\tikzset{cross/.style={cross out, draw=black, fill=none, minimum size=2*(#1-\pgflinewidth), inner sep=0pt, outer sep=0pt}, cross/.default={2pt}}
\DeclareFontFamily{U}{mathx}{}
\DeclareFontShape{U}{mathx}{m}{n}{ <-> mathx10 }{}
\DeclareSymbolFont{mathx}{U}{mathx}{m}{n}
\DeclareMathAccent{\widecheck}{0}{mathx}{"71}
\definecolor{mygray}{gray}{0.75} 
\definecolor{shadecolor}{rgb}{1,0.9,0.7}
\newtheorem{theorem}{Theorem}[section]
\newtheorem{lemma}[theorem]{Lemma}
\newtheorem{lemma-definition}[theorem]{Lemma-Definition}
\newtheorem{proposition}[theorem]{Proposition}
\newtheorem{corollary}[theorem]{Corollary}
\newtheorem{conjecture}[theorem]{Conjecture}
\theoremstyle{definition}
\newtheorem{definition}[theorem]{Definition}
\newtheorem{construction}[theorem]{Construction}
\newtheorem{example}[theorem]{Example}
\theoremstyle{remark}
\newtheorem{remark}[theorem]{Remark}
\numberwithin{equation}{section}
\numberwithin{figure}{section}
\newcommand {\lfor} {\llbracket}
\newcommand {\rfor} {\rrbracket}
\newcommand{\NN} {\mathbb{N}}
\newcommand{\ZZ} {\mathbb{Z}}
\newcommand{\QQ} {\mathbb{Q}}
\newcommand{\RR} {\mathbb{R}}
\newcommand{\CC} {\mathbb{C}}
\newcommand{\FF} {\mathbb{F}}
\newcommand{\PP} {\mathbb{P}}
\renewcommand{\AA} {\mathbb{A}}
\newcommand {\shC}  {\mathcal{C}}
\newcommand {\shE}  {\mathcal{E}}
\newcommand {\shF}  {\mathcal{F}}
\newcommand {\shL}  {\mathcal{L}}
\newcommand {\shN}  {\mathcal{N}}
\newcommand {\shO}  {\mathcal{O}}
\newcommand {\shT}  {\mathcal{T}}
\newcommand {\shX}  {\mathcal{X}}
\newcommand{\bC}{\mathbb{C}}
\newcommand{\bP}{\mathbb{P}}
\newcommand{\bZ}{\mathbb{Z}}
\newcommand{\cM}{\mathscr{M}}
\newcommand{\cN}{\mathscr{N}}
\newcommand{\cS}{\mathscr{S}}
\newcommand {\can} {{\mathrm{can}}}
\newcommand {\ev}  {\operatorname{ev}}
\newcommand {\Hom}  {\operatorname{Hom}}
\newcommand {\hra} {\hookrightarrow}
\newcommand {\N}  {\operatorname{N}}
\newcommand {\NE}  {\operatorname{NE}}
\let \op  \operatorname
\renewcommand{\P}  {\mathscr{P}}
\newcommand {\Pic}  {\operatorname{Pic}}
\newcommand {\ra}  {\to}
\newcommand {\red}[1]{{\color{red} #1}}
\newcommand {\reg}  {{\mathrm{reg}}}
\newcommand {\Spec} {\operatorname{Spec}}
\newcommand {\Spf}  {\operatorname{Spf}}
\newcommand {\trop}  {{\operatorname{trop}}}
\newcommand {\vir}  {{\operatorname{vir}}}
\newcommand {\vdim}  {{\operatorname{vdim}}}
\newcommand {\vt} {\vartheta}
\def\mydate{\ifcase\month \or January\or February\or March\or
April\or May\or June\or July\or August\or September\or October\or 
November\or December\fi \space\number\day,\space\number\year}
\let\oldcite\cite
\renewcommand{\cite}{\@ifnextchar[{\@newcite}{\oldcite}}
\def\@newcite[#1]#2{\oldcite{#2},\,#1}
\begin{document}

\title[LG potential is open mirror map]
{The proper Landau-Ginzburg potential\\ is the open mirror map}

\author{Tim Gr\"afnitz} 
\address{\tiny Tim Gr\"afnitz, University of Cambridge, DPMMS, Wilberforce Road, Cambridge, CB3 0WB, UK}
\email{tg485@cam.ac.uk}

\author{Helge Ruddat} 
\address{\tiny Helge Ruddat, JGU Mainz, Institut f\"ur Mathematik, Staudingerweg 9, 55128 Mainz,
Germany}
\email{ruddat@uni-mainz.de}

\author{Eric Zaslow} 
\address{\tiny Eric Zaslow, Department of Mathematics, Northwestern University,
Evanston, IL, USA}
\email{zaslow@math.northwestern.edu}

\begin{abstract}
The mirror dual of a smooth toric Fano surface $X$ equipped with an anticanonical divisor $E$ is a Landau--Ginzburg model with superpotential, $W$.  
Carl--Pumperla--Siebert give a definition of the the superpotential in terms of tropical disks \cite{CPS} using a toric degeneration of the
pair $(X,E)$. 
When $E$ is smooth, the superpotential is proper.  
We show that this proper superpotential
equals the open mirror map for outer Aganagic--Vafa branes in the canonical bundle $K_X$, in framing zero.  
As a consequence, the proper Landau--Ginzburg potential is a solution to the Lerche--Mayr Picard--Fuchs equation.

Along the way, we prove a generalization
of a result about relative Gromov--Witten invariants by Cadman--Chen to arbitrary genus using the multiplication rule of quantum theta functions. 
In addition, we generalize a theorem of Hu that relates Gromov--Witten invariants of a surface under a blow-up from the absolute to the relative case.
One of the two proofs that we give introduces birational modifications of a scattering diagram.
We also demonstrate how the Hori--Vafa superpotential is related
to the proper superpotential by mutations
from a toric chamber to the unbounded chamber of
the scattering diagram.
\end{abstract}

\maketitle
\setcounter{tocdepth}{1}
\tableofcontents
\vfill

\section{Introduction and Summary}
\label{sec:intro}

Mirror Symmetry relates Fano manifolds to Landau--Ginzburg theories.  
The Landau--Ginzburg superpotential $W$ is a virtual count of disks bounding Lagrangian torus fibers of a fibration in the complement of an anticanonical divisor. 
In the simplest case of a toric Fano with toric boundary as the anticanonical divisor, the Landau--Ginzburg superpotential is a Laurent polynomial defined on an algebraic torus.
Also associated to a Fano manifold $X$ is its canonical bundle, a Calabi--Yau manifold $K_X$ with mirror geometry determined by
the superpotential $W$.  
Aganagic--Vafa studied a class of Lagrangian A-branes in a toric $K_X$ and their mirror B-branes: the open mirror map relates parameters for their common moduli space \cite{AV}.  
In this paper we conjecture that the proper Landau--Ginzburg superpotential for a Fano manifold $X$ with
smooth anticanonical divisor is equal to the open mirror map, and establish this for the case $X=\bP^2$ and more generally for toric del Pezzo surfaces.

Smoothing the boundary divisor inside a toric Fano manifold $X$ yields an enlargement of the mirror dual to a quasiprojective variety $\widecheck{X}$.
Its algebra of global functions is generated by the Landau--Ginzburg potential $W$ which defines a proper morphism to $\AA^1$ in this situation (unlike the non-proper potential on the algebraic torus). 

This quasiprojective variety $\widecheck X$ is constructed from gluing infinitely many cluster torus charts that are glued by wallcrossing transformations. 
The corresponding walls and wall functions are encoded in a consistent 
scattering diagram that is contained in the dual intersection complex $B$ of the toric degeneration of X.  
This dual intersection complex is a real affine manifold $B$ with singularities and polyhedral decomposition $\P$. It is referred to as the \emph{fan picture} for $X$.
There are in total 16 toric degenerations within the linear system $K_X$ when $X$ is toric.
 
We follow the definition of the Landau--Ginzburg potential $W$ for $\widecheck X$ as the primitive \emph{generalized theta function}, i.e., the generating series of broken lines with primitive unbounded leg in $B$ \cite{CPS,GHS}. 
Counting of \emph{broken lines} for the scattering diagram is equivalent to counting tropical disks in $B$ and these in turn control the count of holomorphic disks in $X$. 

We focus on the case where $X$ is a toric Fano surface with smooth anticanonical elliptic curve $E$. 
The scattering theory invites a comparison between holomorphic disks in $X$ and $K_X$ and the log Gromov--Witten theory of the pair $(X,E)$, as
well as comparisons with the blow-up $\widehat{X}$ and the local Gromov--Witten theory of $K_{\widehat X}$ --- relationships we exploit to prove our main result.  
We also demonstrate how wall crossing relates the superpotentials for toric and smooth anticanonical divisors.

When $X = \bP^2$ and we take the toric anticanonical divisor, the superpotential was
computed by Hori--Vafa \cite{HV} through T-duality and proven mathematically by Cho--Oh \cite{CO}:  $W_0 = x + y + \frac{s}{xy},$
where $s$ is the complex modulus of the mirror, dual to the symplectic modulus of $\bP^2.$
When $E$ is a \emph{smooth} cubic curve, our theorem takes the following form.
Let
\begin{equation}
\label{eq:Meq}
M(Q) = 1 - 2Q + 5Q^2 - 32Q^3 + 286Q^4 - \cdots
\end{equation}
 be the power series defined by
$(Q/z(Q))^\frac{1}{3},$ where $z(Q)$ is the inverse of $Q(z) := z\exp(\sum_{n\geq 1}(-1)^n\frac{(3n)!}{n(n!)^3}z^n)$.  

\begin{theorem}
\label{thm-main-p2}
Assume $X=\PP^2$. We take a torus fiber
near $E$ and set $Q = -t^3y^{-3},$ the broken line expansion of the Landau--Ginzburg superpotential is
$$W = yM(Q) = y + 2t^3y^{-2} + 5t^6y^{-5} + 32t^9y^{-8} + 286t^{12}y^{-11} + \cdots.$$
\end{theorem}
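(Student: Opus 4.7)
The plan is to compute the broken line expansion of $W$ in the chamber of the fan picture $B$ containing a basepoint $p$ near the smooth elliptic fiber, and match it coefficient by coefficient with the Taylor expansion of $yM(Q)$ under the substitution $Q = -t^3y^{-3}$.

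First, I would fix the scattering diagram associated to the toric degeneration of $(\PP^2, E)$ with $E$ a smooth cubic, and choose $p$ in the unbounded chamber of $B$ where the outgoing cluster monomial is $y$. Any broken line $\beta$ ending at $p$ with primitive outgoing exponent $y$ and carrying relative class $d[\text{line}]$ contributes a monomial $c_d\, t^{3d}y^{1-3d}$, since $[\text{line}]\cdot E = 3$. By consistency of the CPS scattering diagram \cite{CPS, GHS} and the tropical--algebraic correspondence, the coefficient $c_d$ equals the log Gromov--Witten invariant $N_d$ of $(\PP^2, E)$ counting genus-zero rational curves of degree $d$ maximally tangent to $E$ at a single movable point and passing through $p$. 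This reduces the theorem to a calculation of $\sum_d N_d Q^d$.

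Second, I would invoke the generalization of the Cadman--Chen theorem proved in the body of this paper (via the multiplication rule for quantum theta functions), together with the generalization of Hu's blow-up formula to the relative case, to identify $\sum_{d\geq 1} N_d Q^d$ with the open disk potential of the outer Aganagic--Vafa brane in $K_{\PP^2}$ in framing zero. By the open mirror construction of Aganagic--Vafa \cite{AV}, this disk potential is determined by the Lerche--Mayr Picard--Fuchs system, and an elementary hypergeometric inversion identifies it with $M(Q) = (Q/z(Q))^{1/3}$ from \eqref{eq:Meq}. Multiplying by $y$ and substituting $Q = -t^3y^{-3}$ then yields the expansion claimed.

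The main obstacle will be the middle step: translating log GW invariants of $(\PP^2, E)$ with a single maximally tangent contact point into open GW invariants of $K_{\PP^2}$ in framing zero. This passes through the blow-up $\widehat{\PP^2}$ and the relative-to-local comparison for $K_{\widehat{\PP^2}}$, and requires the higher-genus Cadman--Chen generalization; the sign $(-1)^d$ absorbed in $Q$ also needs careful bookkeeping against the orientations of the broken lines. By contrast, the tropical/scattering step and the closed-string mirror theorem for local $\PP^2$ (which pins down $M(Q)$) are both standard.
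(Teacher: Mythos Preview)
Your overall strategy matches the paper's six-step chain, but there is a genuine imprecision in your identification of the broken line coefficients that would derail the argument if taken literally. The coefficient $c_d$ attached to $t^{3d}y^{1-3d}$ in the broken line expansion of $W=\vartheta_1$ is \emph{not} a one-point maximal tangency invariant. Extending a broken line with asymptotic monomial $z^{m_{\text{out}}}$ and endpoint $P$ to a tropical curve produces \emph{two} unbounded legs, of weights $p=3d-1$ and $q=1$; the point $P$ becomes a \emph{fixed} contact point on $E$ of order $p$, not an interior point condition. Thus $c_d = R^{\trop}_{3d-1,1}(\PP^2,dL) = (3d-1)\,R_{3d-1,1}(\PP^2,dL)$, a two-point log invariant with one tangency fixed and one movable (Theorem~\ref{prop-global-expression-for-W} and Theorem~\ref{prop:trop}).

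This matters because the Cadman--Chen relation you invoke (Theorem~\ref{thm:cadman-chen}) is precisely the identity $R_{1,n}=n^2 R_{n,1}$ between two-point invariants; it has no content for a one-point count and is needed here to swap the roles of the contact orders before the blow-up step. The blow-up comparison (Corollary~\ref{cor-two-and-one-point}) then converts the two-point invariant $R_{1,3d-1}(\PP^2,dL)$ into the one-point invariant $R(\widehat{\PP^2},\pi^*dL-C)$ on $\widehat{\PP^2}$; only after this do the log-local correspondence \cite{GGR} and the closed-open result \cite{LLW} apply. With $c_d$ correctly identified as a two-point count, your remaining outline coincides with the paper's proof in Section~\ref{section-main-proof}. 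Note also that only the genus-zero case of Cadman--Chen is required for Theorem~\ref{thm-main-p2}.
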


Here, $y$ is the local cluster monomial on $\widecheck X$ given by the unique unbounded direction while $\widecheck X$ is the toric degeneration that is discrete Legendre dual to the canonical maximal degeneration of $\PP^2$. The variable $t$ is the toric degeneration parameter so that $t=0$ gives the degenerate fiber $\widecheck X_0$ whose four irreducible components are isomorphic to $\PP^2/((\ZZ/3)^2)$ and three copies of $\PP^1\times\AA^1$ and the two generators of $(\ZZ/3)^2$ act by weights $(1,-1,0)$ and $(1,0,-1)$. The map $W$ on this degenerate fiber is the projection to the $\AA^1$ factor and it maps $\PP^2/((\ZZ/3)^2)$ to the origin. Each fiber $t\neq 0$ gives an irreducible variety on which $W$ defines a proper elliptic fibration over $\AA^1$.
By ``near $E$'', we mean that for every fixed power $k$ of $t$, the count of tropical disks bounding a torus fiber in any unbounded chamber of the scattering diagram consistent to order $k$ agrees with the corresponding coefficient of $M(Q)$.

The function $M(Q)$ originally appeared in the work of Aganagic--Klemm--Vafa \cite{AKV} as the
open mirror map relating Aganagic--Vafa branes \cite{AV} on the canonical bundle $K_{\bP^2}$ and its mirror, and was used to determine open Gromov--Witten invariants in \cite{GZ}.

The series $M(Q)$ also appears in the work of Gross and Siebert \cite{GS-reconstruction,GS14} on local mirror symmetry
for the canonical bundle $K_{\bP^2}$ as a normalized slab function controlling the mirror toric degeneration, where they also provide an enumerative interpretation in terms of tropical
trees.

The works of Chan, Cho, Lau, Leung, Tseng (\cite{CLL,CLT,CCLT}) give an enumerative 
interpretation of $M(Q)$ in terms of 
open Gromov--Witten
invariants of moment fibers in $K_{\bP^2}$,
and these were related by Lau, Leung and Wu \cite{LLW}
to local invariants of the
blow-up $\widehat{\bP^2}=\mathbb F_1$,
following Chan's open-closed relation \cite{C11}.  See in particular \cite[Theorem 1.1 and Example 4.6]{LLW} and \cite[Example 6.5 (3)]{CCLT}.

We learned from Hiroshi Iritani that the function $M(Q)$ also appears as the relation $-3 S_{nc} = S_c \cdot M(Q)$ of elements in the multiplicative group of the quantum cohomology ring of $K_{\PP^2}$.
Here, $S_{nc}$ is the product of the three Seidel elements of the noncompact toric divisors while $S_c$ is the Seidel element of the zero section. 
Seidel elements arise from Hamiltonian circle actions and the given relation is obtained by combining Theorem 1.1 in \cite{GI} with Theorem 1.1 in \cite{CLLT}.

The various appearances of $M(Q)$ have in common that each is a generating function of an enumerative invariant where the power of $Q$ records the degree of an algebraic curve in $\PP^2$.
Yu Wang observed the appearance of the sequence of coefficients of $M(Q)$ in his study of punctured invariants for $\PP^2$, \cite{W}.

\begin{remark} 
\label{first-remark}
There is a conceptual reason behind the insertion of $Q = -t^3y^{-3}$ into $M$ in Theorem~\ref{thm-main-p2}. 
Let $\beta_\can$ denote the tropical cycle that is the tropicalization of a line in $\PP^2$ with its three legs ending on the amoeba of the zero locus of $y-w(t)$ for some holomorphic function $w(t)$.
As a consequence of \cite{RS}, it is computed in \cite{GRS} that the exponentiated chain integral $\exp\frac1{2\pi i}\int_{\beta_\can}\Omega$ is given by 
$-t^3w(t)^{-3}$. 
We may therefore identify $Q$ as the exponentiated chain integral for $\beta_\can$ with end points given by the value of the cluster coordinate $y$ itself.

While $W$ itself is not a period integral, $W/y$ is closely related to the anti-derivative of the classical period $\pi_W(t):=\frac{1}{2\pi i}\int_{S^1}\frac{1}{1-W(y,t)}\frac{dy}{y}$ which in turn agrees with the classical period of the Hori--Vafa mirror\footnote{This potential agrees with $W_0 = x + y + \frac{s}{xy}$ upon defining $t=s^{\frac{1}{3}}$ and rescaling variables by $1/t$.} potential $t(x+y+\frac{1}{xy})$ computed in Example 3.5 in \cite{CCGGK}. 
Quite generally, the proper potential can be obtained from its classical period by an analytic expression, see \cite{R3} for more details.
\end{remark}

Our main theorem for more general smooth toric Fano surfaces requires
the correct notions of open mirror map and Landau--Ginzburg potentials.
By ``open mirror map,'' we mean the relation between complex and symplectic open parameters defined by the logarithmic solution to the Picard--Fuchs differential
equations of Lerche--Mayr \cite{LM} --- see Section \ref{section-mirmap}.
Now let $X$ be a smooth toric Fano surface and $E$ a smooth anticanonical divisor.
Let $\widecheck X$ be the Gross--Siebert mirror dual to the canonical degeneration of the log Calabi--Yau pair $(X,E)$, see for instance Section~7 in \cite{CPS}.  
The unique primitive theta function $W:=\vartheta_1$ defines the Landau-Ginzburg potential on $\widecheck{X}.$
Our analysis of this Landau-Ginzburg potential is in two ways somewhat more general than what is needed to prove Theorem~\ref{thm-main-p2}: We also study $\vartheta_r$ for $r>1$ and we study its $\boldsymbol{q}$-refined version $\vt_r(\boldsymbol{q})$ which reduces to the unrefined version $\vartheta_r$ by setting $\boldsymbol{q}=1$. The reason for this more general treatment is on the one hand that it works with no extra effort using the same tools and arguments and on the other hand we want to use the description of $\vt_r(\boldsymbol{q})$ in an upcoming article joint with Benjamin Zhou where we study the open mirror map in higher genus and higher winding \cite{GRZZ}.

Let $y$ denote the primitive asymptotic outward-pointing monomial in the intersection complex $B$ of $\widecheck X$.
We show in Proposition \ref{prop:theta} that the $\boldsymbol{q}$-refined power series expansion of the theta function $\vt_q$ in every unbounded chamber in $B$ takes the form, with $\boldsymbol{q}=e^{i\hbar}$,
\begin{equation} 
\label{eq-prop-theta}
\vt_q(\boldsymbol{q}) = y^{q} + \sum_{p\geq 1}\sum_{\beta:\beta.E=p+q} R_{p,q}^{g,\text{trop}}(X,\beta) \hbar^{2g}s^\beta t^{\deg(\beta)}y^{-p}.
\end{equation} 
The non-negative integer $R_{p,q}^{g,\text{trop}}(X,\beta)$ is the number of tropical curves in $B$, counted with multiplicity, of genus $g$ and class $\beta$ with two unbounded legs of multiplicity $p$ and $1$. The first leg is required to pass through a fixed point near $E$. See \S\ref{sec:tropicalcorrespondence} for the precise definition. 
The homology group of tropical cycles $\beta$ is canonically isomorphic to the dual of the Picard group of $X$.
Let $\NE(X)$ denote the monoid of effective curve classes in $X$ and
let $M(Q)\in\QQ\lfor\NE(X)\rfor=\left\{\left.\sum_{\beta\in \NE(X)} a_\beta Q^\beta\right|a_\beta\in\QQ\right\}$ 
denote the open mirror map of $X$ as defined in Section \ref{section-mirmap}.

\begin{theorem} 
\label{thm-main}
Let $X$ be a smooth toric Fano surface.  Let $W(s,t,y) := \vt_1 := \vt_1(\hbar=0)$ be the Landau--Ginzburg potential near $E$ and $M$ the open mirror map, as defined above. 
Set $Q^\beta:= (-1)^{\text{deg}(\beta)}s^\beta\cdot t^{\text{deg}(\beta)}\cdot y^{-\beta.E}$. Then
$$W=yM(Q).$$
\label{delPezzo-theorem}
\end{theorem}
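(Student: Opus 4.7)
The plan is to chain together several enumerative correspondences, starting from the tropical description of $W$ and ending with the closed-form generating series $M(Q)$. By Proposition~\ref{prop:theta} (specialized to $q=1$ and $\hbar=0$) the broken-line expansion reads
$$W = y + \sum_{p \geq 1}\sum_{\beta\,:\,\beta.E = p+1} R_{p,1}^{0,\text{trop}}(X,\beta)\, s^\beta t^{\deg\beta} y^{-p},$$
so the task reduces to identifying the tail with $yM(Q) - y$ under $Q^\beta=(-1)^{\deg\beta}s^\beta t^{\deg\beta} y^{-\beta.E}$. The sign $(-1)^{\deg\beta}$ should come out of the standard sign conventions relating log, relative, and open Gromov--Witten invariants in the presence of tangency $\beta.E=p+1$ to the smooth anticanonical $E$.

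First, I would invoke a tropical/log correspondence theorem on the canonical toric degeneration of $(X,E)$: the count $R_{p,1}^{0,\text{trop}}(X,\beta)$ equals the genus-zero log Gromov--Witten invariant of $(X,E)$ in class $\beta$ with one marking of tangency $p$ at a fixed point of $E$ and a second marking of tangency $1$. Smoothness of $E$ is crucial so that log intersection with $E$ is ordinary multiplicity. Second, I would apply the generalization of Cadman--Chen announced in the abstract to reduce this two-point log count to a one-point log count of maximal tangency $p+1=\beta.E$ at a generic point of $E$, using the multiplication rule of quantum theta functions to supply the correct combinatorial factor (in genus zero it degenerates to an explicit rational coefficient). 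Third, I would apply the generalization of Hu's blow-up theorem (also flagged in the abstract) to rewrite the resulting maximal-tangency invariant of $(X,E)$ as a genus-zero local Gromov--Witten invariant of $K_{\widehat X}$, where $\widehat X$ is the blow-up of $X$ at the chosen point of $E$.

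At this point the comparison with the open mirror map is reduced to known identities. Chan's open-closed equality~\cite{C11}, in the form refined by Lau--Leung--Wu~\cite{LLW}, identifies such local invariants of $K_{\widehat X}$ with open Gromov--Witten invariants of framing-zero outer Aganagic--Vafa branes in $K_X$, and the results of Chan--Cho--Lau--Leung--Tseng \cite{CLL,CLT,CCLT} identify the generating function of those open invariants with the logarithmic solution $M(Q)$ of the Lerche--Mayr Picard--Fuchs system. Substituting $Q^\beta=(-1)^{\deg\beta}s^\beta t^{\deg\beta} y^{-\beta.E}$ and matching class by class yields $W=yM(Q)$; Theorem~\ref{thm-main-p2} for $X=\PP^2$ is then the special case where $M(Q)$ is the explicit series \eqref{eq:Meq}.

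The main obstacle I expect is the Cadman--Chen-type reduction step: identifying precisely how the two-marking tropical/log count with insertions of tangency $p$ and $1$ is expressible as a scalar multiple of the one-marking invariant of maximal tangency $\beta.E$, in a way that survives setting $\hbar=0$ in the $\boldsymbol{q}$-refined theta-function identity. The Hu-type blow-up comparison is also delicate because one must track how the point on $E$ becomes an exceptional condition on $\widehat X$ without creating stray components. The alternative proof announced in the abstract, via birational modifications of the scattering diagram, circumvents the Hu step by mutating the chamber of $W$ through walls until it is recognizable as $yM(Q)$; the technical difficulty there shifts to proving that consistency of the scattering diagram is preserved under the birational modification.
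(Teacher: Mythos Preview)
Your overall architecture---a chain of correspondences from tropical to log to local to open---matches the paper's, but you have misidentified what two of the key steps actually do, and you have dropped a crucial factor of $p$.

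First, the tropical/log correspondence (Theorem~\ref{prop:trop}) is not an equality: it reads $R_{p,1}^{\text{trop}}(X,\beta)=p\cdot R_{p,1}(X,\beta)$. That factor of $p$ is what eventually cancels against the $d$ in the log--local formula $R(\widehat X,\gamma)=(-1)^{d+1}d\,N(\gamma)$ of \cite{GGR}, so you cannot ignore it. Second, the Cadman--Chen generalization (Theorem~\ref{thm:cadman-chen}) does \emph{not} collapse a two-point invariant to a one-point maximal-tangency invariant. It is the symmetry $R_{1,n}^g=n^2R_{n,1}^g$ (equivalently $R_{1,n}^{\text{trop}}=nR_{n,1}^{\text{trop}}$), used to swap which of the two markings carries the fixed-point condition: the broken-line expansion naturally produces $R_{p,1}^{\text{trop}}$ with the tangency-$p$ point fixed, whereas the blow-up comparison needs the tangency-$1$ point fixed. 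Third, the Hu-type blow-up theorem (Corollary~\ref{cor-two-and-one-point}) takes the two-point invariant $R_{1,q}(X,\beta)$ to the \emph{one-point log} invariant $R(\widehat X,\pi^*\beta-C)$ on the blow-up; it does not land in local invariants of $K_{\widehat X}$. The passage to local Gromov--Witten theory is a separate step, the log--local correspondence of \cite{GGR}, which you have omitted. Only after that does \cite{LLW} (and \cite{CCLT}) convert local invariants of $K_{\widehat X}$ into the open invariants $n_{\beta_0+\beta}(K_X)$ that assemble into $M(Q)$.

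So the correct chain is
\[
R_{p,1}^{\text{trop}}\ \xrightarrow{\text{Thm~\ref{thm:cadman-chen}}}\ \tfrac{1}{p}R_{1,p}^{\text{trop}}\ \xrightarrow{\text{Thm~\ref{prop:trop}}}\ \tfrac{1}{p}R_{1,p}\ \xrightarrow{\text{Cor.~\ref{cor-two-and-one-point}}}\ \tfrac{1}{p}R(\widehat X,\pi^*\beta-C)\ \xrightarrow{\text{\cite{GGR}}}\ (-1)^{p+1}N(K_{\widehat X},\pi^*\beta-C)\ \xrightarrow{\text{\cite{LLW}}}\ (-1)^{\deg\beta}n_{\beta_0+\beta},
\]
and the sign $(-1)^{\deg\beta}$ is produced by the log--local step, not by ``standard sign conventions''. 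Finally, the scattering-diagram alternative in the paper is a second proof of the blow-up comparison only (step~(4b) replacing the degeneration-formula argument~(4a)); it is not the wall-crossing mutation from the Hori--Vafa chamber, which is a separate illustrative computation.
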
 
\vspace{-.8cm}We need $X$ to be toric only for the open-closed correspondence in step \emph{(6)} below. 
The tropical correspondence theorem in step \emph{(2)} requires that $X$ has very ample anticanonical bundle to ensure that the set of tropical curves of a given class is finite.
Moreover, the tropical proof of \ref{eq-blowup-one-two-intro} in step \emph{(4b)} as presented is also restricted to surfaces with very ample anticanonical bundle.
All other steps work for general del Pezzo surfaces by using the degenerations provided in Section~7 in \cite{CPS}.
As mentioned in Remark~\ref{first-remark}, the substitution expression for $Q^\beta$ is up to sign an exponentiated chain integral which is computed in \cite{GRS}. 

\subsection*{Steps of the proof and auxiliary results}
The proof consists of a chain of identifications that connects tropical disks in a toric Fano surface to open mirror maps for branes in Calabi--Yau threefolds. 
\smallskip 

\emph{(1) Tropical disks to tropical curves.} 
Carl--Pumperla--Siebert give a tropical definition of the Landau--Ginzburg superpotential \cite{CPS}.  
We compute the potential in an unbounded chamber and show that all broken lines are parallel to the unique asymptotic direction of the chamber.
The Landau--Ginzburg potential $W$ is thus a Laurent power series in a single variable $y$ with coefficients in $\QQ[\NE(X)][t]$. 
Note the difference from Hori--Vafa potentials,
which are Laurent polynomials in two variables
with coefficients in $\QQ[\NE(X)][t]$, even though both relate by wall-crossing --- see Corollary~7.9 in \cite{CPS} for a proof, and Appendix~\ref{app:wallcrossing} for a demonstration.

There is a unique outbound disk that contributes a summand $y$ whereas all further disks are inbound and each of these contributes a summand $y^{-p}$ for $p>0$.
For each inbound disk, it is a simple matter to extend it to a tropical curve which then has unbounded edges of weight $1$ and $p$. 
We arrive at the statement of Proposition \ref{prop:theta} giving \eqref{eq-prop-theta}.
\smallskip 

\emph{(2) Tropical to log invariants.}
Tropical curves arise as degenerations of holomorphic curves and the degeneration formula relates tropical multiplicities to the enumerative counts. 
In Section \ref{sec:tropicalcorrespondence} we quote a theorem (Theorem~\ref{prop:trop}) of the first-named author which relates the tropical invariants $R_{p,q}^{g,\trop}(X,\beta)$ to analogously defined two-point log invariants $R_{p,q}^g(X,\beta)$, 
\begin{equation}
\label{eq-trop-log}
 R_{p,q}^{g,\trop}(X,\beta)=p \cdot R_{p,q}^g(X,\beta). 
 \end{equation}
Consequently, the tropical Landau--Ginzburg superpotential \cite{CPS} agrees with the one
defined by Auroux and Fukaya--Oh--Ohta--Ono \cite{A,FOOO,L}. 
\smallskip 

\emph{(3) Theta calculations.} 
Using the multiplicative structure of Gross--Hacking--Siebert's theta functions in Section~\ref{section-theta}, we prove Theorem~\ref{thm:cadman-chen} which gives us the identity
\begin{equation} 
R_{1,n}^g(X,\beta) = n^2 \cdot R_{n,1}^g(X,\beta)
\end{equation}
and generalizes a theorem of Cadman-Chen \cite{CC}.
\smallskip 

\emph{(4) Trading a contact point for a blow-up.} 
We provide two proofs relating two-point log Gromov--Witten invariants of $(X,E)$ to one-point invariants of the blow-up $\pi\colon\widehat X\ra X$ with exceptional curve denoted $C$. 
One proof is by the degeneration formula, the other is tropical; the relation \eqref{eq-trop-log} justifies either approach. 
We refer to genus zero and class $\alpha$ Gromov--Witten invariants in $(\widehat X,E)$ with a single point of contact at $E$ by $R(\widehat X,\alpha)$.
 With $q=\beta.E-1$, for genus zero invariants the relation takes the simplest form (if $g=0$ we omit the superscript $g$):
\begin{equation} 
\label{eq-blowup-one-two-intro}
 R(\widehat X,\pi^*\beta-C) = R_{1,q}(X,\beta).
\end{equation} 
\smallskip 

\begin{enumerate}
\item[\emph{(4a)}]
In Section \ref{section-two-point} we prove Theorem~\ref{thm-one-to-multi-point} that provides the identity
$$R^{g}(\widehat X,\pi^*\beta-pC)(\gamma) =  \sum_{{p_1+...+p_r=p}\atop{{g_0+...+g_r=g}\atop{r>0,p_i>0,g_i\ge 0}}}\frac{p_1\cdot...\cdot p_r}{r!}\cdot
R^{g_0}_{p_1,...,p_r,q}(X,\beta)(\gamma)\cdot \prod_{i=1}^r N(g_i,p_i).$$
Here, $R^{g}_{p_1,...,p_r,q}(X,\beta)$ refers to genus $g$ invariants with the implicit insertion of the class $(-1)^g\lambda_g$ and with $r+1$ contacts at $E$ with tangencies $p_1,...,p_r,q$ where the first $r$ contacts are at prescribed point positions on $E$. The number $N(g,p)$ is an invariant of $\PP^1$ that is defined in \eqref{eq-cover-contribution}. 
We prove this result by degenerating $\widehat{X}$ to the union of $X$ and one another component, then analyzing the
contributions of various types of stable maps that occur in the degeneration formula. The result generalizes a theorem by \cite{Hu} to the relative situation.
\smallskip 
\item[\emph{(4b)}]
In Section \ref{sec:scattering-blowup}, we introduce the ``blow-up of a scattering diagram'' (see Figure~\ref{fig:blowup1}), and use it to construct a bijection between the sets of tropical curves with two unbounded legs in $X$ of class $\beta$ to the set of tropical curves with one unbounded leg in $\widehat{X}$ of class $\pi^\star\beta-C$ resulting in 
Theorem \ref{thm:blowupfan}, Corollary \ref{cor:tropcorrg} and a proof of the tropical analogue of \eqref{eq-blowup-one-two-intro}. Since the relevant tropical curves were shown to be equivalent to broken lines in \cite{CPS}, equivalently, one may phrase the bijection as one for broken lines.
We identify the initial wall structure of $X$ inside that of $\widehat{X}$. 
Uniqueness of scattering then relates the full structures and yields the desired correspondence of tropical curves, where
erasing a piece of a tropical curve in $\widehat{X}$ yields a similar curve in $X$ with one fewer unbounded leg.

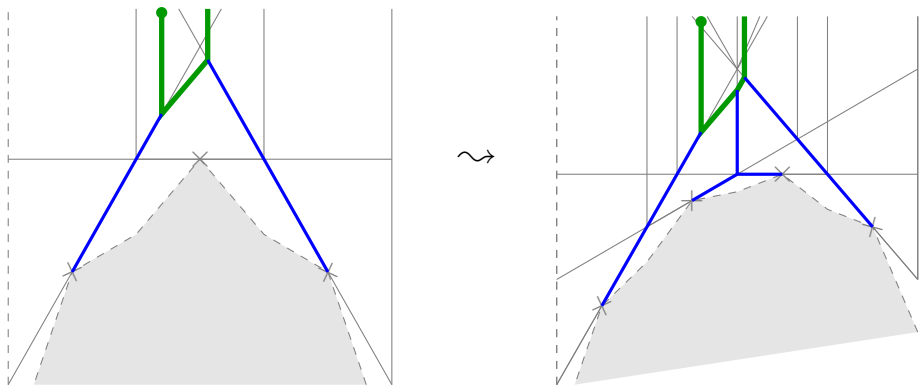
\begin{figure}[h!]
\centering
\captionsetup{width=.9\textwidth}
\begin{tikzpicture}[xscale=1.7,yscale=1,rotate=90]
\draw[gray] (-3,-1) -- (0,0) -- (0,1) -- (-3,2);
\draw[gray] (-3,-1) -- (2,-1);
\draw[gray] (0,0) -- (2,0);
\draw[gray] (0,1) -- (2,1);
\draw[dashed,gray] (-3,2) -- (2,2);
\draw[dashed,fill=black,fill opacity=0.2,gray] (-3,-0.8) -- (-1.5,-0.5) node[opacity=1,rotate=20]{$\times$} -- (-1,0) -- (0,0.5) node[opacity=1,rotate=0]{$\times$} -- (-1,1) -- (-1.5,1.5) node[opacity=1,rotate=70]{$\times$} -- (-3,1.8);
\draw[gray] (0,0.5) -- (0,2);
\draw[gray] (0,0.5) -- (0,-1);
\draw[gray] (-1.5,-0.5) -- (2,2/3);
\draw[gray] (-1.5,1.5) -- (2,1/3);
\draw[black!40!green,line width=2pt,line width=2pt] (1.95,0.8) node[fill,circle,inner sep=1.5pt]{} -- (0.6,0.8) -- (1.32,0.44) -- (2,0.44);
\draw[blue,line width=1.2pt] (-1.5,-0.5) -- (1.32,0.44);
\draw[blue,line width=1.2pt] (-1.5,1.5) -- (0.6,0.8);
\draw (0,-1.67) node{\large$\leadsto$};
\draw (0,-2.2);
\end{tikzpicture}
\begin{tikzpicture}[xscale=1.2,yscale=0.7,rotate=90]
\draw[gray] (-2,-1) -- (0,0) -- (0,1) -- (-1,2) -- (-4,3);
\draw[gray] (-2,-1) -- (3,-1);
\draw[gray] (0,0) -- (3,0);
\draw[gray] (0,1) -- (3,1); 
\draw[gray] (-1,2) -- (3,2);
\draw[dashed,gray] (-4,3) -- (3,3);
\draw[dashed,fill=black,fill opacity=0.2,gray] (-3,-1) -- (-1,-0.5) node[opacity=1,rotate=30]{$\times$} -- (-2/3,0) -- (0,0.5) node[opacity=1,rotate=0]{$\times$} -- (-1/3,1) -- (-1/2,1.5) node[opacity=1,rotate=45]{$\times$} -- (-5/3,2) -- (-2.5,2.5) node[opacity=1,rotate=70]{$\times$} -- (-4,2.8);
\draw[gray] (0,0.5) -- (0,3);
\draw[gray] (0,0.5) -- (0,-1);
\draw[gray] (-0.5,1.5) -- (2,-1);
\draw[gray] (-0.5,1.5) -- (-2,3);
\draw[gray] (-1,-0.5) -- (-2,-1);
\draw[gray] (-2.5,2.5) -- (-4,3);
\draw[gray] (-1,-0.5) -- (2,1);
\draw[gray] (2,1) -- (3,1+1/2);
\draw[gray] (2,1) -- (3,1+1/3);
\draw[gray] (-2.5,2.5) -- (2,1);
\draw[gray] (2,1) -- (3,1-1/3);
\draw[gray] (2,1) -- (3,1-1/4);
\draw[gray] (2/3,1/3) -- (3,1/3);
\draw[gray] (0,1+2/3) -- (3,1+2/3);
\draw[gray] (-2,-1) -- (3,-1);
\draw[gray,dashed] (-4,3) -- (3,3);
\draw[black!40!green,line width=2pt,line width=2pt] (2.9,1.4) node[fill,circle,inner sep=1.5pt]{} -- (0.8,1.4) -- (1.6,1) -- (1.84,0.92) -- (3,0.92);
\draw[blue,line width=1.2pt] (-1,-0.5) -- (1.84,0.92);
\draw[blue,line width=1.2pt] (0,0.5) -- (0,1) -- (-0.5,1.5);
\draw[blue,line width=1.2pt] (0,1) -- (1.6,1);
\draw[blue,line width=1.2pt] (-2.5,2.5) -- (0.8,1.4);
\end{tikzpicture}
\caption{A broken line (fat green) in the fan picture of $\mathbb{P}^2$ (left) and the corresponding broken line in the fan picture of $\widehat{\mathbb{P}^2}=\FF_1$ (right). The respective extensions to tropical curves are indicated in thick blue.}
\label{fig:brokencorr-intro}
\label{broken-curves}
\end{figure}
\end{enumerate}
\smallskip 

\emph{(5) log-local.}  
The work of \cite{GGR} establishes a correspondence between one point log Gromov--Witten invariants of $\widehat{X}$ and local Gromov--Witten invariants of the canonical bundle $K_{\widehat{X}}$, for $n=\beta.E$, in genus $0$,
\begin{equation}  
R(\widehat X,\beta) = (-1)^{n+1} n\cdot N(K_{\widehat{X}},\beta). 
\end{equation} 

Combining the identities in \emph{(1),(2),...,(5)} above, we obtain $$R_{\beta.E-1,1}^{\text{trop}}(X,\beta)=(-1)^{p+1}N(K_{\widehat{X}},\pi^*\beta-C),$$ hence 
\begin{equation} 
\label{eq-series-with-N}
 W/y = 1+\sum_{p\geq 1}\sum_{\beta : \beta.E=p+1} (-1)^{p+1}N(K_{\widehat{X}},\pi^*\beta-C) s^\beta t^{\deg(\beta)} y^{-p-1}.  
\end{equation}
\smallskip 

\emph{(6) Closed-open correspondence.}  
We finally use Theorem~1.1 in \cite{LLW} which establishes the identity 
\begin{equation}  
\label{eq-open-closed}
N(K_{\widehat{X}},\pi^*\beta-C)=n_{\beta+\beta_0}(K_X)
\end{equation}  
where $\beta_0$ is a holomorphic disk contained in a fiber of the projection $K_X\ra X$ and $n_{\beta+\beta_0}(K_X)$ denotes the Fukaya--Oh--Ohta--Ono-theoretic holomorphic disk invariant with boundary in a moment map fiber of disk class $\beta+\beta_0$. See Example~4.6 in \cite{LLW} and Example~6.5\,(3) in \cite{CCLT}.
The series in \eqref{eq-series-with-N}
is determined through closed mirror symmetry via
the hypergeometric series which produce the function $M(Q)$ in the case of $\PP^2$ and similar functions for other toric del Pezzo surfaces.
%

\begin{remark}
\label{rmk:cetalwork}
There are other open-closed relations in the literature.
The expansive series of works by
Chan, Cho, Lau, Leung, Tseng and Wu (\cite{CLL,CLT,CCLT,LLW}),
building on Chan's remarkable open-closed result \cite{C11},
relates open invariants on toric fibers of
the canonical bundle
of a toric surface to local Gromov--Witten invariants.
For Aganagic--Vafa branes, an open-closed relationship was found in \cite{GZ} and employed in \cite{FL} for
the so-called outer branes which are relevant here, see also \cite{BBvG}.
The computations of \cite{FL}
agree with Mayr's calculations using differential
equations \cite{M06}, a method further developed in \cite{LM}. 
The equivalence of all these methods establishes the folklore
result
that outer Aganagic--Vafa branes in framing zero
have the same invariants
as a moment map fiber in the toric Calabi-Yau.  
It is the open mirror map for Aganagic--Vafa branes, as computed first in \cite{AKV}, to which
the title of this paper refers.
\end{remark}

\begin{remark} 
By counting broken lines, we compute the tropical invariants for $\PP^2$ for degree $1$ and $2$ and arbitrary genus in Appendix~\ref{app:brokenlines}. 
In Appendix~\ref{app-verification} we use \cite{BFGW} to obtain the log invariants from local invariants to verify the computations in Appendix~\ref{app:brokenlines}. 
In this context, also the suitability of the insertion of the class $(-1)^g\lambda_g$ for the higher genus results in \emph{(4)} was discovered in \cite{Bou20}.
Steps \emph{(3)} and \emph{(4)} also hold true for positive genus. 
Concerning step \emph{(5)}, a higher genus version of this correspondence was proved in \cite{BFGW}.
We checked whether it permits an easy generalization of the statement of Theorem~\ref{thm-main} to higher genus but could not confirm this. 
\end{remark}

\begin{remark}
Tonkonog studies the proper Landau--Ginzburg potential $W$ from a symplectic topology point of view as the disk potential of an exact monotone Lagrangian torus \cite{Ton}. 
He relates the potential to the Borman--Sheridan class, a certain deformation class in the symplectic cohomology ring. Collins, Jacob and Lin proved the existence of a special Lagrangian fibration in the complement of a smooth elliptic curve in every del Pezzo surface \cite{CJL1}. They show that the SYZ mirror is a fibre-wise compactification of the Landau-Ginzburg model \cite{CJL2}. Furthermore, Lin proved that broken line counts agree with Maslov index two disks \cite{L}. Lau, Lee and Lin proved that the scattering diagram in the fan picture of $\PP^2$ from \cite{CPS,Gra1} matches the Floer theoretic scattering diagram of Maslov zero disks \cite{LLL1}. The relationship of the Landau-Ginzburg potential with Hyperkähler rotation has been explored in \cite{LLL2}. 
\end{remark}


\begin{remark}
Theta functions are typically hard to compute. Theorem~\ref{thm-main} computes $\vt_1$ in the region where the walls of the relevant scattering diagram are dense which can be seen as follows. If two rays intersect with determinant $m$, the scattering diagram is locally isomorphic via a change of lattice (see \cite{GHKK}, Proposition C.13) to the one obtained from the ``standard'' scattering diagram with function $1+atx^m$ and $1+bty^m$, for some $a,b\in\mathbb{Z}$. This latter diagram has a dense region given by the convex cone spanned by slopes $\frac{m-\sqrt{m^2-4}}{2}$ and $\frac{m+\sqrt{m^2-4}}{2}$ as can be shown by identification with a similar diagram from quiver representations \cite{Rei} or via mutations \cite{Pr20}. 

Prince studied the scattering diagram of $\mathbb{P}^2$ in the complement of the region where walls are dense \cite{Pr20}. Outside the dense region, all rays have coefficient $1$ since they are related via mutations to the initial rays. Prince shows that the chambers of the non-dense region are related to the bounded central cell via mutations of polytopes, and that these chambers are the Newton polytopes of the superpotential defined in them. By contrast, we are interested in the superpotential in a nested sequence of unbounded chambers. Every such a sequence leaves the region of bounded cells.

Toric degenerations have recently also been used to equate Laurent polynomial disk potentials in non-toric cluster variety situations with Floer theoretic potentials in \cite{KLZ}.
\end{remark}

\begin{remark}
While most of the steps above work more general, the closed-open correspondence $(6)$ is only established for winding number $p=1$ and genus $g=0$.
In future work joint with Benjamin Zhou \cite{GRZZ} we plan to extend the results of this paper to higher windings and higher genus.
\end{remark}


The superpotential is defined in each chamber of the wall structure determined by the dual intersection complex associated to the toric degeneration.  
Since this potential can be easily computed in the central chamber to agree with the Hori--Vafa superpotential, we can fix an order $k$ for the
wall structure and relate that to the function $M(Q)$ mod $t^k$
by a finite series of mutations \`a la \cite{CPS}.
An example of this mutation process is given in Appendix \ref{app:wallcrossing}.

The following conjectures concern the non-toric case and higher-dimensional situations.  
As there is no known system of Lerche--Mayr type in the non-toric case, we take \eqref{def-mirror-map} as a definition of the open mirror map.

\begin{conjecture}[intrinsic mirror construction] 
\label{conj-int-MS}
Given a Fano manifold $Y$ with smooth anticanonical divisor $D$, let $(X,W)$ be the intrinsic mirror dual, that is, for a suitable submonoid $P\subset \Pic(Y)$,  
$X\ra \Spf\CC[\Pic(Y)^*]\lfor P\rfor$ is a formal degenerating family of affine Calabi--Yau manifolds with $W\in\Gamma(X,\shO_X)$ the unique primitive theta function. 
The broken line expansion of $W$ in a chamber of the wall structure near infinity is the open mirror map of $K_Y$.
\end{conjecture}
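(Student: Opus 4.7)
The plan is to lift the six-step strategy used to prove Theorem~\ref{delPezzo-theorem} to the general setting of Conjecture~\ref{conj-int-MS}. First, one analyzes the shape of broken lines in an unbounded chamber of the intrinsic scattering diagram associated to $(Y,D)$. As in Proposition~\ref{prop:theta}, the smoothness and ampleness of $D$ force the asymptotic direction in the dual intersection complex $B$ to be unique, so the primitive theta function $W=\vartheta_1$ expands as a power series in a single cluster monomial $y$, whose coefficients enumerate broken lines of prescribed contact order with $D$ at infinity. This reduces the problem to identifying those broken line coefficients with the expansion coefficients of the open mirror map $M$ evaluated at the appropriate $Q^\beta$ substitution.

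The middle steps should go through once the correct higher-dimensional analogues are in place. One invokes a tropical correspondence of Nishinou--Siebert--Abramovich--Chen--Gross--Siebert type (generalising Theorem~\ref{prop:trop}) to identify the broken line coefficients with punctured/two-point log invariants of $(Y,D)$. One then trades the second contact point for a blow-up $\pi\colon\widehat Y\to Y$ along a subvariety inside $D$: this requires dimension-independent generalisations of Theorem~\ref{thm-one-to-multi-point} and of Theorem~\ref{thm:cadman-chen}, both of which rest on the theta-function multiplication rule and on the degeneration formula, techniques that are not intrinsically two-dimensional. Applying the log-local principle of \cite{GGR} in its higher-dimensional and higher-genus form (as in \cite{BFGW} and in work of Nabijou--Ranganathan) then converts the invariants of $(\widehat Y,\widetilde D)$ into local invariants $N(K_{\widehat Y},\pi^\ast\beta-C)$.

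The hard part will be step \emph{(6)}, the closed-to-open correspondence tying $N(K_{\widehat Y},\pi^\ast\beta-C)$ to the open Gromov--Witten disk potential on $K_Y$: extending \cite{C11,LLW,CCLT} beyond the toric world is not available at present, and outside the toric category the class of outer Aganagic--Vafa branes does not have a ready definition. A practical workaround is to take \eqref{def-mirror-map} as the \emph{definition} of the open mirror map and to match it with the theta-function side through Picard--Fuchs, using the classical period of $W$ computed from the chain-integral expression of Remark~\ref{first-remark} via \cite{GRS} and \cite{R3}; the relation $\pi_W = \int \frac{dy}{y(1-W)}$ should again satisfy the hypergeometric system governing $M(Q)$, now for the mirror to $K_Y$. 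Alternatively, one could pursue a direct symplectic argument using the special Lagrangian fibrations of Collins--Jacob--Lin \cite{CJL1,CJL2} together with the Floer-theoretic scattering of \cite{L,LLL1,LLL2}, matching broken lines with Maslov-index-two disks in $K_Y$ in the absence of toric symmetry. Either way, this final identification is where the conjecture's main content lies, since all preceding steps are reductions of an enumerative problem to a Picard--Fuchs/mirror-map computation that needs to be carried out intrinsically.
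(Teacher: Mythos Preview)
This statement is a \emph{conjecture} in the paper, not a theorem: the paper offers no proof of it. What the paper proves is the special case of smooth toric Fano surfaces (Theorem~\ref{delPezzo-theorem}), and then records Conjecture~\ref{conj-int-MS} as the expected generalisation to arbitrary Fano manifolds with smooth anticanonical divisor. There is therefore no ``paper's own proof'' to compare your proposal against.

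What you have written is not a proof but a research programme, and you are candid about this: you explicitly flag that the tropical correspondence, the blow-up/contact-point trade, the Cadman--Chen relation, the log-local principle, and above all the closed-to-open identification of step~(6) are all unavailable in the required generality. Each of these is a genuine gap, not a routine extension. In particular, the two-dimensional arguments in the paper rely on features specific to surfaces (a single asymptotic direction forcing broken lines to be parallel, finiteness of tropical curves via very-ampleness of $-K_X$, the explicit form of scattering in dimension two, and the toric hypothesis needed for \cite{LLW}). Your suggested workarounds---defining the open mirror map via \eqref{def-mirror-map} and matching classical periods, or invoking the Floer-theoretic scattering of \cite{L,LLL1,LLL2}---are reasonable directions but are themselves open problems in this generality. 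So your proposal is a fair outline of how one might attack the conjecture along the lines of the paper's six-step strategy, but it does not constitute a proof, and the paper does not claim one.
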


\begin{conjecture}[symplectic geometry and FOOO theory] 
Let $Y$ be a symplectic Fano manifold with smooth anticanonical divisor $D$ so that $Y\setminus D$ supports a Lagrangian torus fibration that collapses an $S^1$ along $D$.
Let $L$ be a torus fiber near $D$ and $\delta$ the disc class of the thimble of the collapsing $S^1$. 
Let $m_0$ denote Fukaya--Oh--Ohta--Ono's obstruction term of the Lagrangian Floer complex for $L$ with itself.
Then $\exp(\int_\delta\omega)\cdot m_0$ is the open mirror map of $K_Y$.
\end{conjecture}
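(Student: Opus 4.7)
The strategy is to reduce the conjecture to Theorem \ref{thm-main} (and its expected higher-dimensional analogue, Conjecture \ref{conj-int-MS}) by threading together known and expected symplectic-to-algebraic correspondences. First, for a weakly unobstructed Lagrangian torus $L$ one has $m_0 = W_{\mathrm{FOOO}}(L) \cdot 1_L$, where $W_{\mathrm{FOOO}}(L)$ is the Maslov-index-two disk potential: a generating series over pseudoholomorphic disks $u\colon(\mathbb{D},\partial\mathbb{D})\to(Y,L)$ weighted by $\exp(-\int_u\omega)$ and by boundary class in $H_1(L;\mathbb{Z})$. The SYZ dictionary identifies this $W_{\mathrm{FOOO}}(L)$ with the evaluation of the algebraic Landau--Ginzburg potential $W$ on the mirror $\widecheck Y$ at the particular point dual to $L$.

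Under this identification, the collapsing $S^1$ hypothesis singles out the thimble $\delta$ as the Maslov-two disk of minimal area, and its boundary class is dual to the asymptotic mirror coordinate $y$ appearing in Theorem \ref{thm-main}. The standard SYZ prescription then reads $y = \exp(-\int_\delta\omega)$: the mirror coordinate is the exponentiated symplectic area of the collapsing cycle. For $Y$ a del Pezzo surface, combining this with the expansion $W = yM(Q)$ of Theorem \ref{thm-main} (applied with $X=Y$) immediately gives $\exp(\int_\delta\omega) \cdot m_0 = \exp(\int_\delta\omega) \cdot y \cdot M(Q) = M(Q)$, as required. All higher-order Maslov-two disks have strictly larger area than $\delta$, so they precisely assemble into the Kähler-parameter corrections in $M(Q)$ and parallel the broken-line expansion \eqref{eq-prop-theta} of $\vt_1$ under the substitution $Q^\beta=(-1)^{\deg\beta}s^\beta t^{\deg\beta}y^{-\beta\cdot E}$.

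The main obstacle is justifying the identification $W_{\mathrm{FOOO}}(L) = W\bigl|_{y=\exp(-\int_\delta\omega)}$, i.e.\ proving that the Floer-theoretic disk counts defining $m_0$ coincide with the theta-function / broken-line counts defining $\vt_1$ in the asymptotic chamber. For del Pezzo surfaces this is essentially available from the combined works of Tonkonog, Lin, and Lau--Lee--Lin cited above, which together match Maslov-two disks on a Collins--Jacob--Lin special-Lagrangian torus fibration with broken lines in the fan-picture scattering diagram; one then chains these identifications with Theorem \ref{thm-main} to conclude. Beyond surfaces one needs both existence of an appropriate Lagrangian torus fibration in the complement of a \emph{smooth} anticanonical divisor (a higher-dimensional analogue of Collins--Jacob--Lin) and a proof that its Floer-theoretic scattering diagram coincides with the algebraic one producing $\vt_1$; establishing either ingredient is itself a non-trivial instance of SYZ mirror symmetry for this class of log Calabi--Yau pairs, and is beyond the scope of the methods developed here.
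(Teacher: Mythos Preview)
The statement is a \emph{conjecture} in the paper, not a theorem; the paper offers no proof and instead presents it as a symplectic-geometric counterpart to Conjecture~\ref{conj-int-MS}. There is therefore nothing to compare your proposal against.

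Your proposal is a reasonable heuristic roadmap, and you correctly identify the essential obstruction: one needs to prove that the Floer-theoretic disk potential $m_0$ coincides with the broken-line expansion of $\vt_1$ in an asymptotic chamber. But the argument as written is not a proof even in the surface case. First, Theorem~\ref{thm-main} is stated and proved only for \emph{toric} Fano surfaces (the open--closed step~(6) requires this), so invoking it ``for $Y$ a del Pezzo surface'' already overreaches. Second, the chain of identifications you appeal to---Tonkonog, Lin, Lau--Lee--Lin, Collins--Jacob--Lin---does not, as far as the paper records, combine into a single theorem matching $m_0$ for a torus fiber near $D$ with $\vt_1$ evaluated at $y=\exp(-\int_\delta\omega)$; you are asserting that these results ``essentially'' assemble, which is precisely the content of the conjecture. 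Third, the sign in the substitution $Q^\beta=(-1)^{\deg\beta}s^\beta t^{\deg\beta}y^{-\beta\cdot E}$ would need to be accounted for on the symplectic side, and you have not addressed where it arises in the Floer-theoretic normalization. In short, you have restated why the conjecture is plausible and what would be needed to prove it, but the gap you name in your final paragraph is exactly why the paper leaves it open.
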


\subsection*{Acknowledgements}
We are grateful to many people for discussions about this work, in particular Denis Auroux, Andrea Brini, Kwokwai Chan, Tobias Ekholm, Michel van Garrel, Tom Graber, Mark Gross, Samuel Johnston,
Albrecht Klemm, Siu-Cheong Lau, Chiu-Chu Melissa Liu, Travis Mandel, Dhruv Ranganathan, Vivek Shende, Bernd Siebert, and Yu Wang.
H.R.~received support from DFG grant RU 1629/4-1 and E.Z.~ received support from grant NSF-DMS-1708503 and NSF-DMS-2104087. T.G.~received support from the ERC
Advanced Grant MSAG.

\section{The open and closed mirror maps}
\label{section-mirmap}

The purpose of this section is to elaborate on our distinction between open and closed mirror maps,
to give some historical context, and to discuss their enumerative interpretation as
open Gromov--Witten invariants. 
Most of the material is not needed in the main result, so the reader is free to take
Equation \ref{def-mirror-map} as a definition of the open mirror map and move on.
We assume that the reader who remains
is generally familiar with the enumerative form of mirror symmetry, so that the following lines serve as a reminder.
Given a toric Fano surface $X$ where $D\subset X$ denotes the toric boundary divisor, the mirror dual of the pair $(X,D)$ is a Landau--Ginzburg theory $W: (\bC^\times)^2\to \bC$ depending
on a complex parameter (or several parameters) $z$ which is related by the \emph{closed mirror map} to the symplectic parameter (or several parameters) $Q$
of $X$.
Relatedly, the total space of the canonical bundle $K_X$ is mirror dual to the subvariety $uv = W(x,y)$ in $\bC^2 \times (\bC^\times)^2,$
and has the same closed mirror map $z(Q)$.  
Suitable combinations $f(z)$ of period integrals over three-cycles in the variety $uv = W(x,y)$, when written as $f(z(Q))$, enumerate closed Gromov--Witten invariants
of $K_X$.  

Now let us turn to open mirrors.  
On $K_X$ there is a class of non-compact Lagrangian A-branes whose moment map image is a ray \cite{AV,FL}. 
The generating function of open Gromov--Witten invariants with boundary on the Lagrangian is a power series in a complex modulus $U$ encoding the symplectic area the disk together with the $U(1)$ holonomy of its boundary circle. Mirror dual to such A-branes are B-branes supported on $\{v=0\}$ and their moduli space is the mirror curve $C = \{W(x,y)=0\}\subset (\bC^\times)^2$.
By solving for $y$ in terms of $x$, we can then use $x$ as a local parameter for the moduli space $C$ around some reference point $x_0$.
Similar to the closed Gromov--Witten situation before, it is possible to extract open Gromov--Witten invariants from period integrals over chains whose boundary is the difference between B-branes described by $x_0$ and $x$. 
Since this period integral depends on $x$, we need to relate $x$ and $U$, and this is the \emph{open mirror map},
which will also depend on $Q$.\footnote{
You can vary the brane modulus $x$ in a fixed ambient geometry described by $z$, so the open mirror map depends as well on $z$, or equivalently $Q$. Geometrically, the open string moduli space fibers over the closed string moduli space.  Both can be combined into a single space, as done in \cite{M06,LM}.}

Somewhat confusingly, it turns out that for $X=\PP^2$, the closed and open mirror maps are related by taking a third root,
though they play different roles.
Both maps are determined by the Picard--Fuchs differential equation for $C$ as we discuss in the next section.

\subsection{The case of $\bP^2$:
Picard--Fuchs equations and mirror maps}
\label{sec:caseofp2}
In local mirror symmetry, the closed mirror map is
obtained from the logarithmic solution of the Picard--Fuchs differential equation \cite{CKYZ}.
In the case of $X=\PP^2$, the equation is\footnote{\emph{c} stands for ``closed.''} $\shL_{\rm c} f=0$ where
$\shL_{\rm c}:=\theta^3+3z\theta(3\theta+1)(3\theta+2)$ and $\theta=z\frac{d}{dz}$,
and the logarithmic solution is
$$t(z) = \log(z) + F(z),
\qquad F(z)=\sum_{k\ge 1}(-1)^{k}
\frac{(3k)!}{k\cdot (k!)^3} z^k.$$
Defining the closed-string symplectic parameter
$Q := e^t,$ we get
$$Q=ze^{F(z)}=z-6z^2+63z^3-866z^4+13899z^5-246366z^6+...$$
The inverse relationship is $ z=Q+6Q^2+9Q^3+56Q^4
-300Q^5+3942Q^6+... .$

As for open and closed moduli, the mirror maps have
been determined by \cite{AKV} in several examples,
following the pioneering work of Aganagic--Vafa \cite{AV}.  Generalizing to more general toric examples,
Mayr \cite{M06} and Lerche--Mayr \cite{LM} constructed differential
equations in both open and closed variables which determine the mirror map.
In this example, the equations are
$\shL_{\rm{oc}}^{(1)} f=0, \shL_{\rm{oc}}^{(2)} f = 0$ where
$$\shL_{\rm{oc}}^{(1)} :=\theta^2 (\theta - \theta_{\rm o}) + z(3\theta - \theta_{\rm o})(3\theta-\theta_{\rm o}+1)(3\theta-\theta_{\rm o}+2)$$
$$\shL_{\rm{oc}}^{(2)} := (\theta_{\rm o}-3\theta)\theta_{\rm o} - x(\theta_{\rm o}-\theta)\theta_{\rm o}$$
and $\theta_{\rm o} = x\frac{d}{dx}$, where $x$ is the open complex variable.
Note that when $f = f(z)$ is independent of $x$, the second equation is solved and the first equation reduces to the closed equation --- the logarithmic solution with no $x$-dependence
therefore gives the closed mirror relation $t(z).$
The logarithmic solution with nontrivial $x$ dependence
has the form
$t_{\rm o} = \log(x) + G(z)$, and in fact $G(z) = -\frac{1}{3}F(z).$  Exponentiating, we define
the open-string symplectic parameter
$U := e^{t_{\rm o}}$ and find
$x = Ue^{\frac{1}{3}F} = U(1 - 2z + 17z^2 - 218z^3 + 3404z^4 - 59644z^5 + 1127009z^6 + ....),$
or after substituting $z(Q)$ from above,
$$x = U(1 - 2Q + 5Q^2 - 32Q^3 + 286Q^4 - 3038Q^5 + 35870Q^6- 454880Q^7+ 6073311Q^8+...)$$


Note that $x/U = M(Q)$.  We therefore refer to $M(Q)$ as the
\emph{open mirror map}.
In more general settings, the open mirror map
is similarly defined:  exponentiate the logarithmic
solution of the Lerche--Mayr system
with nontrivial dependence on the
complex open-string parameter.
The open mirror map has an enumerative interpretation,
which we describe below.

By deriving the Picard--Fuchs differential
equations of the mirror geometry,
Lerche--Mayr \cite{LM} compute
the superpotential of the effective $4d$, $\cN=1$ theory arising from a IIA string compactification
$(K_X,L)$, where $X$ is a toric surface and $L$ is a spacetime-filling outer
Aganagic--Vafa
D$6$-brane. 
Following Aganagic--Vafa \cite{AV}, 
this $4d$
superpotential $\mathcal W$ (not to be confused with the Landau--Ginzburg potential $W$)
should be the generating function of disk invariants. 
These disk invariants can be computed directly, as well.
The case $X = \bP^2$ was studied
by Graber and the third author, albeit for an inner brane:
the open Gromov-Witten invariants were defined through localization,
then related to closed invariants --- see \cite[Section 4]{GZ}.
Inner branes and other framings in the $\bP^2$ case
were studied by Fang--Liu --- see \cite[Section 6.2]{FL}.
The invariants are then computed by employing the equivariant
mirror symmetry theorem.  The different methods agree.  Continuing
with the Lerche--Mayr approach,
when $X = \bP^2,$ the 4d superpotential
$\mathcal W$ is given by \cite[Equation (A.3)]{LM}. In our
our notation, this reads
\begin{equation}
\label{eq:lmpot}
{\mathcal W} = \sum_{n>m\geq 0}{(-1)^m} \frac{(n-m-1)!}{n(n-3m)!(m!)^2}x^nz^m.
\end{equation}
Substituting $x = U(1 - 2Q + ...)$ and $z = Q - 6Q^2 + ...$, the coefficient of $U^w Q^d$ gives the disk invariant in
class $(d,w)\in H_2(K_X,L) \cong H_2(X)\oplus H_1(L) \cong \bZ^2.$
We are interested in the invariants with
winding $w=1$.  Looking at the sum in Equation \eqref{eq:lmpot}, since $U$ appears with $x$ alone,
this requires $n = 1$ and therefore $m = 0$, and we get simply $x$.
Thus
$$x = U - 2UQ + 5UQ^2 - 32UQ^3 + 286UQ^4 + ...$$
is the generating function of disk invariants with winding $1.$

As mentioned in Remark \ref{rmk:cetalwork},
similar results were obtained for moment fibers,
rather than Aganagic--Vafa outer branes, in \cite{CLL} ---
see their Remark 5.7.

\subsection{Other smooth toric Fano surfaces}
\label{sec-open-mirror-map}

The story we have told above for $X = \bP^2$ above
remains true for other toric Fano surfaces --- see Equation (2.23) of \cite{LM}.
The work of Fang-Liu proves the validity
of the Lerche--Mayr
method --- see Section 5.3
of \cite{FL}.
In \cite[Section 6]{FL}, the canonical
bundles for each of the five toric Fano
surfaces are studied.  As with $K_{\bP^2}$,
each has an outer brane whose
generating function of disk invariants with winding $1$
and in framing zero
agrees with the open mirror map defined
by the Lerche--Mayer equations.\footnote{
The recent work of Liu--Yu \cite{LY} proves that the Lerche--Mayr
program also computes four-fold invariants, see also \cite{YZ}.}

The works of \cite{LLW,CCLT}
concern holomorphic disks bounding moment fibers
of toric Calabi--Yau threefolds, which for us are $K_X$
with $X$ a smooth toric Fano surface.
By \cite[Theorems 1.1 and 4.5]{LLW}, these
are equivalent to local (closed) Gromov--Witten invariants
of the canonical bundle of the blow-up, $K_{\widehat{X}}.$
Formulas for these are given in
\cite[Equation (6.14) and Propositions 6.14 and 6.15]{CCLT}.
Comparison with \cite[Equations (35)--(37)]{FL},
shows that counts of Aganagic--Vafa branes in framing
zero are the same as those of
moment fibers.\footnote{A few comments on this are
in order.  First, the methods of \cite{LLW} require
the blow-up $\widehat{X}$ to be toric, and while
this is not true for the del Pezzo surface $dP_3$,
the result still holds for this marginal case, as
was kindly explained
to us by Siu-Cheong Lau. 
Second,
Aganagic--Vafa branes
in the resolved conifold are large-$N$ dual to
unknot conormals and have generalizations to other
toric Calabi--Yau manifolds, but the large-$N$ duals
of general knot conormals have no known (to us)
generalizations.}

We summarize these results here.
Let $X$ be a smooth toric del Pezzo surface and $L$ be a general fiber of the toric moment map of $K_X$. 
Let $\beta_0$ denote the holomorphic disc that is embedded in a fiber of the projection $K_X\ra X$ and has boundary on $L$.
For $\beta\in \op{NE}(X)$ a curve class in $X$, following Section~4 in \cite{LLW}, we let $n_{\beta_0+\beta}$ denote the genus zero open Gromov--Witten invariant of $(K_X,L)$ with a single marking on $L$ and of class $(\beta_0+\beta)\in\pi_2(K_X,L)$.
Set $M\in \QQ\lfor\op{NE}(X) \rfor=\{\sum_{\beta \in \op{NE}(X)} a_\beta Q^\beta \mid a_\beta\in\QQ\}$ to be the generating function
\begin{equation}
\label{def-mirror-map}
M=\sum_{\beta \in \op{NE}(X)} n_{\beta_0+\beta} Q^\beta.
\end{equation}
Invert the (closed) mirror map $z(Q)$ --- namely,
$z(Q)$ is a multicoordinate of
logarithmic solution to the local
Picard--Fuchs equation for $X$ \cite{CKYZ} --- to write $Q = Q(z)$.  Then $M(Q(z))$ is the
solution to the open Picard--Fuchs equations
of \cite{LM} with leading dependence $\log(x)$,
where $x$ is the open-string complex parameter
of \cite{LM}, as in Section
\ref{sec:caseofp2}.
That is, $M(Q)$ is the open mirror map.  Due to these equivalences, we can take Equation \ref{def-mirror-map} as
a mathematical definition of the open mirror map.

\section{The Landau-Ginzburg potential in the proper case}
\label{section-LG}
In this section, we first recall the construction of a Landau--Ginzburg model from \cite{CPS}. 
The construction takes as input a triple $(B,\P,\varphi)$ where $B$ is an integral affine manifold with singularities, $\P$ is a polyhedral decomposition of $B$ into lattice polyhedra and 
$\varphi\colon B\to\RR$ is a piecewise affine function that is strictly convex with respect to the polyhedral decomposition. In the most general form, $\varphi$ may be multi-valued but it is going to be single-valued for the examples relevant to this article. We the define the triple $(B,\P,\varphi)$ for the mirror dual of $\PP^2$.
We give a summary about how the work \cite{CPS} takes $(B,\P,\varphi)$ as input and then outputs a formal toric degeneration $\widecheck{\frak{X}}\ra\Spf A\lfor t\rfor$ and a regular function $W\colon \widecheck{\frak{X}}\ra\AA^1$. 
We then introduce a particular base ring $A_N$ to use in place of $A$ and relate it to unbounded tropical cycles in $B$ in order to obtain a very natural description for $W$ when $B$ is asymptotically cylindrical. Except for {\S}\ref{visual-surface}, {\S}\ref{subsec-Fanosurface} and {\S}\ref{subsec-qwalls}, this section treats $B$ of arbitrary dimension. 
We specialize to the surface case in Section~\ref{sec:tropicalcorrespondence}.

\begin{figure}[H]
    \centering
    \includegraphics[scale=.55]{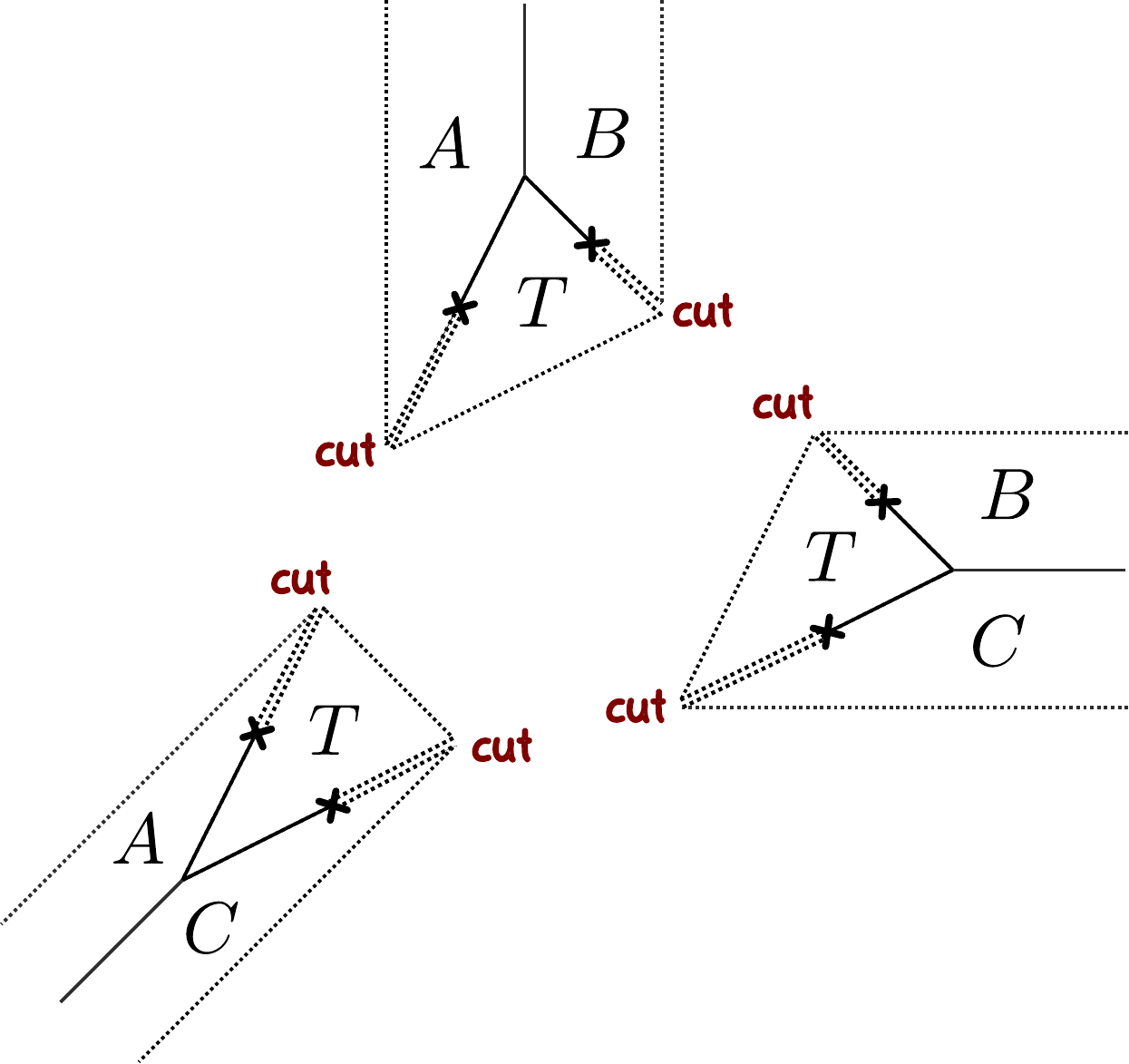}
    \caption{$A,B,C$ and $T$ represent the images of regions of the underlying singular affine surface in various coordinate charts.  The intersection of any two charts will be the disjoint union of the interiors of $T$ and one of $A,B,C.$  The transition function is the identity along $T$ and determined by the conditions it maps the singularity to itself, leaves the edge of the triangle invariant, and identifies the remaining regions labeled by the same letter.  For example, if we take the upper right singularity to be the origin of the plane, the upper and right charts are related by the linear transformation $\binom{\;2\;\;1}{-1\;0}$.}
    \label{fig:affine-charts}
\end{figure}

\subsection{Visualizing the polarized integral affine surface $(B,\P,\varphi)$ that underlies the mirror dual of $\PP^2$}
\label{visual-surface}
One can visualize a point singularity of a surface with integral affine structure in a manner similar to a branch cut in the complex plane:
surround the singularity with two contractible open sets intersecting in two components,
and specify the two transition functions between coordinate patches.  One then arranges that one of the transition functions is the identity, so that both coordinate regions are locally identified with two overlapping sets in the same Euclidean plane,
with the remaining transition function identifying the two regions corresponding to the other component of the intersection.
In our cases, the local monodromy around a point will be a cyclic element of ${\mathbb Z}^2\rtimes SL_2(\bZ)$ with a single invariant direction, such as 
$\binom{1\;n}{0\; 1}$, with $n\neq 0$, or a conjugate.
Such an affine transformation of a surface is determined by specifying where a point goes (the singularity will be mapped to itself), an invariant direction, and the image of any transverse ray.  The information is then easily encoded pictorially, see Figure~\ref{fig:affine-charts}.

Another useful visual is to avoid any overlapping, similar to how one visualizes a cone angle. 
We stress, however, that the affine transformation is an infinite-order shear, not a rotation.
To do so above, in the top figure we take the left half of region $B$ after slicing it with a vertical line from the singularity,
then the bottom half of region $B$ in the right picture, after slicing it with a horizontal line.  Then we could draw these two pictures on the same plane, with wedge-shaped region removed and the boundary edge identified by the shear.  This is indicated in the diagram shown in Figure~\ref{fig:P2-cones} for the affine manifold of interest to us.

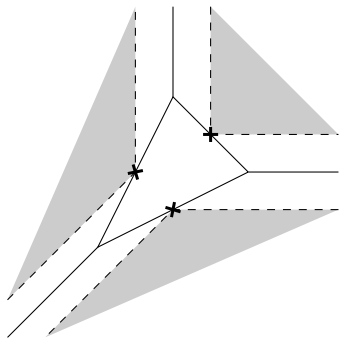
\begin{figure}[ht]
\centering
\begin{tikzpicture}[scale=1]
\draw (1,0) -- (0,1) -- (-1,-1) -- (1,0);
\draw (1,0) -- (2.2,0);
\draw (0,1) -- (0,2.2);
\draw (-1,-1) -- (-2.2,-2.2);
\draw[dashed,fill=black,fill opacity=0.2] (0.5,2.2) -- (0.5,0.5) node[opacity=1,rotate=45]{} -- (2.2,0.5);
\draw[line width=0.4mm] (0.5,0.4) -- (0.5,0.6);
\draw[line width=0.4mm] (0.4,0.5) -- (0.6,0.5);
\draw[dashed,fill=black,fill opacity=0.2] (-2.2,-1.7) -- (-0.5,0) node[opacity=1,rotate=60]{} -- (-0.5,2.2);
\draw[line width=0.4mm] (-0.47,-0.1) -- (-0.53,0.1);
\draw[line width=0.4mm] (-0.4,0.02) -- (-0.6,-0.02);
\draw[dashed,fill=black,fill opacity=0.2] (2.2,-0.5) -- (0,-0.5) node[opacity=1,rotate=30]{} -- (-1.7,-2.2);
\draw[line width=0.4mm] (-0.02,-0.6) -- (0.02,-0.4);
\draw[line width=0.4mm] (-0.1,-0.47) -- (0.1,-0.53);
\end{tikzpicture}
\caption{In each local halfplane outside an edge, a shear identifies the two boundaries of the excised shaded region, while preserving the edge.}
\label{fig:P2-cones}
\end{figure}

Finally, we could have chosen the identity transformation to be on the \emph{outside}
of the triangle.  To draw this on the single plane, we need to unwrap the triangle. 
A fundamental domain for the interior of the triangle will be the complement of a region which meets the singularities in wedges which determine the affine
transformation, as above.  The picture is drawn in the universal ($\bZ$-fold) cover of the complement of the extracted region.  Choosing the top region to define the common plane, we see from the affine transformation that all unbounded rays become vertical.  At this point, we rewrap the triangle by taking
a fundamental domain for the $\bZ$ action to arrive at
the picture displayed in Figure~\ref{fig:brokencorr-0}.

\begin{figure}[ht]
\centering
\begin{tikzpicture}[scale=1.1,rotate=90]
\draw[thick] (-3,-1) -- (0,0) -- (0,1) -- (-3,2);
\draw[dashed,thick] (-3,-1) -- (2,-1);
\draw[thick] (0,0) -- (2,0);
\draw[thick] (0,1) -- (2,1);
\draw[dashed,thick] (-3,2) -- (2,2);
\draw[dashed,fill=black,fill opacity=0.2,thick] (-3,-0.8) -- (-1.5,-0.5) node[opacity=1,rotate=20]{} -- (-1,0) -- (0,0.5) node[opacity=1,rotate=0]{} -- (-1,1) -- (-1.5,1.5) node[opacity=1,rotate=70]{} -- (-3,1.8);
\draw[line width=0.4mm] (0.08,0.58) -- (-0.08,0.42);
\draw[line width=0.4mm] (-0.08,0.58) -- (0.08,0.42);
\draw[line width=0.4mm] (-1.55,1.58) -- (-1.45,1.42);
\draw[line width=0.4mm] (-1.58,1.46) -- (-1.42,1.54);
\draw[line width=0.4mm] (-1.55,-0.58) -- (-1.45,-0.42);
\draw[line width=0.4mm] (-1.58,-0.46) -- (-1.42,-0.54);
\draw[thick] (0,0.5) -- (0,2);
\draw[thick] (0,0.5) -- (0,-1);
\end{tikzpicture}
\caption{Another fundamental domain, with cut regions in the central triangle.  The central singularity is at the origin, while the other two are at $(\pm1,- \frac{3}{2})$ on lines of slope $\pm 3$.  The $\bZ$ quotient identifying outer edges is effected by the affine transformation $(x,y)$ $\mapsto$ $(x+3,y-9x-\frac{27}{2})$.}
\label{fig:brokencorr-0}
\end{figure}
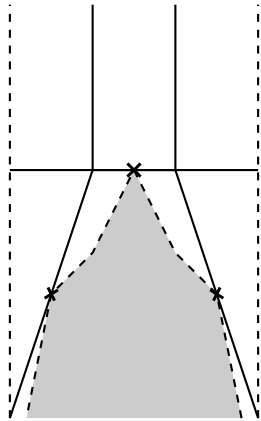

We have defined the affine manifold $B$ that underlies the mirror dual of $\PP^2$ and we have also given a decomposition into lattice polyhedra. In order to have the entire triple $(B,\P,\varphi)$, it remains to define the piecewise affine convex function $\varphi\colon B\to\RR$. The function is going to be integral affine on each of the four polyhedra and it is constant of value zero on the central triangle $T$. The function is then uniquely determined by requiring it to take value $1$ at the first integral point along each unbounded ray outside the central triangle $T$. Alternatively, instead of prescribing the values along rays, we can specify $\varphi$ by insisting that it changes integral affine slope by the value $1$ at each of the edges of the central triangle. This slope change is called \emph{kink}.

\subsection{Review of the construction of $W\colon \widecheck{\frak{X}}\ra\AA^1$ from $(B,\P,\varphi)$}
Let $A$ be some base ring and $B$ be an integral affine manifold with finite polyhedral decomposition $\P$ and a multi-valued strictly convex piecewise linear function $\varphi$, so that the triple $(B,\P,\varphi)$ forms a polarized integral affine manifold with simple singularities as defined in \cite{GS-dataI}, Definition 1.60. 
The key examples of interest to us are those which arise from the 16 toric Fano surfaces defined by reflexive polygons --- see, e.g., Figure~1 of \cite{CKYZ} or Figure~1.5 of \cite{Gra1}.
We first discuss the general situation. 
We are interested in situations where $B$ is non-compact and we require that the vertices of the singular locus are contained in the union of compact cells of $\P$.
Let $\iota:B_\reg\ra B$ denote the inclusion of the complement of the singular locus and $\Lambda\subset \shT_{B_\reg}$ the local system of integral tangent vectors on $B_\reg$.
In particular, for each vertex $v$ in $\P$, the stalk $\Lambda_v$ is a free abelian group of rank equal to the dimension of $B$ because the vertices are contained in $B_\reg$ by the definition of simple singularities. 

As in Appendix A of \cite{CPS}, we pick a set of slab functions $f_{\rho,v}\in A[\Lambda_v]$, one function for each pair of codimension-one-cell $\rho\in\P$ and vertex $v$ of $\rho$.
The slab functions have the property that the Newton polytope of 
$f_{\rho,v}$ coincides with the inner monodromy polytope of $\rho$ for the local monodromy group with base point $v$ --- see p.117 in \cite{R} or Definition 1.58 in \cite{GS-dataI}.
In particular, if the interior of $\rho$ does not meet any singularities of $B$ then $f_{\rho,v}$ is a unit.
Furthermore, if $v$ and $v'$ are two vertices of a codimension-one cell $\rho,$ then
after identifying $\Lambda_{v'}$ with $\Lambda_{v}$ by parallel transport, $f_{\rho,v}$
and $f_{\rho,v'}$ differ only by a monomial factor $az^m$ with $a\in A^\times$ a unit and $m\in\Lambda_v$. 
Finally, the collection $\{f_{\rho,v}\}_{v,\rho}$ satisfies the multiplicative condition (3.2) in \cite{GS-dataI}.

A slab function $f_{\rho,v}$ is called \emph{normalized} if its constant coefficient equals $1$ and its Newton polytope is in the correct position, that is, its Newton polytope is contained in the cone generated by $\rho-v$ --- see Definition~4.23 in \cite{GS-dataI}. 
The collection of slab functions together with the codimension one cells of $\P$ and the kinks of $\varphi$ gives the initial wall structure $\cS_0$ that is used for the algorithmic production of wall structures $\cS_k$ for $k\ge 0$.

Recall from \cite{GS-reconstruction}, Definition 2.22, that a \emph{wall structure $\cS_k$ to order $k$} is a locally finite polyhedral subdivision of $\P$ whose codimension-one-cells are called \emph{walls} and each wall $\frak{p}$ comes with a \emph{wall-function} $f_\frak{p}\in A[\Lambda_\frak{p}]\lfor t\rfor$.
Here, $\Lambda_\frak{p}$ denotes the subgroup of $\Gamma(\frak{p},\Lambda)$ given by integral vectors that are contained in the tangent space of $\frak{p}$.
A wall that is contained in a codimension-one-cell $\rho$ of $\P$ is called a \emph{slab}. 
Other than usual walls, slabs carry not just a single wall-function but one wall function $f_{\frak{p},v}$ for each vertex $v$ of $\rho$ and the reductions of $f_{\frak{p},v}$ modulo $t$ 
agree with the slab function $f_{\rho,v}$. For two vertices $v,v'$ of $\rho$, $f_{\frak{p},v}$ and $f_{\frak{p},v'}$ relate to each other by the same rule as $f_{\rho,v}$ and $f_{\rho,v'}$. 

The wall functions are used to glue charts of the form $\Spec A[\Lambda_p][t]/(t^k)$ for $p$ a point on either side of the wall via identifications of the form given in Appendix~\ref{app:wallcrossing}. When $\dim B=2$, the transformation at a non-slab wall $\frak{p}$ takes the form $y\mapsto y\cdot (1+at^kx)$ for $f_\frak{p}=(1+at^kx)$ and $x,y$ suitable monomials that generate a stalk of $\Lambda$ with $x\in\CC[\Lambda_\frak{p}]$, $a\in A^\times$, $k>0$.
When the wall is a slab, the function $\varphi$ dictates a $t$-power $\kappa$ that enters the chart transition on top of the usual transformation by the wall function, so crossing a slab takes the form $y\mapsto y\cdot t^\kappa\cdot f_{\frak{p},v}$ where $f_{\frak{p},v}=(1+ax)+tg$ for some $g\in A[\Lambda_\frak{p}]\lfor t \rfor$. We should remark that walls may lie on top of each other and crossing one of them means to cross them all. For slabs it is customary to combine all such walls into a single slab and single function by taking their product. This is how the term $tg$ in the given transformation should be understood.

The \emph{reduction} of a wall structure $\cS_k$ modulo $t^l$ for $l<k$ is the result of reducing all wall functions modulo $t^l$.
The structure is \emph{consistent} if the path-ordered composition of the corresponding wall crossing automorphisms associated with the walls that are being crossed gives the identity for all circular loops that have finite intersection with the union of walls.
The algorithm from \cite{GS-reconstruction} proves the existence of a canonical consistent structure $\cS_k$ for every $k$ so that $\cS_0$ is given by the set of slabs together with the chosen slab functions. These wall structures are \emph{compatible} in the sense that for any $l<k$ the reduction of $\cS_k$ modulo $t^l$ results in a subdivision of $\cS_l$ where every additional wall has wall function $1$.

From now on, we denote by $\{\cS_k\}$ the set of canonical, consistent and compatible wall structures that result from $(B,\P,\varphi)$ together with the universal set of slab functions by following the algorithm of \cite{GS-reconstruction}. The main point of \cite{GS-reconstruction} is to use $\cS_k$ in order to glue a scheme $\widecheck{X}_k$ over $A[t]/t^k$ for each $k$. Compatibility implies the existence of natural projection maps $\widecheck{X}_k\ra \widecheck{X}_l$ over 
$A[t]/t^k\ra A[t]/t^l$ whenever $l<k$, so the inverse limit 
$$\widecheck{\frak{X}}=\lim_{k\to\infty} \widecheck{X}_k$$ 
is a formal scheme over the formal spectrum of $A\lfor t\rfor$.
We say $(B,\P,\varphi)$ is the \emph{cone picture} of the degeneration $\widecheck{\frak{X}}\ra \Spf A\lfor t\rfor$.
We use the notation $\cS_\infty$ to refer to the inverse system of compatible wall structures $\{\cS_k\}$.

An integral tangent vector of a one-dimensional unbounded cell in $(B,\mathscr{P},\varphi)$ that points in the unbounded direction is called an \emph{asymptotic direction}. 
By parallel transport, we consider these asymptotic directions also in the neighbouring maximal cells of the ray that defines them.
We recall Definition 4.9 from \cite{Gr10}, see also Definition 3.3 in \cite{GHS}.

\begin{definition}
\label{defi:brokenline}
A \textit{broken line} $\mathfrak{b}$ is a proper continuous map $\frak{b}\colon[0,\infty)\ra B$ 
with finitely many break points $-\infty<t_1<...<t_r<0$ that map into walls $\frak{p_i}$ of $\mathscr{S}_k$ for some fixed $k$ so that $\frak{b}$ is affine linear in the complement $[0,\infty)\setminus\{t_i\}$ of the these points.
Each component $I$ of this complement is decorated with a monomial $a t^{d} z^m \in A[\Lambda_I][t]$ with $a\in A^\times$, $d\ge 0$ and $m\in\Lambda_I:=\Gamma(\frak{b}(I),\Lambda)$ a non-zero integral tangent vector on $I$ that is tangent to $\frak{b}(I)$ --- precisely, $\frak{b}'|_I\equiv -m$.
The monomials of successive line segments are required to be related by \emph{monomial transport} past the wall $\frak{p_i}$ which contains the image $\frak{b}(t_i)$ of the break point that separates the line segments. 
If $a t^{d} z^m , b t^{e} z^{m'}$ are the monomials of successive segments separated by $t_i$, the \emph{monomial transport} property says that $b t^{e} z^{m'}$ is a monomial summand in the expansion of the product
$(a t^{d} z^m)\cdot f^k_\frak{p}$ where the positive integer $k$ is the index of $m$ in the rank one quotient $\Lambda_{\frak{b}(t_i)}/\Lambda_\frak{p_i}\cong\ZZ$.
Finally, it is required that the unique unbounded segment $(-\infty,t_1)$ of $\mathfrak{b}$ carries the monomial $z^m$ with $m$ an asymptotic direction on $B$. 
The image of $0\in[0,\infty)$ is a point $P\in B$ that we call \emph{endpoint} and we say the broken line \emph{ends} in $P$. 
\end{definition}

A broken line is \textit{primitive} if the unbounded segment carries a monomial $z^m$ with $m$ primitive.
Note that in \cite{CPS}, Definition 4.2, all broken lines are assumed to be primitive. 
Write $a_{\mathfrak{b}}t^{d_{\mathfrak{b}}}z^{m_{\mathfrak{b}}}$ for the \emph{ending monomial}, i.e., the monomial attached to the last segment of $\mathfrak{b}$ which ends in $P$. 
The $t$-exponent $d_{\mathfrak{b}}$ is called the $t$-\emph{order} or just \emph{order} of the broken line $\mathfrak{b}$.
Since monomial transport may increase but never lowers the $t$-exponent of a monomial, all the walls $\frak{p}$ at which a broken line of order $k$ breaks are contained in $\mathscr{S}_{d_{\mathfrak{b}}}$. The number of broken lines of fixed order is finite by Lemma 3.7 in \cite{GHS}.

\begin{definition}
\label{defi:theta}
For a chamber $\mathfrak{u}$ of the wall structure $\cS_k$ and $P\in\mathfrak{u}$ a general point let $\mathfrak{B}^{(k)}_m(P\in\mathfrak{u})$ be the set of broken lines of $t$-order $k$ ending in $P$ with asymptotic monomial $z^m$. When using the notation $P\in\mathfrak{u}$ in the following, we implicitly require that $P$ is in general position.
If $\mathfrak{B}_m(P\in\mathfrak{u}):=\bigcup_{l\ge 0} \mathfrak{B}^{(l)}_m(P\in\mathfrak{u})$ denotes the set of broken lines of any order with respect to $\cS_k$, then in fact
$$\mathfrak{B}_m(P\in\mathfrak{u})=\bigcup_{l=0,...,k} \mathfrak{B}^{(l)}_m(P\in\mathfrak{u}).$$ 
By \cite{CPS}, Lemma 4.7, the sum
\[ \vt^{(k)}_m(\mathfrak{u}) = \sum_{\mathfrak{b}\in\mathfrak{B}^{(k)}_m(P\in\mathfrak{u})} a_{\mathfrak{b}}t^{d_{\mathfrak{b}}}z^{m_{\mathfrak{b}}}. \]
is independent of where $P$ lies inside $\mathfrak{u}$, justifying the notation. 
For a nested sequence $(\mathfrak{u}_k)_{k\in \NN}$ of containments with $\mathfrak{u}_k$ a chamber of $\cS_k$, one defines the theta function
\[ \vt_m((\mathfrak{u}_k)_{k\in \NN}) = \sum_{k\ge 0}  \vt^{(k)}_m(\mathfrak{u}_k) =  \sum_{k\ge 0}\sum_{\mathfrak{b}\in\mathfrak{B}^{(k)}_m(P\in\mathfrak{u}_k)} a_{\mathfrak{b}}t^{d_{\mathfrak{b}}}z^{m_{\mathfrak{b}}}. \]
The \textit{superpotential} $W((\mathfrak{u}_k)_{k\in \NN})$ is the sum over all theta functions $\vt_m((\mathfrak{u}_k)_{k\in \NN})$ with $m$ primitive,
\[ W((\mathfrak{u}_k)_{k\in \NN}) = \sum_m \vt_m((\mathfrak{u}_k)_{k\in \NN}) = \sum_{m,k}\sum_{\mathfrak{b}\in\mathfrak{B}^{(k)}_m(P\in \mathfrak{u}_k)} a_{\mathfrak{b}}t^{d_{\mathfrak{b}}}z^{m_{\mathfrak{b}}}. \]
The sums run over all primitive asymptotic directions $m$ of $B$.
\end{definition}

By the main result of \cite{CPS, GHS}, the collection of $\vt_m((\mathfrak{u}_k)_{k\in \NN})$ and in particular $W((\mathfrak{u}_k)_{k\in \NN})$ give well-defined global regular functions on $\widecheck{\frak{X}}$, so they give elements $W,\vt_m\in\Gamma(\widecheck{\frak{X}},\shO_{\widecheck{\frak{X}}})$. 
It will be relevant for the multiplication rule in Section~\ref{section-theta} that we include the function $\vt_0=1$.

\subsection{Properness of the Landau--Ginzburg potential}
We write $W^0$ for the $t$-order zero part of $W$.

\begin{definition}
\label{defi:cyl}
We say that $(B,\P)$ is \emph{asymptotically cylindrical} if $B$ is non-compact and every polyhedron $\sigma$ in $\P$ has the property that all of its unbounded one-faces are parallel with respect to the affine structure on $\sigma$. 
\end{definition}

For $a\in\Spec A$ a point, we denote the base change of $\widecheck{\frak{X}}$ to 
$a\times\Spf \CC\lfor t\rfor$ by $\widecheck{\frak{X}}_a$ and we similarly define $(\widecheck{X}_k)_a$. 
We next recall Proposition 2.1 in \cite{CPS}.

\begin{proposition} 
$W^0\colon (\widecheck{X}_0)_a \ra \AA^1$ is a proper morphism for every $a\in \Spec A$ with $(\widecheck{X}_0)_a\neq\varnothing$ if and only if $(B,\P)$ is asymptotically cylindrical.
\end{proposition}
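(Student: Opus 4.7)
The approach is to reduce the global properness question to a componentwise analysis by restricting $W^0$ to each toric component $X_\sigma$ of the central fiber $(\widecheck{X}_0)_a$, then exploiting the explicit broken-line description of $W^0|_{X_\sigma}$ at $t=0$.

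First I would set up the geometry. At $t=0$, the formal scheme $(\widecheck{X}_0)_a$ is a union of toric varieties $X_\sigma$ indexed by maximal cells $\sigma \in \P$, glued along lower-dimensional toric strata via the slab functions evaluated at $a$. By Definition~\ref{defi:theta}, the restriction $W^0|_{X_\sigma}$ is the sum, over primitive asymptotic directions $m$ of $B$, of the $t$-order zero parts $\vt_m^{(0)}(\sigma)$, each a polynomial in monomials on $X_\sigma$ assembled from $t$-order zero broken lines ending in $\sigma$ with asymptotic monomial $z^m$. The key structural observation is that for each unbounded one-face of $\sigma$ with primitive direction $m$, the unique straight (unbroken) line in direction $-m$ from infinity to any general $P\in\sigma$ contributes the monomial $z^m$ to $\vt_m^{(0)}(\sigma)$ with coefficient~$1$. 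Consequently, on any affine chart $U_v \subset X_\sigma$ around a vertex $v$ of $\sigma$, each primitive direction $m_i$ of an unbounded edge of $\sigma$ at $v$ appears as a monomial $z^{m_i}$ in $W^0|_{U_v}$ with coefficient~$1$.

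For the $(\Leftarrow)$ direction, assuming $(B,\P)$ asymptotically cylindrical, every maximal cell decomposes as $\sigma = \sigma_0 + \RR_{\geq 0}m_\sigma$ with $\sigma_0$ a bounded cross-section transverse to the unique asymptotic direction $m_\sigma$. Hence $X_\sigma \cong X_{\sigma_0} \times \AA^1$ with $\AA^1$-coordinate $z^{m_\sigma}$, and the observation above shows $W^0|_{X_\sigma}$ is a monic polynomial in $z^{m_\sigma}$ of positive degree with coefficients in $\Gamma(X_{\sigma_0},\O)$. Since $X_{\sigma_0}$ is compact, this defines a finite (hence proper) morphism to $\AA^1$, and taking the union over all maximal cells gives properness of $W^0$ on $(\widecheck{X}_0)_a$.

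For the $(\Rightarrow)$ direction, suppose $(B,\P)$ is not asymptotically cylindrical, so some maximal cell $\sigma$ has a vertex $v$ at which two non-parallel unbounded edges emanate in primitive directions $m_1, m_2$. On the affine chart $U_v = \Spec\CC[T_v\sigma \cap M]$ of $X_\sigma$, both $z^{m_1}$ and $z^{m_2}$ are regular functions, and by the setup they both appear in $W^0|_{U_v}$ with coefficient~$1$; restricting to the $2$-dimensional affine toric subvariety spanned by $m_1,m_2$ gives $W^0 = z^{m_1} + z^{m_2} + (\text{other monomials in } T_v\sigma \cap M)$. For generic $c\in\AA^1$, the level set $\{W^0=c\}$ in this subvariety contains a curve along which $z^{m_1}\to\infty$ is balanced by $z^{m_2}$ adjusting to keep $W^0=c$; this curve admits no limit in $X_\sigma$, so the fiber is non-compact and $W^0\colon X_\sigma\to\AA^1$ is not proper. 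The hardest step will be to verify that the additional monomials in $W^0|_{U_v}$ (coming from $t$-order zero broken lines that break at slabs with nontrivial $t^0$ slab function) cannot conspire to rescue properness; this is resolved by a Newton polytope argument, using that $m_1$ and $m_2$ are extremal lattice vertices in the Newton polytope of $W^0|_{U_v}$ with coefficient~$1$, forcing non-compact level sets regardless of lower-order terms.
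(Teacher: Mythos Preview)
The paper does not prove this proposition; it is quoted as Proposition~2.1 from \cite{CPS}. Your componentwise approach via the explicit form of $W^0$ on each toric component $X_\sigma$ is the natural one and is essentially correct (and presumably what \cite{CPS} does). The $(\Leftarrow)$ direction is clean once you also note that bounded maximal cells give proper $X_\sigma$ on which $W^0$ is constant.

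For $(\Rightarrow)$, two refinements are needed. First, the passage from ``$\sigma$ has two non-parallel unbounded one-faces'' to ``there is a vertex $v$ at which two non-parallel unbounded edges emanate'' requires a short polyhedral argument: pick a $2$-face $G$ of $\operatorname{rec}(\sigma)$ and descend (by repeatedly passing to facets of the face on which a suitable linear functional is minimized) to a $2$-face $\tau$ of $\sigma$ with $\operatorname{rec}(\tau)=G$; then $\tau=v+G$ for its unique vertex $v$, and $m_1,m_2$ may be taken to be the edge directions of $G$. Second, and more importantly, a Newton-polytope argument carried out in the \emph{open} chart $U_v$ does not by itself yield non-properness on $(\widecheck{X}_0)_a$: a curve escaping $U_v$ could in principle limit to a point in another chart or another component. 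The clean fix is to work instead on the \emph{closed} stratum $X_\tau\subset(\widecheck{X}_0)_a$: this is a $2$-dimensional affine toric variety on which $W^0$ restricts to a non-constant regular function (it contains $z^{m_1}+z^{m_2}$), so its generic fibers are one-dimensional closed subvarieties of an affine surface, hence non-compact. Closedness of $X_\tau$ in $(\widecheck{X}_0)_a$ then guarantees these fibers stay non-compact in the whole central fiber. This argument also renders your worry about additional order-zero monomials moot, since any such terms leave $W^0|_{X_\tau}$ a non-constant regular function on an affine surface.
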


Now assume that $(B,\mathscr{P})$ is asymptotically cylindrical, without boundary and that there is only one unbounded direction $m_{\text{out}}$, unlike say for the fan of $\PP^1$ which has two such directions.
Then theta functions are labeled by integers, $\vt_q := \vt_{qm_{\text{out}}}$. 
The superpotential is given by the primitive theta function $\vt_1$ and is also called the \emph{proper Landau--Ginzburg superpotential}. 
All other theta functions are polynomials in $\vt_1$, so the proper Landau--Ginzburg superpotential generates the ring of theta functions. 

Let $K$ be the union of compact polytopes in $B$.
The asymptotic boundary $B_\infty$ of $B$ is the integral affine manifold with singularities and polyhedral decomposition given as the quotient $(B\setminus K)/m_{\text{out}}$. It follows directly that $B_\infty$ is compact and closed.

We learned the statement of the following proposition from Bernd Siebert.

\begin{proposition}
\label{prop:parallel}
Let $(B,\P)$ be asymptotically cylindrical and let $\mathfrak{u}$ be an unbounded chamber of $\mathscr{S}_k$. 
If $\mathfrak{b}\in\mathfrak{B}_m^{(k)}(P\in\mathfrak{u})$, then $m_{\mathfrak{b}}$ is parallel to $m_{\text{out}}$.
\end{proposition}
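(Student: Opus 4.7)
The plan is to exploit the $P$-independence of $\vt^{(k)}_m(\mathfrak{u})$ together with translation invariance of the asymptotic wall structure along $m_{\mathrm{out}}$, and then to derive the conclusion from a rigidity argument applied to the breaks of $\mathfrak{b}$.

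First I would isolate within $\cS_k$ the \emph{cylindrical} walls, those whose supports are invariant under translation by $m_{\mathrm{out}}$. By the asymptotically cylindrical hypothesis, every codimension-one cell of $\P$ reaching infinity has $m_{\mathrm{out}}$ in its tangent space, so the initial slabs of $\cS_0$ become cylindrical once restricted to a sufficiently deep asymptotic neighborhood of infinity. In dimension two such walls are pairwise parallel and cannot scatter among themselves, so they remain cylindrical throughout the scattering process. For a cylindrical $\mathfrak{p}$ one has $\Lambda_\mathfrak{p} = \langle m_{\mathrm{out}} \rangle$ (in dimension two), and monomial transport across $\mathfrak{p}$ changes an exponent by an integer multiple of $m_{\mathrm{out}}$, preserving its class $\pi_\perp(\cdot)$ in $\Lambda/\langle m_{\mathrm{out}} \rangle$. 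The initial monomial satisfies $\pi_\perp(m_0) = 0$ since $m_0 = m$ is parallel to $m_{\mathrm{out}}$ by definition of an asymptotic direction.

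Since $\vt^{(k)}_m(\mathfrak{u})$ does not depend on $P \in \mathfrak{u}$, it suffices to shift $P$ arbitrarily far in the $m_{\mathrm{out}}$ direction inside $\mathfrak{u}$ and to argue there. Label the breaks $Q_1, \ldots, Q_r$ in increasing parameter order and denote the segment directions by $-m_0, -m_1, \ldots, -m_r = -m_\mathfrak{b}$. Translating $P$ by an infinitesimal $\delta\, m_{\mathrm{out}}$ in $\mathfrak{u}$ preserves the combinatorial type of $\mathfrak{b}$ and each $m_i$ for $P$ in general position; breaks lying on cylindrical walls translate by $\delta\, m_{\mathrm{out}}$ by the asymptotic translation invariance, while any break lying in a bounded region of $B$ is fixed. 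Let $j^*$ be the largest index for which $Q_{j^*}$ lies in a bounded region, with $j^* = 0$ if no such index exists. If $j^* = 0$, iterated $\pi_\perp$-preservation across the cylindrical crossings gives $\pi_\perp(m_\mathfrak{b}) = \pi_\perp(m_0) = 0$. If $j^* \geq 1$, the segment leaving $Q_{j^*}$ has fixed initial endpoint and a terminal endpoint—either $P$ itself (when $j^* = r$) or $Q_{j^*+1}$ on a cylindrical wall—that translates by $\delta\, m_{\mathrm{out}}$; rigidity of the direction $-m_{j^*}$ forces $m_{j^*} \parallel m_{\mathrm{out}}$, and $\pi_\perp$-preservation across the remaining cylindrical crossings $Q_{j^*+1}, \ldots, Q_r$ propagates this to $m_\mathfrak{b}$.

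The main obstacle is controlling walls of $\cS_k$ that arise from scattering at joints sitting in bounded regions of $B$ but extend into the asymptotic end of $\mathfrak{u}$ without being $m_{\mathrm{out}}$-invariant, since such walls could in principle host a break of $\mathfrak{b}$ deep in the asymptotic region whose transport would spoil $\pi_\perp$-preservation. Making the rigidity argument fully rigorous therefore requires a finiteness statement guaranteeing that, for $P$ pushed far enough in $m_{\mathrm{out}}$, every non-cylindrical wall relevant to $\mathfrak{b}$ is crossed at a point of $B$ belonging to a bounded region, hence contributes only to the $j^* \geq 1$ case of the argument.
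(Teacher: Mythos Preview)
Your proposal has a genuine gap, and in fact the ``main obstacle'' you flag at the end is precisely the crux of the argument, not a technicality to be handled separately.

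First, the rigidity step in the $j^*\ge 1$ case is not correct as written. When you translate $P$ by $\delta\, m_{\mathrm{out}}$, the break points $Q_i$ are determined recursively from $P$ by intersecting the line through $Q_{i+1}$ (or $P$) in direction $m_i$ with wall $i$; all of them move in general, including those lying on bounded (non-cylindrical) walls. There is no reason for $Q_{j^*}$ to stay fixed while $Q_{j^*+1}$ translates by $\delta\, m_{\mathrm{out}}$, so the conclusion $m_{j^*}\parallel m_{\mathrm{out}}$ does not follow. The asymptotic translation symmetry tells you that translating the \emph{entire} broken line by $\delta\, m_{\mathrm{out}}$ gives another valid broken line if it stays in the asymptotic region, but it does not tell you how a broken line with bounded breaks deforms.

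Second, and more importantly, the missing finiteness statement you ask for is not auxiliary: it is the heart of the proof. The paper shows directly that \emph{every} unbounded wall of $\cS_k$ is parallel to $m_{\mathrm{out}}$. The mechanism is the $t$-order: if an unbounded wall $\mathfrak{p}$ were not parallel to $m_{\mathrm{out}}$, then by compactness of $B_\infty$ any ray in $\mathfrak{p}\setminus K$ would wrap around $B_\infty$ infinitely often and hence cross infinitely many unbounded slabs; the tangent monomial of the wall function picks up a positive $t$-power at each slab crossing, contradicting $\mathfrak{p}\in\cS_k$. Once this is known, the broken-line statement follows by the same mechanism: move $P$ far from the union $\widetilde K$ of bounded cells of $\cS_k$; tracing the final segment back from $P$, every wall encountered is cylindrical, so your $\pi_\perp$-preservation observation applies and a non-parallel $m_{\mathfrak{b}}$ stays non-parallel across all breaks in $B\setminus\widetilde K$. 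Such a segment is then forced to cross arbitrarily many slabs before reaching $\widetilde K$, each crossing raising the $t$-order, contradicting $d_{\mathfrak{b}}\le k$. No deformation argument is needed; the contradiction comes from $t$-order alone.
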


\begin{proof} 
The number of walls in $\mathscr{S}_k$ is finite. 
We claim that every unbounded wall is necessarily parallel to $m_{\text{out}}$, i.e., it projects to a compactly supported codimension one subset of $B_\infty$. 
This is clearly true for unbounded slabs. 
If we assume to the contrary to have a non-slab wall $\frak{p}$ not containing $m_{\text{out}}$, then by compactness of $B_\infty$, the projection of $\frak{p}\setminus K$ to $B_\infty$ is an infinite cover of its image. 
Every ray contained in $\frak{p}\setminus K$ therefore has infinitely many crossings with unbounded slabs in $B$.
Since by the definition of a wall, the wall function has a monomial that generates such a ray, this monomial picks up an arbitrarily high $t$-power under these slab crossings which is a contradiction to 
the wall being contained in $\mathscr{S}_k$. 

A similar argument now excludes a broken line $\frak{b}$ of order $\le k$ ending at $P$ which does not have its final segment parallel to $m_{\text{out}}$.
Let $\widetilde K$ denote the union of bounded cells in $\mathscr{S}_k$.
Since $\vt^{(k)}_m(\mathfrak{u})$ is independent of the endpoint $P$, so we can assume $P$ is arbitrarily far away from $\widetilde K$. 
Now, if $\frak{b}$ is a broken line with endpoint $P$ and monomial $m_\frak{b}$ not an integer multiple of $m_{\text{out}}$, then the projection of $m_\frak{b}$ to $B_\infty$ is non-trivial and it generates a ray. 
By moving $P$ arbitrarily far away from $\widetilde K$, we can force $\frak{b}$ to traverse arbitrarily many slabs in $B$: indeed, a slab crossing or wall crossing in $B\setminus \widetilde K$ may modify $m_\frak{b}$ to some other monomial but never to a multiple of $m_{\text{out}}$ because, as we found before, $m_{\text{out}}$ is parallel to every unbounded slab and wall. 
So the projection of the new monomials to $B_\infty$ remain non-trivial. 
Each slab crossing however increases the $t$-order of the broken line contradicting that the broken line is of finite $t$-order $\le k$. 
\end{proof}

\begin{corollary}
If $(B,\P)$ is asymptotically cylindrical and $\mathfrak{u}$ an unbounded chamber of $\mathscr{S}_k$, denoting $y=z^{m_{\text{out}}}$, we have
$\vt^{(k)}_{qm_{\text{out}}}(\mathfrak{u})$ for any $q>0$ as well as
 $W^{\le k}(\mathfrak{u}):= \sum_{i=0}^{k} \vt^{(i)}_{m_{\text{out}}}(\mathfrak{u}) \in y A[y^{-1}][t]$ are independent of the choice of $\mathfrak{u}$.
Consequently, $W((\mathfrak{u}_k)_{k\in \NN})$ and $\vt_{qm_{\text{out}}}((\mathfrak{u}_k)_{k\in \NN})$ are independent of the choice of nested sequence $(\mathfrak{u}_k)_{k\in \NN}$ of unbounded chambers.
\end{corollary}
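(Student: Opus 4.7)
My plan is to leverage Proposition~\ref{prop:parallel} together with the ``cylindrical'' structure of the wall structure near infinity: once I know all unbounded walls are parallel to $m_{\text{out}}$, the variable $y = z^{m_{\text{out}}}$ extends to a global regular function on the asymptotic part of $\widecheck{\frak{X}}_k$, and the theta function, being a well-defined global section, must have a single uniform expression as a Laurent polynomial in $y$ there. First, by Proposition~\ref{prop:parallel} every broken line contributing to $\vt^{(k)}_{qm_{\text{out}}}(\mathfrak{u})$ has ending monomial of the form $a_\mathfrak{b} t^{d_\mathfrak{b}} y^{n_\mathfrak{b}}$ with $n_\mathfrak{b}\in\ZZ$, so $\vt^{(k)}_{qm_{\text{out}}}(\mathfrak{u})\in A[y,y^{-1}][t]$; the straight broken line contributes the top term $y^q$, and a short bookkeeping of monomial transport at slabs shows the remaining terms have strictly smaller $y$-exponent, placing $\vt^{(k)}_{qm_{\text{out}}}(\mathfrak{u})$ in $y^q A[y^{-1}][t]$.

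To establish independence of the chamber, I would reuse the first half of the proof of Proposition~\ref{prop:parallel}, which shows that every unbounded wall of the (finite) structure $\mathscr{S}_k$ is parallel to $m_{\text{out}}$. Hence there exists $T>0$ such that, in the ``far-out'' region of $B$ where the height along $m_{\text{out}}$ exceeds $T$, the local wall structure is literally the product of a wall structure on $B_\infty$ with the ray $[T,\infty)$. For any wall $\mathfrak{p}$ parallel to $m_{\text{out}}$ one has $m_{\text{out}}\in\Lambda_\mathfrak{p}$, hence $\langle n_\mathfrak{p},m_{\text{out}}\rangle=0$, and the wall-crossing automorphism $z^m\mapsto z^m f_\mathfrak{p}^{\langle n_\mathfrak{p},m\rangle}$ fixes $y$. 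Consequently, $y$ glues to a global regular function on the cylindrical subscheme of $\widecheck{\frak{X}}_k$ corresponding to this region, and any Laurent polynomial in $y$ is itself a globally defined regular function on this subscheme.

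Given two unbounded chambers $\mathfrak{u},\mathfrak{u}'$ of $\mathscr{S}_k$, both contain asymptotic parts sitting inside the cylindrical region. Since by the main results of \cite{CPS,GHS} the theta function $\vt_{qm_{\text{out}}}$ is a well-defined global section on $\widecheck{\frak{X}}_k$, its restriction to the cylindrical region must be a single Laurent polynomial $F(y)\in y^q A[y^{-1}][t]/(t^{k+1})$ whose further restriction to the asymptotic part of $\mathfrak{u}$ equals $\vt^{(k)}_{qm_{\text{out}}}(\mathfrak{u})$ (and likewise for $\mathfrak{u}'$). This gives the desired independence for $\vt^{(k)}_{qm_{\text{out}}}$; the analogous statement for $W^{\le k}$ is the sum of $k+1$ such identities, and the statements for nested sequences follow formally since each summand $\vt^{(k)}_{qm_{\text{out}}}(\mathfrak{u}_k)$ is independent of $\mathfrak{u}_k$. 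I expect the main obstacle in executing this plan to be the rigorous identification of the cylindrical subscheme of $\widecheck{\frak{X}}_k$ and the verification that the asymptotic part of every unbounded chamber actually extends into it; this should reduce to bookkeeping with the finite wall structure $\mathscr{S}_k$ together with the explicit chart-transition formulas recalled before Definition~\ref{defi:brokenline}.
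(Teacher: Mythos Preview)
Your approach is correct and is essentially the argument the paper leaves implicit (the corollary is stated without proof). The two key ingredients you identify---that every unbounded wall of the finite structure $\mathscr{S}_k$ is parallel to $m_{\text{out}}$ (from the proof of Proposition~\ref{prop:parallel}), and that wall-crossing across such a wall fixes $y=z^{m_{\text{out}}}$---are exactly what makes the chamber-independence immediate once one knows the theta functions glue globally as in \cite{CPS,GHS}. One small remark: your containment $\vt^{(k)}_{qm_{\text{out}}}(\mathfrak{u})\in y^q A[y^{-1}][t]$ is in fact sharper than the corollary's stated $yA[y^{-1}][t]$ (which, as written, only applies literally to the $q=1$ case $W^{\le k}$), and your bookkeeping of slab crossings correctly explains why non-straight broken lines contribute strictly lower $y$-powers.
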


\subsection{Perfect pairing with curve classes} 
\label{subsec-pairing}
The purpose of this section is a specify a particular base ring to use for $A$ so that the family of Landau--Ginzburg models $W\colon\widecheck{\frak{X}}\ra\AA^1$ over $\Spf A\lfor t\rfor$ is versal in a suitable sense. We follow \cite{MR21}.
We keep the assumption of $(B,\mathscr{P})$ being asymptotically cylindrical with a single primitive asymptotic direction $m_{\text{out}}$. 
We denote by $H_1(B,\iota_\star\Lambda)$ the first \emph{sheaf homology} with coefficients in $\iota_\star\Lambda$. Set $\widecheck{\Lambda}:=\Hom({\Lambda},\ZZ)$. 
By \cite{R2}, Theorem 3, there is a natural pairing
$H_1(B,\iota_\star\Lambda) \otimes H^1(B,\iota_\star\widecheck{\Lambda}) \rightarrow \mathbb{Z},$
which is perfect over $\QQ$. In other words, we have a natural homomorphism
\begin{equation}
\mathsf{D}\colon H_1(B,\iota_\star\Lambda)\ra H^1(B,\iota_\star\widecheck\Lambda)^*
\label{eq-map-D}
\end{equation}
 that becomes an isomorphism when tensoring with $\QQ$. 

In \cite{MR21}, a natural compactification $\overline{B}$ of $(B,\P)$ with the feature $\overline{B}\setminus B=B_\infty$ is introduced. 
It comes with an open subset
$\overline{B}_\reg\subset \overline{B}$ so that $B_\reg=\overline{B}_\reg\cap B$. 
There are two different natural extensions of $\Lambda$ from $B_\reg$ to $\overline{B}_\reg$.
One extension is simply the direct image sheaf under the open embedding $B_\reg\hra \overline{B}_\reg$ which is also denoted $\Lambda$. 
The other extension is the subsheaf $\Lambda_N\subset\Lambda$ of this extension given by the property that the stalk of $\Lambda_N$ at a point in $\overline{B}\setminus B$ is generated by the asymptotic directions. The notation is borrowed from \cite{Ba} where a similar analysis is done for the discrete Legendre dual where $B$ is already compact.
In our case of a unique asymptotic direction $m_{\text{out}}$, the stalk of $\Lambda_N$ at every point in $\overline{B}\setminus B$ is given by $\ZZ m_{\text{out}}$.
We also use $\iota\colon \overline{B}_\reg\subset \overline{B}$ for the open embedding of the domain of definition of $\Lambda$ on $\overline{B}$ which is justified by the existence of canonical restriction isomorphisms
$H^i(\overline B,\iota_\star\Lambda)\ra H^i(B,\iota_\star\Lambda)$. 
By our assumption about a unique asymptotic direction, $B_\infty=\overline B\setminus B$ is itself an integral affine manifold with singularities and polyhedral decomposition. 
In this case, the quotient sheaf 
$\Lambda_\infty:=\Lambda/\Lambda_N$ is supported on $ B_\infty$ and naturally identified with the integral tangent sheaf on $B_{\infty,\reg} = \overline{B}_\reg\cap B_\infty$. 

We also consider two extensions of $\widecheck{\Lambda}$ from $B_\reg$ to $\overline{B}_\reg$. 
The first extension is the direct image sheaf  which we also denote by $\widecheck{\Lambda}$. 
The second is the subsheaf of the first given by those dual vectors that annihilate $m_{\text{out}}$ and we denote this extension by $\widecheck{\Lambda}_N$. Despite the notation, it is not true that $\widecheck{\Lambda}_N$ is the dual of $\Lambda_N$.
The quotient sheaf $\widecheck{\Lambda}/\widecheck{\Lambda}_N$ is supported on $B_{\infty,\reg}$ with stalks naturally isomorphic to $(\ZZ m_{\text{out}})^*$ and we denote this quotient sheaf and its direct image on $ B_\infty$ by $(\ZZ m_{\text{out}})^*$ as well.

Let
$\iota\colon  B_{\infty,\reg}\hra B_\infty$ denote the inclusion.
We consider the following two long exact sequences of cohomology groups 
\begin{equation}
\cdots\ra H^0( B_\infty, \iota_\star\Lambda_N)\ra  H^1(\overline{B}, \iota_\star\Lambda_N) \ra H^1(\overline{B},B_\infty; \iota_\star\Lambda_N) \ra H^1(B_\infty, \Lambda_N) \ra\cdots \label{les-1}
\end{equation}
\begin{equation}
\cdots\ra H^0(B_\infty, (\ZZ m_{\text{out}})^*)\ra  H^1(\overline{B}, \iota_\star\widecheck{\Lambda}_N) \ra H^1(B, \iota_\star\widecheck{\Lambda}) \ra H^1(B_\infty, (\ZZ m_{\text{out}})^*) \ra \cdots \label{les-2}
\end{equation}
Note that $H^0(B_\infty, (\ZZ m_{\text{out}})^*)\cong \ZZ$ and let $n_{\text{out}}$ denote the generator that evaluates to $1$ on $m_{\text{out}}$.
The image of $n_{\text{out}}$ under the coboundary homomorphism gives a distinguished class 
\begin{equation}
\label{eq-dist-class}
\beta_{B_\infty}\in
H^1(\overline{B}, \iota_\star\widecheck{\Lambda}_N)
\end{equation}
that we will come back to in the next section. 
We are interested in the Poincar\'e--Lefschetz dual sequence of \eqref{les-1} that we obtain from Theorem~1 in \cite{R2}. It is shown in \cite{MR21}, that this Poincar\'e--Lefschetz dual of \eqref{les-1} permits a natural map to the dual of the sequence \eqref{les-2} that is shown in the following diagram. The displayed vertical maps become isomorphisms when tensoring with $\QQ$,
\begin{equation}
\resizebox{\textwidth}{!}{%
\xymatrix{
\cdots \ra H_1(B_\infty, \Lambda_N) \ar[r]\ar^{\hbox{\rotatebox[origin=c]{90}{$\sim$}}}[d]&  
H_1( B, \iota_\star\Lambda) \ar[r]\ar[d]^{\mathsf{D}}& 
H_1(\overline{B},B_\infty; \iota_\star\Lambda_N) \ar[r]\ar[d]^{\mathsf{D}_N}&  
H_0(B_\infty, \Lambda_N) \ar^{\hbox{\rotatebox[origin=c]{90}{$\sim$}}}[d]\ra\cdots \\
\cdots\ra
H^1(B_\infty, (\ZZ m_{\text{out}})^*)^* \ar[r]&
H^1(B, \iota_\star\widecheck{\Lambda})^* \ar[r]&
H^1(\overline{B}, \iota_\star\widecheck{\Lambda}_N)^* \ar[r]&
H^0(B_\infty, (\ZZ m_{\text{out}})^*)^* \ra  
\cdots
}
}
\label{eq-pairing-diagram}
\end{equation}
We define the ring
$$A_N:=\CC[H^1(\overline{B}, \iota_\star\widecheck{\Lambda}_N)^*] = \left\{ \sum^\text{finite}_\beta a_\beta s^\beta|a_\beta\in \CC, \beta\in H^1(\overline{B},\iota_\star\widecheck{\Lambda}_N)^*\right\}$$
which serves as a natural base and coefficient ring $A$ for the family of Landau--Ginzburg models $\widecheck{\frak{X}}$ as used in the previous section, see also \cite{MR21}.
For the sake of the form of slab functions with coefficients in $A_N$, please note that
the units in $A_N$ are given by $$A_N^\times =\CC^*\oplus H^1(\overline{B},\iota_\star\widecheck{\Lambda}_N)^* =\{ a s^\beta|a\in \CC^*, \beta\in H^1(\overline{B},\iota_\star\widecheck{\Lambda}_N)^*\}.$$
\begin{remark}
We remark here that $H^1(\overline{B}, \iota_\star\widecheck{\Lambda}_N)\otimes_\ZZ \CC$ can be naturally identified with the first cohomology of the sheaf 
$\Theta_{\overline{\widecheck{X}_0}(\log D)/\Spec A_N}(-D)$ of relative log derivations if $D=\overline{\widecheck{X}_0}\setminus \widecheck{X}_0$ is any toroidal crossing divisor that compactifies the fibers of $X_0\ra \Spec A$. 
The group $H^1(\overline{\widecheck{X}_0},\Theta_{\overline{\widecheck{X}_0}(\log D)/\Spec A_N}(-D))$ classifies infinitesimal log deformations of $\overline{\widecheck{X}_0}$ over $A_N$ that induce the identity on $D$. 
In other situations where $\widecheck{X}_0$ is already proper, so if $B$ is compact, 
the ring $\CC[H^1(B,\iota_\star\widecheck\Lambda)^*]$ was used in \cite{GHS,RS} in place of $A_N$ as a universal coefficient ring and versality of ${\widecheck{\frak{X}}}\ra\Spf \CC[H^1(B,\iota_\star\widecheck\Lambda)^*]\lfor t\rfor$ was shown in \cite{RS}.
\end{remark}

Since $\varphi$ is locally determined only up to adding a linear function, we may assume that it annihilates $m_{\text{out}}$ and therefore gives a class in
$H^1(\overline{B}, \iota_\star\widecheck{\Lambda}_N)$. 
By pairing with this class we define the degree homomorphism
\begin{equation}
\label{eq-map-deg}
\deg\colon H^1(\overline{B}, \iota_\star\widecheck{\Lambda}_N)^* \ra \ZZ.
\end{equation}

The relevance of the homomorphism $\mathsf{D}_N\colon H_1(B,B_\infty;\iota_\star\Lambda_N)\ra H^1(\overline{B},\iota_\star\widecheck{\Lambda}_N)^*$ for the Landau--Ginzburg potential $W$ will become clear in Section~\ref{sec:tropicalcorrespondence}.

\subsection{A Landau-Ginzburg model as a mirror dual to a Fano variety}
\label{subsec-Fanosurface}
In \S\ref{S:toricdeg}, we discuss the situation where $(B,\P,\varphi)$ is the dual intersection complex, i.e., the \emph{fan picture}, of a toric degeneration $\mathfrak{X}$ of a log Calabi--Yau pair $(X,D)$. In general for a fan picture of $(X,D)$,
the polyhedral decomposition $\P$ contains unbounded cells if and only if $D$ is non-empty.
The existence of unbounded cells in turn is necessary for $W$ to be non-constant when taking $(B,\P,\varphi)$ as the cone picture of a Landau--Ginzburg model as we did in the previous sections. 
We call $(\widecheck{\frak{X}},W)$ the Landau--Ginzburg mirror dual to $(X,D)$.
An easy example is given by a toric Fano variety $X$ with toric anticanonical divisor $D$ where the degeneration $\mathfrak{X}$ is just a trivial family of the pair $(X,D)$ and then $B=\RR^n$, $\P$ is the set of cones in the fan of $X$ and $\varphi$ a piecewise linear function that gives a polarization for $X$. 
It is also possible that both $\mathfrak{X}$ and $\widecheck{\mathfrak{X}}$ carry non-constant Landau--Ginzburg potentials, e.g., with mirror symmetry for varieties of general type \cite{GKR}. Unlike for toric varieties, we will be predominantly interested in situations where $X$ is a compact Fano manifold and $D$ is a smooth irreducible anticanonical divisor. 
In the situation where $B$ is homeomorphic to a disk, it follows from \cite{RZ} that there is in fact a natural isomorphism
\begin{equation}
H^1(\overline{B}, \iota_\star\widecheck{\Lambda}_N) \cong \Pic(X)
\end{equation}
and thus the base ring $A_N$ of the mirror family $\widecheck{\frak{X}}$ from the section~\ref{subsec-pairing} is canonically identified with $\CC[\Pic(X)^*]$.

\subsection{$\boldsymbol{q}$-refined wall structures}
\label{subsec-qwalls}
While the main result of this articles is about genus zero invariants, the comparison of log invariants under a blow-up can be done more generally for arbitrary genus by the same methods. Since we are going to use this higher-genus correspondence in the upcoming article \cite{GRZZ}, we cover it here.
To keep track of the genus, we introduce the variables $\boldsymbol{q}$ and $\hbar$, which are related by $\boldsymbol{q}=e^{i\hbar}$.
We define the $\boldsymbol{q}$-refined initial wall structure $\mathscr{S}_0(\boldsymbol{q})$ to have the same slabs as $\mathscr{S}_0$ but with their normalized slab functions $f_{\rho,v}=1+a s^\beta z^m$ replaced by the power series $f_{\rho,v}(\boldsymbol{q})\in A\llbracket \hbar\rrbracket[\Lambda_{\mathfrak{p}}]$ which satisfies
\[ \textup{log }f_{\rho,v}(\boldsymbol{q}) = \sum_{k\geq 1} \frac{(-1)^{k+1}i\hbar}{\boldsymbol{q}^{k/2}-\boldsymbol{q}^{-k/2}} a^k s^{k\beta}z^{km}. \]

\begin{remark}

The $\boldsymbol{q}$-refinement will correspond to higher genus Gromov-Witten invariants with insertions of $\lambda$-classes (see \S\ref{sec:tropicalcorrespondence}). More precisely, the genus $g$ invariant will be the $\hbar^{2g}$-term. The limit $\hbar \rightarrow 0$ or, equivalently, $\boldsymbol{q} \rightarrow 1$ reproduces the genus $0$ case. Indeed, note that $\boldsymbol{q}^{k/2}-\boldsymbol{q}^{-k/2}=ik\hbar + \mathcal{O}(\hbar^2)$, so in the limit $\hbar\rightarrow 0$ the above expression for $\textup{log }f_{\rho,v}(\boldsymbol{q})$ becomes the Taylor expansion for the original $f_{\rho,v}=1+a s^\beta z^m$.
\end{remark}

The inductive definition of wall structures and the associated broken lines, as explained at the beginning of \S\ref{section-LG}, can be performed in $A\llbracket\hbar\rrbracket[\Lambda]\llbracket t\rrbracket $ instead of $A[\Lambda]\llbracket t\rrbracket $. 
The formulas are the same, the only difference being that all functions carry an additional $\hbar$-variable.
Hence, for each finite order $k>0$ we obtain a $\textbf{q}$-refined wall structure $\mathscr{S}_k(\boldsymbol{q})$.
Let $\mathscr{S}_\infty(\boldsymbol{q})$ denote the resulting inverse system $\{\mathscr{S}_k(\boldsymbol{q})\}$ of compatible consistent \emph{$\boldsymbol{q}$-refined} wall structures 
obtained from $\mathscr{S}_0(\boldsymbol{q})$. 

As in Definition \ref{defi:theta} we define theta function as sums over broken lines $\mathfrak{b}$ with initial monomial $z^m$. When $\mathfrak{b}$ crosses a wall of the $\mathbf{q}$-refined wall structure $\mathscr{S}_k(\boldsymbol{q})$, it becomes dependent on $\mathbf{q}$. So also $a_{\mathfrak{b}}$, the coefficient of its ending monomial, will depend on $\mathbf{q}$ and we indicate this by writing $a_{\mathfrak{b}}(\mathbf{q})$.

\begin{definition}
\label{defi:thetaq}
For each chamber $\mathfrak{u}\in \mathscr{S}_k(\boldsymbol{q})$ and asymptotic direction $m$ one defines 
\[ \vt^{(k)}_m(\mathfrak{u},\boldsymbol{q}) = \sum_{\mathfrak{b}\in\mathfrak{B}^{(k)}_m(P\in\mathfrak{u})} a_{\mathfrak{b}}(\boldsymbol{q})t^{d_{\mathfrak{b}}}z^{m_{\mathfrak{b}}}. \]
and then for a nested sequence of chambers $(\mathfrak{u}_k)_{k\in \NN}$ one defines the \emph{$\boldsymbol{q}$-refined theta function}
$$\vt_m((\mathfrak{u}_k)_{k\in \NN},\boldsymbol{q})=\sum_{k\ge 0} \vt^{(k)}_m(\mathfrak{u}_k,\boldsymbol{q})$$
\end{definition}

\begin{definition}
\label{defi:Bpq}
If $(B,\P)$ is asymptotically cylindrical and $\mathfrak{u}$ an unbounded chamber of $\mathscr{S}_k$, let $\mathfrak{B}^{(k)}_{p,q}(P\in \mathfrak{u})$ denote the set of broken lines $\mathfrak{b}\in\mathfrak{B}^{(k)}_{q m_{\text{out}}}(X,P\in\mathfrak{u})$ with ending monomial of the form $a_{\mathfrak{b}} t^{k}y^{-p}$ where $y=z^{m_{\text{out}}}$.
\end{definition}

If $(B,\P)$ is asymptotically cylindrical, $\mathfrak{u}$ is an unbounded chamber of $\mathscr{S}_k(\boldsymbol{q})$ and $y=z^{m_{\text{out}}}$ then
Proposition~\ref{prop:parallel} implies
\begin{equation}
\label{eq-theta-parallel}
\vt^{(k)}_{q m_{\text{out}}}(\mathfrak{u},\boldsymbol{q}) = \sum_{p\ge 1}\sum_{\mathfrak{b}\in\mathfrak{B}_{p,q}^{(k)}(\mathfrak{u},\boldsymbol{q})} a_{\mathfrak{b}}(\boldsymbol{q})t^{k}y^{-p} \in A\llbracket \hbar,t,y\rrbracket. 
\end{equation}

\begin{corollary}
If $(B,\P)$ is asymptotically cylindrical, then the ring of $\boldsymbol{q}$-refined theta functions (\textit{quantum theta ring}) is commutative.
\end{corollary}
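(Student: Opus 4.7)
The plan is to exploit Proposition~\ref{prop:parallel} together with the formula \eqref{eq-theta-parallel} to embed the entire quantum theta ring into a manifestly commutative ambient ring of Laurent series in the single variable $y = z^{m_{\text{out}}}$, where the multiplication rule admits a faithful chamber-by-chamber realization.

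First, I would fix an order $k$ and a nested sequence of unbounded chambers $(\mathfrak{u}_k)_{k\in\NN}$. For each $q \ge 1$, Proposition~\ref{prop:parallel} (which applies verbatim to the $\boldsymbol{q}$-refined wall structure $\mathscr{S}_k(\boldsymbol{q})$, since the argument only uses $t$-orders of broken lines and the asymptotic cylindrical geometry of $B$) forces every broken line contributing to $\vt_{qm_{\text{out}}}^{(k)}(\mathfrak{u}_k, \boldsymbol{q})$ to have final monomial of the form $a_{\mathfrak{b}}(\boldsymbol{q}) t^{k} y^{-p}$ for some $p$ with $-p \le q$. Passing to the inverse limit as in Definition \ref{defi:thetaq}, each theta function $\vt_{qm_{\text{out}}}((\mathfrak{u}_k)_{k\in\NN},\boldsymbol{q})$ lies in the commutative ring
\[ R := A\llbracket \hbar \rrbracket[y^{\pm 1}] \llbracket t \rrbracket. \]
Moreover, the corollary preceding this section guarantees that this embedding does not depend on the choice of nested sequence of unbounded chambers.

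Second, I would recall that the ring structure on quantum theta functions is the one induced by their realization as global regular functions on the corresponding $\boldsymbol{q}$-deformation of $\widecheck{\mathfrak{X}}$, with multiplication given by the broken-line (equivalently scattering) structure constants of \cite{GHS}. The key observation is that the realization map $\vt_{qm_{\text{out}}} \mapsto \vt_{qm_{\text{out}}}((\mathfrak{u}_k)_{k\in\NN},\boldsymbol{q}) \in R$ is a ring homomorphism: the product of theta functions expands term-by-term in the Laurent monomial basis $\{t^a y^b\}$ in any fixed chamber exactly as the product of their individual expansions, because the broken line count defining a coefficient of $\vt_{q_1}\cdot \vt_{q_2}$ in an unbounded chamber factors through concatenation of the broken lines computing $\vt_{q_1}$ and $\vt_{q_2}$, with all intermediate monomials parallel to $m_{\text{out}}$ by Proposition~\ref{prop:parallel}.

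Third, I would conclude: since the target ring $R$ is commutative and the realization map is injective (a theta function is determined by its expansion in any chosen unbounded chamber), the quantum theta ring must itself be commutative. The main potential obstacle is verifying that the product rule for theta functions is realized by ordinary multiplication of Laurent expansions in the unbounded chamber; this is ultimately a consequence of the same parallel-transport argument that proves Proposition~\ref{prop:parallel}, applied to pairs of broken lines rather than single ones, so no new idea is required, only the observation that concatenation of monomials parallel to $m_{\text{out}}$ stays parallel to $m_{\text{out}}$ and hence commutes.
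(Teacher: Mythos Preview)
Your proof is correct and follows essentially the same approach as the paper: both arguments use Proposition~\ref{prop:parallel} and \eqref{eq-theta-parallel} to see that in an unbounded chamber every $\boldsymbol{q}$-refined theta function is a series in the single variable $y=z^{m_{\text{out}}}$, and then conclude commutativity from the trivial fact that $y$ commutes with itself in the quantum product. The paper phrases the last step slightly more directly---it remarks that while distinct $z^m$ and $z^{m'}$ need not commute in the quantum ring, the single variable $y$ does commute with itself---whereas you package the same observation as an embedding into a commutative target ring $R$; the paper also notes explicitly that it suffices to check commutativity in an unbounded chamber because wall-crossing conjugation preserves commutativity, a point you use implicitly via injectivity of the chamber realization.
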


\begin{proof}
We show that for $k>0$ and $\mathfrak{u}$ a chamber of $\mathscr{S}_k$, the $\boldsymbol{q}$-refined theta functions $\vt_{qm_{\text{out}}}^{(k)}(\mathfrak{u},\boldsymbol{q})$ commute for different values of $q\in \mathbb{Z}_{>0}.$  

In fact, it suffices to establish this for $\mathfrak{u}$ an unbounded chamber, since theta functions in other chambers are related by conjugation by wall-crossing isomorphisms, which preserves commutativity. If $\mathfrak{u}$ is an unbounded chamber, then by \eqref{eq-theta-parallel} we have that $\vt_{qm_{\text{out}}}^{(k)}(\mathfrak{u},\boldsymbol{q}) \in A\llbracket \hbar,t,y\rrbracket$ only depends on the global variable $y$ of $A[\Lambda]$ and not on any other local variable $z^m$. While different variables $z^m$ and $z^{m'}$ in the quantum ring do not necessarily commute, the single variable $y$ trivially commutes with itself. Thus the $\vt_{qm_{\text{out}}}^{(k)}(\mathfrak{u},\boldsymbol{q})$ all commute with each other. 

Using results of \S\ref{section-theta} there is another proof: For $(B,\P)$ asymptotically cylindrical the theta ring is generated by the primitive theta function $\vt_{m_{\text{out}}}^{(k)}(\mathfrak{u},\boldsymbol{q})$. The other theta functions $\vt_{qm_{\text{out}}}^{(k)}(\mathfrak{u},\boldsymbol{q})$ can be expressed uniquely in terms of it via the multiplication rule in Proposition \ref{prop:multstr}. By Proposition \ref{prop:theta} the structure constants of the multiplication $\vt_{pm_{\text{out}}}^{(k)}(\mathfrak{u},\boldsymbol{q}) \cdot \vt_{qm_{\text{out}}}^{(k)}(\mathfrak{u},\boldsymbol{q})$ are invariant under swapping $p$ and $q$. Thus $\vt_{pm_{\text{out}}}^{(k)}(\mathfrak{u},\boldsymbol{q})$ and $\vt_{qm_{\text{out}}}^{(k)}(\mathfrak{u},\boldsymbol{q})$ commute for all $p,q$.
\end{proof}


\section{Definition of invariants and tropical correspondence}
\label{sec:tropicalcorrespondence}

\subsection{Log Gromov-Witten invariants}

For an effective curve class $\beta$ of a Fano surface $X$ with log structure given by the smooth anticanonical divisor $E$, let $\mathscr{M}_{g,1}(X,\beta)_{(d)}$ be the moduli stack of basic stable log  maps to $X$ of genus $g$, class $\beta$ and maximal tangency $d=\beta.E$ with $E$ at a single unspecified point. 
The virtual dimension is $g$ and we cut this dimension down to zero by inserting the \textit{lambda class} $\lambda_g$ which is defined as follows. 
Let $\pi : \mathcal{C} \rightarrow \mathscr{M}_{g,1}(X,\beta)_{(d)}$ be the universal curve with relative dualizing sheaf $\omega_\pi$. Then $\mathbb{E}=\pi_\star\omega_\pi$ is a rank $g$ vector bundle over $\mathscr{M}_{g,1}(X,\beta)_{(d)}$, known as the \emph{Hodge bundle}. The lambda classes are the Chern classes of the Hodge bundle, $\lambda_j=c_j(\mathbb{E})$. We define the $1$-marked log Gromov--Witten invariants
\[ R_d^g(X,\beta) := \int_{\llbracket\mathscr{M}_{g,1}(X,\beta)_{(d)}\rrbracket} (-1)^g\lambda_g \in \mathbb{Q}. \]
Similarly, for an effective curve class $\beta$ of $X$, let $\mathscr{M}_{g,2}(X,\beta)_{(p,q)}$ be the moduli stack of basic stable log maps to $X$ of genus $g$, class $\beta$ and two marked points with contact orders $p$ and $q$ with $E$. It has virtual dimension $g+1$. We cut this dimension down to zero by fixing the image of the first marked point on $E$ and inserting a lambda class. Let $\textup{ev} \colon \mathscr{M}_{g,2}(X,\beta)_{(p,q)} \rightarrow E$ be the evaluation map at the marked point of order $p$. 
We define the $2$-marked log Gromov--Witten invariant
\[ R_{p,q}^g(X,\beta) := \int_{\llbracket\mathscr{M}_{g,2}(X,\beta)_{(p,q)}\rrbracket} (-1)^g\lambda_g \textup{ev}^\star[\textup{pt}] \in \mathbb{Q}. \]
Note that $R_{p,q}^g(X,\beta)=0$ unless $p+q=\beta.E$.
If $q=0$, the corresponding marking is no longer constrained to $E$ and can be anywhere in $X$ in that case. The virtual class thus pulls back under the map that forgets this marking and the projection formula implies the invariant vanishes in this case, $R_{p,0}^g(X,\beta)=0$.
Similarly, for $p\leq 0$ we define $R_{p,q}^g(X,\beta)=0$.

\begin{definition}
\label{defi:Rq}
We write $R_d(X,\beta):=R_d^0(X,\beta)$ and define, with $\boldsymbol{q}=e^{i\hbar}$,
\[ R_d(X,\beta,\boldsymbol{q}) := \sum_{g\geq 0}R_d^g(X,\beta)\hbar^{2g}. \]
Similarly, write $R_{p,q}(X,\beta):=R_{p,q}^0(X,\beta)$ and define
\[ R_{p,q}(X,\beta,\boldsymbol{q}) = \sum_{g\geq 0}R_{p,q}^g(X,\beta)\hbar^{2g}. \]
This notation has the feature that
$R_d(X,\beta,\boldsymbol{q})|_{\hbar=0}=R_d(X,\beta)$ and $R_{p,q}(X,\beta,\boldsymbol{q})|_{\hbar=0}=R_{p,q}(X,\beta)$.
\end{definition}

\subsection{Tropical curves}

We use the definition of tropical curves from \cite{Gra1}, Definition 3.3, c.f. Definition~1 in \cite{MaR20}. 
That is, tropical curves $h : \Gamma \rightarrow B$ may have bounded legs ending in affine singularities and unbounded legs that are parallel to asymptotic directions. 
They do not have uni- or bivalent vertices and fulfill the ordinary balancing condition $\sum_{E\ni V} u_{(V,E)}=0$ at vertices $V$, if not specified otherwise.
Here $u_{(V,E)}\in\iota_\star\Lambda_{h(V)}$ is the weight vector of the flag $(V,E)$, as defined in \cite{Gra1}, Definition 3.3.

For the purpose of the following construction to work\footnote{Elementary tropical curves will no longer generate the Picard group in non-toric situations. The construction can be generalized to arbitrary toric degenerations of del Pezzo surfaces by including additional generators. 
The statement of Lemma~\ref{lemma-degree-varphi} needs to be modified in order to hold for non-toric del Pezzo degenerations.}, it is important to consider a toric degeneration $\mathfrak{X}\rightarrow\mathbb{A}^1$ that is among the list of 16 cases given in Figure 1.4 in \cite{Gra1}. 
We will refer to such a degeneration as a \emph{toric del Pezzo degeneration}. The corresponding fan pictures are depicted in Figure 1.5 in \cite{Gra1}.
To a tropical curve $h$ we associate an algebraic curve class $\beta$ as follows. 
There are certain \textit{elementary tropical curves} that are defined as follows. An elementary tropical curve is contained in the 1-skeleton of $\P$. 
It has one unbounded leg with image given by an unbounded edge of $B$ and two bounded legs with weight $1$ that end on singularities of $B$. 
Every tropical curve $h$ gives a cycle $\beta(h)$ in 
$H_1(\overline{B},B_\infty\cup\Delta;\iota_\star\Lambda_N)$. 
Replacing the endpoints on the singularities with loops as explained at the end of the introduction in \cite{R2}, the tropical curve gives a class in 
$H_1(\overline{B},B_\infty;\iota_\star\Lambda_N)$. 
Contracting the primitive global 2-form yields a class in $H_1(\overline{B},B_\infty;\iota_\star\widecheck{\Lambda}_N)$. 
Finally, the Poincar\'e--Lefschetz isomorphism proved in Theorem 1 in \cite{R2} gives a class in 
$H^1(B,\iota_\star\widecheck{\Lambda}_N)$.

On the other hand, the
projection to the unbounded direction or deformation of the toric degeneration as in \cite{Gra1}, {\S}3.4, shows that an elementary tropical curve corresponds to a toric divisors of $X$.  The natural isomorphism $H^1(\overline{B},\iota_\star\widecheck{\Lambda}_N)\cong\text{Pic}(X)$ gives the toric divisor (line bundle) corresponding to the elementary tropical curve.
In particular, the classes of elementary tropical curves generate $H^1(B,\iota_\star\widecheck{\Lambda}_N)$ and therefore, given any tropical curve $h : \Gamma \rightarrow B$, we can read off the class of $h$ in $H_1(\overline{B},B_\infty;\iota_\star\Lambda_N)$ from its intersection with the elementary tropical curves. 
The intersection can be computed according to Theorem~6 in \cite{R2} and it agrees with the ordinary intersection of the corresponding singular homology 2-cycles in $X$ by Theorem~7 in \cite{R2}.

The union of the elementary tropical curves yields the one-skeleton of $\P$ in $B$. 
The entire one-skeleton of $\P$ in $B$ is the support of the tropical 1-cycle that belongs to the piecewise linear function $\varphi$. 
In fact, it is straightforward to check that $c_1(\varphi)\in H^1(B,\iota_\star\widecheck{\Lambda}_N)$ agrees with the sum of the classes of the elementary tropical curves. 
This observation is related to the following important consequence for us. 
Recall from \eqref{eq-map-deg} that the degree homomorphism $\text{deg} : H^1(\overline{B},\iota_\star\widecheck{\Lambda}_N)\ra \ZZ$ is defined by $\beta\mapsto \beta.c_1(\varphi)$.
For a tropical cycle $\beta\in H_1(\overline{B},B_\infty;\iota_\star\Lambda_N)$, denote the sum of the weights of its unbounded legs weighted by the orientation of the leg by $\beta. B_\infty$, so that  $\beta. B_\infty$ agrees with the image of $\beta$ under the boundary map $H_1(\overline{B},B_\infty;\iota_\star\Lambda_N)\ra H_0(B_\infty,\iota_\star\Lambda_N)\cong\ZZ$.

\begin{lemma} 
\label{lemma-degree-varphi}
If $(B,\P,\varphi)$ is the fan picture of a toric del Pezzo degeneration, then
for every $\beta\in H_1(\overline{B},B_\infty;\iota_\star\Lambda_N)$ we have
$$\deg(\mathsf{D}_N(\beta)) = \beta.B_\infty.$$
\end{lemma}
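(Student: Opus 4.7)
My plan is to reduce $\deg(\mathsf{D}_N(\beta))$ to a tropical intersection number on $(B,\P)$ and then match it with the boundary count $\beta.B_\infty$. The key input is the identity $c_1(\varphi) = \sum_i [e_i]$ in $H^1(\overline{B},\iota_\star\widecheck{\Lambda}_N)$ recorded in the text immediately preceding the lemma, where $e_i$ ranges over the elementary tropical curves, one per unbounded edge $\rho_i$ of $\P$. Since $\mathsf{D}_N$ is the duality isomorphism coming from the Poincar\'e--Lefschetz pairing proved in Theorem~1 of \cite{R2}, the value $\mathsf{D}_N(\beta)(c_1(\varphi))$ is computed as a tropical intersection number $\beta \cdot c_1(\varphi)$ via Theorem~6 of the same reference. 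Bilinearity then reduces the problem to computing $\sum_i \beta \cdot e_i$.

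Each $\beta \cdot e_i$ admits a local decomposition. The image of $e_i$ consists of the unbounded edge $\rho_i$ together with two bounded edges ending at singularities of the affine structure; after passing to the absolute group $H_1(\overline{B},B_\infty;\iota_\star\Lambda_N)$, these singular endpoints are replaced by monodromy loops. For a generic representative of $\beta$, the intersection receives an asymptotic contribution from $\rho_i$ counting, with weights, those unbounded legs of $\beta$ which lie along $\rho_i$, together with contributions from transverse crossings in the bounded region and from the monodromy loops inserted at the singularities.

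The crucial step is to show that, when summed over $i$, the non-asymptotic contributions cancel, leaving only the total weight of unbounded legs of $\beta$, which by definition is $\beta.B_\infty$ as the image of $\beta$ under the boundary map $H_1(\overline{B},B_\infty;\iota_\star\Lambda_N)\to H_0(B_\infty,\iota_\star\Lambda_N)\cong\ZZ$. The cancellation I expect follows from the balancing condition of tropical cycles, combined with the property that $\beta$ has no boundary on the singular locus $\Delta$, so the total signed count of crossings of $\beta$ with the full 1-skeleton of $\P$ vanishes in the bounded region.

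The main obstacle is carrying out this cancellation cleanly in the presence of the affine singularities and their monodromy. A technically simpler alternative, which is available because both $\deg\circ\mathsf{D}_N$ and $\beta\mapsto\beta.B_\infty$ are $\ZZ$-linear and $\mathsf{D}_N$ is a $\QQ$-isomorphism, is to verify the identity on a $\QQ$-basis of $H_1(\overline{B},B_\infty;\iota_\star\Lambda_N)$ consisting of tropicalizations of algebraic curves $C$ in the associated toric del Pezzo degeneration. For such classes the identity reduces to the geometric statement $[C]\cdot[E] = (\text{sum of weights of unbounded legs of the tropical curve})$, with $[E]=c_1(\varphi)$ the anticanonical class; this is the standard tropical--algebraic correspondence and is precisely what the linearly piecewise convex $\varphi$ encodes in the fan picture of a toric del Pezzo degeneration.
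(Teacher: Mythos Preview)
Your second alternative is essentially the paper's proof, and is correct. The paper makes it sharper by choosing as the spanning set precisely the elementary tropical curves $e_i$ themselves: for each $e_i$ one computes directly that $\deg(\mathsf{D}_N(e_i)) = c_1(\varphi)\cdot e_i = 1$ (the kink of $\varphi$ is $1$ along every bounded edge of $\P$, and $e_i$ lies in the $1$-skeleton), while $e_i.B_\infty = 1$ because $e_i$ has a single unbounded leg of weight one. Since the elementary tropical curves generate $H_1(\overline{B},B_\infty;\iota_\star\Lambda_N)$ over $\QQ$ (via the $\QQ$-isomorphism $\mathsf{D}_N$ and the fact, recorded just before the lemma, that they generate $H^1(B,\iota_\star\widecheck{\Lambda}_N)$), linearity finishes.

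Your first approach---expanding $\beta\cdot\sum_i e_i$ into asymptotic and non-asymptotic contributions and arguing a cancellation in the bounded region---is plausible but unnecessary, and you correctly flag that the monodromy loops at the singularities make it delicate. The paper avoids this entirely by working on the convenient basis where both sides are visibly equal to $1$; no cancellation argument is needed.
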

\begin{proof}
It is straightforward to compute that the pairing of $c_1(\varphi)$ with an elementary tropical curve is $1$. 
Also, the pairing of the elementary tropical curve with $B_\infty$ equals $1$ since it has a unique unbounded leg of weight one. 
Since the class of any tropical curve is a linear combination of elementary tropical curves, the result follows.
\end{proof}
\begin{remark}
Recall the distinguished class $\beta_{B_\infty}$ from \eqref{eq-dist-class}. 
It is not hard to show for $B$ of arbitrary dimension that for every tropical 1-cycle $\beta$ we have $\beta.B_\infty = \beta.\beta_{B_\infty}$. 
By the perfectness of the pairing, the statement of the Lemma~\ref{lemma-degree-varphi} is therefore equivalent to saying that for toric del Pezzo degenerations, we have
$\beta_{B_\infty}=c_1(\varphi)$.
In \cite{CPS}, particular toric degenerations for \emph{non-toric} del Pezzo surfaces were given using the linear system $2K_X$. 
In these situations, $2\beta_{B_\infty}=c_1(\varphi)$.
For yet more general toric degenerations, even when $\dim B=2$, $c_1(\varphi)$  need not be a multiple of $\beta_{B_\infty}$.
\end{remark}
For shorter notation, we will denote the composition $\deg\circ \mathsf{D}_N$ simply by $\deg$.

\begin{definition}
\label{def-Hpq}
Following the same steps as for elementary tropical curves, we can associate an effective class $\beta\in\Pic(X)$ to every tropical curve $h\colon \Gamma\ra B$.
We write $\mathfrak{H}_n(X,\beta)$ for the set of tropical curves on the dual intersection complex $B$ of $X$, of class $\beta\in \Pic(X)$ and with one unbounded leg of weight $n$. Similarly, for a point $P$ on $B$, we write $\mathfrak{H}_{p,q}(X,\beta,P)$ for the set of tropical curves with $2$ unbounded legs of weight $p$ and $q$ such that the image of the former contains $P$.
\end{definition}

\begin{definition}[Tropical multiplicity] 
\label{def:tropmult}
Let $h : \Gamma \rightarrow B$ be a tropical curve. Write $V(\Gamma)$ for the set of vertices of $\Gamma$ and $L_\Delta(\Gamma)$ for the set of bounded legs, which necessarily have to end in affine singularities of $B$. For a trivalent vertex $V\in V(\Gamma)$ define, with $u_{(V,E)}\in\iota_\star\Lambda_{h(V)}$ as in \cite{Gra1}, Definition 3.3,
\[ m_V=\lvert u_{(V,E_1)}\wedge u_{(V,E_2)}\rvert=\lvert\text{det}(u_{(V,E_1)}|u_{(V,E_2)})\rvert, \]
where $E_1,E_2$ are any two edges adjacent to $V$. For a vertex $V\in V(\Gamma)$ of valency $\nu_V>3$ let $h_V$ be the one-vertex tropical curve describing $h$ locally at $V$ and let $h'_V$ be a deformation of $h_V$ to a trivalent tropical curve. 
This deformation has $\nu_V-2$ vertices. We define 
\[ m_V=\prod_{V'\in V(h'_V)}m_{V'} \]
and, by Proposition 2.7 in \cite{GPS}, this expression is independent of the deformation $h'_V$, hence well-defined. For a bounded leg $E\in L_\Delta(\Gamma)$ with weight $w_E$ define 
\[ m_E=\frac{(-1)^{w_E+1}}{w_E^2}. \]
We define the \textit{multiplicity} of $h$ to be
\[ m_h = \frac{1}{|\text{Aut}(h)|} \cdot \prod_V m_V \cdot \prod_{E\in L_\Delta(\Gamma)} m_E. \]
\end{definition}

\begin{definition}%
[$\boldsymbol{q}$-refined tropical multiplicity]
\label{def:qtropmult}
Let $h : \Gamma \rightarrow B$ be a tropical curve. For a trivalent vertex $V$ with multiplicity $m_V$ (Definition \ref{def:tropmult}) define, with $\boldsymbol{q}=e^{i\hbar}$,
\[ m_V(\boldsymbol{q}) = \frac{1}{i\hbar}\left(\boldsymbol{q}^{m_V/2}-\boldsymbol{q}^{-m_V/2}\right). \]
For a vertex with higher valency define $m_V(\boldsymbol{q}) = \prod_{V'\in V(h'_V)} m_{V'}(\boldsymbol{q})$ with $h'_V$ as in Definition \ref{def:tropmult}. For a bounded leg $E$ with weight $w_E$ define
\[ m_E(\boldsymbol{q}) = \frac{(-1)^{w_E+1}}{w_E}\cdot \frac{i\hbar}{\boldsymbol{q}^{w_E/2}-\boldsymbol{q}^{-w_E/2}}. \]
Then define the \textit{$\boldsymbol{q}$-refined multiplicity} of $h$ to be
\[ m_h(\boldsymbol{q}) = \frac{1}{|\textup{Aut}(h)|} \cdot \prod_{V\in V(\Gamma)} m_V(\boldsymbol{q}) \cdot \prod_{E\in L_\Delta(\Gamma)} m_E(\boldsymbol{q}). \]
Note that setting $\hbar$ to 0 in $m_h(\textbf{q})$ gives $m_h$ as defined above
\end{definition}

\begin{definition}
We define $R_{p,q}^{\text{trop}}(X,\beta,\boldsymbol{q}):=0$ if $p\leq 0$ or if $q=0$ and otherwise 
\[ R_{p,q}^{\text{trop}}(X,\beta,\boldsymbol{q}) = \sum_{h\in\mathfrak{H}_{p,q}(X,\beta,P)} m_h(\boldsymbol{q}), \]
where $P$ is a general point in an unbounded chamber of $\mathscr{S}_{p+q}$. 
Let $R_{p,q}^{g,\text{trop}}(X,\beta)$ denote the $\hbar^{2g}$-coefficient of $R_{p,q}^{\text{trop}}(X,\beta,\boldsymbol{q})$, i.e.,
\[ R_{p,q}^{\text{trop}}(X,\beta,\boldsymbol{q}) = \sum_{g\geq 0} R_{p,q}^{g,\text{trop}}(X,\beta)\hbar^{2g}, \]
and we write $R_{p,q}^{\text{trop}}(X,\beta):=R_{p,q}^{0,\text{trop}}(X,\beta)$.
\end{definition}

\subsection{Tropical correspondence}

In the setting considered here, the following correspondence theorem for log Calabi--Yau surfaces is proved in \cite{Gra2}, Theorem 6.2.

\begin{theorem}
\label{prop:trop}
$R_{p,q}^{g,\trop}(X,\beta)=p\cdot R_{p,q}^g(X,\beta)$. 
\end{theorem}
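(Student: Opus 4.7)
The plan is to apply the logarithmic decomposition formula of Abramovich--Chen--Gross--Siebert to the toric del Pezzo degeneration $\mathfrak{X}\to\mathbb{A}^1$ whose fan picture is $(B,\mathscr{P},\varphi)$. Every basic stable log map in $\mathscr{M}_{g,2}(X,\beta)_{(p,q)}$ tropicalizes to a decorated tropical curve $h\colon\Gamma\to B$ of genus $g$ and class $\beta$ with unbounded legs of weights $p$ and $q$, and the decomposition formula writes $R_{p,q}^g(X,\beta)$ as a weighted sum over such tropical types. The task then reduces to matching, type by type, the logarithmic vertex and edge contributions with the tropical multiplicity $m_h(\boldsymbol{q})$ of Definition~\ref{def:qtropmult}, and accounting separately for the factor coming from the point insertion $\mathrm{ev}^\star[\mathrm{pt}]$.

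For each trivalent vertex $V$ of tropical multiplicity $m_V$, the local moduli problem is a genus-$g_V$ log GW invariant of a toric surface with three cyclic tangencies; by Bousseau's refined vertex formula \cite{Bou20}, the $\lambda_{g_V}$-twisted generating series in $\hbar$ is precisely $m_V(\boldsymbol{q})=(\boldsymbol{q}^{m_V/2}-\boldsymbol{q}^{-m_V/2})/(i\hbar)$. For higher-valency vertices, the deformation recipe built into Definition~\ref{def:qtropmult} together with the gluing formula at expanded nodes reduces the computation to the trivalent case, exactly as in \cite{GPS}. For a bounded leg of weight $w_E$ ending on an affine singularity, the local geometry in the central fibre is a degree-$w_E$ multi-cover of a rational thimble meeting the focus--focus locus, and the Hodge multi-cover contribution (with $\lambda_g$-insertion) is the Gopakumar--Vafa-type factor $((-1)^{w_E+1}/w_E)\cdot i\hbar/(\boldsymbol{q}^{w_E/2}-\boldsymbol{q}^{-w_E/2})$, matching $m_E(\boldsymbol{q})$. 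The prefactor $1/|\mathrm{Aut}(h)|$ emerges naturally from the groupoid-valued moduli space.

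The factor of $p$ arises from comparing the point conditions. On the log side one imposes $\mathrm{ev}^\star[\mathrm{pt}]$ at the order-$p$ marking; tropically, this corresponds to requiring that the image of the weight-$p$ leg of $h$ contains the chosen general point $P\in B$ in an unbounded chamber. Because the log evaluation map at a contact of order $p$ covers $E$ with local degree $p$, pulling back the point class produces $p$ log-geometric solutions for every tropical solution, accounting for the stated ratio. Summing over tropical types and extracting the $\hbar^{2g}$-coefficient then yields $R_{p,q}^{g,\trop}(X,\beta)=p\cdot R_{p,q}^g(X,\beta)$. The main obstacle is the vertex-by-vertex identification of the $\lambda_g$-twisted logarithmic contributions with the refined multiplicities $m_V(\boldsymbol{q})$: this requires a two-pointed variant of Bousseau's scattering-theoretic vertex formula, together with a verification that superabundant tropical types--those not in $\mathfrak{H}_{p,q}(X,\beta,P)$ for generic $P$--contribute zero once the endpoint of the weight-$p$ leg is specialized. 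Both inputs are provided by the analysis in \cite{Gra2}, from which the stated identity follows.
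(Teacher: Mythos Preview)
The paper does not actually prove this theorem; it simply quotes it from \cite{Gra2}, Theorem~6.2. Your proposal is a reasonable sketch of the strategy that proof follows (decomposition formula, Bousseau's refined vertex computation, multi-cover contributions at bounded legs, and the factor of $p$ from the point constraint), and you correctly acknowledge that the hard inputs---the two-pointed refined vertex identity and the vanishing of superabundant types---are supplied by \cite{Gra2}. So your treatment is consistent with, and indeed more detailed than, what the paper itself provides; both ultimately defer to the same external reference.
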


A \textit{tropical disk} is a tropical curve with a single $1$-valent vertex $V_\infty$, at which no balancing condition is required. Let $\frak{H}^\circ_q(P)$ denote the set of tropical disks with a single unbounded leg of weight $q$ and $1$-valent vertex mapping to a given point $P\in B$.
We cite Proposition 3.2 from \cite{Gra2}. 
\begin{proposition} 
\label{prop-finite-extension-to-disks}
If $\dim B=2$, $\frak{u}$ is an unbounded chamber of $\cS_k$ and $P\in\frak{u}$ a general point then there is a subset $(\frak{H}^\circ_q)^{(k)}(P)\subset \frak{H}^\circ_q(P)$ and a surjective map $\rho\colon (\frak{H}^\circ_q)^{(k)}(P)\ra \frak{B}^{(k)}_q(P\in\frak{u})$
such that for each $\frak{b}\in\frak{B}^{(k)}_q(P\in\frak{u})$ the preimage $\rho^{-1}(\frak{b})$ is finite.
We say a tropical disk in $h^\circ\in \rho^{-1}(\frak{b})$ is obtained from $\frak{b}$ by \emph{disk extension}. 
\end{proposition}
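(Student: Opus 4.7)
The plan is to construct, for each broken line $\mathfrak{b}\in\mathfrak{B}^{(k)}_q(P\in\mathfrak{u})$, a finite non-empty set of tropical disk extensions of $\mathfrak{b}$, and to define $\rho$ as the map that forgets the extension data. Fix $\mathfrak{b}$ with break points $t_1<\dots<t_r$ lying on walls $\mathfrak{p}_1,\dots,\mathfrak{p}_r$ of $\mathscr{S}_k$. By the monomial transport rule of Definition~\ref{defi:brokenline}, the change of monomial across each $t_i$ is an explicit summand of $f_{\mathfrak{p}_i}^{\kappa_i}$, and the failure of the balancing condition at $t_i$, viewed as a potential vertex of the underlying graph of $\mathfrak{b}$, equals the tangent vector of that summand. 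The strategy is to attach at each $t_i$ a balanced tree of edges whose total outgoing asymptotic direction cancels this failure, and to declare the endpoint $P$ to be the unique $1$-valent vertex $V_\infty$ of the resulting tropical disk.

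The key input that makes such extension possible is the tropical interpretation in dimension two of the canonical wall structure: every monomial summand $a\,t^d z^u$ of every wall function $f_\mathfrak{p}$ in $\mathscr{S}_k$ arises from a finite non-empty set of tropical disks whose $1$-valent endpoint lies on $\mathfrak{p}$ and whose outgoing asymptotic monomial is $z^u$ of $t$-order $d$. This is essentially the content of \cite{GPS} combined with the reinterpretation in terms of broken lines and disks in \cite{CPS}. Expanding $f_{\mathfrak{p}_i}^{\kappa_i}$ as an ordered product of $\kappa_i$ summands of $f_{\mathfrak{p}_i}$, choosing a realising tropical disk for each factor, and gluing these disks onto $\mathfrak{b}$ at the break point $t_i$ produces a tropical curve whose only unbalanced vertex is $P$, i.e., a tropical disk extending $\mathfrak{b}$. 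Taking $(\mathfrak{H}^\circ_q)^{(k)}(P)$ to be the set of all such extensions as $\mathfrak{b}$ varies over $\mathfrak{B}^{(k)}_q(P\in\mathfrak{u})$ and $\rho$ to be the map that erases the attached subtrees makes $\rho$ surjective tautologically.

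Finiteness of $\rho^{-1}(\mathfrak{b})$ follows because monomial transport never decreases $t$-orders, so every attached disk has $t$-order at most $k$; combined with the finiteness of $r$ and the finiteness (for each fixed order) of the set of tropical disks realising a given wall summand, this yields a finite fibre. The main obstacle is the wall-to-disk correspondence itself: it has to be proved by induction on scattering order using the two-dimensional Kontsevich--Soibelman algorithm, checking that each new monomial produced by scattering at a joint corresponds to a tropical disk whose endpoint sits on the new wall. The planarity of $B$ is essential here, since it prevents unbounded cancellations in the scattering process. Once this correspondence is in place, the extension procedure and the verifications of balancing, surjectivity and finiteness are combinatorial bookkeeping.
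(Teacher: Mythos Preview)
The paper does not prove this proposition: immediately before stating it, the text reads ``We cite Proposition 3.2 from \cite{Gra2}.'' So there is no proof in the present paper to compare against; the result is imported wholesale from \cite{Gra2}.

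That said, your sketch is essentially the strategy carried out in \cite{Gra2}, building on \cite{CPS} and \cite{GPS}. The core idea---that in dimension two every monomial of every wall function is a generating function for tropical disks ending on that wall, and that gluing such disks onto the broken line at its break points yields the desired extension---is exactly right, and your inductive justification via the scattering algorithm is the correct mechanism. A couple of points that need more care in a full argument: when you ``expand $f_{\mathfrak{p}_i}^{\kappa_i}$ as an ordered product of $\kappa_i$ summands'' you must track multiplicities and automorphisms (the same monomial can arise from several factorisations, and the glued disks may share legs or have symmetries), and you must check that the glued object really is a tropical disk in the paper's sense (bounded legs terminating only at affine singularities, image contained in $B$). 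Your comment that planarity ``prevents unbounded cancellations'' is imprecise; the genuine role of $\dim B=2$ is that it makes available the explicit tropical-vertex description of scattering from \cite{GPS}, which is what supplies the wall-to-disk dictionary you invoke.
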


\begin{definition}
In view of Proposition~\ref{prop-finite-extension-to-disks}, we call a disk $h^\circ$ \emph{inbound} if it is not contained in the straight line segment that connects $P$ with $B_\infty$ inside $\frak{u}$, otherwise we call $h^\circ$ \emph{outbound}. We use the analogous definition to call a broken line inbound or outbound.
\end{definition}

After extending an inbound broken line $\mathfrak{b}$ with endpoint $P\in\frak{u}$ in an unbounded chamber to a tropical disk $h^\circ$ by the previous proposition, we can further extend this tropical disk to a tropical curve $h$ by adding the segment between $P$ and $B_\infty$. 
Recall $\mathfrak{B}^{(k)}_{p,q}(P\in\mathfrak{u})$ from Definition~\ref{defi:Bpq} and recall $\mathfrak{H}_{p,q}(X,\beta,P)$ from Definition~\ref{def-Hpq}.
As a direct consequence of Proposition~\ref{prop-finite-extension-to-disks} the following proposition is proved in \cite{Gra2}, Corollary 3.7, and \cite{Gra2}, Proposition~4.30.
 
\begin{proposition}
\label{prop:corr}
Let $P$ be a general point in an unbounded chamber $\mathfrak{u}$ of $\mathscr{S}_{p+q}$. There is a surjective map
\[ \mu : \coprod_\beta \mathfrak{H}_{p,q}(X,\beta,P) \rightarrow \mathfrak{B}_{p,q}(P\in\mathfrak{u}) \]
such that for each $\mathfrak{b}\in\mathfrak{B}_{p,q}(P\in\mathfrak{u})$ the preimage $\mu^{-1}(\mathfrak{b})$ is finite and contained in a single component $\mathfrak{H}_{p,q}(X,\beta,P)$.  Moreover, we have
\[ a_{\mathfrak{b}}(\boldsymbol{q}) = \sum_{h\in\mu^{-1}(\mathfrak{b})} m_h(\boldsymbol{q}), \]
where $m_h(\boldsymbol{q})$ is the tropical multiplicity defined in Definition \ref{def:qtropmult}.
We say a tropical curve in $\mu^{-1}(\mathfrak{b})$ is obtained from $\mathfrak{b}$ by \emph{disk extension}.
\end{proposition}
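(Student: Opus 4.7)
The plan is to construct $\mu$ in two stages by reducing to Proposition~\ref{prop-finite-extension-to-disks}. Given $\mathfrak{b} \in \mathfrak{B}_{p,q}(P \in \mathfrak{u})$ with ending monomial $a_{\mathfrak{b}}(\boldsymbol{q})\, t^{d_{\mathfrak{b}}} y^{-p}$, first apply Proposition~\ref{prop-finite-extension-to-disks} to obtain the finite set $\rho^{-1}(\mathfrak{b})$ of tropical disks $h^\circ$ extending $\mathfrak{b}$, each having its unique $1$-valent vertex $V_\infty$ at $P$. The monomial $y^{-p} = z^{-p m_{\text{out}}}$ on the terminal segment of $\mathfrak{b}$ forces the incoming weight vector at $V_\infty$ to be $-p\, m_{\text{out}}$, so balancing fails at $V_\infty$ by exactly $p\, m_{\text{out}}$. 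I would restore balancing by attaching an unbounded leg of weight $p$ at $V_\infty$ in the direction $m_{\text{out}}$. The two edges at $V_\infty$ are then collinear of equal weight $p$, so $V_\infty$ becomes bivalent; by the convention in \cite{Gra1}, Definition~3.3, bivalent vertices are suppressed, and the two edges merge into a single edge through $P$. The result is a tropical curve $\mu(h^\circ) \in \mathfrak{H}_{p,q}(X,\beta,P)$ whose weight-$p$ unbounded leg passes through $P$, as required.

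Surjectivity and finite fibers of $\mu$ inherit from the corresponding properties of $\rho$ in Proposition~\ref{prop-finite-extension-to-disks}: any $h \in \mathfrak{H}_{p,q}(X,\beta,P)$ may be cut along its weight-$p$ leg at $P$, thereby creating a $1$-valent vertex there and converting the remaining combinatorial data into a tropical disk that extends a unique broken line in $\mathfrak{B}_{p,q}(P \in \mathfrak{u})$. The class $\beta$ is determined by the tropical curve through its intersection numbers with the elementary tropical curves of $(B,\mathscr{P})$ (as described before Lemma~\ref{lemma-degree-varphi}), and these intersections depend only on the break points and edges of $h^\circ$, which are fixed by $\mathfrak{b}$; the added weight-$p$ leg only contributes to the asymptotic data $\beta.B_\infty = p+q$. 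Hence $\mu^{-1}(\mathfrak{b})$ lands in a single $\mathfrak{H}_{p,q}(X,\beta,P)$.

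The central step, and the one I expect to be the main obstacle, is the multiplicity identity $a_{\mathfrak{b}}(\boldsymbol{q}) = \sum_{h \in \mu^{-1}(\mathfrak{b})} m_h(\boldsymbol{q})$, which I would prove by induction on the number of break points of $\mathfrak{b}$. The base case is the unique outbound broken line, whose extension is the straight tropical curve of multiplicity $1$ matching $a_{\mathfrak{b}}(\boldsymbol{q}) = 1$. For the inductive step, consider the first break along $\mathfrak{b}$ across a wall $\mathfrak{p}$ with $\boldsymbol{q}$-refined wall function $f_{\mathfrak{p}}(\boldsymbol{q})$: monomial transport writes the outgoing monomial as a summand of (incoming monomial) $\cdot f_{\mathfrak{p}}(\boldsymbol{q})^k$, where $k$ is the index of the incoming direction in $\Lambda/\Lambda_\mathfrak{p}$. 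The logarithmic form of $f_{\mathfrak{p}}(\boldsymbol{q})$ prescribed in \S\ref{subsec-qwalls} is engineered so that, upon expanding $\exp(\log f_{\mathfrak{p}})^k$, each summand contributes precisely the factor $(\boldsymbol{q}^{m_V/2} - \boldsymbol{q}^{-m_V/2})/(i\hbar)$ appearing in the $\boldsymbol{q}$-refined vertex multiplicity of Definition~\ref{def:qtropmult} at the trivalent vertex created by the disk extension; similarly, for slabs meeting singular points, the bounded-leg factor $m_E(\boldsymbol{q})$ arises from the singular slab function contribution. The delicate point is that this matching depends on the inductive scattering construction producing $\mathscr{S}_k(\boldsymbol{q})$ from $\mathscr{S}_0(\boldsymbol{q})$, whose consistency must be read as the $\boldsymbol{q}$-refined Gross--Pandharipande--Siebert tropical vertex identity; once verified locally at each break and singularity, multiplying contributions along $\mathfrak{b}$ and summing over $\rho^{-1}(\mathfrak{b})$ yields the claimed formula.
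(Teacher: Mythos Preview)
Your construction of $\mu$ via Proposition~\ref{prop-finite-extension-to-disks} followed by attaching the outgoing weight-$p$ leg is exactly what the paper does: the sentence immediately preceding the proposition describes this disk-extension-then-complete procedure, and the paper then simply cites \cite{Gra2}, Corollary~3.7 and Proposition~4.30 for the full statement rather than giving its own proof. So on the level of approach you are aligned with the paper, and in fact you supply more detail than the paper itself.

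Two points on the multiplicity argument. First, your base case is off: an outbound broken line in $\mathfrak{B}_q(P\in\mathfrak{u})$ has ending monomial $y^q$, hence corresponds to $p=-q<0$ and does not lie in $\mathfrak{B}_{p,q}$ for $p\geq 1$; there is no broken line with zero breaks in the relevant set. The induction must start elsewhere (e.g.\ with broken lines breaking only at initial slabs). Second, and more substantively, your inductive step presupposes that the coefficients of the inductively produced wall functions $f_{\mathfrak{p}}(\boldsymbol{q})$ themselves decompose as sums of $\boldsymbol{q}$-refined tropical multiplicities of the sub-trees feeding into $\mathfrak{p}$. This is precisely the content of the $\boldsymbol{q}$-refined tropical vertex theorem (Bousseau's extension of Gross--Pandharipande--Siebert), and it is the genuine work behind the cited results in \cite{Gra2}. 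Your sketch correctly identifies this as ``the delicate point'' but treats it as a local matching to be verified, when in fact it requires the full scattering--tropical correspondence established in those references. The paper sidesteps this by citation; your outline is the right shape for an independent proof but would need that correspondence as input.
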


\begin{definition}
Let $\mathfrak{B}_{p,q}(\beta,P\in\mathfrak{u})$ be the set of broken lines $\mathfrak{b}$ in $\mathfrak{B}_{p,q}(P\in\mathfrak{u})$ such that $\in\mu^{-1}(\mathfrak{b})$ is contained in $\mathfrak{H}_{p,q}(X,\beta,P)$. This is well-defined by the above proposition.
\end{definition}

Proposition~\ref{prop:corr} is further refined in \cite{MR21} in the situation where the base ring $A$ is $A_N$ and slab functions are chosen universal with respect to $A_N$ which we assume from now on.
We already explained how to associate to a tropical curves $h$ a cycle $\beta(h)\in H_1(B,B_\infty;\iota_\star\Lambda_N)$.
Given a cycle $\beta\in H_1(\overline{B},B_\infty; \iota_\star\Lambda_N)$, we obtain a monomial $s^{\mathsf{D}_N(\beta)}\in A_N$ that we will refer to by the simpler notation $s^\beta$. 
We quote from \cite{MR21} the following.
\begin{proposition} 
\label{prop:MR21}
For $\mathfrak{u}$ an unbounded chamber of $\mathscr{S}_{p+q}$,
 $\mathfrak{b}\in\mathfrak{B}_{p,q}(P\in\mathfrak{u})$ a broken line and $h\in\mu^{-1}(\mathfrak{b})$ a disk extension, we have
 $ d_\mathfrak{b} = \deg(h)$ and $\beta_\mathfrak{b} = \beta(h)$. 
\end{proposition}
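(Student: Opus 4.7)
The plan is to proceed by induction on the number $r$ of breakpoints of the broken line $\mathfrak{b}$, using the universal choice of slab functions with coefficients in $A_N$ to identify the $s$- and $t$-exponents accumulated at each break with the class and degree of a tropical sub-curve attached at that break. The base case $r=0$ is the case where $\mathfrak{b}$ is the straight segment from $B_\infty$ to $P$, for which the ending monomial is simply $z^{qm_{\text{out}}}$, giving $d_\mathfrak{b}=0$ and $\beta_\mathfrak{b}=0$. The disk extension $h$ of $\mathfrak{b}$ is then the single edge joining $B_\infty$ to $P$, which by construction has $\deg(h)=0$ and $\beta(h)=0$. So the base case holds trivially.

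For the inductive step, suppose $\mathfrak{b}$ has breakpoints $t_1<\cdots<t_r$ at walls $\mathfrak{p}_1,\ldots,\mathfrak{p}_r$, and write $\mathfrak{b}'$ for the broken line obtained by truncating $\mathfrak{b}$ after the $(r-1)$-st break and extending its last segment to infinity in the direction of the unbounded leg of $\mathfrak{b}$. By induction, $d_{\mathfrak{b}'}=\deg(h')$ and $\beta_{\mathfrak{b}'}=\beta(h')$ for some disk extension $h'$ of $\mathfrak{b}'$. At the last break of $\mathfrak{b}$ one multiplies by a monomial summand of $f_{\mathfrak{p}_r}^k$ (with $k$ the integer index from Definition \ref{defi:brokenline}); by the universal construction of the wall structure out of $\mathscr{S}_0(\boldsymbol{q})$ over the base ring $A_N$, each such monomial has the form $c\,s^{\beta_r}t^{\deg(\beta_r)}z^{m_r}$ for a definite tropical cycle class $\beta_r\in H_1(\overline{B},B_\infty;\iota_\star\Lambda_N)$, where the identification of the exponent in $A_N$ with $\mathsf{D}_N(\beta_r)$ uses Lemma~\ref{lemma-degree-varphi}. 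Hence $d_\mathfrak{b}=d_{\mathfrak{b}'}+\deg(\beta_r)$ and $\beta_\mathfrak{b}=\beta_{\mathfrak{b}'}+\beta_r$.

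On the tropical side, the disk extension $h$ of $\mathfrak{b}$ produced in Proposition~\ref{prop:corr} attaches at the vertex corresponding to the break $t_r$ an additional tropical sub-disk $h_r$ that realizes the monomial factor picked up at $\mathfrak{p}_r$. The invariance of the class and of the degree under gluing of tropical curves (which follows from the additivity of $\mathsf{D}_N$ on the homology level and from $\deg=\cdot\,c_1(\varphi)$) gives $\beta(h)=\beta(h')+\beta(h_r)$ and $\deg(h)=\deg(h')+\deg(h_r)$. Thus it suffices to show that for a single tropical sub-disk $h_r$ realizing the monomial factor picked up at $\mathfrak{p}_r$ one has $\beta(h_r)=\beta_r$ and $\deg(h_r)=\deg(\beta_r)$. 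For slab walls this is built into the normalization of the slab functions chosen in \cite{MR21} and recalled in \S\ref{subsec-pairing}; for non-slab walls generated by scattering it follows by a secondary induction on the order $k$ using the inductive construction of $\mathscr{S}_k$, with the same argument as above applied to the two broken lines whose product produces the scattering wall at that order.

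The main obstacle is the bookkeeping at the scattering walls: one must check that the combinatorial class appearing in the wall function produced by the scattering algorithm agrees with the tropical class $\beta(h_r)$ of the sub-disk. This is precisely the content of the universal construction over $A_N$ carried out in \cite{MR21}, which is what justifies citing the result in the form stated here. Once this identification is in place, the two equalities $d_\mathfrak{b}=\deg(h)$ and $\beta_\mathfrak{b}=\beta(h)$ follow immediately by combining the inductive step with the additivity of $\beta$ and $\deg$ under gluing of tropical curves along shared edges.
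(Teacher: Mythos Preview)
The paper does not give its own proof of this proposition: it is introduced with ``We quote from \cite{MR21} the following,'' and \cite{MR21} is listed as a work in preparation. So there is no paper proof to compare against; the statement is imported as a black box.

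Your plan is a reasonable outline of how such an argument could be organized, and you correctly isolate the crux: matching the $A_N$-exponent that a broken line accumulates at a wall with the tropical class $\beta(h_r)$ of the sub-disk attached at that break in the disk extension. But at exactly this point your argument becomes circular. For slabs you say it is ``built into the normalization of the slab functions chosen in \cite{MR21},'' and for scattering walls you invoke a ``secondary induction'' that again reduces to \cite{MR21}. In other words, you have decomposed the problem but then deferred the content-bearing step to the same reference the paper cites. That is not a gap in your logic, but it means the proposal is a sketch, not a proof.

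There are also some imprecisions in the induction that would need fixing if you fleshed this out. The truncation $\mathfrak{b}'$ you describe (``extending its last segment to infinity in the direction of the unbounded leg of $\mathfrak{b}$'') does not obviously yield a broken line with a disk extension $h'$ that is literally a sub-curve of $h$; the last segment of $\mathfrak{b}'$ goes in direction $-m_{\mathfrak{b}'}$, not in direction $m_{\text{out}}$, so $\mathfrak{b}'\notin\mathfrak{B}_{p',q}$ in general and Proposition~\ref{prop:corr} does not directly furnish $h'$. A cleaner bookkeeping is to work directly with tropical disks (not curves) at each stage, or to phrase the induction as matching the wall-function monomial at each break with the class of the sub-disk emanating from that vertex of $h$, summing over breaks rather than truncating.
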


Combining Proposition~\ref{prop:MR21} and \ref{eq-theta-parallel} and using $y=z^{m_{\text{out}}}$, the ending monomial of 
$\mathfrak{b}\in\mathfrak{B}_{p,q}(P\in\mathfrak{u})$ reads
\begin{equation}
\label{eq-bl-expression}
 \sum_{h\in\mu^{-1}(\mathfrak{b})} m_h(\boldsymbol{q}) s^{\beta(h)} t^{\deg(h)} y^{-p}.
 \end{equation}

We obtain the following theorem that proves Step (1) from the introduction.
Recall $W((\frak{u}_k)_{k\in\NN})$ and $\vt_q((\frak{u}_k)_{k\in\NN},\boldsymbol{q})$ from Definitions~\ref{defi:theta} and \ref{defi:thetaq}.

\begin{theorem} 
\label{prop-global-expression-for-W}
Assume that $(B,\P,\varphi)$ is a polarized integral affine $2$-fold with singularities and polyhedral decomposition
with unique asymptotic monomial $y=z^{m_\text{out}}$ arising from a toric del Pezzo degeneration.
For every sequence of unbounded chambers $(\frak{u}_k)_{k\in\NN}$, the Landau--Ginzburg potential $W:=W((\frak{u}_k)_{k\in\NN})$ with respect to the base ring $A_N$ satisfies
$$W/y=1+\sum_{h} m_h s^{\beta(h)} t^{\deg(h)} y^{-\deg(h)}$$
where the sum ranges over the tropical curves $h$ with two unbounded legs of weight $1$ and $p=\deg(h)-1\ge 1$ so that this second leg passes through a fixed general point $P\in B_\infty$.
More generally $\vt_q(\boldsymbol{q}):=\vt_q((\frak{u}_k)_{k\in\NN},\boldsymbol{q})$ satisfies, 
\begin{equation}
\label{eq-theta-expression}
 \vt_q(\boldsymbol{q}) = y^{q} + \sum_{p\geq 1}\sum_{\beta:\beta.E=p+q} R_{p,q}^{\text{trop}}(X,\beta,\boldsymbol{q})s^\beta t^{\deg(\beta)}y^{-p}.
\end{equation}
\end{theorem}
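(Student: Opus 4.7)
The plan is to reduce the statement to a bookkeeping identity between broken line contributions and tropical curve contributions, using the structural results already in hand. First I would fix any order $k$ and an unbounded chamber $\mathfrak{u}\subset\mathscr{S}_k(\boldsymbol{q})$ together with a general endpoint $P\in\mathfrak{u}$, and classify all broken lines $\mathfrak{b}\in\mathfrak{B}^{(k)}_{qm_{\text{out}}}(P\in\mathfrak{u})$ by their ending monomial. Proposition~\ref{prop:parallel} forces this ending monomial to be a power of $y=z^{m_{\text{out}}}$. Exactly one broken line does not break, namely the straight unbounded segment in direction $-qm_{\text{out}}$ ending at $P$ with monomial $y^q$; it contributes the summand $y^q$ at $t$-order $0$. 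Every other broken line breaks at a wall, so it has strictly positive $t$-order, and since $P$ lies arbitrarily far out in the unbounded direction while the final velocity $-m_{\mathfrak{b}}$ must reach $P$ from within the chamber, the exponent of $y$ on the final segment must be negative; this identifies the remaining broken lines with the sets $\mathfrak{B}^{(k)}_{p,q}(P\in\mathfrak{u})$ of Definition~\ref{defi:Bpq} as $p$ ranges over $\mathbb{Z}_{\geq 1}$. The independence statement of the theorem is then inherited from the corollary following Proposition~\ref{prop:parallel}, whose $\boldsymbol{q}$-refined variant is argued in exactly the same way.

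Next I would sum the contributions using \eqref{eq-theta-parallel}, obtaining
\[
\vt_q(\boldsymbol{q}) \;=\; y^q + \sum_{p\geq 1}\;\sum_{\mathfrak{b}\in\mathfrak{B}_{p,q}(P\in\mathfrak{u})} a_{\mathfrak{b}}(\boldsymbol{q})\,t^{d_\mathfrak{b}}\,y^{-p}.
\]
The ending monomial of each $\mathfrak{b}$ is then translated into a sum over disk extensions via \eqref{eq-bl-expression}, i.e.\ Proposition~\ref{prop:corr} combined with Proposition~\ref{prop:MR21}, which give $a_{\mathfrak{b}}(\boldsymbol{q})=\sum_{h\in\mu^{-1}(\mathfrak{b})}m_h(\boldsymbol{q})$, $d_{\mathfrak{b}}=\deg(h)$ and $\beta_{\mathfrak{b}}=\beta(h)$. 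Surjectivity of $\mu$ with finite fibres, together with the fact that $\mu^{-1}(\mathfrak{b})$ lies entirely in a single $\mathfrak{H}_{p,q}(X,\beta,P)$, lets me rearrange the double sum into a single sum indexed by tropical curves of class $\beta$, yielding
\[
\vt_q(\boldsymbol{q}) \;=\; y^q + \sum_{p\geq 1}\sum_{\beta}\Bigg(\sum_{h\in\mathfrak{H}_{p,q}(X,\beta,P)} m_h(\boldsymbol{q})\Bigg)s^{\beta}\,t^{\deg(\beta)}\,y^{-p},
\]
and the inner bracket is $R^{\text{trop}}_{p,q}(X,\beta,\boldsymbol{q})$ by definition.

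Finally I would pin down the range of $\beta$. A tropical curve $h\in\mathfrak{H}_{p,q}(X,\beta,P)$ has two unbounded legs of weights $p$ and $q$, so its image under the boundary map to $H_0(B_\infty,\iota_\star\Lambda_N)$ has value $p+q$; Lemma~\ref{lemma-degree-varphi} then gives $\deg(\beta)=p+q$, which corresponds under the identification $H^1(\overline{B},\iota_\star\widecheck{\Lambda}_N)\cong\Pic(X)$ to $\beta.E=p+q$, matching \eqref{eq-theta-expression}. The first formula for $W/y$ follows by specializing $q=1$ and $\boldsymbol{q}=1$, dividing by $y$, and using $p+1=\deg(\beta(h))=\deg(h)$ to rewrite $y^{-p-1}$ as $y^{-\deg(h)}$. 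I expect the only genuinely delicate point is the argument that non-trivial broken lines land in $y^{-p}$ with $p\geq 1$ rather than in $y^{p'}$ for some $0<p'<q$; the remaining steps are direct applications of already-cited results and essentially amount to unpacking the notation and interchanging finite sums.
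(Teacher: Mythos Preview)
Your proposal is correct and follows essentially the same approach as the paper's proof: both invoke \eqref{eq-theta-parallel} (via Proposition~\ref{prop:parallel}) to reduce to broken lines with ending monomial $y^{-p}$, then apply \eqref{eq-bl-expression} (Propositions~\ref{prop:corr} and~\ref{prop:MR21}) to pass to tropical curves, and finally use Lemma~\ref{lemma-degree-varphi} to pin down $\deg(\beta)=p+q$. Your write-up is simply more explicit than the paper's, in particular separating out the unique outbound broken line contributing $y^q$ and flagging the inequality $p\geq 1$; the paper absorbs both of these into the assertion \eqref{eq-theta-parallel} without further comment, so the point you call ``delicate'' is one the paper also leaves implicit.
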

\begin{proof}
By Definition \ref{defi:thetaq} and \eqref{eq-theta-parallel}, $\vt_q((\frak{u}_k)_{k\in\NN},\boldsymbol{q})=\sum_{k\ge 0}\vt^{(k)}_{q m_{\text{out}}}(\frak{u}_k,\boldsymbol{q})$ 
and
\begin{equation*}
\vt^{(k)}_{q m_{\text{out}}}(\mathfrak{u},\boldsymbol{q}) = \sum_{p\ge 1}\sum_{\mathfrak{b}\in\mathfrak{B}_{p,q}^{(k)}(\mathfrak{u},\boldsymbol{q})} a_{\mathfrak{b}}(\boldsymbol{q})t^{k}y^{-p}
\end{equation*}
By \eqref{eq-bl-expression},
\begin{align*}
\vt^{(k)}_{q m_{\text{out}}}(\mathfrak{u},\boldsymbol{q}) &= 
\sum_{{\beta:\deg(\beta)=k}\atop{p\ge 1}}\sum_{\mathfrak{b}\in\mathfrak{B}_{p,q}(\beta,\mathfrak{u},\boldsymbol{q})} a_{\mathfrak{b}}(\boldsymbol{q})t^{k}y^{-p}\\
&= \sum_{{\beta:\deg(\beta)=k}\atop{p\ge 1}}\sum_{\mathfrak{b}\in\mathfrak{B}_{p,q}(\beta,\mathfrak{u},\boldsymbol{q})}\sum_{h\in\mu^{-1}(\mathfrak{b})} 
m_h(\boldsymbol{q})s^{\beta(h)}t^{k}y^{-p}
\end{align*}
By Lemma~\ref{lemma-degree-varphi}, we have $\deg(\beta(h))=p+q$ for a toric del Pezzo degeneration. Inserting this property and the definition of $R_{p,q}^{\text{trop}}(X,\beta,\boldsymbol{q})$ yields the assertion.
\end{proof}

\subsection{Integrality of genus $0$ invariants --- relative and open}

\begin{proposition}
$R_{p,q}^{\text{trop}}$ is integral. If $\gcd(p,q)=1$, then $R_{p,q}^{\text{trop}}$ is divisible by $pq$.
\end{proposition}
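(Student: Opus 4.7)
The plan is to handle integrality and the refined $pq$-divisibility separately.

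For integrality, by Theorem~\ref{prop-global-expression-for-W} the coefficient $R_{p,q}^{\text{trop}}(X,\beta)$ is the $s^\beta t^{\deg\beta}y^{-p}$-coefficient of the theta function $\vt_q$, which by Definitions~\ref{defi:theta} and \ref{defi:Bpq} is a finite sum of ending-monomial coefficients $a_{\mathfrak{b}}\in A_N$ over broken lines $\mathfrak{b}\in\mathfrak{B}_{p,q}(\mathfrak{u})$ in an unbounded chamber. I would establish by induction on the $t$-order that every wall function in the canonical scattering diagram $\mathscr{S}_k$ lies in $\ZZ[\NE(X)][\Lambda_{\mathfrak{p}}][t]/(t^{k+1})$: the initial slab functions for a toric del Pezzo degeneration are of the form $1+a\,s^\beta z^m$ with $a\in\ZZ$, and the Gross--Siebert/Kontsevich--Soibelman scattering algorithm preserves integrality since it is generated by commutators of integer logarithmic derivations $z^m\partial_n$ acting by integer-coefficient formal power series. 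Consequently every $a_{\mathfrak{b}}\in\ZZ[\NE(X)]$, and the finite sum gives $R_{p,q}^{\text{trop}}(X,\beta)\in\ZZ$.

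For divisibility by $pq$ when $\gcd(p,q)=1$, I would combine Theorem~\ref{prop:trop} with a direct vertex-multiplicity analysis. For each $h\in\mathfrak{H}_{p,q}(X,\beta,P)$, let $V_p$ and $V_q$ be the vertices to which the weight-$p$ and weight-$q$ unbounded legs attach. After deforming higher-valent vertices to trivalent ones as in Definition~\ref{def:tropmult}, the multiplicity $m_{V_p}=p\cdot w\cdot|\bar u_1\wedge\bar u_2|$ is divisible by $p$; symmetrically $q\mid m_{V_q}$; and if $V_p=V_q$ the common trivalent vertex contributes $pq\cdot|\bar u_p\wedge\bar u_q|$. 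Since $\gcd(p,q)=1$, in every case $pq\mid\prod_V m_V$, so each $m_h\in pq\cdot\QQ$. Combined with the integrality of $R_{p,q}^{\text{trop}}(X,\beta)$ from the first part, and with the observation that the residual denominators $|\mathrm{Aut}(h)|\prod_{E\in L_\Delta}w_E^2$ arise from lattice data controlling interior edges rather than from the asymptotic weights $p,q$, summing the identity $m_h=pq\cdot r_h$ yields $pq\mid R_{p,q}^{\text{trop}}(X,\beta)$.

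The main obstacle is the rigorous justification that the leftover denominators from bounded legs and automorphisms, after summing across the tropical curves in $\mathfrak{H}_{p,q}(X,\beta,P)$, contain no hidden factor of $p$ or $q$. A cleaner alternative that avoids this obstacle is to establish $p\mid R_{p,q}^{\text{trop}}(X,\beta)$ from the vertex $V_p$ analysis together with integrality, and then $q\mid R_{p,q}^{\text{trop}}(X,\beta)$ via a Cadman--Chen-type symmetry generalizing Theorem~\ref{thm:cadman-chen} to the $(p,q)$ case, with the final divisibility by $pq$ following from coprimality.
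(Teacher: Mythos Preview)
Your integrality argument is essentially the paper's: both reduce to the fact that broken-line coefficients $a_{\mathfrak{b}}$ are integers, which follows from monomial transport against integer-coefficient wall functions. The paper simply cites Definition~\ref{defi:brokenline} for this; your induction on the scattering order is a correct but more elaborate way of saying the same thing.

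For the $pq$-divisibility you and the paper start from the same observation --- the vertices $V_p,V_q$ carrying the two unbounded legs force $pq\mid\prod_V m_V$ --- but the paper makes one move you do not: it works per broken line, not globally. The point is that all $h\in\mu^{-1}(\mathfrak{b})$ share the \emph{same} spine and hence the same vertices $V_p,V_q$; the curves in a single fibre of $\mu$ differ only in their subtree extensions, whose aggregated contribution is exactly what produces the integer $a_{\mathfrak{b}}$ via Proposition~\ref{prop:corr}. So the $pq$-factor from the spine vertices is common to the whole fibre, and the paper concludes that each integer $a_{\mathfrak{b}}$ itself carries the factor $pq$ (the primitivity hypothesis from $\gcd(p,q)=1$ is what rules out multiple covers that could spoil this). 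Your global sum over all tropical curves loses this fibrewise structure, which is why you are left fighting the denominators from $|\mathrm{Aut}(h)|$ and bounded-leg weights.

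Your proposed alternative route --- deduce $q\mid R_{p,q}^{\text{trop}}$ from a Cadman--Chen-type swap --- is not available here: the general relation $p^2R_{p,q}^g=q^2R_{q,p}^g$ is only announced in Remark~\ref{rem:cc} as forthcoming work, and Theorem~\ref{thm:cadman-chen} covers only the case $p=1$. So as written your proposal has a genuine gap: you correctly diagnose the obstacle but neither of your two proposed resolutions goes through with the tools in the paper. The fix is the localisation to individual $a_{\mathfrak{b}}$ described above.
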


\begin{proof}
Let $\mathfrak{u}$ be an unbounded chamber of $\mathscr{S}_{p+q}$. By Proposition \ref{prop:corr} we have
\[ R_{p,q}^{\text{trop}}(X,\beta) = \sum_{\mathfrak{b}\in\mathfrak{B}_{p,q}^{(p+q)}(P\in\mathfrak{u})} a_{\mathfrak{b}}, \]
with
\[ a_{\mathfrak{b}} = \sum_{h\in\mu^{-1}(\mathfrak{b})} m_h. \]
By definition of monomial transport (Definition \ref{defi:brokenline}) the coefficients $a_{\mathfrak{b}}$ of broken lines are integral. Hence $R_{p,q}^{\text{trop}}(X,\beta)$ is integral. If $\gcd(p,q)=1$, then every tropical curve $h\in\mu^{-1}(\mathfrak{b})\subset\mathfrak{H}_{p,q}(X,\beta,P)$ is primitive (not a multiple cover of another tropical curve) and has unbounded legs of weight $p$ and $q$, respectively. Hence, for any $\mathfrak{b}\in\mathfrak{B}_{p,q}^{(p+q)}(P\in\mathfrak{u})$ the coefficient $a_{\mathfrak{b}}$ is integral and has a factor of $pq$. This shows that $R_{p,q}^{\text{trop}}(X,\beta)$ is integral and divisible by $pq$.
\end{proof}

\begin{corollary}
If $\gcd(p,q)=1$, then $R_{p,q}$ is integral and divisible by $q$.
\end{corollary}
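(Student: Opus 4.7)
The plan is to deduce this corollary directly from the preceding proposition together with the tropical correspondence theorem (Theorem~\ref{prop:trop}), which relates the tropical invariants to the log invariants. Recall that Theorem~\ref{prop:trop} gives the identity
\[
R_{p,q}^{g,\trop}(X,\beta) = p \cdot R_{p,q}^{g}(X,\beta),
\]
and, specializing to genus zero via Definition~\ref{defi:Rq}, this becomes
\[
R_{p,q}^{\trop}(X,\beta) = p \cdot R_{p,q}(X,\beta).
\]

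The preceding proposition asserts that $R_{p,q}^{\trop}(X,\beta)$ is an integer, and moreover that it is divisible by $pq$ whenever $\gcd(p,q)=1$. Combining these two facts, when $\gcd(p,q)=1$ we may write $R_{p,q}^{\trop}(X,\beta) = pq \cdot k$ for some integer $k$, and therefore
\[
R_{p,q}(X,\beta) = \frac{R_{p,q}^{\trop}(X,\beta)}{p} = q \cdot k \in \ZZ,
\]
which establishes both that $R_{p,q}$ is integral and that it is divisible by $q$.

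The only subtle point worth noting (but not really an obstacle) is that a priori $R_{p,q}$ is merely a rational number, defined as a log Gromov--Witten integral. The integrality comes from dividing the integer $R_{p,q}^{\trop}$ by $p$; but since we in fact have the stronger divisibility by $pq$ from the proposition above, the division by $p$ is clean and leaves an integer multiple of $q$. No further argument is needed beyond invoking the stated identity and the divisibility.
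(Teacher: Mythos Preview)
Your proof is correct and is exactly the argument the paper intends: the corollary is stated without proof because it follows immediately from the preceding proposition together with the tropical correspondence $R_{p,q}^{\trop}=p\cdot R_{p,q}$ of Theorem~\ref{prop:trop}, precisely as you write.
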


A consequence of our main result \eqref{eq-series-with-N} together with \eqref{eq-open-closed} is then the following.

\begin{corollary}
The open Gromov-Witten invariant $n_{\beta+\beta_0}(K_X)$ is integral and divisible by $\beta.E-1$.
\end{corollary}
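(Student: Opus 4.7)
The plan is to exploit the two expressions for the primitive theta function $W=\vt_1$ divided by $y$: one in terms of local Gromov--Witten invariants of $K_{\widehat X}$, and one in terms of tropical invariants. Comparing coefficients will reduce the corollary to the integrality and divisibility statement of the immediately preceding proposition.

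First I would read off the coefficients of $W/y$ from the main result \eqref{eq-series-with-N}, rewriting $N(K_{\widehat X},\pi^*\beta - C) = n_{\beta+\beta_0}(K_X)$ via the open-closed correspondence \eqref{eq-open-closed}. Setting $p = \beta.E - 1$, this gives the expansion
\[
W/y \;=\; 1 + \sum_{p\geq 1}\sum_{\beta:\beta.E=p+1} (-1)^{p+1}\, n_{\beta+\beta_0}(K_X)\, s^\beta t^{\deg(\beta)}y^{-p-1}.
\]
On the other hand, specializing Theorem \ref{prop-global-expression-for-W} (that is, formula \eqref{eq-theta-expression} with $q=1$) and dividing by $y$ gives
\[
W/y \;=\; 1 + \sum_{p\geq 1}\sum_{\beta:\beta.E=p+1} R_{p,1}^{\mathrm{trop}}(X,\beta)\, s^\beta t^{\deg(\beta)}y^{-p-1}.
\]

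Next I would compare the two expansions term by term in the ring $A_N\lfor t,y^{-1}\rfor$, whose monomials $s^\beta t^{\deg(\beta)}y^{-p-1}$ are linearly independent. This forces the coefficient identity
\[
n_{\beta+\beta_0}(K_X) \;=\; (-1)^{p+1}\, R_{p,1}^{\mathrm{trop}}(X,\beta)\qquad \text{with } p=\beta.E-1.
\]

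Finally I would invoke the proposition proved just above, which asserts that $R_{p,q}^{\mathrm{trop}}(X,\beta)$ is integral and divisible by $pq$ whenever $\gcd(p,q)=1$. Applied with $(p,q)=(\beta.E-1,\,1)$, for which the gcd condition is automatic, this yields integrality of $R_{p,1}^{\mathrm{trop}}(X,\beta)$ together with divisibility by $p=\beta.E-1$. Both properties are inherited by $n_{\beta+\beta_0}(K_X)$ through the coefficient identity above, proving the corollary. No step of this plan is genuinely hard: the only thing to check carefully is the indexing convention matching the coefficient $(-1)^{p+1}$ in \eqref{eq-series-with-N} against the positive tropical coefficient in \eqref{eq-theta-expression}, which is a bookkeeping verification rather than a substantive obstacle.
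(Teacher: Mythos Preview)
Your proof is correct and follows essentially the same route as the paper: the paper states the corollary as ``a consequence of our main result \eqref{eq-series-with-N} together with \eqref{eq-open-closed}'', which amounts precisely to your coefficient comparison yielding $n_{\beta+\beta_0}(K_X)=(-1)^{p+1}R_{p,1}^{\mathrm{trop}}(X,\beta)$ with $p=\beta.E-1$, followed by the divisibility proposition applied with $(p,q)=(p,1)$.
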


\begin{remark}
In fact, there is a stronger statement: If $\gcd(p,q)=1$, then $R_{p,q}$ is divisible by $q^2$. This requires a generalization of a result by Cadman and Chen \cite{CC} (see Remark \ref{rem:cc}) and will be proved in forthcoming work \cite{eval}. Related integrality results were recently proved also in \cite{Yu}.
\end{remark}

\section{Theta calculations}
\label{section-theta}

In this section, we generalize the theta function multiplication rule from its genus zero version in \cite{GHS}, assuming $\dim B=2$, to arbitrary genus and then use the rule to prove relations among the two-point invariants that we introduced in the previous section.

\begin{proposition}
\label{prop:multstr} 
Let $(B,\P,\varphi)$ be an integral affine manifold with simple singularities where $\dim B=2$. Let $\frak{u}$ be an unbounded chamber in the wall structure $\cS_k$ for some $k$ and let $p$ and $q$ be asymptotic directions of $(B,\P)$.
The multiplicative structure of the corresponding theta functions is given by
\begin{equation}
\label{eq:multiplication}
\vt_p(\mathfrak{u},\boldsymbol{q}) \cdot \vt_q(\mathfrak{u},\boldsymbol{q}) = \sum_r \alpha_{p,q}^r(\mathfrak{u},\boldsymbol{q}) \cdot \vt_r(\mathfrak{u},\boldsymbol{q}) \mod t^{k+1}
\end{equation}
where the sum ranges over all asymptotic directions $r$ of $(B,\P)$.
If $r$ is an asymptotic direction of $\mathfrak{u}$, then the structure constant satisfies
\[ \alpha_{p,q}^r(\mathfrak{u},\boldsymbol{q}) = \sum_{(\mathfrak{b}_1,\mathfrak{b}_2)} a_{\mathfrak{b}_1}(\boldsymbol{q})a_{\mathfrak{b}_2}(\boldsymbol{q}). \]
 where the sum is over all pairs $(\mathfrak{b}_1,\mathfrak{b}_2)$ of broken lines with asymptotics $p$, $q$, ending in a general point of $\mathfrak{u}$, and with the property that $m_{\mathfrak{b}_1}+m_{\mathfrak{b}_2}=r$.
\end{proposition}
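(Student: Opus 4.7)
My plan is to combine a direct broken-line multiplication of the two theta functions with the basis property of $\boldsymbol{q}$-refined theta functions, and then extract the structure constants $\alpha^r_{p,q}$ for $r$ asymptotic to $\mathfrak{u}$ by comparing coefficients of monomials at a point $P$ chosen far out in $\mathfrak{u}$ along the direction $r$. I break the argument into three steps.

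First, fix a general point $P\in\mathfrak{u}$ and substitute the Definition~\ref{defi:thetaq} expansions of $\vt_p(\mathfrak{u},\boldsymbol{q})$ and $\vt_q(\mathfrak{u},\boldsymbol{q})$ into the product. Expanding and collecting terms modulo $t^{k+1}$ gives
\begin{equation*}
\vt_p(\mathfrak{u},\boldsymbol{q})\cdot\vt_q(\mathfrak{u},\boldsymbol{q}) \equiv \sum_{(\mathfrak{b}_1,\mathfrak{b}_2)} a_{\mathfrak{b}_1}(\boldsymbol{q})\,a_{\mathfrak{b}_2}(\boldsymbol{q})\,t^{d_{\mathfrak{b}_1}+d_{\mathfrak{b}_2}}\,z^{m_{\mathfrak{b}_1}+m_{\mathfrak{b}_2}}\pmod{t^{k+1}},
\end{equation*}
summed over pairs $\mathfrak{b}_i$ of broken lines of $t$-order at most $k$ with asymptotic monomials $z^p$, $z^q$ ending at $P$. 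On the other hand, $\vt_p\cdot\vt_q$ is again a global section of $\mathcal{O}_{\widecheck{X}_k}$, and by the $\boldsymbol{q}$-refined analogue of the theta-function basis result in \cite{CPS,GHS}, which holds by running the broken-line construction verbatim over $A\llbracket\hbar\rrbracket$ in place of $A$, the theta functions $\{\vt_r\}_r$ indexed by asymptotic directions $r$ of $(B,\mathscr{P})$ freely generate $\Gamma(\widecheck{X}_k,\mathcal{O})$ over $A\llbracket\hbar\rrbracket[t]/(t^{k+1})$. Hence there is a unique expansion $\vt_p\cdot\vt_q\equiv \sum_r\alpha^r_{p,q}(\mathfrak{u},\boldsymbol{q})\,\vt_r \pmod{t^{k+1}}$.

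Second, to pin down $\alpha^r_{p,q}$ for an asymptotic direction $r$ of $\mathfrak{u}$, I choose $P$ very far inside $\mathfrak{u}$ along the asymptotic ray in direction $r$. At such a $P$ the only broken line ending at $P$ with ending monomial $z^r$ at $t$-order $0$ is the unbroken straight ray from infinity in direction $-r$, which is the distinguished order-zero summand of $\vt_r$ with coefficient $1$. Broken lines with asymptotic direction $r'\neq r$ at $t$-order $0$ do not break and have ending monomial $z^{r'}\neq z^r$. Comparing the coefficient of $z^r$ modulo $t$ on both sides yields $\alpha^r_{p,q}\equiv \sum_{(\mathfrak{b}_1,\mathfrak{b}_2)}a_{\mathfrak{b}_1}a_{\mathfrak{b}_2}\pmod{t}$ over order-zero pairs with $m_{\mathfrak{b}_1}+m_{\mathfrak{b}_2}=r$. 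An induction on the $t$-adic filtration then recovers the full formula: at stage $t^n$, one subtracts the known lower-order contributions of all $\vt_{r'}$ to the coefficient of $z^r$ at $P$, and the remaining equation reads off the $t^n$-component of $\alpha^r_{p,q}$ as exactly the sum of $a_{\mathfrak{b}_1}(\boldsymbol{q})\,a_{\mathfrak{b}_2}(\boldsymbol{q})$ over pairs of total order $n$ with $m_{\mathfrak{b}_1}+m_{\mathfrak{b}_2}=r$.

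The main obstacle is the bookkeeping in this induction: broken lines of positive $t$-order from various asymptotic directions may produce the ending monomial $z^r$ at $P$, and one must verify that subtracting the already-determined $\vt_{r'}$-summands cleanly isolates $\alpha^r_{p,q}$ at each new order. This relies on two finiteness inputs: any broken line ending at $P$ with ending monomial $z^r$ and positive $t$-order must break at walls lying within a bounded distance of the compact cells of $\mathscr{P}$ (by an argument in the spirit of Proposition~\ref{prop:parallel}), so only finitely many breaking configurations occur at each order; and uniqueness of the basis expansion then forces the claimed identity. A secondary subtlety is the possible $\boldsymbol{q}$-non-commutativity of local monomials, but both sides are expanded in the same local chart at the same point $P$, so any such factors cancel in the coefficient comparison.
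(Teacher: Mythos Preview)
Your argument reproduces the strategy behind \cite{GHS}, Theorem~3.24, which is all the paper invokes: its entire proof is the remark that the statement is a $\boldsymbol{q}$-refined version of that theorem and that the broken-line combinatorics there generalize literally to the $\boldsymbol{q}$-refined setting. So you are doing more than the paper, namely sketching the GHS argument itself.

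That said, your execution departs from GHS in a way that introduces both unnecessary work and a genuine gap. The key geometric input in GHS is stronger than what you use: once $P$ is sufficiently far along the ray in direction $r$ inside $\mathfrak{u}$, the only broken line of \emph{any} asymptotic direction and \emph{any} $t$-order $\le k$ ending at $P$ with ending $\Lambda$-direction exactly $r$ is the trivial straight line with asymptotic $r$. This kills every contribution of $\vt_{r'}$ to the $z^r$-coefficient at $P$ except the single coefficient-$1$ term from $\vt_r$, and the formula for $\alpha^r_{p,q}$ drops out in one step with no induction. You only assert this vanishing at $t$-order $0$ and then propose a $t$-adic induction; but that induction, as written, requires at stage $t^n$ the values $[\alpha^{r'}_{p,q}]_a$ for \emph{all} asymptotic directions $r'$ of $(B,\mathscr{P})$ and all $a<n$, in order to subtract their contributions to the $z^r$-coefficient at $P$. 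Your argument only determines $\alpha^{r'}_{p,q}$ for $r'$ asymptotic to $\mathfrak{u}$, so the subtraction is not fully specified. One can patch this by running the induction simultaneously over all unbounded chambers and invoking chamber-independence of the structure constants (a consequence of the global basis property you cite), but that detour is precisely what the stronger vanishing statement avoids.
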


\begin{proof}
The statement is a $\boldsymbol{q}$-refined version of \cite{GHS}, Theorem 3.24. 
The proof of the theorem in \cite{GHS} relies only on the combinatorics of broken lines and generalizes literally to this $\boldsymbol{q}$-refined situation.
\end{proof}

Note that $\alpha_{0,q}^r(\boldsymbol{q})=0$ unless $r=q$ in which case $\alpha_{0,q}^q(\boldsymbol{q})=1$, similarly when $q=0$. On the other hand, $\alpha_{p,q}^0(\boldsymbol{q})$ is typically non-trivial.
Proposition~\ref{prop:multstr} receives a much more explicit form if $B$ is asymptotically cylindrical.

\begin{proposition} 
\label{prop:theta}
Let $B$ be the fan picture of a toric del Pezzo degeneration and let $m_{\text{out}}$ denote the primitive outward pointing asymptotic monomial. We refer to 
the multiple $rm_{\text{out}}$ of $m_{\text{out}}$ simply by $r$. If $r=p+q$ then
 $\alpha_{p,q}^{r}(\boldsymbol{q})=1$ and otherwise
\begin{equation}
\label{eq-alpha-expression}
\alpha_{p,q}^r(\boldsymbol{q}) = \sum_{\beta : \beta.E=p+q-r}(R_{p-r,q}^{\text{trop}}(\beta,\boldsymbol{q}) + R_{q-r,p}^{\text{trop}}(\beta,\boldsymbol{q}))s^\beta t^{\text{deg}(\beta)}.
\end{equation}
\end{proposition}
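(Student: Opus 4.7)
The plan is to apply the general multiplication rule of Proposition~\ref{prop:multstr} and then use the very restricted shape of broken lines in an unbounded chamber that follows from Proposition~\ref{prop:parallel}. Fix an unbounded chamber $\mathfrak{u}$ of $\cS_k(\boldsymbol{q})$ and a general endpoint $P\in\mathfrak{u}$; by Proposition~\ref{prop:multstr} we have
\[
\alpha_{p,q}^{r}(\mathfrak{u},\boldsymbol{q})
=\sum_{(\mathfrak{b}_{1},\mathfrak{b}_{2})} a_{\mathfrak{b}_{1}}(\boldsymbol{q})\,a_{\mathfrak{b}_{2}}(\boldsymbol{q}),
\]
summed over pairs of broken lines with asymptotic monomials $y^{p}$, $y^{q}$ ending at $P$ and satisfying $m_{\mathfrak{b}_{1}}+m_{\mathfrak{b}_{2}}=r\,m_{\mathrm{out}}$.

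Next I carry out a case analysis on the type of each broken line. Since $\mathfrak{u}$ is unbounded, Proposition~\ref{prop:parallel} forces $m_{\mathfrak{b}_{i}}=s_{i}m_{\mathrm{out}}$ with $s_{i}\in\mathbb{Z}$, and by Definition~\ref{defi:Bpq} and \eqref{eq-theta-parallel} each $\mathfrak{b}_{i}$ is either the unique outbound straight broken line ($s_{i}=p$, resp.\ $q$, with coefficient $1$ and $t$-order $0$) or an inbound broken line with $s_{i}=-p_{i}$ for some integer $p_{i}\geq 1$. Imposing $s_{1}+s_{2}=r\geq 0$ leaves exactly three possibilities: (i) both outbound, forcing $r=p+q$ and contributing $1$; (ii) $\mathfrak{b}_{1}$ outbound and $\mathfrak{b}_{2}\in\mathfrak{B}_{p-r,q}(P\in\mathfrak{u})$, which requires $p-r\geq 1$; (iii) $\mathfrak{b}_{2}$ outbound and $\mathfrak{b}_{1}\in\mathfrak{B}_{q-r,p}(P\in\mathfrak{u})$, requiring $q-r\geq 1$. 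The case of both broken lines being inbound is excluded because $s_{1}+s_{2}\leq-2<0$. This already yields $\alpha_{p,q}^{p+q}(\boldsymbol{q})=1$.

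For $r\neq p+q$ the remaining two cases give
\[
\alpha_{p,q}^{r}(\boldsymbol{q})
=\!\!\sum_{\mathfrak{b}\in\mathfrak{B}_{p-r,q}(P\in\mathfrak{u})}\!\! a_{\mathfrak{b}}(\boldsymbol{q})\,t^{d_{\mathfrak{b}}}
+\!\!\sum_{\mathfrak{b}\in\mathfrak{B}_{q-r,p}(P\in\mathfrak{u})}\!\! a_{\mathfrak{b}}(\boldsymbol{q})\,t^{d_{\mathfrak{b}}},
\]
with the convention (already present in Definition~\ref{defi:Bpq}) that $\mathfrak{B}_{p',q'}=\varnothing$ when $p'\leq 0$, matching the vanishing $R^{\mathrm{trop}}_{p',q'}=0$. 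Finally, I apply Proposition~\ref{prop:corr} to rewrite $a_{\mathfrak{b}}(\boldsymbol{q})=\sum_{h\in\mu^{-1}(\mathfrak{b})}m_{h}(\boldsymbol{q})$, combined with Proposition~\ref{prop:MR21} to identify $t^{d_{\mathfrak{b}}}=t^{\deg(\beta(h))}$ and to record the curve class via $s^{\beta(h)}$. Lemma~\ref{lemma-degree-varphi} then says $\deg(\beta(h))=\beta(h).B_{\infty}=(p-r)+q=p+q-r$ in case (ii), and the analogous identity in case (iii), so the $\beta$-constraint $\beta.E=p+q-r$ is exactly what appears. Grouping broken lines in each case by curve class $\beta$ and invoking the definition of $R^{\mathrm{trop}}_{p',q'}(X,\beta,\boldsymbol{q})$ as the sum of $m_{h}(\boldsymbol{q})$ over tropical curves of class $\beta$ with unbounded legs of weights $p'$ and $q'$ through $P$, the two sums become $R^{\mathrm{trop}}_{p-r,q}(X,\beta,\boldsymbol{q})$ and $R^{\mathrm{trop}}_{q-r,p}(X,\beta,\boldsymbol{q})$, which reproduces \eqref{eq-alpha-expression}.

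The proof is essentially bookkeeping once Proposition~\ref{prop:parallel} has eliminated all non-trivial ``sideways'' monomials in an unbounded chamber; the main subtle point to verify carefully is that $\boldsymbol{q}$-refinement enters only through the coefficients $a_{\mathfrak{b}}(\boldsymbol{q})$ and $m_{h}(\boldsymbol{q})$, so the combinatorial case analysis of broken lines is unchanged from the unrefined case, and that the two naturally asymmetric contributions cleanly reassemble as the symmetric expression in the statement thanks to the vanishing conventions for $R^{\mathrm{trop}}_{p',q'}$ with $p'\leq 0$.
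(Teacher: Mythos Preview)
Your proof is correct and follows essentially the same approach as the paper: apply Proposition~\ref{prop:multstr}, use Proposition~\ref{prop:parallel} to force $m_{\mathfrak{b}_i}\in\ZZ m_{\text{out}}$ and reduce to the three cases (both outbound / one outbound), then invoke Propositions~\ref{prop:corr} and~\ref{prop:MR21} to convert the inbound broken line sums into $R^{\text{trop}}_{p-r,q}$ and $R^{\text{trop}}_{q-r,p}$. If anything, your write-up is slightly more explicit than the paper's in invoking Lemma~\ref{lemma-degree-varphi} to pin down the constraint $\beta.E=p+q-r$.
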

\begin{proof}
The statement is a consequence of Propositions \ref{prop:parallel}, \ref{prop:corr}, \ref{prop:MR21} and \ref{prop:multstr} as follows. 
By Proposition \ref{prop:multstr} the structure constants $\alpha_{p,q}^r(\boldsymbol{q})$ are given by sums over pairs of broken lines $(\mathfrak{b}_1,\mathfrak{b}_2)$ with $m_{\mathfrak{b}_1}+m_{\mathfrak{b}_2}=rm_{\text{out}}$. 
By Proposition \ref{prop:parallel} we have $m_{\mathfrak{b}_1}=r_1m_{\text{out}}$ and $m_{\mathfrak{b}_2}=r_2m_{\text{out}}$ for some $r_1,r_2\in\mathbb{Z}\setminus\{0\}$, such that $r_1+r_2=r\geq 0$. So we have $r_i>0$ for at least one $i\in\{1,2\}$. 
By Proposition \ref{prop:parallel} this means the corresponding $\mathfrak{b}_i$ is contained in the ray in direction $m_{\text{out}}$ that connects the endpoint $P$ with $B_\infty$ inside $\frak{u}$, in particular $a_{\mathfrak{b}_i}=1$. Similarly, $r_1>0$ implies $r_1=p$ and $r_2>0$ implies $r_2=q$. 
If both $r_1$ and $r_2$ are positive, we readily conclude $\alpha_{p,q}^r(\boldsymbol{q})=1$. Next assume that only one of $r_1,r_2$ is positive, say $r_1$. 
Since $r\ge 0$, we need $|r_2|<r_1$. Replacing $r_1=p$ by $p-r$ the broken lines $\mathfrak{b}_1$ and $\mathfrak{b}_2$ together give an element $\frak{b}\in \mathfrak{B}_{p-r,q}(X,P\in\mathfrak{u})$ with two unbounded legs weights of $p-r$ and $q$ and the first leg contains $P$. 
By Proposition \ref{prop:corr}, $a_{\mathfrak{b}_1}(\boldsymbol{q})a_{\mathfrak{b}_2}(\boldsymbol{q})=a_{\mathfrak{b}}(\boldsymbol{q})=\sum_{h\in\mu^{-1}(\frak{b})} m_h(\boldsymbol{q})$ and summation over such pairs $(\mathfrak{b}_1,\mathfrak{b}_2)$ gives the first term in the sum of the claimed formula. The second term similarly comes from the case $r_2>0$.
\end{proof}

For the purpose of structural clarity in the next lemma, we introduce the notation 
\begin{equation}
\label{eq-def-Rpq}
\mathcal{R}_{p,q} := \sum_{\beta:\beta.E=p+q}R^{\text{trop}}_{p,q}(\beta,\boldsymbol{q})s^\beta t^{\text{deg}(\beta)}  = \sum_{\beta:\beta.E=p+q}pR_{p,q}(X,\beta,\boldsymbol{q})s^\beta t^{\text{deg}(\beta)} 
\end{equation}
with last equality given in Theorem~\ref{prop:trop}. The expressions for $\vt_q$ from \eqref{eq-theta-expression} and for $\alpha_{p,q}^r(\boldsymbol{q})$ from \eqref{eq-alpha-expression} receive the form

\begin{align}
\label{eq-vt-compact}
\vt_q(\boldsymbol{q}) &= y^{q} + \sum_{p\geq 1}\mathcal{R}_{p,q}y^{-p},
\\
\label{eq-alpha-compact}
\alpha_{p,q}^r(\boldsymbol{q}) &=\mathcal{R}_{p-r,q}+\mathcal{R}_{q-r,p}.
\end{align}

Inserting both expressions into the multiplication rule \eqref{eq:multiplication} we obtain relations among the invariants $R_{p,q}^g(X,\beta)$. The multiplication rule implies that powers of $\vt_1(\boldsymbol{q})$ generate the $A_N\lfor t,\hbar\rfor$-algebra $\oplus_{q\ge 0}A_N\lfor t,\hbar\rfor \cdot \vt_q$. 
The resulting relations determine all $R_{p,q}^g(X,\beta)$ upon knowing $R_{p,1}^g(X,\beta)$ or, equivalently, upon knowing $R_{1,q}^g(X,\beta)$. We will know the invariants $R_{1,q}^g(X,\beta)$ from studying the blow-up of $X$ in the next section.

\begin{lemma}
\label{lem:cc}
The relation for invariants with $p+q=n+1$ obtained from the multiplication $\vt_1(\boldsymbol{q})\cdot\vt_k(\boldsymbol{q})$ for $n>k$ takes the form
\[ \mathcal{R}_{n-k,k+1}+\sum_{r=1}^{k-1}\mathcal{R}_{k-r,1}\mathcal{R}_{n-k,r} = \mathcal{R}_{n-k+1,k}+\mathcal{R}_{n,1}+\sum_{a+b=n-k}\mathcal{R}_{a,1}\mathcal{R}_{b,k}. \]
\end{lemma}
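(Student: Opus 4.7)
\emph{Plan.} The plan is to apply the theta multiplication rule of Proposition~\ref{prop:multstr} to the product $\vt_1\cdot\vt_k$, identify the nonzero structure constants $\alpha_{1,k}^{r}$ via~\eqref{eq-alpha-compact}, expand every theta function using~\eqref{eq-vt-compact}, and then compare the coefficient of $y^{-(n-k)}$ on the two sides.

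First I would determine the nonzero $\alpha_{1,k}^{r}$. We have $\alpha_{1,k}^{k+1}=1$ from the highest-weight normalization, and for every other asymptotic direction $r\ge 0$ formula~\eqref{eq-alpha-compact} gives $\alpha_{1,k}^{r}=\mathcal{R}_{1-r,k}+\mathcal{R}_{k-r,1}$. Since $\mathcal{R}_{p,q}=0$ whenever $p\le 0$, the first summand vanishes for $r\ge 1$ and the second for $r\ge k$, so together with $\vt_0=1$ the multiplication rule reduces to
\[
\vt_1\,\vt_k \;=\; \vt_{k+1} \;+\; \sum_{r=1}^{k-1}\mathcal{R}_{k-r,1}\,\vt_r \;+\; \bigl(\mathcal{R}_{1,k}+\mathcal{R}_{k,1}\bigr),
\]
valid modulo $t^{N+1}$ for any chosen wall-structure order $N$, and hence as a formal identity.

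Next, substituting~\eqref{eq-vt-compact} into the left-hand side produces
\[
\vt_1\,\vt_k \;=\; y^{k+1} + y\sum_{p\ge 1}\mathcal{R}_{p,k}\,y^{-p} + y^{k}\sum_{p\ge 1}\mathcal{R}_{p,1}\,y^{-p} + \sum_{a,b\ge 1}\mathcal{R}_{a,1}\mathcal{R}_{b,k}\,y^{-(a+b)}.
\]
Using that $n>k$ implies $n-k\ge 1$, so the $y^{k+1}$ summand contributes nothing at the power $y^{-(n-k)}$, the coefficient of $y^{-(n-k)}$ on this side equals $\mathcal{R}_{n-k+1,k}+\mathcal{R}_{n,1}+\sum_{a+b=n-k}\mathcal{R}_{a,1}\mathcal{R}_{b,k}$. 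Likewise, on the right-hand side the constant $\mathcal{R}_{1,k}+\mathcal{R}_{k,1}$ does not contribute at $y^{-(n-k)}$, the term $\vt_{k+1}$ contributes $\mathcal{R}_{n-k,k+1}$, and each $\vt_{r}$ with $1\le r\le k-1$ contributes $\mathcal{R}_{n-k,r}$, giving $\mathcal{R}_{n-k,k+1}+\sum_{r=1}^{k-1}\mathcal{R}_{k-r,1}\mathcal{R}_{n-k,r}$. Equating the two expressions yields the claimed identity.

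The whole derivation is careful bookkeeping and I do not foresee a genuine obstacle. The only point that needs attention is confirming which $\alpha_{1,k}^{r}$ are nonzero, in particular that the boundary values $r=k$ (where $\mathcal{R}_{0,1}=0$) and $r=0$ (contributing only to $\vt_0=1$, which is invisible at strictly negative $y$-power since $n>k$) are handled correctly.
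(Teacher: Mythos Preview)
Your proposal is correct and follows essentially the same approach as the paper's proof: both compute $\vt_1\cdot\vt_k$ in two ways---once by directly multiplying the Laurent expansions~\eqref{eq-vt-compact} and once via the multiplication rule of Proposition~\ref{prop:multstr} with the structure constants~\eqref{eq-alpha-compact}---and then compare the coefficient of $y^{-(n-k)}$. Your handling of the boundary cases $r=0$, $r=k$, and $r=k+1$ is accurate and mirrors the paper's reasoning.
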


\begin{proof}
We insert \eqref{eq-vt-compact} into the product $\vt_1(\boldsymbol{q})\cdot\vt_k(\boldsymbol{q})$ and get
\[ \vt_1(\boldsymbol{q})\cdot\vt_k(\boldsymbol{q}) = \left(y + \sum_{a=1}^\infty \mathcal{R}_{a,1}y^{-a} \right)\left(y^k + \sum_{b=1}^\infty \mathcal{R}_{b,k}y^{-b}\right). \]
After expanding the product on the right, the coefficient of $y^{-(n-k)}$ yields the right hand side of identity in the assertion. 

We will derive the left hand side of the claimed identity from the product rule of Proposition~\ref{prop:multstr} where we insert the expression \eqref{eq-alpha-compact} to obtain
\[ \vt_1(\boldsymbol{q})\cdot\vt_k(\boldsymbol{q}) = \vt_{k+1}(\boldsymbol{q}) + \sum_{r=0}^{k} \mathcal{R}_{k-r,1}\vt_r(\boldsymbol{q}) + \sum_{r=0}^{k} \mathcal{R}_{1-r,k}\vt_r(\boldsymbol{q}). \]
Since $\mathcal{R}_{p,q}$ is zero for $p<1$ and $\vt_0(\boldsymbol{q})=1$, the right sum equals $\mathcal{R}_{1,k}$.
We insert \eqref{eq-vt-compact} for $\vt_{k+1}(\boldsymbol{q})$ and $\vt_r(\boldsymbol{q})$ to find
\begin{equation*}
\vt_1(\boldsymbol{q})\cdot\vt_k(\boldsymbol{q}) = y^{k+1} + \sum_{p=1}^\infty \mathcal{R}_{p,k+1}y^{-p}
+ \sum_{r=0}^{k} \mathcal{R}_{k-r,1}\left(y^r+\sum_{p=1}^\infty \mathcal{R}_{p,r}y^{-p}\right) + \mathcal{R}_{1,k}.
\end{equation*}

We claim that the coefficient of $y^{-(n-k)}$ in this expression is the left hand side of the identity in the assertion.
Indeed, the first sum contributes $\mathcal{R}_{n-k,k+1}$ and the coefficient of $y^{-(n-k)}$ in the second sum is the one where $p=n-k$, so it reads
$\sum_{r=0}^{k} \mathcal{R}_{k-r,1} \mathcal{R}_{n-k,r}$.
Since $\mathcal{R}_{0,1}=0$ and $\mathcal{R}_{n-k,0}=0$, this sum yields the corresponding term in the assertion.
\end{proof}

\begin{theorem}
\label{thm:cadman-chen}
For every $g\geq 0$ and curve class $\beta$ we have
\[ R_{1,n}^g(X,\beta) = n^2R_{n,1}^g(X,\beta) \]
and, equivalently, $R_{1,n}^{g,\trop}(X,\beta) = nR_{n,1}^{g,\trop}(X,\beta)$.
\end{theorem}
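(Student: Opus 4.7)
The strategy is to prove the equivalent identity $\mathcal{R}_{1,n}=n\,\mathcal{R}_{n,1}$ in $A_N\lfor\hbar,t\rfor$ for every $n\ge 1$, with $\mathcal{R}_{p,q}$ as in \eqref{eq-def-Rpq}. Once this is in hand, unpacking the definition of $\mathcal{R}_{p,q}$ and invoking Theorem~\ref{prop:trop} to match coefficients of $\hbar^{2g}s^\beta t^{\deg\beta}$ will deliver both equivalent forms in the statement.

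First I form the linear combination $\sum_{k=1}^{n-1}L_k$ of the $n-1$ relations $L_k=0$ produced by Lemma~\ref{lem:cc}, with total tangency $p+q=n+1$ held fixed and $k$ running from $1$ to $n-1$. The linear parts telescope: the sums
\[
\sum_{k=1}^{n-1}\mathcal{R}_{n-k,k+1} \;=\; \mathcal{R}_{n-1,2}+\mathcal{R}_{n-2,3}+\cdots+\mathcal{R}_{1,n}
\]
and
\[
\sum_{k=1}^{n-1}\mathcal{R}_{n-k+1,k} \;=\; \mathcal{R}_{n,1}+\mathcal{R}_{n-1,2}+\cdots+\mathcal{R}_{2,n-1}
\]
differ by exactly $\mathcal{R}_{1,n}-\mathcal{R}_{n,1}$ after the intermediate entries with both indices $\ge 2$ cancel in pairs, while the constant term $-\mathcal{R}_{n,1}$ occurs $n-1$ times. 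Hence the combined relation reduces to
\[
\mathcal{R}_{1,n} - n\,\mathcal{R}_{n,1} + (A-B) \;=\; 0,
\]
where $A := \sum_{k=1}^{n-1}\sum_{r=1}^{k-1}\mathcal{R}_{k-r,1}\mathcal{R}_{n-k,r}$ and $B := \sum_{k=1}^{n-1}\sum_{a+b=n-k,\,a,b\ge 1}\mathcal{R}_{a,1}\mathcal{R}_{b,k}$ collect the quadratic contributions from the two sides of the Lemma.

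The only substantive step will be to verify $A=B$, which I expect to be a pure re-indexing. In $A$, the substitution $(i,p,q) := (k-r,\,n-k,\,r)$ bijects the index set with $\{(i,p,q)\in\mathbb{Z}_{\ge 1}^3 : i+p+q=n\}$ and turns the summand into $\mathcal{R}_{i,1}\mathcal{R}_{p,q}$; in $B$, the substitution $(i,p,q):=(a,b,k)$ bijects the index set with the same target and produces the same summand. Hence $A=B$ identically, and the desired identity $\mathcal{R}_{1,n} = n\,\mathcal{R}_{n,1}$ drops out. The argument uses no induction on $n$ and no further enumerative input beyond Lemma~\ref{lem:cc}; the only modest obstacle is recognising the symmetric re-parameterisation of the two quadratic sums that forces them to agree on the nose.
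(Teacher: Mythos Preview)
Your argument is correct and essentially identical to the paper's: both sum the relations of Lemma~\ref{lem:cc} over $k=1,\dots,n-1$ (the paper phrases this as iterated substitution ``from bottom to top,'' which amounts to the same thing), telescope the linear terms to isolate $\mathcal{R}_{1,n}-n\mathcal{R}_{n,1}$, and then show the two quadratic double sums coincide via a change of indices. Your re-parametrisation of both sums by the common index set $\{(i,p,q)\in\mathbb{Z}_{\ge 1}^3:i+p+q=n\}$ is slightly cleaner than the paper's (which first extends the inner sum and then swaps $r\leftrightarrow k$), but the content is the same.
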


\begin{proof}
Subtracting from the identity in Lemma \ref{lem:cc} the sum given on its left hand side yields
\begin{equation}
\label{eq-cadmen-chen-first}
\mathcal{R}_{n-k,k+1} = \mathcal{R}_{n-k+1,k}+\mathcal{R}_{n,1}+\sum_{a+b=n-k}\mathcal{R}_{a,1}\mathcal{R}_{b,k}-\sum_{r=1}^{k-1}\mathcal{R}_{k-r,1}\mathcal{R}_{n-k,r}. 
\end{equation}
Specializing this equation to $k=n-1, n-2, \dots, 1,0$ yields respectively
\begin{eqnarray*}
\mathcal{R}_{1,n}&=&\mathcal{R}_{2,n-1}+\mathcal{R}_{n,1}+\ldots,\\
\mathcal{R}_{2,n-1}&=&\mathcal{R}_{3,n-2}+\mathcal{R}_{n,1}+\ldots,\\
&&\vdots\\
\mathcal{R}_{n-1,2}&=&2\mathcal{R}_{n,1}+\ldots,\\
\mathcal{R}_{n,1}&=&\mathcal{R}_{n,1}.
\end{eqnarray*}
From bottom to top, we substitute each equation into the one above.
The results is
\[ \mathcal{R}_{1,n} = n\mathcal{R}_{n,1} + \sum_{k=1}^{n-1}\sum_{a+b=n-k}\mathcal{R}_{a,1}\mathcal{R}_{b,k} - \sum_{k=1}^{n-1}\sum_{r=1}^{k-1}\mathcal{R}_{k-r,1}\mathcal{R}_{n-k,r}. \]
The right double sum does not change if we let the index $r$ run up until $n-1$ since $\mathcal{R}_{k-r,1}=0$ for $r\ge k$. 
We may then swap the index names $r$ and $k$ in the right double sum and get 
\[ \mathcal{R}_{1,n} = n\mathcal{R}_{n,1} + \sum_{k=1}^{n-1}\sum_{a+b=n-k}\mathcal{R}_{a,1}\mathcal{R}_{b,k} - \sum_{k=1}^{n-1}\sum_{r=1}^{n-1}\mathcal{R}_{r-k,1}\mathcal{R}_{n-r,k}. \]
One now observes that the double sums cancel each other under the identifications $a=r-k$ and $b=n-r$. 
We have thus proved that $\mathcal{R}_{1,n} = n\mathcal{R}_{n,1}$. 
Inserting the definition \eqref{eq-def-Rpq} and comparing the coefficients of $\hbar^{2g}s^\beta t^{\text{deg}(\beta)}$ on both sides yields the assertion.
\end{proof}

\begin{remark}
\label{rem:cc}
The previous theorem generalizes a result by Cadman--Chen (\cite{CC}, Theorem 6.6) to arbitrary genus. In fact, there is a more general formula
\[ p^2R_{p,q}^g(X,\beta) = q^2R_{q,p}^g(X,\beta). \]
This will be proved in forthcoming work \cite{eval}, as it is not needed here and the proof requires some different techniques.
\end{remark}

\begin{remark}
The relations for punctured invariants \cite{ACGS} coming from the theta multiplication rule are currently being investigated independently by Yu Wang \cite{W}.
\end{remark}

\section{One-point versus multi-point log invariants in a surface} 
\label{section-blowup}

We prove a correspondence of 
one-point log invariants of the blow-up $\widehat X$ of a surface $X$ relative to an elliptic curve $E$ with multi-point log invariants in $X$, Theorem~\ref{thm-one-to-multi-point} below. 
The proof will make use of the degeneration formula applied to a blowup of the degeneration to the normal cone of $E$ in $X$. The general fiber in the degeneration is $\widehat X$. 
One component of the degenerate fiber is a blown-up $\PP^1$-bundle $\widehat\PP$ over an elliptic curve and the other component is $X$. 
The natural projection from the union of both components to $X$ will achieve the identification of one-point invariants in the degeneration of $\widehat X$ with multi-point invariants in $X$.
We first prove a few basic lemmas about the vanishing of Gromov--Witten invariants in $\widehat\PP$. 
We assume the reader is familiar with the basic notions of log Gromov--Witten theory and the degeneration formula for which \cite{KLR} will be our primary source. Each contribution in the degeneration formula is indexed by a decorated bipartite graph where one type of vertices belongs to $X$ and the other to $\widehat\PP$. 
The edges correspond to nodes of the stable map.
We also assume familiarity with intersection theory standard notions like Gysin pullbacks and bivariant classes.
\label{section-two-point}
\subsection{Log Gromov--Witten invariants of a blown-up $\PP^1$-bundle}
\label{subsection-P1bundle}
Let $E$ be an elliptic curve and  $\shL$ a line bundle of degree $-k$ for some $k>0$. 
We denote by $E_0$ and $E_\infty$ the zero and infinity section respectively of the $\PP^1$-bundle $\PP:=\PP(\shL\oplus\shO_E)$, so $E_0^2=-k$ and $E_\infty^2=k$.
Gromov--Witten invariants of $\PP$ were recently studied tropically in \cite{Bl}.
Let $P\in E$ be a point and 
let $\widehat \PP$ be the blow-up of $\PP$ in the point that is the intersection of the fiber over $P$ with $E_\infty$. 
Let $C$ denote the exceptional curve and let $C'$ denote the strict transform of the fiber over $P$, so $C\cup C'$ is a fiber of the projection $\pi\colon \widehat \PP\ra E$.
Let $F$ denote a smooth fiber of $\pi$. 
By abuse of notation, we denote the strict transforms of $E_0,E_\infty$ in $\widehat \PP$ by the same symbols.
The resulting geometry is pictured on the right hand side of Figure~\ref{fig:degeneration-blowup}.
\begin{wrapfigure}{r}{.3\textwidth}
\centering
\includegraphics[width=.28\textwidth]{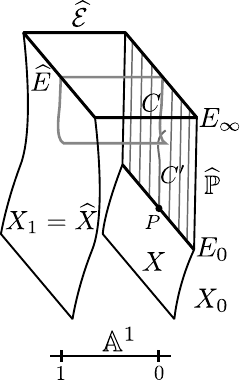}
        \captionsetup{width=.26\textwidth}
        \caption{Degeneration of $\widehat X$}
        \label{fig:degeneration-blowup}
\end{wrapfigure}

An effective curve class $\alpha$ in $\widehat \PP$ is a non-negative linear combination the classes of $E_0,F,C,C'$ and since $[C]=[F]-[C']$, by eliminating $[C]$, we can write $\alpha$ as
\begin{equation}
\label{eq-class}
\alpha = e[E_0] + d[F] + p[C']\hspace{4cm}
\end{equation}
for $d,e\ge 0$ and $p\ge-d$. A curve of class $\alpha$ must have positive genus if $e>0$ because $E_0$ has genus one.
It is easy to compute that $\alpha.E_0=d+p-ke$ and $\alpha.E_\infty=d$.

Let $Y$ denote the log scheme with underlying space $\widehat \PP$ and divisorial log structure given by $E_0\cup E_\infty$. 
Since $-E_0-E_\infty$ is the canonical divisor on $\widehat\PP$, the line bundle $\Omega^2_{Y} = \Omega^2_{\widehat\PP/E}(E_0+E_\infty)$ is isomorphic to $\shO_Y$, that is, $Y$ is log Calabi-Yau.
Let $\alpha$ be an effective curve class in $Y$ given by integers $(k,d,e)$ as above and let $(a_i)_{i=1}^r$ and $(b_j)_{j=1}^s$ be tuples of positive integers satisfying 
\begin{align*}
a_1+a_2+...+a_r&=d,\hspace{4cm}\\
b_1+b_2+...+b_s&=d+p-ke.
\end{align*}
We denote by $\cM:=\cM_{g,n}(Y,\alpha)_{(a_i),(b_j)}$ the stack of basic stable log maps to $Y$ of genus $g$ and class $\alpha$ with $n\ge r+s$ markings out of which the first $r$ have tangency profile $(a_i)$ with $E_\infty$ and the next $s$ markings have tangency profile $(b_j)$ with $E_0$ and the further markings have zero tangency to $E_0$ and $E_\infty$. 
The stack $\cM$ carries a natural virtual fundamental class $[\cM]^\vir$ of virtual dimension
\begin{equation}
\label{eq-vdim}
\vdim(\cM)\ =\ (1-g)(\dim Y-3)-\int_\alpha \Omega^{\dim Y}_{Y}+n\ =\ g-1+n.
\end{equation}

We paraphrase the statement of Lemma 7 in \cite{Bou19} as follows.
\begin{lemma}
\label{lemma-Bousseau}
Let $B$ be a scheme and $\Gamma$ a graph of genus $g>0$. 
Assume given a family of prestable curves $C_V\ra B$ for each vertex $V$ of $\Gamma$. 
Furthermore, for each edge $E$ of $\Gamma$ with vertices $V_1,V_2$ let $s^E_i\colon B\ra C_{V_i}$ be a pair of sections whose images do not meet any nodes, markings nor images of $s^{E'}_i$ for $E'\neq E$. 
We glue the curves $\{C_V\ra B\}_V$ by the natural identification of the images of $s^E_1,s^E_2$ as $B$-schemes for each edge $E$ of $\Gamma$. 
The Hodge bundle $\mathbb{E}$ of the resulting prestable curve $C_\Gamma\ra B$ surjects to the trivial bundle $\shO^{\oplus g}_B$. 
Consequently, the top Chern class $c_g(\mathbb{E})$ vanishes. 
\end{lemma}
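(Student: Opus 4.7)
The plan is to realise the surjection via dual graph cohomology and then read off the Chern class conclusion from the resulting short exact sequence. The main input will be the normalisation sequence for the nodal curve $C_\Gamma$ and the interpretation of its $R\pi_{\Gamma*}$ in terms of the simplicial chain complex of $\Gamma$.

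Concretely, let $\nu\colon \tilde C_\Gamma := \bigsqcup_V C_V \to C_\Gamma$ be the partial normalisation along all the glued sections. The standard short exact sequence
\begin{equation*}
 0 \to \mathcal{O}_{C_\Gamma} \to \nu_* \mathcal{O}_{\tilde C_\Gamma} \to \mathcal{S} \to 0
\end{equation*}
has $\mathcal{S}$ supported on the nodes, with the second map sending $(f_1,f_2)$ to $f_1-f_2$ on each pair of branches. Because the sections $s^E_i$ are disjoint $B$-flat sections in $C_\Gamma$ and each $C_V$ is connected, applying $\pi_{\Gamma*}$ identifies $\pi_{\Gamma*}\mathcal{S}\cong\mathcal{O}_B^{E(\Gamma)}$ and $\pi_{\Gamma*}\nu_*\mathcal{O}_{\tilde C_\Gamma}\cong\mathcal{O}_B^{V(\Gamma)}$. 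The long exact sequence of $R\pi_{\Gamma*}$ then contains
\begin{equation*}
 \mathcal{O}_B^{V(\Gamma)}\xrightarrow{\delta}\mathcal{O}_B^{E(\Gamma)}\to R^1\pi_{\Gamma*}\mathcal{O}_{C_\Gamma}\to\bigoplus_V R^1\pi_{V*}\mathcal{O}_{C_V}\to 0,
\end{equation*}
where $\delta$ is the simplicial coboundary of $\Gamma$ tensored with $\mathcal{O}_B$. Its cokernel is $H^1(\Gamma,\mathbb{Z})\otimes\mathcal{O}_B\cong\mathcal{O}_B^{\oplus g}$, giving the short exact sequence
\begin{equation*}
 0\to\mathcal{O}_B^{\oplus g}\to R^1\pi_{\Gamma*}\mathcal{O}_{C_\Gamma}\to\bigoplus_V R^1\pi_{V*}\mathcal{O}_{C_V}\to 0.
\end{equation*}

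Applying relative Serre duality $\mathbb{E}=(R^1\pi_{\Gamma*}\mathcal{O}_{C_\Gamma})^\vee$ dualises this to
\begin{equation*}
 0\to\bigoplus_V\mathbb{E}_V\to\mathbb{E}\to\mathcal{O}_B^{\oplus g}\to 0,
\end{equation*}
which is the asserted surjection. By the Whitney sum formula, $c(\mathbb{E})=c\bigl(\bigoplus_V\mathbb{E}_V\bigr)$, so the top Chern class vanishes as soon as the kernel has rank strictly less than $g$, in particular when the $C_V$ are rational so that the kernel is zero and $c_g(\mathbb{E})$ is genuinely the top class. The main subtlety is verifying that after taking $\pi_{\Gamma*}$ the connecting map really is the simplicial coboundary with the correct signs (which is where the disjointness of the sections $s^E_i$ is used to identify $\pi_{\Gamma*}\mathcal{S}$ with $\mathcal{O}_B^{E(\Gamma)}$), and then executing relative Serre duality in families carefully so that the short exact sequence dualises without unexpected higher $\mathrm{Ext}$ terms.
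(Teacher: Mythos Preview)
The paper does not give its own proof of this lemma; it explicitly paraphrases Lemma~7 of \cite{Bou19} and defers to that reference. Your argument via the partial normalisation sequence, identification of the connecting map with the simplicial coboundary of $\Gamma$, and relative Serre duality is exactly the standard proof (and is essentially what Bousseau does). So the core of your proposal is correct and matches the cited source.

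However, your final paragraph contains a genuine confusion that you should fix. You write that ``the top Chern class vanishes as soon as the kernel has rank strictly less than $g$'', and then restrict to the case where the $C_V$ are rational. This is backwards. Let $G=g+\sum_V g_V$ be the total arithmetic genus of $C_\Gamma$, so $\mathbb{E}$ has rank $G$. From your short exact sequence, the kernel $\bigoplus_V\mathbb{E}_V$ has rank $G-g$, and the Whitney formula gives $c(\mathbb{E})=c\bigl(\bigoplus_V\mathbb{E}_V\bigr)$. Since $g>0$, the kernel has rank strictly less than $G$, hence $c_G(\mathbb{E})=c_G\bigl(\bigoplus_V\mathbb{E}_V\bigr)=0$ unconditionally. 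No hypothesis on the genera $g_V$ is needed. The paper's notation ``$c_g(\mathbb{E})$'' for the top Chern class is admittedly ambiguous (in the applications $g$ always denotes the \emph{total} genus of the stable map, which is $G$ in the present notation), but the intended conclusion is the vanishing of $c_G(\mathbb{E})=\lambda_G$, and your exact sequence delivers this immediately.
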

Let $\lambda_g$ denote the top Chern class of the Hodge bundle on $\cM$. 
\begin{lemma} 
\label{lem-vanish-positive-e}
If $\alpha$ is an effective curve class given by \eqref{eq-class} with $e>0$ then $$\lambda_g\cap[\cM]^{vir}=0.$$
\end{lemma}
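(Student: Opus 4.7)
The plan is to show that the Hodge bundle $\mathbb{E}$ on $\cM$ admits a nowhere-vanishing global section, which exhibits $\shO_\cM$ as a rank-one subbundle and consequently forces $\lambda_g = c_g(\mathbb{E}) = 0$ already in the Chow ring of $\cM$. Intersecting with the virtual class will then give the claim. The observation that makes the construction possible is that $e > 0$ means the composition $\tilde\pi := \pi \circ f : \cC \to E$ of the universal stable map with the projection $\pi\colon \widehat\PP \to E$ has positive total degree on every fiber of the universal curve $\mu\colon \cC \to \cM$, and $E$ being elliptic has a canonical nonzero invariant differential.

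First, I would construct the section. Let $\eta \in H^0(E, \Omega_E^1) \cong \mathbb{C}$ be a nonzero invariant holomorphic differential on $E$. Pulling back along $\tilde\pi$ yields $\tilde\pi^*\eta$, which I claim is a regular section of the relative dualizing sheaf $\omega_{\cC/\cM}$. On every irreducible component $C_m^{(i)}$ of any fiber $C_m = \mu^{-1}(m)$, the restriction is the pullback of a regular $1$-form on $E$ under a morphism to $E$, hence regular on $C_m^{(i)}$ with no poles at preimages of nodes. The residue-matching condition at each node that defines $\omega_{C_m}$ is therefore satisfied automatically, since both residues are individually zero.

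Second, I would verify that the resulting section is nonzero on every fiber. The degrees of $\tilde\pi_m$ on the components of $C_m$ sum to $e > 0$, so at least one component maps nonconstantly to $E$, and on such a component the pullback of $\eta$ is a nonzero section of the component's canonical sheaf. Hence $\tilde\pi_m^*\eta \neq 0$ in $H^0(C_m,\omega_{C_m})$ for every $m$. Pushing forward along $\mu$ and using that $\mu$ has geometrically connected fibers gives a nowhere-vanishing section $\shO_\cM \hookrightarrow \mu_* \omega_{\cC/\cM} = \mathbb{E}$, realizing $\shO_\cM$ as a subbundle of rank one.

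Finally, the short exact sequence $0 \to \shO_\cM \to \mathbb{E} \to \mathbb{E}/\shO_\cM \to 0$, with $\mathbb{E}/\shO_\cM$ locally free of rank $g-1$, gives $c(\mathbb{E}) = c(\mathbb{E}/\shO_\cM)$ by Whitney multiplicativity, and therefore $c_g(\mathbb{E}) = 0$ as there is no Chern class of degree $g$ on a rank-$(g-1)$ bundle. Capping with $[\cM]^{\vir}$ yields $\lambda_g \cap [\cM]^{\vir} = 0$. The one point requiring care is the behavior at nodes that join a component contracted by $\tilde\pi_m$ to one that is not contracted: the residue is zero on the contracted side because the section vanishes there, and on the non-contracted side because $\eta$ has no poles on $E$, so the residue condition at such nodes is again trivially verified. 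This is the only mildly subtle step; the rest is standard Chern-class calculus.
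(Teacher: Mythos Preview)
Your argument is correct, and it proceeds by a genuinely different route from the paper's. The paper does not work directly on $\cM$; instead it invokes deformation invariance of log Gromov--Witten invariants to replace $E$ by a necklace of three $\PP^1$'s and $\widehat\PP$ by the corresponding degeneration $\widehat\PP_0$. Over $\widehat\PP_0$, any stable map whose class has $e>0$ must surject onto the necklace, so its dual intersection graph has positive first Betti number; then Lemma~\ref{lemma-Bousseau} (Bousseau's observation that for curves glued along a graph of genus $g'>0$ the Hodge bundle surjects onto $\shO^{\oplus g'}$) kills the top Chern class. Your approach bypasses both the degeneration and Lemma~\ref{lemma-Bousseau}: you exploit directly that $E$ already carries a nonzero holomorphic $1$-form, and pulling it back along $\pi\circ f$ produces a nowhere-vanishing section of $\mathbb{E}$ on $\cM$ itself. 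This is more elementary and in fact yields the slightly sharper conclusion $\lambda_g=0$ in $A^*(\cM)$, not merely after capping with the virtual class. The paper's method, on the other hand, slots into the degeneration machinery used throughout \S\ref{section-two-point} and recycles an already-cited lemma, which is presumably why that route was chosen. Morally the two are dual: the paper trivializes a quotient of $\mathbb{E}$ via the graph cycle appearing after degeneration, while you trivialize a subbundle via the $1$-form on the elliptic base before degeneration.
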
 
\begin{proof} 
By the deformation invariance of log Gromov--Witten invariants in log smooth families, see e.g.~Appendix A in \cite{MR20}, it suffices to prove the assertion when $\cM$ is the stack of stable maps to a log smooth degeneration $\widehat\PP_0$ of $\widehat\PP$. 
We may construct such a degeneration by degenerating the elliptic curve $E$ to a necklace of three projective lines and then constructing $\widehat\PP_0$ from the necklace in a similar way as we constructed $\widehat\PP$ from $E$. 
The main point of this construction is that the curve class $[E_0]$ in the degeneration $\widehat\PP_0$ of $\widehat\PP$ will necessarily have to be represented by a nodal curve whose dual intersection graph has a positive genus because it surjects to the necklace of projective lines. Even more generally, if $e>0$, then every stable map of class $\alpha$ to the degeneration of $\widehat\PP$ will have a dual intersection graph with positive genus. By Lemma~\ref{lemma-Bousseau}, the Hodge bundle on a family of curves whose dual intersection graph has positive genus has trivial top Chern class and we therefore haven proven the lemma.
\end{proof}
Recall that $s$ is the number of markings with positive tangency to $E_0$.
\begin{lemma} 
\label{lem-vanish-1}
Assume that $s>0$ and let $\ev\colon \cM\ra E_0$ be the evaluation map of one of the markings with positive tangency to $E_0$. 
Assume that $\ev_*(\lambda_{g}\cap[\cM]^\vir)\neq 0$.
Then we have $e=0$ and $n\le 2$ and moreover
\begin{enumerate}
\item $n=2$ is equivalent to $\alpha=d[F]$ with $a_1=b_1=d$,
\item $n=1$ is equivalent to $\alpha=p[C']$ with $a_1=p>0$.
\end{enumerate}
\end{lemma}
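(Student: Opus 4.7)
My plan is to combine Lemma~\ref{lem-vanish-positive-e} with a virtual dimension count and an analysis that forces ``bad'' stable log maps to concentrate on the special fiber $C \cup C'$. First, Lemma~\ref{lem-vanish-positive-e} immediately yields $e = 0$ from the hypothesis $\ev_*(\lambda_g \cap [\cM]^{\vir}) \neq 0$, so $\alpha = d[F] + p[C']$. Next, by the virtual dimension formula \eqref{eq-vdim}, the class $\lambda_g \cap [\cM]^{\vir}$ has dimension $n - 1$. Its pushforward along $\ev$ lies in $A_{n-1}(E_0)$, which vanishes for $n - 1 \geq 2$ since $E_0$ is a smooth curve, while $n \geq 1$ is needed for $\ev$ to exist; hence $n \in \{1, 2\}$.

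The key geometric input for the classification is that $C'$ is a $(-1)$-curve, which is the unique effective representative of its class, meeting $E_0$ transversely at the single point $Q := E_0 \cap C'$. Consequently, any effective representative of $p[C']$ with $p > 0$ is supported set-theoretically on $C'$ and meets $E_0$ only at $Q$ with total multiplicity $p$. For $n = 2$, the only marking configurations $(r, s, \textup{interior})$ compatible with $s > 0$ are $(1,1,0)$, $(0,2,0)$ and $(0,1,1)$. In the last two, $d = 0$ forces $\alpha = p[C']$, any representative lives on $C'$, all $E_0$-markings lie at $Q$, and $\ev$ is constant, so the pushforward of a $1$-dimensional cycle to a point vanishes — contradicting the hypothesis. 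In the configuration $(1,1,0)$ with $p > 0$, the decomposition $\alpha = (\sum n_{F_Q} + n_C)[F] + (n_{C'} - n_C)[C']$ shows that any general-fiber component $F_Q$ would contribute a contact with $E_0$ at a point distinct from $Q$, demanding an additional marking of positive $E_0$-tangency beyond the single one prescribed. Hence every stable log map is forced into the special fiber $C \cup C'$, both markings are pinned, $\ev$ is again constant and its pushforward vanishes. This leaves only $(r,s,\textup{interior})=(1,1,0)$ with $p=0$ for $n=2$, giving $\alpha = d[F]$ and $a_1 = b_1 = d$. For $n = 1$, $s > 0$ forces $(r, s) = (0, 1)$, so $d = 0$ and $\alpha = p[C']$ with the sole marking carrying tangency $p > 0$ to $E_0$.

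The main obstacle I anticipate is making the ``no hidden contact with the boundary'' step fully rigorous in the log category: one must confirm that basic stable log maps to $Y$ realize their contacts with $E_0 \cup E_\infty$ exactly at the prescribed markings with the prescribed contact orders, so that the presence of even one general-fiber component really does introduce an unaccounted contact with $E_0$ away from $Q$. This follows from the standard compatibility between curve classes and contact orders for the divisorial log structure on $E_0 \cup E_\infty$, combined with the rigidity of $C'$ as a $(-1)$-curve noted above.
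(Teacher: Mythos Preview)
Your proof is correct and reaches the same conclusions as the paper, but via a heavier case analysis. The paper organizes the argument more efficiently by splitting on whether $p = 0$ or $p \neq 0$ rather than on the value of $n$ and the marking configurations. The key simplification is the observation that once $e = 0$, connectedness forces the image of \emph{every} stable map into a \emph{single} fiber of $\pi \colon \widehat\PP \to E$ (since $\pi_*\alpha = e[E] = 0$ makes $\pi \circ f$ constant). If $p \neq 0$, this fiber can only be the special one over $P$, so $\ev$ is automatically constant and the pushforward of the $(n-1)$-cycle vanishes unless $n = 1$; then $n \geq s > 0$ forces $r = 0$, hence $d = 0$, giving case~(2). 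If $p = 0$ one lands directly in case~(1).

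This route entirely bypasses the ``no hidden contact with the boundary'' issue you flag in your last paragraph: you never need to argue that a general-fiber component would demand an extra $E_0$-marking, because connectedness already rules out any general-fiber component when $p \neq 0$. Your marking-count argument for the $(1,1,0)$, $p>0$ subcase is valid but relies on the more delicate log-geometric input you correctly identify as an obstacle (that all contacts with $E_0$ occur at the prescribed markings), whereas the paper's argument needs only the elementary fact that a connected curve with $\pi_*$-trivial class maps into one fiber.
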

\begin{proof} 
That $e=0$ follows from Lemma~\ref{lem-vanish-positive-e}.
By \eqref{eq-vdim}, the class $\lambda_g\cap[\cM]^\vir$ is of degree $n-1$ and since $\dim(E)=1$, the application of $\ev_*$ is trivial if $n>2$.
The assumption $s>0$ implies $\alpha.E_0=d+p-ke>0$ and thus $d+p>0$. 
If $p\neq 0$ then, by connectedness, every stable map of class $\alpha$ maps into the fiber over $P$, so the image of $\ev_*$ is $P$ which implies $n=1$. 
Since $n\ge s>0$, necessarily $\alpha.E_\infty=0$ and thus $p>0$ and $d=0$, so we are in case (2). The other possibility is $p=0$ which gives (1) as claimed.
\end{proof}
We will also need the following lemma that deals with the higher genus situation for fiber classes. Its proof bears similarities with that of Lemma 14 in \cite{Bou19}.
\begin{lemma} 
\label{lem-vanish-2}
If $\alpha=d[F]$ and $g>0$ and $\ev\colon \cM\ra E_0$ denotes the evaluation map for one of the markings with positive tangency to $E_0$ then
$\ev_*(\lambda_g\cap[\cM]^\vir)= 0$.
\end{lemma}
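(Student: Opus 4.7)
The plan is to factor the evaluation map and reduce to a Hodge integral on a simpler moduli space. Since $\alpha = d[F]$ is a multiple of the fiber class, every stable log map of class $\alpha$ is contracted by $\pi\colon\widehat{\PP}\to E$, producing a morphism $\varpi\colon\cM\to E$ sending each stable log map to the point of $E$ over which its image fiber lies. The evaluation map at the $E_0$-tangent marking then factors as $\ev = \iota\circ\varpi$ with $\iota\colon E\xrightarrow{\sim} E_0$ the natural isomorphism, so it suffices to show $\varpi_*(\lambda_g\cap[\cM]^\vir) = 0$.

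A dimension count settles the case $n>2$ immediately: the class $\lambda_g\cap[\cM]^\vir$ has dimension $\vdim\cM - g = n-1$, while $d>0$ forces $r\geq 1$ and $s\geq 1$ so that $n\geq r+s\geq 2$. When $n>2$ the pushforward lands in $A_{n-1}(E)$, which vanishes since $\dim E = 1$. The remaining case is $n=2$, which forces $r=s=1$ and tangencies $a_1=b_1=d$.

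For $n=2$, I would compare the obstruction theory of $\cM$ to that of the auxiliary moduli $\cM^{\PP^1}$ of stable log maps to $(\PP^1,\{0,\infty\})$ in degree $d$, genus $g$, fully tangent at both markings. Since every fiber $F\cong\PP^1$ of $\pi$ has trivial normal bundle $\shN_{F/\widehat{\PP}}\cong\shO_F$, the extra obstruction one acquires by lifting stable maps from $F$ to $\widehat{\PP}$ is $R^1\pi_*f^*\shN_{F/\widehat{\PP}}\cong \mathbb{E}^\vee$, the dual Hodge bundle of rank $g$. This identifies the virtual classes (at least after étale localization on $E$, compatibly with $\varpi$) as
\begin{equation*}
[\cM]^\vir \;=\; (-1)^g\,\lambda_g\cap\bigl([\cM^{\PP^1}]^\vir\boxtimes[E]\bigr),
\end{equation*}
so by capping with $\lambda_g$ and applying the projection formula,
\begin{equation*}
\varpi_*\bigl(\lambda_g\cap[\cM]^\vir\bigr) \;=\; (-1)^g\Bigl(\int_{\cM^{\PP^1}}\lambda_g^2\cap[\cM^{\PP^1}]^\vir\Bigr)\cdot[E].
\end{equation*}

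The hard step, directly parallel to Lemma~14 of \cite{Bou19}, is to verify that this top Hodge integral vanishes for $g>0$. I would attack this by virtual $\mathbb{G}_m$-localization with respect to the torus action on $\PP^1$ fixing $\{0,\infty\}$: each $\mathbb{G}_m$-fixed source curve is a tree of rational multi-covers of $\PP^1$ with positive-genus components contracted to $0$ or $\infty$, and on every such fixed stratum the Hodge bundle decomposes as a direct sum of Hodge bundles of the contracted components. The insertion $\lambda_g^2$ then produces a factor $(\lambda_{g_v}^{(v)})^2$ on each positive-genus contracted component, which combines with the vertex-edge weights of the localization formula so as to vanish by Mumford's relation $c(\mathbb{E})c(\mathbb{E}^\vee)=1$ applied to each component. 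Summing the (necessarily zero) stratum contributions gives the desired vanishing.
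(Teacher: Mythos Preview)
Your strategy is essentially the paper's: reduce to $n=2$ by dimension count, then compare obstruction theories to pick up an extra factor of $(-1)^g\lambda_g$ from $H^1$ of the pulled-back normal bundle (dual Hodge bundle), leaving you with $\lambda_g^2$ capped against a virtual class. The paper does this via Gysin pullback $i^!$ to a general point $Q\in E$ rather than your local product decomposition; this is slightly cleaner because it avoids the singular fiber $C\cup C'$ over the blow-up point $P$, which your ``\'etale-local product'' picture would need to treat separately (though, being supported over a single point of $E$, it cannot affect the pushforward of a $1$-cycle).

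The real issue is that you have made the final step far harder than it is. You propose torus localization on $\cM^{\PP^1}$ and a vertex-by-vertex application of Mumford's relation to kill $\int\lambda_g^2$. None of this is needed: Mumford's relation $c(\mathbb{E})\,c(\mathbb{E}^\vee)=1$ in degree $2g$ reads $(-1)^g\lambda_g^2=0$ as a Chow class (the only term with $i+j=2g$ and $i,j\le g$ is $i=j=g$). Hence $\lambda_g^2\cap[\,\cdot\,]^\vir=0$ identically, and the pushforward vanishes. The paper's proof ends with exactly this one line: ``we conclude the proof by noting that $\lambda_g^2=0$.'' Your reference to Lemma~14 of \cite{Bou19} is apt in spirit, but that lemma likewise finishes with the bare identity $\lambda_g^2=0$, not a localization computation.
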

\begin{proof} 
By Lemma~\ref{lem-vanish-1}, we may thus assume $n=2$ and consequently, by \eqref{eq-vdim}, $\lambda_g\cap[\cM]^\vir$ is a 1-cycle. 
We want to show that its projection under $\ev$ is trivial. 
Equivalently, we may show that $i^!(\lambda_g\cap[\cM]^\vir)=0$ where 
$i:Q\hra E$ denotes the regular embedding of a general point $Q$ and $i^!$ is the Gysin pullback. 
Let $\shN_{Q/E}$ be the normal bundle of $i$. We may assume $F$ is the fiber over $Q$. 
Let $F^\dagger$ denote the log scheme with underlying scheme $F$ equipped with the log structure given by the divisor $F\cap(E_0\cup E_\infty)$.
The closed substack $\ev^{-1}(Q)$ of $\cM$ is isomorphic to the stack of basic stable log maps $\cM_F:=\cM_{g,n}(F^\dagger,\alpha)_{(d),(d)}$ to $F^\dagger$ with tangency $d$ at $0$ and $\infty$ respectively.
Let $j\colon\cM_F\hra \cM$ denote the inclusion.
The obstruction theories on $\cM$ and $\cM_F$ differ by the bundle whose fiber at a stable log map $f:\shC \ra F^\dagger$ is $H^1(\shC,f^*\pi^*\shN_{Q/E})$. 
By Serre duality, this bundle is thus isomorphic to the dual of the Hodge bundle and so its top Chern class is $(-1)^g\lambda_g$. We find that 
$$j_*i^!(\lambda_g\cap[\cM]^\vir)=((-1)^g\lambda_g\cup\lambda_g)\cap[\cM]^\vir$$
and we conclude the proof by noting that $\lambda_g^2=0$.
\end{proof}

\subsection{Log Gromov--Witten invariants of $\shO_{\PP^1}(-1)$}
If $X$ denotes the total space of $\shO_{\PP^1}(-1)$ with log structure given by the divisor $D$ which is a fiber of the bundle, we may consider the stack $\cM_{g,1}(X,p)_{(p)}$ of 1-marked genus $g$ stable log maps to $X$ of degree $p$ where the marking has tangency $p$ to $D$. By \eqref{eq-vdim}, the virtual dimension is $g$.
We define
\begin{equation}
\label{eq-cover-contribution}
N(g,p):=\int_{[\cM_{g,1}(X,p)_{(p)}]^\vir}(-1)^g\lambda_g.
\end{equation}
By \cite{BP}, Theorem 5.1, and \cite{Bou20}, Lemma 5.9, $N(g,p)$ equals the coefficient of $\hbar^{2g}$ in the expression $\frac{(-1)^{p+1}}{p} \frac{i\hbar}{\textbf{q}^{pih/2}-\textbf{q}^{-pih/2}}$, where $\textbf{q}=e^{i\hbar}$. So in particular $N(0,p)=\frac{(-1)^{p+1}}{p}$.

\subsection{Relating one point invariants to multi point invariants}
\label{subsection-two-point}
Let $X$ be a smooth surface containing a smooth elliptic curve $E$. 
Let $\beta$ be a curve class in $X$ 
so that $d:=\beta.E$ is positive.
Let $p_1,...,p_r,q,g,n$ be non-negative integers satisfying $p_i,n\ge 1$ and let $d=q+\sum_i p_i$. 
Let $\cM$ denote the stack of genus $g$ and class $\beta$ basic stable log maps to $(X,E)$ with $n+r$ markings that have tangency profile $(p_1,...,p_r,q,0....,0)$ with $E$.
If $\cN$ denotes the stack of genus $g$ and class $\beta$ ordinary stable log maps to $X$ with $n$ markings, there is a natural functor $\shF\colon\cM\ra\cN$ that forgets the log structure and the first $r$ markings.
Let $\gamma_{2},...,\gamma_n$ be bivariant classes on $\cN$ so that $\gamma_i$ is the pullback of a bivariant class from the universal curve under the section given by the $i$th marking.
For $\ev_{i}\colon \cN\ra X$ the $i$th evaluation, we assume that the restriction of $\gamma_i$ to $\ev_{i}^{-1}(E)$ vanishes.
We may view the $\gamma_i$ as bivariant classes on $\cM$ via pullback under $\shF$.
Set $\gamma=\gamma_{2}\cup...\cup \gamma_n$.
For $s\colon P\ra E$ the embedding of a point and $1\le i\le r$, let $P_i^!$ denote the Gysin pullback $s^!$ acting on cycle classes in the source of the morphism $\ev_i\colon\cM \ra E$.
We denote by
$$R^g_{p_1,...,p_r,q}(X,\beta)(\gamma):=
(-1)^g\cdot \deg\left(\gamma\cap\lambda_{g}\cap P_1^!...P_r^![\cM]^{vir}\right)$$ 
the genus $g$ and $(n+r)$-marked log Gromov-Witten invariant with target $(X,E)$ for curves of class $\beta$ with tangency profile $(p_1,...,p_r,q,0....,0)$ where the first $r$ markings are required to map to specified points on $E$ and we use the insertion $(-1)^g\lambda_g\cup\gamma$.
Note that for $n=r=1$, $R^g_{p,q}(X,\beta)(1)$ is the previously defined invariant $R^g_{p,q}(X,\beta)$. 

Let $\pi:\widehat X\ra X$ be the blow-up of $X$ in a point $P$ on $E$ with exceptional curve denoted by $C$. 
By abuse of notation, we also refer to the corresponding curve class $[C]$ by $C$. 
The strict transform of $E$ in $\widehat X$ we call $\widehat E$. 
We also consider $\gamma_i$ as bivariant classes on the stack of basic stable log maps to $(\widehat X,\widehat E)$ via the natural map to $\cN$.
Similar to the convention just introduced, for $p>0$ and $q=\beta.E-p$,
$R^g(\widehat X,\pi^*\beta-pC)(\gamma)$ denotes 
the genus $g$ and $n$-marked log Gromov--Witten invariant with target $(\widehat X,\widehat E)$ and tangency profile $(q,0,...,0)$ for curves of class $\pi^*\beta-pC$ with insertion $(-1)^g\cdot \gamma\cup\lambda_g$. 
Again, if $\gamma=1$, we simply write $R^g(\widehat X,\pi^*\beta-pC)$ for the invariant.

\begin{theorem} 
\label{thm-one-to-multi-point} 
We have the identity
$$R^{g}(\widehat X,\pi^*\beta-pC)(\gamma) =  \sum_{{p_1+\cdots+p_r=p}\atop{{g_0+\cdots+g_r=g}\atop{r>0,p_i>0,g_i\ge 0}}}\frac{p_1\cdot...\cdot p_r}{r!}\cdot
R^{g_0}_{p_1,...,p_r,q}(X,\beta)(\gamma)\cdot \prod_{i=1}^r N(g_i,p_i).$$
\end{theorem}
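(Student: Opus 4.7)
The plan is to apply the log degeneration formula of \cite{KLR} to the degeneration $\mathfrak{X}\to\mathbb{A}^1$ described at the opening of this section, whose general fiber is $\widehat X$ and whose special fiber is $X\cup_E\widehat\PP$ with $\widehat\PP$ as in \S\ref{subsection-P1bundle}. The family $\mathfrak{X}$ is obtained by taking the degeneration of $X$ to the normal cone of $E$, with special fiber $X\cup_E\PP$ for $\PP=\PP(\shL\oplus\shO_E)$ and $\shL=N_{E/X}^\vee$ (so $k=-\deg\shL=E^2$), and then blowing up a well-chosen section through $P\in E$; this section specializes to a point on $E_\infty\subset\PP$ in the special fiber, producing $\widehat\PP$.

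The degeneration formula expresses $R^g(\widehat X,\pi^*\beta-pC)(\gamma)$ as a sum over decorated bipartite graphs $\Gamma$ whose vertices split into $X$-vertices and $\widehat\PP$-vertices, with edges recording nodes of matching tangency order along $E=E_0$. The external marking of tangency $q$ with $\widehat E$ specializes to a marking with tangency $q$ to $E_\infty$ on a single $\widehat\PP$-vertex, while the remaining $n-1$ markings stay on the $X$-side by the vanishing assumption on the $\gamma_i$ along $E$. The dual graph of every surviving configuration will turn out to be a tree, so the Hodge bundle of the total glued curve splits as a direct sum over vertices, and the insertion $(-1)^g\lambda_g$ becomes $\prod_v(-1)^{g_v}\lambda_{g_v}$ with $\sum_vg_v=g$.

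The vanishing lemmas of \S\ref{subsection-P1bundle} then collapse the sum. Lemma~\ref{lem-vanish-positive-e} kills every $\widehat\PP$-vertex with positive $[E_0]$-coefficient $e_v>0$. For the remaining vertices, each edge contributes an $E_0$-tangency, so $s>0$, and Lemma~\ref{lem-vanish-1} forces each $\widehat\PP$-vertex to be either a fiber vertex of class $d_v[F]$ with one $E_\infty$-tangency and one $E_0$-tangency both of order $d_v$, or a vertex of class $p_v'[C']$ with a single $E_0$-tangency of order $p_v'$. Lemma~\ref{lem-vanish-2} additionally rules out positive-genus fiber vertices. Combined with the class balance forcing $\sum_vd_v=q$ and $\sum_vp_v'=p$, together with the presence of the external $E_\infty$-marking on exactly one vertex, the only surviving graphs have a single genus-zero fiber vertex of class $q[F]$ carrying the external marking and one $E_0$-edge of tangency $q$, together with $r\ge 1$ further $\widehat\PP$-vertices of classes $p_i[C']$ and genera $g_i$ (with $p_1+\dots+p_r=p$), and a single $X$-vertex of class $\beta$, genus $g_0$, and tangency profile $(p_1,\dots,p_r,q,0,\dots,0)$ with $\gamma$-insertions on the zero-tangency markings, subject to $g_0+g_1+\dots+g_r=g$.

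Finally, the local contributions match the terms in the stated formula. The $X$-vertex contributes $R^{g_0}_{p_1,\dots,p_r,q}(X,\beta)(\gamma)$ by definition, with the Gysin pullbacks $P_i^!$ realized via the diagonal matching at the edges to the $p_i[C']$-vertices, which are rigid inside their respective fibers over the corresponding points of $E$. Each $[C']$-vertex reduces, by deformation to its formal neighbourhood $\shO_{\PP^1}(-1)$ with divisorial log structure at $E_0\cap C'$, to the invariant $N(g_i,p_i)$ of \eqref{eq-cover-contribution}. The fiber vertex $q[F]$ together with its edge of tangency $q$ contributes $1$ overall: the rigid genus-zero $q$-fold cover of $\PP^1$ ramified at both $E_0$ and $E_\infty$ has automorphism group $\mathbb{Z}/q\mathbb{Z}$ giving a factor $1/q$, cancelled by the gluing multiplicity $q$ of the corresponding edge. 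The remaining edge multiplicities $p_i$ combined with the symmetry factor $1/r!$ for permutations of the $r$ interchangeable $[C']$-vertices yield $\frac{p_1\cdots p_r}{r!}$. The main obstacle is the consistent bookkeeping of the fiber-vertex and gluing multiplicities together with the splitting of $\lambda_g$; Lemma~\ref{lem-vanish-2} is essential here to discard the positive-genus fiber contributions that would otherwise spoil this clean factorization.
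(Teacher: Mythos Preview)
Your approach is the same as the paper's: apply the degeneration formula of \cite{KLR} to the blown-up degeneration to the normal cone of $E$, use the vanishing lemmas of \S\ref{subsection-P1bundle} to cut the sum over bipartite graphs down to the single star-shaped type, and read off the contribution.

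There is, however, a logical gap in your handling of the $\lambda_g$-splitting. You write that ``the dual graph of every surviving configuration will turn out to be a tree, so the Hodge bundle splits\ldots'', and then invoke Lemmas~\ref{lem-vanish-positive-e}, \ref{lem-vanish-1}, \ref{lem-vanish-2} vertex by vertex. But those lemmas concern $\lambda_{g_V}\cap[\cM_V]^{\vir}$; they presuppose that the global $\lambda_g$ has already been decomposed as $\prod_V\lambda_{g_V}$, which holds only once the graph is known to be a tree. As written the argument is circular: the tree property is deduced from vertex constraints that themselves require the tree property. The paper breaks this circle by first invoking Lemma~\ref{lemma-Bousseau}: if $\Gamma$ has positive first Betti number, the Hodge bundle on $\cM_\Gamma$ surjects onto a trivial bundle of positive rank, so $\lambda_g$ vanishes on $[\cM_\Gamma]^{\vir}$ outright, independent of any vertex analysis. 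Only after discarding non-tree $\Gamma$ via this lemma does one split $\lambda_g=\prod_V\lambda_{g_V}$ and apply the vertex lemmas. You should insert this step explicitly before your third paragraph.

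A minor omission: you assert there is a single $X$-vertex without justification. Once each $\widehat\PP$-vertex is shown (via Lemma~\ref{lem-vanish-1}) to carry exactly one $E_0$-edge, the $\widehat\PP$-vertices are all leaves in the bipartite graph, and connectedness forces a unique $X$-vertex; this deserves a sentence. The remaining bookkeeping (the $1/q$ from the fiber vertex, the edge multiplicities $p_i$, and the $1/r!$) matches the paper's computation.
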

\begin{proof}
We prove a slightly stronger statement about Chow cycles using the degeneration formula.
We blow up the product $X\times\AA^1$ first in $E\times\{0\}$ and secondly in the strict transform of $P\times\AA^1$.
The resulting space $\shX$ comes with a flat morphism $f$ to $\AA^1$ by composing the blow-downs with the second projection. 
The fiber $X_1=f^{-1}(1)$ is the blow-up of $X$ in $P$. 
The central fiber $X_0=f^{-1}(0)$ has two irreducible components, one of which is isomorphic to $X$ and the other is isomorphic to the blow-up $\widehat\PP$ of the $\PP^1$-bundle $\PP:=\PP(\shO_X(-E)|_E\oplus\shO_E)$ in the point $(1,0)$ in the fiber over $P$, so there is a natural projection $X_0\ra X$.
Let $E_\infty$ denote the intersection of the strict transform $\widehat\shE$ of $E\times \AA^1$ under both blow-ups with the central fiber $X_0$, so $E_\infty$ is isomorphic to $E$ and sits ``at infinity'' in the blown-up $\PP^1$-bundle, that is, $E_\infty$ does not meet the component of $X_0$ that is isomorphic to $X$.
The geometry is summarized in Figure~\ref{fig:degeneration-blowup}.
 We upgrade the morphism $\shX\ra\AA^1$ to a log smooth morphism using the 
divisorial log structures from the divisors $\widehat\shE\cup X_0$ and $\{0\}$
respectively. We arrive at a commutative diagram of stacks of basic stable log maps where every square is Cartesian
\begin{equation}
\label{eq-F0-F1}
\begin{gathered}
\xymatrix{
& \cM(X_1)\ar[r]\ar^{F_1}[d] & \cM(\shX/\AA^1)\ar^F[d] & \cM(X_0)\ar[l] \ar^{F_0}[d] \\
& \cM(X)\ar[r]\ar[d] & \cM((X\times\AA^1)/\AA^1)\ar[d] & \cM(X)\ar[l] \ar[d] \\
& \{1\} \ar^{i_1}[r] & \AA^1  & \ar_{i_0}[l]\{0\}.
}
\end{gathered}
\end{equation}
We suppress from the notation that we consider for the top row $n$-marked genus $g$ stable log maps of class $\pi^*\beta-pC$ with first marking of tangency $q$ to $\widehat\shE$ and with insertion $(-1)^g\lambda_g\cup \gamma$.
The middle row consists of stacks of ordinary $n-1$-marked genus $g$ stable maps of class $\beta$ and insertion $(-1)^g\lambda_g\cup \gamma$. 
The three vertical arrows in the bottom half of the diagram are just the structure morphisms of the respective stacks. 
The three vertical arrows in top half of the diagram are given by composing a stable map with the blow-down, forgetting the log structure and the first marking and stabilizing.
The virtual fundamental class $[\cM(\shX/\AA^1)]^{vir}$ of the stack in the top middle Gysin pulls back to the respective virtual fundamental classes
$[\cM(X_1)]^{vir}$, $[\cM(X_0)]^{vir}$ for the stacks in the top left and right corner.
Since Gysin pullbacks commute with proper pushforward, we conclude that  
$$
(F_1)_*[\cM(X_1)]^{vir}=i_1^!F_*[\cM(\shX/\AA^1)]^{vir}=i_0^!F_*[\cM(\shX/\AA^1)]^{vir} =(F_0)_*[\cM(X_0)]^{vir}. 
$$
and similarly by the assumption that $\gamma$ pulls back from $\cM(X)$, 
\begin{equation}
\label{eq-nearby-to-degenerate}
(F_1)_*(\gamma\cap[\cM(X_1)]^{vir})=(F_0)_*(\gamma\cap[\cM(X_0)]^{vir}).
\end{equation}
We next recall the degeneration formula which 
provides an expression for $[\cM(X_0)]^{vir}$ as a sum of classes indexed by rigid tropical curves. 
As explained in section 2 and 4 in \cite{KLR}, a rigid tropical curve can be regarded as a decorated bipartite graph $\Gamma$ subject to a list of conditions. Let $V(\Gamma)$ and $E(\Gamma)$ denote the sets of vertices and edges of $\Gamma$ respectively. Each edge $e\in E(\Gamma)$ carries a weight $w_e$ which is a positive integer. The graph $\Gamma$ comes with a total ordering of its edges. We define the map
$$\Delta\colon \prod_{e\in E(\Gamma)} E\ra \prod_{e\in E(\Gamma)}\prod_{V\in V(\Gamma):V\in e} E,\qquad (x_e)_{e}\mapsto (x_e)_{V\in e}.$$ 
For $V$ a vertex of $\Gamma$, let $\cM_V$ denote the moduli stack of basic stable log maps to the component of $X_0$ that $V$ belongs to with divisorial log structure induced by $E$ and with curve class, markings and genus induced by the decorations and adjacent edges of $V$.
For a set $S$, we will use the notation $E^S=\prod_{s\in S} E$. 
The evaluation map for the markings with positive tangencies to $E$ reads $\ev\colon\cM_V\ra E^{\{e\in E(\Gamma):e\in V\}}$. Let $\bigodot_V\cM_V$ denote the fiber product $E^{E(\Gamma)}\times_{\Delta,E^{\{e\in V\}},\ev} \prod_{V} \cM_V$.
The stack of $\Gamma$-marked curves $\cM_\Gamma$ and natural maps $F\colon\cM_\Gamma\ra\cM(X_0)$ and $\phi\colon \cM_\Gamma\ra \bigodot_V\cM_V$ are introduced in diagram (1.5) in \cite{KLR}.
Theorem 1.6 in \cite{KLR} states the following identity
\begin{equation}
\label{degen-formula}
[\cM(X_0)]^{vir} = \sum_\Gamma \frac{\operatorname{lcm}(\{w_e\})}{E(\Gamma)!} F_*\phi^*\Delta^! \prod_{V\in V(\Gamma)} [\cM_V]^{vir}.
\end{equation}

There are two differences between the situation here and the case treated in \cite{KLR}. First, \cite{KLR} only treats the case of two components $X_1,X_2$. While it is stated for two smooth irreducible components, it actually gives a proof for the case of two log smooth components, and in particular the components might be reducible. Then we can apply \cite{KLR} repeatedly to the case where $X_1$ is one of our irreducible components and $X_2$ is the union of the other components. Similar arguments were given in \cite{Gra1}, \S4 and \cite{Bou19},\S 7. The second difference is that in \cite{KLR} the general fiber has trivial log structure, while our general fiber $X$ carries the divisorial log structure from the divisor $E\subset X$. However, $\widehat\shE$, the proper transform of $E\times\mathbb{A}^1$ does not meet the singular locus of the underlying space of $X_0$. Hence locally at the intersection of two or more components there is no additional log structure from $E$ and the situation is as in \cite{KLR}.

We will make use of the fact that the map $F_0$ from \eqref{eq-F0-F1} is compatible with \eqref{degen-formula} in the following sense. 
There are maps $F_{0,V}\colon\cM_V\ra \cN_V$ for each of the stacks $\cM_V$ defined as follows.

If $V$ belongs to $X$, this map $F_{0,V}$ is the map that forgets the log structure induced from $E$, forgets the tangencies to $E$ but keeps the markings. So $\cN_V$ is the stack of ordinary stable maps to $X$ with data induced from $V$ and its adjacent edges.

If $V$ belongs to $\widehat\PP$, the map $F_{0,V}$ is given by composing a stable log map to $\widehat\PP$ with the projection $\widehat\PP\ra E$ and forgetting all log structures with subsequent stabilization, so $\cN_V$ refers to stack of stable log maps to $E$ with markings, genus and class induced from $V$ and its adjacent edges. 
We arrive at a commutative diagram
\begin{equation}
\label{glue-diagram}
\begin{gathered}
\xymatrix{
\cM(X_0) \ar[d]^{F_0} & \cM_\Gamma \ar[l]_(.35)F \ar[r]^\phi & \bigodot_V\cM_V \ar[r]\ar^{\odot_{V} F_{0,V}}[d] & \prod_{V}\cM_V\ar^{\prod_{V} F_{0,V}}[d]   \\
\cM(X) &&
E^{E(\Gamma)}\times_{E^{\{V\in e\}}} \prod_{V} \cN_V \ar_(.65){G}[ll] \ar[r]\ar[d] & \prod_{V}\cN_V \ar^{\ev}[d]\\
&&  E^{E(\Gamma)} \ar^\Delta[r] &  E^{\{V\in e\}}.
}
\end{gathered}
\end{equation}
Let $\gamma_V$ denote the product of those $\gamma_i$ for which the $i$th marking is attached to $V$. In particular, $\gamma=\prod_V \gamma_V$.
Commutativity of the previous diagram implies the equalities 
\begin{equation}
\label{glue-equations}
\begin{split}
\gamma\cap F_*\phi^*\Delta^! \prod_{V\in V(\Gamma)} [\cM_V]^{vir}
&=(F_0)_*F_*\phi^*\Delta^! \prod_{V\in V(\Gamma)} \gamma_V\cap[\cM_V]^{vir}\\  
&= G_*(\odot_{V} F_{0,V})_*\phi_*\phi^*\Delta^! \prod_{V\in V(\Gamma)} \gamma_V\cap[\cM_V]^{vir}\\
&= \deg(\phi)\cdot G_*(\odot_{V} F_{0,V})_*\Delta^! \prod_{V\in V(\Gamma)} \gamma_V\cap[\cM_V]^{vir}\\
&= \deg(\phi)\cdot G_*\Delta^! \prod_{V\in V(\Gamma)} (F_{0,V})_*(\gamma_V\cap[\cM_V]^{vir}).
\end{split}
\end{equation}
For the right hand side to vanish, it suffices that $(F_{0,V})_*(\gamma_V\cap[\cM_V]^{vir})=0$ for a single vertex $V$. 
This happens in particular if one of the markings indexed by $2,...,n$ is attached to a vertex $V$ that belongs to $\widehat\PP$ by the assumption that $\gamma_i$ vanishes on $\ev_i^{-1}(E)$ if $i>1$.

We will next go through a case distinction among all possible graphs $\Gamma$ and argue for most of them that the summand in \eqref{degen-formula} indexed by $\Gamma$ vanishes after push-forward under $F_0$. We will prove the following claim.

\underline{Claim:} The only type of graph that contributes non-trivially to the sum on the right hand side of \eqref{degen-formula} after applying $\gamma$, $\lambda_g$ and taking degree is the one depicted in Figure~\ref{fig:graph-contributing}. 

\begin{figure} 
\includegraphics[width=.28\textwidth]{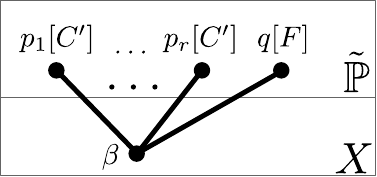}
        \caption{Claim: a graph $\Gamma$ that contributes non-trivially takes this form.}
        \label{fig:graph-contributing}
\end{figure}

We readily assume that none of vertices of $\Gamma$ that belong to $\widehat\PP$ carries a marking with trivial tangency to $E$ apart the markings induced from its adjacent edges.
Let $k$ be the negative of the self-intersection of $E_0$ in $\widehat\PP$, so $k$ is also the self-intersection of $E$ in $X$. 

\underline{i) Graphs $\Gamma$ with $g(\Gamma)>0$.}
By Lemma~\ref{lemma-Bousseau}, the application of $\lambda_g$ to the virtual class $[\cM_{\Gamma}]^{vir}$ of the stack of curves marked by $\Gamma$ is trivial if the underlying graph $\Gamma$ is non-contractible, so we may assume that the underlying graph of $\Gamma$ has genus zero.

\underline{ii) Distributing $\lambda_g$ over the vertices.}
Consider the application of $\lambda_g$ to both sides of the identity \eqref{degen-formula}.
The result on the right hand side for the summand given by a graph $\Gamma$ equals 
$\frac{\operatorname{lcm}(\{w_e\})}{E(\Gamma)!} F_*\phi^*\Delta^! \prod_{V\in V(\Gamma)} \gamma_V\cap\lambda_{g_V}\cap[\cM_{V}]^{vir}$ where $g_V$ is the genus associated with $V$. We will therefore focus on the study of $\lambda_{g_V}\cap[\cM_{V}]^{vir}$ in the following.

\underline{iii) Stable maps to $\widehat\PP$ whose image is not contained in a fiber:}
Consider the situation of a graph $\Gamma$ with a vertex $V$ which belongs to the component $\widehat\PP$. 
Let $\beta_V$ be the curve class that decorates $V$.
We follow the description of effective curves classes in $\widehat\PP$ in \S\ref{subsection-P1bundle}, so we write $\beta_V = \tilde e[E_0] + \tilde d[F] + \tilde p[C']$.
If $\tilde e>0$ then $g_V>0$ and Lemma~\ref{lem-vanish-positive-e} implies that $\lambda_{g_V}$ annihilates $[\cM_{V}]^{vir}$.
On the other hand, if the coefficient $\tilde e$ in $\beta_V$ vanishes, by connectedness, the image of a stable map of this class is then necessarily contained in a fiber of the projection $\widehat\PP\ra E$.

\underline{iv) Constant maps:} 
A constant map to either $\widehat\PP$ or $X$ cannot occur as a factor contribution for any vertex of $\Gamma$ because such a curve has trivial intersection with $E$, so $\Gamma$ does not have any edges and all of $\Gamma$ would be this single vertex. This case is excluded by the non-trivial curve class $\pi^*\beta-pC$. 

\underline{v) Stable maps to $\widehat\PP$ whose intersection with $E_0$ is non-positive:}
We consider a graph $\Gamma$ with a vertex $V$ which belongs to the component $\widehat\PP$ with the property $\beta_V.E_0\le 0$. By iii), we assume that $\tilde e=0$.
That is, we have $0\ge \beta_V.E_0=\tilde d+\tilde p-k\tilde e=\tilde d+\tilde p$ and thus $\tilde p\le -\tilde d$. 
The effectiveness constraint yields $\tilde p=-\tilde d$ and both $\tilde p,\tilde d$ must be non-zero since we excluded constant maps in the previous step. 
Using $[C]+[C']=[F]$, we rewrite the curve class as
$\beta_V = -\tilde p[C]$ and $-\tilde p>0$. 
A graph $\Gamma$ with a vertex $V$ of this class must have no edges and so $\Gamma$ consists entirely of the vertex $V$. 
However, a curve for such a graph cannot have the class $\pi^*\beta-pC$ for any $\beta$, so we may disregards this situation.

\underline{vi) Stable maps to $\widehat\PP$ whose intersection with $E_0$ is positive:}
We consider a graph $\Gamma$ with a vertex $V$ which belongs to the component $\widehat\PP$. 
By iii), we assume that $\tilde e=0$ and so by connectedness, the image of every stable map in $\cM_{V}$ is contained in a fiber of the projection $\widehat\PP\ra E$.
If $\beta_V=q[F]$ for some $q>0$ then $\ev\colon\cM_{V}\ra E^{\{e\in E(\Gamma):e\in V\}}$ factors through the diagonal $E\ra E^{\{e\in E(\Gamma):e\in V\}}$ and if $\beta_V\neq q[F]$ then 
$\ev\colon\cM_{V}\ra E^{\{e\in E(\Gamma):e\in V\}}$ factors through the composition of the point embedding $P\ra E$ with the diagonal $E\ra E^{\{e\in E(\Gamma):e\in V\}}$.
Therefore, since the result of pushing the bottom class in \eqref{glue-equations} to a point agrees with the degree of $\Delta^!\ev_*\prod_V[\cM_{V}]^{vir}$, this degree is zero by \eqref{eq-vdim} unless we are in one of the following two cases. If $g>0$, we take into consideration ii), i.e., we replace $[\cM_{V}]^{vir}$ by $\lambda_V\cap [\cM_{V}]^{vir}$.  

\underline{I) $\beta_V=q[F]$ for some $q>0$:}
The vertex $V$ carries the marking $1$ and has precisely one adjacent edge. By Lemma~\ref{lem-vanish-2}, we may assume $g_V=0$.
In this case, $\ev_*[\cM_{V}]^{vir}=\frac1q[E_0]$ because there is a unique stable map of this type with image in a given fiber and it has a cyclic automorphism group of order $q$.

\underline{II) $\beta_V=\bar p[C']$ with $0<\bar p\le p$:}
The vertex $V$ does not carry the marking $1$ and has precisely one adjacent edge with tangency $\bar p$ to $E_0$. 
Moreover, $\ev_*[\cM_{V}]^{vir}=N(g_V,\bar p)[P]$.

We finishing proving the claim.
A graph as shown in Figure~\ref{fig:graph-contributing} is uniquely determined by specifying positive integers $p_1,...,p_r$ that sum to $p$ and non-negative $g_0,...,g_r$ that sum to $g$. We order its edges by the order shown in Figure~\ref{fig:graph-contributing} from left to right. 
Let $V_\beta$ refer to the unique vertex that belongs to $X$, so $V_\beta$ carries class $\beta$ and genus $g_0$.
By \cite{KLR}, Equation (1.4), we have $\deg(\phi)=\frac{\prod_e w_e}{\operatorname{lcm}(\{w_e\})}$, so that \eqref{degen-formula} and \eqref{glue-equations} yield for
$\deg\left((\gamma\cup\lambda_g)\cap[\cM(X_0)]^{vir}\right)$ the sum
$$
\sum_{{p_1+...+p_r=p}\atop{{g_0+...+g_r=g}\atop{r,p_i>0,g_i\ge 0}}}
\frac{p_1\cdot...\cdot p_r\cdot q}{r!} \deg\left(\gamma\cap\Delta^! \prod_{V\in V(\Gamma)} (F_{0,V})_*(\lambda_{g_V}\cap[\cM_V]^{vir})\right)
$$
$$
=\sum_{{p_1+...+p_r=p}\atop{{g_0+...+g_r=g}\atop{r,p_i>0,g_i\ge 0}}}
\frac{p_1\cdot...\cdot p_r}{r!}
\left(\prod_{i=1}^r N(g_i,p_i)\right)
\deg\left(\gamma\cap(P_1^!...P_r^! (\lambda_{g_0}\cap[\cM_{V_\beta}]^{vir}))\right).
$$
That $\Delta^!$ becomes $P_1^!...P_r^!$ follows from the excess intersection formula with rank zero excess bundle.
Note that $E(\Gamma)!=(r+1)!$ but we always order the edges of $\Gamma$ so that the last edge is adjacent to the vertex $V$ that carries the class $q[F]$ and thereby reduce the number of edge orders to $r!$. In other words, the identification of $\cM_{V_\beta}$ with the stack of $(n+r)$-marked stable log maps to $(X,E)$ with tangencies $p_1,...,p_r,q,0,...$ only involves a choice of the ordering of $r$ edges of $\Gamma$.
The final expression yields the assertion.
\end{proof}

\begin{corollary}
\label{cor-two-and-one-point}
Let $X$ be a del Pezzo surface with smooth anticanonical divisor $E$. 
Let $\pi\colon\widehat X\ra X$ be the blow-up of $X$ in a point. 
Let $\beta$ be a curve class in $X$ so that  $q:=\beta.E-1\ge 0$ and let $C$ be the exceptional curve in $\widehat X$. We have the identity
$$R^{g}(\widehat X,\pi^*\beta-C) =  \sum_{g_0+g_1=g}
R^{g_0}_{1,q}(X,\beta) N(g_1,1).$$
For $g=0$, we have
$R(\widehat X,\pi^*\beta-C) = 
R_{1,q}(X,\beta).$
\end{corollary}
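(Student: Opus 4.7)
The plan is to deduce the corollary as a direct specialization of Theorem~\ref{thm-one-to-multi-point}. Take $p = 1$ and $\gamma = 1$ in that theorem. The sum on the right-hand side is indexed by decompositions $p_1 + \cdots + p_r = p = 1$ with $r > 0$ and each $p_i > 0$. The only admissible choice is $r = 1$ and $p_1 = 1$, and correspondingly $g_0 + g_1 = g$ with $g_0, g_1 \ge 0$. The combinatorial prefactor becomes $\frac{p_1}{1!} = 1$, so the formula collapses to
\begin{equation*}
R^{g}(\widehat X,\pi^*\beta-C) \;=\; \sum_{g_0+g_1=g} R^{g_0}_{1,q}(X,\beta)\, N(g_1,1),
\end{equation*}
which is the first assertion.

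For the genus-zero statement, I would specialize the above identity to $g = 0$, forcing $g_0 = g_1 = 0$. Then the only term on the right is $R_{1,q}(X,\beta)\cdot N(0,1)$. Using the closed form for $N(g,p)$ recalled after \eqref{eq-cover-contribution}, namely $N(0,p) = (-1)^{p+1}/p$, we get $N(0,1) = 1$, and hence
\begin{equation*}
R(\widehat X, \pi^*\beta - C) = R_{1,q}(X,\beta),
\end{equation*}
which is the second assertion. Since every step is a straightforward substitution once Theorem~\ref{thm-one-to-multi-point} is in hand, there is no genuine obstacle; the only things to verify are the uniqueness of the decomposition of $p = 1$ into positive integer parts and the explicit evaluation $N(0,1) = 1$.
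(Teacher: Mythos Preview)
Your proof is correct and is exactly the specialization the paper intends: the corollary is stated without proof, as an immediate consequence of Theorem~\ref{thm-one-to-multi-point} obtained by setting $p=1$, $\gamma=1$, observing that the unique positive partition of $1$ forces $r=1$, $p_1=1$, and then using $N(0,1)=1$ for the genus-zero case.
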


\begin{remark}
A result similar to Corollary~\ref{cor-two-and-one-point} for absolute invariants was previously obtained in \cite{Hu},\,Theorem 1.4.
\end{remark}

\section{Proof of Theorem \ref{thm-main}}
\label{section-main-proof}
We assume $X$ is a toric Fano surface with smooth anticanonical curve $E$. 
Let $\widehat X$ be the blowup of $X$ in a point $P\in E$ with exceptional curve $C$. 
We also use $E$ to refer to the strict transform of $E$ in $\widehat X$.
We use any standard toric degeneration of the log Calabi--Yau pair $(X,E)$ 
to construct a mirror dual $\pi\colon\frak{X}\ra \Spf A_N\lfor t\rfor$ of the pair $(X,E)$.
That is, the fan picture $(B,\P,\varphi)$ of the degeneration of $(X,E)$ is in the list of Figure 1.5 in \cite{Gra1} and the 
triple $(B,\P,\varphi)$ constitutes the cone picture of the degeneration $\frak{X}$. 
We obtain the Landau--Ginzburg potential $W\in \Gamma(\frak{X},\shO_{\frak{X}})$ as described in Section~\ref{section-LG}. 
If $y=z^{m_\text{out}}$ denotes the primitive asymptotic monomial, we want to show that $W/y$ is the open mirror map of $K_X$.
Theorem~\ref{prop-global-expression-for-W} provides the expression
$$W/y = 1 + \sum_{p\geq 1}\sum_{\beta:\beta.E=p+1} R_{p,1}^{\text{trop}}(X,\beta)s^\beta t^{\deg(\beta)}y^{-\deg(\beta)}.$$
We insert the statement of Theorem~\ref{thm:cadman-chen} to get 
$$W/y = 1 + \sum_{p\geq 1}\sum_{\beta:\beta.E=p+1} \frac{1}{p}R_{1,p}^{\text{trop}}(X,\beta)s^\beta t^{\deg(\beta)}y^{-\deg(\beta)}.$$
There are now two paths that lead to the same result:
we either first insert Theorem~\ref{prop:trop} and then use Corollary~\ref{cor-two-and-one-point} or we first use Corollary~\ref{cor:tropcorrg} and then insert Theorem~\ref{prop:trop}.
The result of either path is
$$W/y = 1 + \sum_{p\geq 1}\sum_{\beta:\beta.E=p+1} \frac{1}{p} R(\widehat{X},\pi^{-1}(\beta)-C)s^\beta t^{\deg(\beta)}y^{-\deg(\beta)}.$$
For the next step, we recall a result by van Garrel, Graber and the second author.
For $\gamma$ a curve class in $\widehat{X}$, let $N(\gamma)$ denote the genus zero Gromov--Witten invariant of $K_{\widehat{X}}$ of class $\gamma$ with no markings. 
Set $d:=\gamma.E$. Since $E$ is an anticanonical divisor in $\widehat{X}$,
Corollary~1.3 in \cite{GGR} says the following.
\begin{theorem}
\label{GGR}
$R(\widehat X,\gamma) = (-1)^{d+1} d\cdot N(\gamma)$.
\end{theorem}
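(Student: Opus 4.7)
The plan is to establish this log-local correspondence via a logarithmic degeneration combined with a Serre-duality-style obstruction-theoretic comparison, following the strategy of van Garrel--Graber--Ruddat. First I would compactify $K_{\widehat X} = \mathcal{O}_{\widehat X}(-E)$ to the projective bundle $Z := \mathbb{P}(K_{\widehat X} \oplus \mathcal{O}_{\widehat X}) \to \widehat X$, with zero section $Z_0 \cong \widehat X$ and infinity section $Z_\infty \cong \widehat X$. Since stable maps to $K_{\widehat X}$ are supported on the zero section, $N(\gamma)$ equals a virtual integral over $\overline{M}_{0,0}(\widehat X, \gamma)$ against the top Chern class of $R^1p_* f^* K_{\widehat X}$, where $p$ denotes the universal curve and $f$ the universal map. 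Using that $E$ is anticanonical, a Serre-duality argument should identify $N(\gamma)$ with a log Gromov--Witten invariant of $(Z, Z_0 \cup Z_\infty)$ for the class $\iota_*\gamma$ with maximum tangency $d$ at $Z_\infty$ and trivial tangency at $Z_0$, where $\iota$ is the inclusion of either section.

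Next I would apply a logarithmic degeneration of $Z$ along $E \subset \widehat X$ analogous to the one used in \S\ref{subsection-two-point}, producing a reducible central fiber $Z' \cup Z''$ glued along a $\mathbb{P}^1$-bundle, where $Z''$ is a $\mathbb{P}^1$-bundle closely related to $\widehat\PP$ from \S\ref{subsection-P1bundle}. The degeneration formula (Theorem~1.6 in \cite{KLR}) splits the virtual class into contributions indexed by bipartite dual graphs. A vanishing analysis analogous to the proof of Theorem~\ref{thm-one-to-multi-point} --- using effectivity of classes, the single maximum-tangency marking, and the absence of constant-map contributions --- should reduce the sum to a single graph with two vertices joined by one edge of weight $d$: the $Z'$-side vertex carries class $\gamma$ and contributes precisely $R(\widehat X, \gamma)$, while the $Z''$-side vertex carries $d$ times the fiber class.

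The $Z''$-side is then a genus-zero $d$-fold cover of a single $\mathbb{P}^1$-fiber, fully ramified at both the zero and infinity sections. By a $\mathbb{C}^*$-localization computation on the fiber, or equivalently the Bryan--Pandharipande $d$-fold multiple-cover formula, this contribution evaluates to $(-1)^{d+1}/d^2$; combined with the combinatorial prefactor of $d$ from the gluing factor $\operatorname{lcm}(w_e)\cdot\deg(\phi)/|E(\Gamma)|!$ in the degeneration formula, the total $Z''$-side contribution is $(-1)^{d+1}/d$. Matching the two sides of the degeneration formula then produces $N(\gamma) = \frac{(-1)^{d+1}}{d} R(\widehat X, \gamma)$, which rearranges to the claim. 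The main technical obstacle will be the first step: ensuring that the obstruction bundle on $\overline{M}_{0,0}(\widehat X, \gamma)$ is correctly captured by the logarithmic structure on $Z$, which requires careful control over how the cosection on $R^1p_* f^* K_{\widehat X}$ interacts with the log structure and its tangency stratification.
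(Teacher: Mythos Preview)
The paper does not prove this statement: it quotes it as Corollary~1.3 of \cite{GGR} and uses it as a black box in the chain of identifications in Section~\ref{section-main-proof}. So the relevant comparison is with \cite{GGR}, not with anything argued here.

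Your overall shape---degeneration formula, a single contributing bipartite graph, and a $d$-fold cover contribution producing the factor $(-1)^{d+1}/d$---matches \cite{GGR} in spirit, but your first step is not set up correctly. If $\iota$ is the inclusion of $Z_0$, then $\iota_*\gamma\cdot Z_\infty=0$, so a tangency of order $d$ at $Z_\infty$ is impossible for that class; if $\iota$ is the inclusion of $Z_\infty$, then the curves lie in $Z_\infty$, disjoint from $K_{\widehat X}=Z\setminus Z_\infty$, and you have lost the link to $N(\gamma)$. Saying ``either section'' does not help: the two choices give incompatible intersection numbers, and neither yields a log problem on $(Z,Z_0\cup Z_\infty)$ whose answer is $N(\gamma)$. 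The ``Serre-duality argument'' you gesture at does not repair this.

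The argument in \cite{GGR} never passes through the threefold $Z$. It degenerates the \emph{surface} $\widehat X$ to the normal cone of $E$, with central fibre $\widehat X\cup_E P$ for $P$ a ruled surface over $E$, and tracks the line bundle $\shO(-E)$---the twisting bundle for the local invariant---through this degeneration. On the $\widehat X$-component of the central fibre this bundle becomes trivial, so the twisted invariant there reduces to the log invariant of $(\widehat X,E)$ with maximal contact along the gluing divisor; on the $P$-component it becomes $\shO_P(-E_\infty)$, and the resulting twisted contribution of $d$-fold fibre covers in $P$ is what produces $(-1)^{d+1}/d$. Thus the degeneration is of $\widehat X$, not of $Z$, and the local-to-log passage happens through the specialisation of the twisting line bundle rather than through an auxiliary projective bundle. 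Your proposed ``degeneration of $Z$ along $E\subset\widehat X$'' would have to be made precise (a curve in a threefold is codimension two, not a divisor), and even then it is unclear how the $Z'$-side would yield the surface invariant $R(\widehat X,\gamma)$ rather than a threefold invariant.
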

We want to apply this theorem to the curve class $\gamma:=\pi^{-1}(\beta)-C$.
Note that $d=\pi^{-1}(\beta).E-C.E=p+1-1=p=\deg(\beta)-1$. 
We insert the statement of Theorem~\ref{GGR} into the last expression for $W/y$ to obtain
$$W/y = 1 + \sum_{\beta} (-1)^{\deg(\beta)}N(\pi^{-1}(\beta)-C)s^\beta t^{\deg(\beta)}y^{-\deg(\beta)}.$$

Recall that we assumed that $X$ is toric and therefore also $K_X$ is toric. 
Recall the open invariants $n_{\beta_0+\beta}$ from Section~\ref{sec-open-mirror-map}.
Theorem 1.1 in \cite{LLW} states the following result.
\begin{theorem}
$n_{\beta_0+\beta}= N(\pi^{-1}(\beta)-C)$
\end{theorem}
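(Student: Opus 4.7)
The plan is to prove the cited identity $n_{\beta_0+\beta}= N(\pi^{-1}(\beta)-C)$ via a framed open--closed correspondence in the spirit of Chan \cite{C11}, extended to toric Calabi--Yau threefolds by Chan--Lau--Leung \cite{CLL}. The key geometric observation is that for an outer Aganagic--Vafa brane $L\subset K_X$ in framing zero, the moment-map image of $L$ is a ray pointing in the outward direction that is precisely the ray introduced by blowing up the toric base at the boundary. Thus the toric fan of $K_{\widehat X}$ is obtained from the fan of $K_X$ by inserting one extra ray, corresponding to the exceptional divisor $\pi^{-1}(E)$ upstairs. Under this identification, the relative class $\beta_0+\beta\in H_2(K_X,L)\cong H_2(K_X)\oplus H_1(L)$ matches $\pi^*\beta-C\in H_2(K_{\widehat X})$: the disk direction $\beta_0$ (normal to $L$ inside $K_X$) is exchanged for the exceptional curve $-C$, while the remaining class $\beta$ is preserved by $\pi^*$.

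First I would set up both sides via equivariant localization. On the open side, I would use the Katz--Liu definition of $n_{\beta_0+\beta}$: localizing with respect to the Calabi--Yau torus subgroup $T_{\mathrm{CY}}\subset(\mathbb C^\times)^3$ that preserves $L$, each fixed stable disk configuration decomposes as a tree of $T$-invariant rational curves in $K_X$ together with a collection of disks whose boundaries wind around $L$ (in framing zero only the elementary Aganagic--Vafa disk $\beta_0$ contributes, since all other framing-zero disks vanish by \cite{FL}). On the closed side, I would apply ordinary torus localization to $\overline{\mathcal M}_{0,0}(K_{\widehat X},\pi^*\beta-C)$ using the lifted torus action on $K_{\widehat X}$.

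The bulk of the argument is then a bijection of fixed loci: a framing-zero disk tree with boundary class $\beta_0+\beta$ in $K_X$ extends uniquely to a $T$-invariant rational curve in $K_{\widehat X}$ by replacing the unique boundary disk with the exceptional $C$ and absorbing the winding into the class $-C$. I would verify that this bijection is an isomorphism of (virtual) fixed components and that it matches the equivariant vertex contributions: the Aganagic--Vafa disk vertex with framing zero evaluates (via the Katz--Liu vertex) to the same equivariant weight as the standard toric vertex at the new ray of $K_{\widehat X}$, up to the expected sign. Finally I would send the Calabi--Yau equivariant parameter to $0$ to recover non-equivariant invariants on both sides.

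The hard part will be \emph{step three}: proving the equality of the Katz--Liu open vertex in framing zero with the closed toric vertex along the blow-up ray. This is precisely the content of the ``one-leg topological vertex'' identification, and one has to be careful both with signs (the framing ambiguity and the twist by $C$ contribute compatible signs) and with the marking/stabilization differences between the open moduli (which allow disks with winding but no interior marking on the boundary) and the closed moduli (which parametrize curves with no marking). A clean way to handle this is to follow \cite{CLL,LLW}: realize $n_{\beta_0+\beta}$ as the SYZ-mirror period integral, identify the mirror curve of $K_X$ in framing zero with the mirror curve of $K_{\widehat X}$ restricted to the relevant divisor, and match periods. Once the vertex identification is in place, the sum over trees is identical on both sides and the theorem follows.
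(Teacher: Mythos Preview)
The paper does not give its own proof of this theorem: it is stated verbatim as a citation of Theorem~1.1 in \cite{LLW}, which in turn builds on Chan's open--closed relation \cite{C11}. There is therefore no argument in the paper to compare against beyond the attribution.

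Your outline is in the general spirit of that literature, but it conflates two distinct settings and contains a geometric slip. First, in the paper's conventions (Section~\ref{sec-open-mirror-map}) the Lagrangian $L$ is a \emph{moment-map fiber} of $K_X$ and $n_{\beta_0+\beta}$ is the Fukaya--Oh--Ohta--Ono disk invariant for that fiber; the identification with Aganagic--Vafa outer-brane invariants in framing zero is a separate folklore statement (see Remark~\ref{rmk:cetalwork}). The actual proof in \cite{LLW} compares the disk moduli for a moment fiber with the closed moduli in $K_{\widehat X}$ via a direct identification of Kuranishi/virtual structures following \cite{C11}, not via Katz--Liu equivariant localization. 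Your localization scheme would more naturally address the Aganagic--Vafa side, so even if carried out it would prove a cousin of the stated theorem rather than the theorem itself. Second, the phrase ``the exceptional divisor $\pi^{-1}(E)$'' is wrong: $\pi\colon\widehat X\to X$ is the blow-up at a \emph{point} $P$, so the exceptional curve is $C=\pi^{-1}(P)$, and the extra ray inserted into the fan of $\widehat X$ (and hence of $K_{\widehat X}$) corresponds to $C$, not to anything built from $E$.
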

We insert the theorem into the last expression for $W/y$ and obtain
$$W/y = 1 + \sum_{\beta\in \op{NE}(X)} (-1)^{\deg(\beta)} n_{\beta_0+\beta} s^\beta t^{\deg(\beta)}y^{-\deg(\beta)}.$$

The substitution $Q^\beta=(-1)^{\deg(\beta)}s^\beta t^{\deg(\beta)}y^{-\deg(\beta)}$ gives the identity $W/y=M(Q)$ where $M$ is the open mirror map given in \eqref{def-mirror-map}. We have therefore proved Theorem~\ref{thm-main}.

\section{Relating scattering diagrams under a blow-up}
\label{sec:scattering-blowup}

In this section we establish a correspondence between broken lines for $X$ and certain walls (or tropical curves) for its blow up $\widehat{X}$ (Theorem \ref{thm:blowupfan}). We restrict to the $2$-dimensional case with very ample anticanonical bundle. Together with the correspondence between walls and $1$-marked invariants \cite{Gra1} and the correspondence between broken lines and $2$-marked invariants \cite{Gra2} this yields a tropical proof of Theorem~\ref{thm-one-to-multi-point}. But it is also a refinement of that proposition, since a tropical curve is finer information than a curve class.

\subsection{Toric degenerations}

\label{S:toricdeg}

Let $X$ be a projective surface and let $E\subset X$ be a smooth very ample anticanonical divisor. Consider a toric degeneration $\mathfrak{X}\rightarrow\mathbb{A}^1$ of $X$ together with a divisor $\mathfrak{D}\subset\mathfrak{X}$ that is a toric degeneration of $E$. 


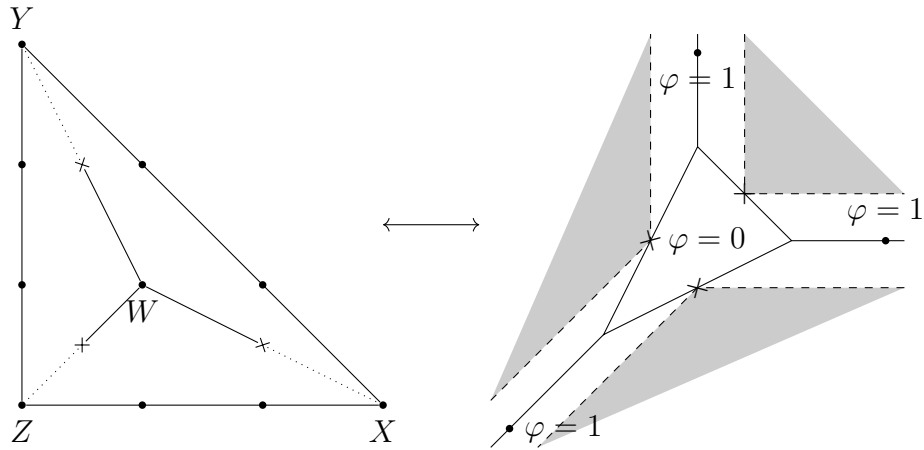
\begin{figure}[h!]
\centering
\begin{tikzpicture}[scale=1.6]
\coordinate[fill,circle,inner sep=1pt,label=below:${W}$] (0) at (0,0);
\coordinate[fill,circle,inner sep=1pt,label=below:${Z}$] (1) at (-1,-1);
\coordinate[fill,circle,inner sep=1pt,label=below:${X}$] (2) at (2,-1);
\coordinate[fill,circle,inner sep=1pt,label=above:${Y}$] (3) at (-1,2);
\coordinate[fill,cross,inner sep=2pt,rotate=45] (1a) at (-0.5,-0.5);
\coordinate[fill,cross,inner sep=2pt,rotate=63.43] (2a) at (1,-0.5);
\coordinate[fill,cross,inner sep=2pt,rotate=26.57] (3a) at (-0.5,1);
\draw (1) -- (2) -- (3) -- (1);
\draw (0) -- (1a);
\draw (0) -- (2a);
\draw (0) -- (3a);
\draw[dotted] (1a) -- (1);
\draw[dotted] (2a) -- (2);
\draw[dotted] (3a) -- (3);
\coordinate[fill,circle,inner sep=1pt] (a) at (0,-1);
\coordinate[fill,circle,inner sep=1pt] (b) at (1,-1);
\coordinate[fill,circle,inner sep=1pt] (c) at (-1,0);
\coordinate[fill,circle,inner sep=1pt] (d) at (-1,1);
\coordinate[fill,circle,inner sep=1pt] (e) at (1,0);
\coordinate[fill,circle,inner sep=1pt] (f) at (0,1);
\draw[<->] (2,0.5) -- (2.8,0.5);
\end{tikzpicture}
\begin{tikzpicture}[scale=1.25]
\draw (1,0) -- (0,1) -- (-1,-1) -- (1,0);
\draw (1,0) -- (2.2,0);
\draw (0,1) -- (0,2.2);
\draw (-1,-1) -- (-2.2,-2.2);
\draw[dashed,fill=black,fill opacity=0.2] (0.5,2.2) -- (0.5,0.5) node[opacity=1,rotate=45]{$\times$} -- (2.2,0.5);
\draw[dashed,fill=black,fill opacity=0.2] (-2.2,-1.7) -- (-0.5,0) node[opacity=1,rotate=60]{$\times$} -- (-0.5,2.2);
\draw[dashed,fill=black,fill opacity=0.2] (2.2,-0.5) -- (0,-0.5) node[opacity=1,rotate=30]{$\times$} -- (-1.7,-2.2);
\draw (0.1,0) node{$\varphi=0$};
\draw (2,0) node[fill,circle,inner sep=1pt,label=above:{$\varphi=1$}]{};
\draw (0,2) node[fill,circle,inner sep=1pt,label=below:{$\varphi=1$}]{};
\draw (-2,-2) node[fill,circle,inner sep=1pt,label=right:{$\varphi=1$}]{};
\end{tikzpicture}
\caption{The intersection complex (left) and the dual intersection complex (right) of $(\mathbb{P}^2,E)$. The shaded region is cut out and the dashed lines are mutually identified, so all unbounded edges are parallel.}
\label{fig:P2}
\end{figure}

Let $M\simeq\mathbb{Z}^2$ be a lattice with the standard area form. The momentum polytope of a toric Fano variety is a \textit{Fano polytope}, i.e., a convex lattice polytope containing the origin and with all vertices being primitive integral vectors. By duality, the bounded maximal cell $\sigma_0$ of the dual intersection complex $(B,\mathscr{P})$ of $\mathfrak{X}$ is a Fano polytope as well. Its boundary has a natural orientation (counterclockwise) and thus we can talk about the orientation of pairs $(v_1,v_2)$ of adjacent vertices of $\sigma_0$ and of flags $(v,e)$, where $v$ is a vertex of $\sigma_0$ and $e$ is an edge of $\sigma_0$ containing $v$. 

\begin{definition}
For two integral vectors $m_1,m_2$ in $M\simeq\mathbb{Z}^2$, the \textit{kink} between them is $\kappa(m_1,m_2):=\text{det}(m_1|m_2)=\braket{m_1^\bot,m_2}$, where $m_1^\bot$ is the primitive normal vector to $m_1$ such that $\text{det}(m_1|m_1^\bot)>0$. The kink at a vertex of the bounded maximal cell is the positive kink between its adjacent edges of the bounded maximal cell.
\end{definition}

\begin{lemma}
\label{lem:fano}
Let $(v,e)$ be a flag of $\sigma_0$ such that $(v,e)$ is oriented positively and let $m$ be its primitive direction. Then the Fano property implies $\kappa(v,m)=1$. Similarly, if $(v,e)$ is oriented negatively, then $\kappa(v,m)=-1$.
\end{lemma}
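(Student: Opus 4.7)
The plan is to unpack what the Fano property means for the $2$-dimensional polygon $\sigma_0$ and use it to compute $\kappa(v,m)$ directly. Recall that by assumption $\sigma_0$ is a Fano polytope arising from one of the $16$ toric del Pezzo degenerations in Figure~1.5 of \cite{Gra1}. In dimension two this means $\sigma_0$ is a smooth reflexive polygon: any two consecutive vertices $v,v'$ (in the counterclockwise order on $\partial\sigma_0$) form a positively oriented $\mathbb{Z}$-basis of $M$. In other words, $\det(v\,|\,v')=1$ whenever $(v,v')$ is positively oriented along $\partial\sigma_0$.

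Given this, my first step would be to fix a positively oriented flag $(v,e)$, let $v'$ be the other endpoint of $e$, and write the primitive direction of $e$ pointing from $v$ to $v'$ as $m=(v'-v)/k$ for some positive integer $k\in\mathbb{Z}_{\ge 1}$, so that $v'=v+km$. Then bilinearity of the determinant gives
\[
1 \;=\; \det(v\,|\,v') \;=\; \det(v\,|\,v+km) \;=\; k\cdot\det(v\,|\,m) \;=\; k\cdot\kappa(v,m).
\]
Since $\kappa(v,m)\in\mathbb{Z}$ and $k\ge 1$, this forces $k=1$ and $\kappa(v,m)=1$, proving the positively oriented case. The negatively oriented case follows immediately by swapping $v$ and $v'$: this reverses the orientation of the flag and, because the primitive direction also flips sign while the remaining vertex entry changes accordingly, it flips the sign of the determinant, giving $\kappa(v,m)=-1$.

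The only real work is isolating the correct formulation of the ``Fano property'' that is being invoked. What is needed is precisely that consecutive boundary vertices span $M$ as an oriented basis, which is the smooth-reflexive condition on $\sigma_0$. I would expect this to be the main subtlety, since a general lattice polytope with primitive vertices containing $0$ in its interior is only reflexive, not smooth-reflexive, and the stronger smoothness is what makes the determinant equal $\pm 1$ rather than a general positive integer. Once that is in hand, the argument is a one-line determinant calculation, and in particular it simultaneously shows that $v'-v$ is itself primitive, i.e.\ $k=1$, which is a useful byproduct for later sections.
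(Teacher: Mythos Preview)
Your argument hinges on the claim that any two consecutive vertices $v,v'$ of $\sigma_0$ satisfy $\det(v\,|\,v')=1$, i.e.\ that $\sigma_0$ is a \emph{smooth} reflexive polygon. This is not what ``Fano polytope'' means in the paper, and it fails for most of the sixteen cases: only five of the sixteen reflexive polygons are smooth. For a concrete counterexample, take the reflexive triangle with vertices $(1,0),(0,1),(-1,-2)$. Then $\det((-1,-2)\,|\,(1,0))=2$, so your displayed identity $1=\det(v\,|\,v')=k\cdot\kappa(v,m)$ is simply false at this edge. Yet the lemma still holds there: the edge vector $(2,2)$ has primitive direction $m=(1,1)$ and $\kappa((-1,-2),(1,1))=1$. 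So the conclusion is correct, but your route to it is blocked; you have essentially assumed a statement equivalent to the lemma (for length-one edges) rather than proved it.

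The paper proceeds differently and uses only reflexivity (the origin is the unique interior lattice point), which does hold for all sixteen polygons. After normalising $v=(1,0)$ and writing $m=(-a,b)$ with $b=\kappa(v,m)$, it shows that if $b>1$ then the lattice point $p=\big(-\lfloor\tfrac{a-1}{b}\rfloor,\,1\big)$ lies in the triangle $\operatorname{conv}(0,v,v')\subset\sigma_0$; since $p$ has height $1$ it is not the origin, contradicting reflexivity. The key inequalities are elementary: one checks $0<bp_1+a\cdot 1<b$, placing $p$ strictly between the line $\mathbb{R}m$ through the origin and the edge line $v+\mathbb{R}m$. This argument makes no use of $\det(v\,|\,v')=1$ and covers the non-smooth cases your approach misses.
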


\begin{proof}
Without loss of generality assume $v=(1,0)$ and $m=(-a,b)$ for some $a,b\in\mathbb{N}$. Then $b=\kappa(v,m)$. If $b>1$, then the integral point $(-\lfloor\frac{a-1}{b}\rfloor,1)$ lies above the line connecting $(-a,b)$ with the origin $(0,0)$, so by convexity lies in the interior of the bounded maximal cell, contradicting the Fano property.
\end{proof}

Let us investigate how blowing up $X$ at a point changes the dual intersection complex.

\begin{construction}
\label{con:blow}
Consider an unbounded maximal cell $\sigma$ of $\mathscr{P}$ with bounded edge of affine length $1$ and let $v_1,v_2$ be its vertices. Then $v_1,v_2$ are vertices of the bounded maximal cell $\sigma_0$. Without loss of generality we assume that $(v_1,v_2)$ is oriented positively. Note that $v_1$ and $v_2$ are also the primitive direction vectors of the unbounded edges adjacent to $v_1$ and $v_2$, respectively. Blowing up the torus fixed point corresponding to $\sigma$ amounts to inserting a vertex $v_1+v_2$ and an unbounded edge in direction $v_1+v_2$. The new vertex $v_1+v_2$ is connected with $v_1$ and $v_2$ via new bounded edges. In exchange the edge connecting $v_1$ with $v_2$ is erased. See Figure \ref{fig:blowup1} for a picture of this procedure. 
\end{construction}

\begin{figure}[h!]
\centering
\begin{tikzpicture}[scale=1.5]
\draw (1,0) -- (0,1);
\draw (1,0) node[below]{$v_1$} -- (2,0);
\draw (0,1) node[left]{$v_2$} -- (0,2);
\draw[->] (1,0) -- (0.2,-0.4) node[above]{$m_1$};
\draw[->] (0,1) -- (-0.4,0.2) node[right]{$m_2$};
\draw[dashed,fill=black,fill opacity=0.2] (0.5,2) -- (0.5,0.5) node[opacity=1,rotate=45]{$\times$} -- (2,0.5);
\draw[fill] (0,0) circle (1pt);
\draw (0.25,0.25) node{$\sigma_0$};
\draw (0.25,1.25) node{$\sigma$};
\draw (2.5,0.5) node{\Large$\leadsto$};
\end{tikzpicture}
\begin{tikzpicture}[scale=1.5]
\draw (1,0) -- (1,1) -- (0,1);
\draw (1,0) node[below]{$v_1$} -- (2,0);
\draw (1,1) node[right]{$v_1+v_2$} -- (2,2);
\draw (0,1) node[left]{$v_2$} -- (0,2);
\draw[->] (1,0) -- (0.2,-0.4) node[above]{$m_1$};
\draw[->] (0,1) -- (-0.4,0.2) node[right]{$m_2$};
\draw[dashed,fill=black,fill opacity=0.2] (0.5,2) -- (0.5,1) node[opacity=1,rotate=0]{$\times$} -- (1.5,2);
\draw[dashed,fill=black,fill opacity=0.2] (2,0.5) -- (1,0.5) node[opacity=1,rotate=0]{$\times$} -- (2,1.5);
\draw[densely dotted] (0,1) -- (1,0);
\fill[opacity=0.1] (0.5,2) -- (0.5,0.5) -- (2,0.5) -- (2,2);
\draw[densely dotted] (0.5,2) -- (0.5,0.5) node[fill,cross,inner sep=2pt,rotate=45]{} -- (2,0.5);
\draw[fill] (0,0) circle (1pt);
\draw (2.5,0.5) node{\Large$\leadsto$};
\end{tikzpicture}
\begin{tikzpicture}[scale=1.5]
\draw (1,0) -- (1,1) -- (0,1);
\draw (1,0) node[below]{$v_1$} -- (2,0);
\draw (1,1) node[right]{$v_1+v_2$} -- (2,2);
\draw (0,1) node[left]{$v_2$} -- (0,2);
\draw[->] (1,0) -- (0.2,-0.4) node[above]{$m_1$};
\draw[->] (0,1) -- (-0.4,0.2) node[right]{$m_2$};
\draw[dashed,fill=black,fill opacity=0.2] (0.5,2) -- (0.5,1) node[opacity=1,rotate=0]{$\times$} -- (1.5,2);
\draw[dashed,fill=black,fill opacity=0.2] (2,0.5) -- (1,0.5) node[opacity=1,rotate=0]{$\times$} -- (2,1.5);
\draw[fill] (0,0) circle (1pt);
\end{tikzpicture}
\caption{Blowing up the fan picture amounts to a subdivision of an unbounded maximal cell.}
\label{fig:blowup1}
\end{figure}
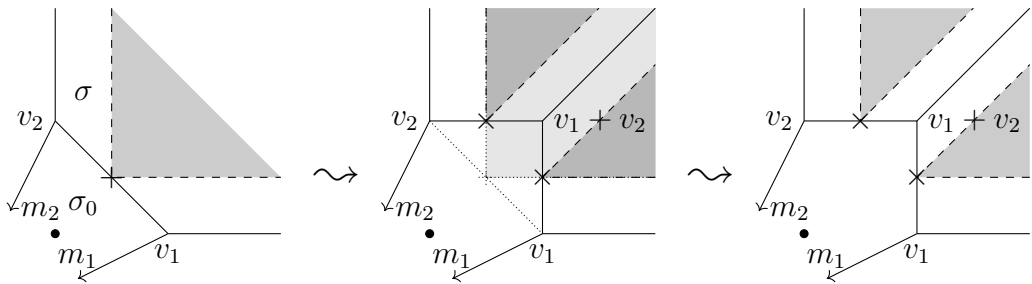

\begin{proposition}
\label{prop:blowup}
Blowing up a point corresponding to a maximal unbounded cell reduces the kink at the adjacent vertices by $1$ and produces a new vertex with kink $1$.
\end{proposition}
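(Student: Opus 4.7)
The plan is to do everything as $2\times 2$ determinants of primitive direction vectors at the three vertices involved: $v_1$, $v_2$, and the new vertex $v_1+v_2$. Let $n_1$ denote the primitive direction of the edge of $\sigma_0$ at $v_1$ other than $v_1v_2$, and similarly $n_2$ at $v_2$. The hypothesis that $v_1v_2$ has affine length one says that $v_2-v_1$ is itself primitive, so Lemma~\ref{lem:fano} applied to the positively oriented flag $(v_1,v_1v_2)$ gives $\det(v_1\mid v_2-v_1)=1$, hence $\det(v_1\mid v_2)=1$. The flag $(v_1,n_1)$ is negatively oriented, so Lemma~\ref{lem:fano} yields $\det(v_1\mid n_1)=-1$; symmetrically $\det(n_2\mid v_2)=-1$. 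These three determinant identities are essentially the only input.

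With this setup, the kink at $v_1$ before the blowup is $|\det(v_2-v_1\mid n_1)|$, which expands using $\det(v_1\mid n_1)=-1$ to $|\det(v_2\mid n_1)+1|$. After the blowup, the edge of $\sigma_0$ at $v_1$ that previously ran to $v_2$ is replaced by an edge running to $v_1+v_2$; its primitive direction is $v_2$ (primitive as a vertex of a Fano polytope). Hence the new kink at $v_1$ is $|\det(v_2\mid n_1)|$, and the same computation at $v_2$ is symmetric. I therefore obtain a decrease by exactly one at each of $v_1$ and $v_2$ provided $\det(v_2\mid n_1)\geq 0$ and the analogous inequality at $v_2$ hold.

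The sign condition is the only non-formal step, and I expect to derive it from convexity of $\sigma_0$. Since the origin lies in the interior of the Fano polytope $\sigma_0$, the vector $-v_1$ lies in the interior of the cone at $v_1$ spanned by $v_2-v_1$ and $n_1$, so $-v_1=a(v_2-v_1)+b\,n_1$ with $a,b>0$. Pairing with $\det(v_1\mid\cdot)$ gives $0=a-b$ (using the identities from the first paragraph), so $a=b$, and pairing with $\det(v_2\mid\cdot)$ then forces $1=a\bigl(1+\det(v_2\mid n_1)\bigr)$, whence $\det(v_2\mid n_1)\geq 0$ as needed.

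Finally, at the new vertex $v_1+v_2$, the two adjacent edges of the enlarged $\sigma_0$ run to $v_1$ and $v_2$ with primitive directions $-v_2$ and $-v_1$, so its kink equals $|\det(-v_2\mid -v_1)|=|\det(v_1\mid v_2)|=1$ by the Fano identity of the first paragraph. This exhausts the three vertices and completes the proof; the only delicate step is the convexity/sign argument, everything else being direct determinant bookkeeping driven by Lemma~\ref{lem:fano}.
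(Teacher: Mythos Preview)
Your proof is correct and follows essentially the same approach as the paper's: compute kinks as determinants of primitive edge directions, expand bilinearly, and feed in the identities $\det(v_1\mid v_2)=1$ and $\det(v_1\mid n_1)=-1$ supplied by Lemma~\ref{lem:fano}. The only substantive difference is that the paper simply asserts $\kappa(v_2,m_1)>0$ (using its signed definition of kink and the orientation convention), whereas you treat kinks via absolute values and therefore give an explicit convexity argument to pin down the sign of $\det(v_2\mid n_1)$; your argument for this is correct and in fact fills in a step the paper leaves implicit.
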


\begin{proof}
Let $v_1$ and $v_2$ be as above. Consider the vertex $v_1$ after blowing up. It has two bounded adjacent edges. One is pointing towards $v_1+v_2$ and has primitive direction vector $v_2$. Let $m_1$ be the primitive direction vector of the other. Then the kink at $v_1$ is $\kappa':=\kappa(v_2,m_1)>0$. Now consider $v_1$ before blowing up. Again, it has two bounded adjacent edges. One is pointing towards $v_2$ with primitive direction vector $v_2-v_1$, the other has primitive direction vector $m_1$. Then the kink at $v_1$ is
\[ \kappa = \kappa(v_2-v_1,m_1) = \kappa(v_2,m_1) - \kappa(v_1,m_1) = \kappa' + 1 > 0. \]
The last equality follows since the Fano property implies $\kappa(v_1,m_1)=-1$ by Lemma \ref{lem:fano}. This shows $\kappa'=\kappa-1$, where $\kappa$ and $\kappa'$ is the kink at $v_1$ before blowing up and after blowing up, respectively. For the other vertex $v_2$ the calculation is similar. This proves the first assertion. The new vertex $v_1+v_2$ has adjacent edges with primitive direction vectors $v_1$ and $v_2$, respectively, so the kink at $v_1+v_2$ is $\kappa(v_1,v_2)$. In construction \ref{con:blow} we required the bounded edge connecting $v_1$ with $v_2$ to have affine length $1$. So $v_1$, $v_2$ and $v_1-v_2$ are all trivial vectors and $v_1$, $v_2$ span a triangle with area $\frac{1}{2}$. This implies $\kappa(v_1,v_2)=1$, completing the proof.
\end{proof}

\subsection{Moving worms}

In this section we will give another way how to obtain the dual intersection complex of the blow up at a point $\pi:\widehat{X}\rightarrow X$ from the dual intersection complex $(B,\mathscr{P})$ of $X$. We introduce a new affine singularity and move it along some ray. Such a family of singularities was called a \textit{moving worm} in \cite{KS}. The advantage of this description is that we can still identify the dual intersection complex of $X$, together with its wall structure, inside the dual intersection complex of $\widehat{X}$ (Proposition \ref{prop:initial}).

\begin{remark}
There are two different types of charts for a $2$-dimensional affine manifold $B$, representing affine singularities in a different way. First, one can cut a wedge out of an affine manifold to represent the obstruction to the affine structure near an affine singularity. An example of this is given by the shaded regions on the right hand side in Figure \ref{fig:P2}. Alternatively, one can cut a ray out of an affine manifold and apply an affine transformation when crossing the ray on one particular side of the singularity. For an example see the dotted lines on the left hand side in Figure \ref{fig:P2}. See \cite{GS14}, {\S}2.2, for more details. In the following we will mix these two types of charts. 
\end{remark}

\begin{construction}
\label{con:blowup}
Start with the dual intersection complex of $X$ and let $m_{\text{out}}$ be its unique unbounded direction. Choose a maximal cell $\sigma$ and let $\delta$ be the affine singularity contained in it. We construct the dual intersection complex of the blow up $\widehat{X}$ as follows. Each step is illustrated in Figure \ref{fig:blowup}, with the same numbering as below.
\begin{enumerate}
\item Insert an affine singularity $\delta'$ with monodromy conjugate to $\left(\begin{smallmatrix}1 & 1 \\ 0 & 1 \end{smallmatrix}\right)$ and  invariant direction $m_{\text{out}}$ on the interior of the ray $\delta + \mathbb{R}_{\geq 0}m_{\text{out}}$. There is a wall emanating from $\delta'$ in direction $-m_{\text{out}}$ and going through the bounded maximal cell $\sigma_0$. It scatters at the slab containing $\delta$ to produce another wall in the interior of $\sigma_0$. Both walls are shown in red in Figure \ref{fig:blowup}, (1).
\item Move $\delta'$ along the ray $\delta + \mathbb{R}_{\geq 0}m_{\text{out}}$ onto $\delta$. This leads to an affine singularity $\delta''$ with more complicated monodromy and without any invariant direction. There are still two walls in the interior of $\sigma_0$.
\item Consider the walls inside $\sigma_0$ as slabs and let them be new edges of the dual intersection complex. This gives a new bounded maximal cell $\sigma'_0$ lying inside $\sigma_0$. Conversely, consider all parts of slabs outside of $\sigma'_0$ as walls, not being part of the dual intersection complex of $\widehat{X}$ (purple in Figure \ref{fig:blowup}, (3)).
\item Split the affine singularity $\delta''$ into two singularities $\delta_1$ and $\delta_2$ with invariant directions being the directions of the new slabs, then move the singularities onto the respective slabs.
\end{enumerate}
For an affine singularity $\delta$ let $T_\delta$ be the monodromy of a counterclockwise loop around $\delta$. The construction above uses the fact that the monodromy $T_{\delta''}$ has two splittings $T_{\delta''}=T_\delta \circ T_{\delta'}$ and $T_{\delta''} = T_{\delta_2} \circ T_{\delta_1}$ with different invariant directions. In the coordinates of Figure \ref{fig:blowup} we have
\[ T_\delta = \begin{pmatrix} 0 & -1 \\ 1 & 2 \end{pmatrix} \ \ T_{\delta'} = \begin{pmatrix} 1 & -1 \\ 0 & 1 \end{pmatrix} \ \ T_{\delta''} = \begin{pmatrix} 0 & -1 \\ 1 & 1 \end{pmatrix} \ \ T_{\delta_1} = \begin{pmatrix} 1 & 0 \\ 1 & 1 \end{pmatrix} \ \ T_{\delta_2} = \begin{pmatrix} 1 & -1 \\ 0 & 1 \end{pmatrix} \]
The wall structures in (1)-(4) all have the same global monodromy and the same asymptotic scattering diagram. Now lets identify $\mathscr{S}_0(X)$ in each of the steps (1)-(4).
\begin{enumerate}
\item For $\delta'$ going off to infinity (in the unique unbounded direction $m_{\text{out}}$) this is $\mathscr{S}_0(X)$ with one additional wall in direction $-m_{\text{out}}$. So to get $\mathscr{S}_0(X)$ we simply ignore this additional wall. As $\delta'$ moves towards $\delta$ the walls of $\mathscr{S}_0(X)$ may intersect $\delta + \mathbb{R}_{\geq 0}m_{\text{out}}$ either between $\delta$ and $\delta'$ (left of $\delta'$ in Figure \ref{fig:blowup},(1)) or between $\delta'$ and infinity (right of $\delta'$ in Figure \ref{fig:blowup},(1)). In the first case nothing special happens. We still ignore the additional wall, and the monodromy when crossing $\delta + \mathbb{R}_{\geq 0}m_{\text{out}}$ is the same as for $\mathscr{S}_0(X)$. In the second case, we also have to apply the monodromy of $\delta'$ when crossing $\delta + \mathbb{R}_{\geq 0}m_{\text{out}}$. The monodromy of $\delta'$ is the same as the linear map that describes scattering with a wall in direction $-m_{\text{out}}$:
\[ T_{\delta'} : m \mapsto m-|\kappa(m,m_{\text{out}})|m_{\text{out}}. \]
To get $\mathscr{S}_0(X$) we compensate this monodromy as follows. Every time a wall, say with direction $m$, crosses the wall emanating from $\delta'$ in direction $m_{\text{out}}$ it scatters to order $|\kappa(m,m_{\text{out}})|$.
\item Here we always have the second case (right of $\delta'$) from (1).
\item Same as (2), we simply changed our view on the wall structure.
\item To identify $\mathscr{S}_0(X)$ we have to compare the monodromy of $\delta$ with the monodromies of $\delta_1$ and $\delta_2$. It turns out that we can write $T_\delta = T_{\delta_2} \circ T' \circ T_{\delta_2}$ where $T'$ is the linear map describing scattering with the new wall in direction $m_{\text{out}}$:
\[ T' : m \mapsto m + |\kappa(m,m_{\text{out}})|m_{\text{out}}. \]
In the coordinates of Figure \ref{fig:blowup} this is
\[ T' = \begin{pmatrix} 0 & 1 \\ -1 & 2 \end{pmatrix}: \ \ \begin{pmatrix} -1 \\ 0 \end{pmatrix} \mapsto \begin{pmatrix} 0 \\ 1 \end{pmatrix}, \ \ \begin{pmatrix} 0 \\ 1 \end{pmatrix} \mapsto \begin{pmatrix} 1 \\ 2 \end{pmatrix} \]
\end{enumerate}

\begin{figure}[h!]
\centering
\begin{tikzpicture}[scale=1]
\draw (1,-1) node{(1)};
\draw[thick] (0,-0.5) -- (1,0) -- (0,1) -- (-0.5,0);
\draw[thick] (1,0) -- (2.5,0);
\draw[thick] (0,1) -- (0,2.5);
\draw[dashed] (0.5,0.5) node[opacity=1,rotate=45]{$\times$} -- (0.5,2.5);
\draw[dashed] (0.5,0.5) -- (1.5,0.5);
\draw[red,dotted] (1.5,0.5) node[opacity=1,rotate=0]{$\times$} -- (2.5,0.5);
\fill[opacity=0.2] (0.5,2.5) -- (0.5,0.5) -- (2.5,0.5);
\draw[red,dashed] (0.5,0.5) -- +(1,0);
\draw[red] (0.5,0.5) -- +(-1,0);
\draw[red] (0.5,0.5) -- +(0,-1);
\draw[blue] (1,0) -- +(0.5,-0.5);
\draw[blue] (0,1) -- +(-0.5,0.5);
\draw[blue] (0,1) -- +(0.5,1);
\draw[blue] (1,0) -- +(1,0.5);
\draw (0.3,0.2) node{$\delta$};
\draw (1.5,0.2) node{$\delta'$};
\draw (3,1) node{\large$\leadsto$};
\end{tikzpicture}
\begin{tikzpicture}[scale=1]
\draw (1,-1) node{(2)};
\draw[thick] (0,-0.5) -- (1,0) -- (0,1) -- (-0.5,0);
\draw[thick] (1,0) -- (2.5,0);
\draw[thick] (0,1) -- (0,2.5);
\draw[dashed] (0.5,2.5) -- (0.5,0.5) node[opacity=1,rotate=45]{$\times$} -- (0.5,0.5);
\draw[red,dotted] (0.5,0.5) node[opacity=1,rotate=0]{$\times$} -- (2.5,0.5);
\fill[opacity=0.2] (0.5,2.5) -- (0.5,0.5) -- (2.5,0.5);
\draw[red] (0.5,0.5) -- +(-1,0);
\draw[red] (0.5,0.5) -- +(0,-1);
\draw[blue] (1,0) -- +(0.5,-0.5);
\draw[blue] (0,1) -- +(-0.5,0.5);
\draw[blue] (0,1) -- +(0.5,1);
\draw[blue] (1,0) -- +(1,0.5);
\draw (0.3,0.2) node{$\delta''$};
\draw (3,1) node{\large$\leadsto$};
\end{tikzpicture}
\begin{tikzpicture}[scale=1]
\draw (1,-1) node{(3)};
\draw[thick] (0,-0.5) -- (1,0) -- (1,1) -- (0,1) -- (-0.5,0);
\draw[thick] (1,0) -- (2.5,0);
\draw[thick] (0,1) -- (0,2.5);
\draw[dashed] (1,2.5) -- (1,1) node[opacity=1,rotate=45]{$\times$} -- (1,1);
\draw[dotted] (1,1) node[opacity=1,rotate=0]{$\times$} -- (2.5,1);
\fill[opacity=0.2] (1,2.5) -- (1,1) -- (2.5,1);
\draw[blue] (1,0) -- +(0,-0.5);
\draw[blue] (0,1) -- +(-0.5,0);
\draw[blue] (1,1) -- +(1.5,-1.5);
\draw[blue] (1,1) -- +(-1.5,1.5);
\draw[blue] (0,1) -- +(3/4,3/2);
\draw[blue] (1,0) -- +(3/2,3/4);
\draw[violet] (1,1) -- +(2/3,-2/3);
\draw[violet] (1,1) -- +(-2/3,2/3);
\draw[violet] (1,0) -- +(2/3,1/3);
\draw[violet] (0,1) -- +(1/3,2/3);
\draw (0.8,0.7) node{$\delta''$};
\draw (3,1) node{\large$\leadsto$};
\end{tikzpicture}
\begin{tikzpicture}[scale=1]
\draw (1,-1) node{(4)};
\draw[thick] (0,-0.5) -- (1,0) -- (1,1) -- (0,1) -- (-0.5,0);
\draw[thick] (1,0) -- (2.5,0);
\draw[thick] (1,1) -- (2.5,2.5);
\draw[thick] (0,1) -- (0,2.5);
\draw[dashed,fill=black,fill opacity=0.2] (0.5,2.5) -- (0.5,1) node[opacity=1,rotate=0]{$\times$} -- (2,2.5);
\draw[dashed,fill=black,fill opacity=0.2] (2.5,0.5) -- (1,0.5) node[opacity=1,rotate=0]{$\times$} -- (2.5,2);
\draw[blue] (1,0) -- +(0,-0.5);
\draw[blue] (0,1) -- +(-0.5,0);
\draw[blue] (1,1) -- +(0.5,0);
\draw[blue] (1.5,0.5) -- +(1,-1);
\draw[blue] (1,1) -- +(0,0.5);
\draw[blue] (0.5,1.5) -- +(-1,1);
\draw (0.8,0.3) node{$\delta_1$};
\draw (0.4,0.7) node{$\delta_2$};
\end{tikzpicture}
\caption{The dual intersection complex (black) and walls (blue) of the blow up $\widehat{X}$, constructed from $X$ by (1) introducing an affine singularity $\delta'$ (red); (2) moving it onto an existing singularity $\delta$ to produce a new singularity $\delta''$; (3) taking walls inside the bounded maximal cell to be part of the dual intersection complex; and (4) splitting the new singularity $\delta''$ into two singularities $\delta_1$ and $\delta_2$.}
\label{fig:blowup}
\end{figure}
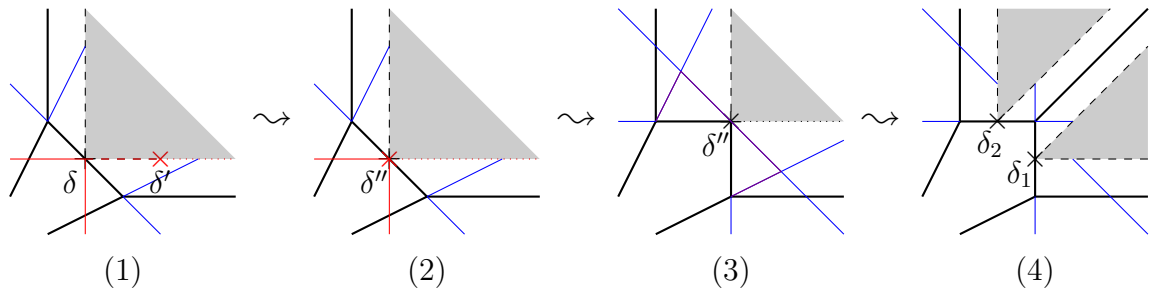

The wall structure in (4) is the wall structure of the blow up $\widehat{X}$. As a consequence $\mathscr{S}_\infty(\widehat{X})$ contains a subset isomorphic to $\mathscr{S}_0(X)$ and by consistency $\mathscr{S}_\infty(\widehat{X})$ contains a subset isomorphic to $\mathscr{S}_\infty(X)$. By the discussion above it is given as follows: ignore the walls emanating from $\delta_1$ and $\delta_2$ and not pointing towards the new wall (in direction $(1,1)$ in Figure \ref{fig:blowup}, (4)); ignore scattering of the walls emanating from $\delta_1$ and $\delta_2$ and pointing towards the new wall; every time a wall in direction $m$ crosses the new wall scatter to order $|\kappa(m,m_{\text{out}})|$ with it. See Proposition \ref{prop:initial} and Corollary \ref{cor:consistent}.
\end{construction}

\begin{remark}
It is an interesting fact, evident from step (3), that the subset of $\mathscr{S}_0(\widehat{X})$ corresponding to $\mathscr{S}_0(X)$ lies outside the central chamber of $\mathscr{S}_0(\widehat{X})$ while in Construction \ref{con:blow} the dual intersection complex of $X$ laid inside the dual intersection complex of $\widehat{X}$.
\end{remark}

Let $\rho_C$ be the unbounded edge of the dual intersection complex $\widehat{B}$ of $\widehat{X}$ corresponding to the exceptional divisor $C$ of the blow up $\pi:\widehat{X}\rightarrow X$. There are two unbounded maximal cells in $\widehat{B}$ containing $\rho_C$. They correspond to a single unbounded maximal cell $\sigma_C$ in $B$. Let $\mathscr{S}'_0(X)$ be the wall structure obtained from $\mathscr{S}_0(X)$ by cutting each wall contained in $\sigma_C$ into two walls, both having the same attached function as the original wall. $\mathscr{S}'_0(X)$ is equivalent to $\mathscr{S}_0(X)$ in the sense that it induces equivalent scattering diagrams (\cite{GPS}, Definition 1.5) at intersection points (``joints'').

\begin{definition}
We say that two (normalized) wall structures $\mathscr{S}$ and $\mathscr{S}'$ are \textit{isomorphic as abstract wall structures} if there is a bijective map $\mathscr{S} \rightarrow \mathscr{S}'$ such that
\begin{enumerate}
\item the coefficients of walls are preserved;
\item two walls intersect if and only if their images intersect, and in this case the kink between their exponents is the same.
\end{enumerate}
In other words, $\mathscr{S}$ and $\mathscr{S}'$ are the same if we forget the embedding of the wall structures into a particular affine manifold with singularities.
\end{definition}

\begin{proposition}
\label{prop:initial}
The initial wall structure $\mathscr{S}_0(X)$ is contained in $\mathscr{S}_\infty(\widehat{X})$ in the following sense: There is a subset $\mathscr{S}'_0(\widehat{X})$ of $\mathscr{S}_\infty(\widehat{X})$ that is isomorphic to $\mathscr{S}'_0(X)$ as an abstract wall structure.
\end{proposition}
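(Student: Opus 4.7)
The plan is to formalize the explicit correspondence already sketched in Construction 8.4 (the moving worm construction). The correspondence associates to each slab of $\mathscr{S}'_0(X)$ a slab of $\mathscr{S}_0(\widehat X) \subset \mathscr{S}_\infty(\widehat X)$ in the following way: slabs whose underlying edge lies outside the subdivided unbounded maximal cell $\sigma_C$ correspond identically to the same edges in $\widehat{\mathscr{P}}$; the two halves of the slab on the bounded edge $v_1v_2$, cut at the singularity $\delta$ to form $\mathscr{S}'_0(X)$, correspond to the two slabs on the new bounded edges $v_1\to v_1+v_2$ and $v_2\to v_1+v_2$, which contain the split singularities $\delta_1$ and $\delta_2$ respectively.

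To verify this is an isomorphism of abstract wall structures, I would check the required conditions. Preservation of slab functions follows from the monodromy identity $T_\delta = T_{\delta_2} \circ T' \circ T_{\delta_1}$ established in Construction 8.4, where $T'$ is the wall-crossing transformation across the new wall in direction $m_{\text{out}}$ emanating from $v_1+v_2$. Restricted to the sub-cone of monomials not involving $m_{\text{out}}$, this identity forces the normalized slab functions at $\delta_1$ and $\delta_2$ (each of the standard form $1+c_i z^{m_i}$ for a focus--focus singularity) to restrict to the two halves of the slab function at $\delta$ that appear in $\mathscr{S}'_0(X)$. Preservation of intersection combinatorics is immediate since every original vertex of $\mathscr{P}$ persists as a vertex of $\widehat{\mathscr{P}}$, and the new vertex $v_1+v_2$ only contributes intersections between the two new slabs, matching the original intersection at $\delta$ in $\mathscr{S}'_0(X)$ (which the abstract wall structure notion disregards the embedding of). Preservation of kinks is automatic: the primitive direction of each slab is unchanged by the construction.

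The main obstacle is to verify that the slab functions of $\mathscr{S}_0(\widehat X)$ are unchanged when passing to the higher-order consistent wall structure $\mathscr{S}_\infty(\widehat X)$, so that the proposed subset of $\mathscr{S}_\infty(\widehat X)$ genuinely realizes the abstract wall structure of $\mathscr{S}'_0(X)$. This follows from the algorithmic construction of the inverse system $\{\mathscr{S}_k(\widehat X)\}_{k}$ recalled in Section~\ref{section-LG} (cf.~\cite{GS-reconstruction}): starting from $\mathscr{S}_0(\widehat X)$, the algorithm adjoins only additional walls with wall functions of strictly positive $t$-order, and updates slab functions by additive corrections in $A_N[\Lambda_{\mathfrak{p}}]\cdot t$. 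Hence the leading-order terms that control the abstract wall structure data are preserved, and combining this with the explicit identifications above exhibits the desired embedding $\mathscr{S}'_0(X)\hookrightarrow\mathscr{S}_\infty(\widehat X)$.
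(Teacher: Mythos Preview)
Your proposal has a genuine gap: it misidentifies the subset $\mathscr{S}'_0(\widehat{X})$. You try to match slabs of $\mathscr{S}'_0(X)$ directly with slabs of $\mathscr{S}_0(\widehat{X})$, and then worry only about whether those slab functions survive into $\mathscr{S}_\infty(\widehat{X})$. But $\mathscr{S}'_0(\widehat{X})$ is \emph{not} a subset of $\mathscr{S}_0(\widehat{X})$. The walls of $\mathscr{S}_0(X)$ that lie in the unbounded cell $\sigma_C$ (these are the ones cut in two to form $\mathscr{S}'_0(X)$ --- note the cutting happens in $\sigma_C$, not on the bounded edge $v_1v_2$ as you write) correspond, on the $\widehat{X}$ side, to walls that must cross the new edge $\rho_C$. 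Because the affine structure has changed along $\rho_C$, these walls do not continue straight: they are replaced by the walls obtained from \emph{scattering to order $|\kappa(m,m_{\text{out}})|$ with $\rho_C$}. Those scattered walls live in $\mathscr{S}_\infty(\widehat{X})$ at strictly positive $t$-order and are not present in $\mathscr{S}_0(\widehat{X})$ at all. This is the content of the red walls in Figure~\ref{fig:blowupinitial}, and it is the heart of the proof; your proposal never produces them.

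Two further points. First, the paper needs an inductive argument to show that after repeated scattering at $\rho_C$ the wall functions stay of the form $1+s^\beta t^d z^m$ with $m=(k,\pm 1)$, so that coefficients are preserved; your monodromy-identity argument does not address this. Second, the kink check is not automatic: the primitive directions of the walls \emph{do} change under scattering at $\rho_C$, and one must invoke Proposition~\ref{prop:blowup} to see that the kink at the adjacent vertices drops by $1$ while each crossing of $\rho_C$ adds $1$ back, so the net change is $-n+n=0$.
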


\begin{proof}
Guided by Construction \ref{con:blowup} we define $\mathscr{S}'_0(\widehat{X})$ as follows. In $\mathscr{S}_0(\widehat{X})$ there are two walls coming out of the affine singularities adjacent to $\rho_C$ but not pointing towards $\rho_C$ (blue in Figure \ref{fig:blowupinitial}). Look at the complement of these walls in $\mathscr{S}_0(\widehat{X})$ and do not perform scattering at the vertex of $\rho_C$ (blue circle in Figure \ref{fig:blowupinitial}). Every time a wall, say with $z$-exponent $m$, crosses $\rho_C$ replace it by the wall after scattering to order $|\kappa(m,m_{\text{out}})|$ with $\rho_C$ (red in Figure \ref{fig:blowupinitial}). Note that $|\kappa(m,m_{\text{out}})|=1$, since this is true for all walls in $\mathscr{S}_0(\widehat{X})$. The map $\mathscr{S}'_0(\widehat{X})\rightarrow\mathscr{S}'_0(X)$ is clear: A wall in $\mathscr{S}'_0(\widehat{X})\cap\mathscr{S}_0(\widehat{X})$ is mapped to the corresponding wall in $\mathscr{S}'_0(X)$ under Construction \ref{con:blowup}. A general wall in $\mathscr{S}'_0(\widehat{X})$ is traced back to a wall in $\mathscr{S}'_0(\widehat{X})\cap\mathscr{S}_0(\widehat{X})$ by undoing the scattering at $\rho_C$ and then mapped to the corresponding wall in $\mathscr{S}_0(X)$.

Choose local coordinates such that $m_{\text{out}}=(1,0)$. Then the functions attached to walls $\mathfrak{p}$ in $\mathscr{S}'_0(\widehat{X})$ are of the form $f_{\mathfrak{p}}=1+s^\beta t^dz^m$ with $m=(k,\pm 1)$ for some $k>0$ as we will show by induction over $n$, the number of times the wall crossed $\rho_C$. For $n=0$ the wall is in $\mathscr{S}_0(\widehat{X})$ so this is clear. Now consider a wall $\mathfrak{p}$ of $\mathscr{S}'_0(X)$ with $z$-exponent $m$. By the induction hypothesis $m=(k,\pm 1)$ for some $k>0$. In particular $m$ is primitive. Let $\mathfrak{p}'$ be the wall obtained from $\mathfrak{p}$ by scattering to order $|\kappa(m,m_{\text{out}})|=1$ with $\rho_C$. By \cite{GPS}, Lemma 1.9, $f_{\mathfrak{p}'}$ has coefficient $w'|\kappa(m,m_{\text{out}})|$ and $z$-exponent $w'm'=m+m_{\text{out}}$, with $m'$ primitive. But $m+m_{\text{out}}=(k+1,\pm 1)$ in our coordinates, so $w'=1$ and $m'$ is as claimed. Moreover $|\kappa(m,m_{\text{out}})|=1$, so the coefficient of $f_{\mathfrak{p}'}$ is $1$ and $f_{\mathfrak{p}'}$ is of the claimed form.

By construction two walls in $\mathscr{S}'_0(\widehat{X})$ intersect if and only if their corresponding walls in $\mathscr{S}'_0(X)$ intersect. The kink is the same in both cases, since by Proposition \ref{prop:blowup} the kinks of the walls where we consider scattering are modified by $-n+n=0$, where $n$ is the number of times we crossed $\rho_C$.
\end{proof}

Let $\mathscr{S}'_\infty(\widehat{X})$ and $\mathscr{S}'_\infty(X)$ be the consistent wall structures defined by $\mathscr{S}'_0(\widehat{X})$ and $\mathscr{S}'_0(X)$, respectively.

\begin{corollary}
\label{cor:consistent}
The consistent wall structure $\mathscr{S}_\infty(X)$ is contained in $\mathscr{S}_\infty(\widehat{X})$ in the following sense: There is a subset $\mathscr{S}'_\infty(\widehat{X})$ of $\mathscr{S}_\infty(\widehat{X})$ that is isomorphic to $\mathscr{S}'_\infty(X)$ as abstract wall structures.
\end{corollary}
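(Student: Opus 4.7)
The plan is to propagate the initial identification of Proposition~\ref{prop:initial} through the scattering algorithm. Recall that the Kontsevich--Soibelman/Gross--Siebert scattering procedure constructs $\mathscr{S}_\infty$ from $\mathscr{S}_0$ inductively in the $t$-order: at each order $k$, one inspects every joint where the existing wall structure fails to be consistent and inserts new walls whose attached functions are determined (up to equivalence) purely by the local scattering diagram at that joint, i.e.~by the collection of $z$-exponents, coefficients, and mutual kinks of the incoming walls. This construction is \emph{intrinsic}: two joints with isomorphic incident scattering data produce isomorphic outgoing walls.

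First, define $\mathscr{S}'_\infty(\widehat{X})$ inductively. Start with $\mathscr{S}'_0(\widehat{X})\subset\mathscr{S}_0(\widehat{X})$ from Proposition~\ref{prop:initial} and, at each order $k$, include in $\mathscr{S}'_k(\widehat{X})$ exactly those new walls of $\mathscr{S}_k(\widehat{X})$ which arise from scattering at joints all of whose incident walls already lie in $\mathscr{S}'_{k-1}(\widehat{X})$, and which do \emph{not} come from scattering at the joint on $\rho_C$ singled out in Proposition~\ref{prop:initial}. As in the proof of Proposition~\ref{prop:initial}, whenever a wall in $\mathscr{S}'_k(\widehat{X})$ crosses $\rho_C$, we identify it with its order-one rescattered image instead of performing genuine scattering at $\rho_C$. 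Because each such rescattering operation preserves the coefficient (the relevant kink $|\kappa(m,m_{\text{out}})|$ equals $1$ on all walls that ever meet $\rho_C$, by the primitivity argument given in Proposition~\ref{prop:initial}) and also preserves abstract combinatorics, the incident data at every joint of $\mathscr{S}'_k(\widehat{X})$ is isomorphic to the incident data at the corresponding joint of $\mathscr{S}'_k(X)$.

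Now proceed by induction on $k$. The base case $k=0$ is Proposition~\ref{prop:initial}. Assuming $\mathscr{S}'_k(\widehat{X})\cong\mathscr{S}'_k(X)$ as abstract wall structures, consider any joint $\mathfrak{j}'\in\mathscr{S}'_k(X)$ where the structure is not consistent to order $k+1$ and let $\mathfrak{j}\in\mathscr{S}'_k(\widehat{X})$ be its image. By the induction hypothesis the local scattering diagrams at $\mathfrak{j}$ and $\mathfrak{j}'$ are isomorphic, so the Gross--Siebert algorithm inserts combinatorially identical walls at both joints, which we add to $\mathscr{S}'_{k+1}(\widehat{X})$. This preserves coefficients and kinks, so the map $\mathscr{S}'_{k+1}(\widehat{X})\to\mathscr{S}'_{k+1}(X)$ remains an abstract wall structure isomorphism. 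Consistency of the $\mathscr{S}'_\infty(\widehat{X})$ side follows because we are only adding walls already present in the consistent structure $\mathscr{S}_\infty(\widehat{X})$: any loop in $\mathscr{S}'_\infty(\widehat{X})$ can be completed to a loop in $\mathscr{S}_\infty(\widehat{X})$ by crossing $\rho_C$ an even number of times with compensating rescattering, and the composite automorphism is trivial by consistency of $\mathscr{S}_\infty(\widehat{X})$.

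The main subtlety is the treatment of the joint on $\rho_C$: one must verify that all walls in $\mathscr{S}'_\infty(\widehat{X})$ that ever cross $\rho_C$ satisfy $|\kappa(m,m_{\text{out}})|=1$, so that the single-step scattering identification used in Proposition~\ref{prop:initial} continues to work at every order. This follows inductively from the same primitivity argument as in Proposition~\ref{prop:initial}: newly created walls arise from scattering of walls of the form $z^{(k,\pm 1)}$ and a direct computation shows that scattering exponents among such walls remain of the shape $(k',\pm 1)$ with $k'>0$. Hence the rescattering procedure is well-defined at every order, completing the induction and the proof.
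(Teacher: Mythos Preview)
Your overall strategy is the same as the paper's one-line proof (``follows from Proposition~\ref{prop:initial} and consistency of wall structures''): the scattering algorithm depends only on abstract incidence data, so an isomorphism of initial structures propagates to an isomorphism of consistent structures. The inductive scaffolding you set up is correct.

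There is, however, a genuine gap in your treatment of walls crossing $\rho_C$. In the second paragraph you identify such a wall with its \emph{order-one} rescattered image, and in the last paragraph you justify this by asserting that every wall in $\mathscr{S}'_\infty(\widehat{X})$ meeting $\rho_C$ has $z$-exponent of the form $(k',\pm 1)$, so that $|\kappa(m,m_{\text{out}})|=1$. That claim is correct for $\mathscr{S}'_0(\widehat{X})$ (this is exactly what the induction in the proof of Proposition~\ref{prop:initial} shows), but it does not survive scattering: two walls with exponents $(k_1,1)$ and $(k_2,-1)$ produce new walls with exponent $a(k_1,1)+b(k_2,-1)=(ak_1+bk_2,\,a-b)$, and $a-b$ is not constrained to $\pm 1$. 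For instance $a=3$, $b=1$, $k_1=1$, $k_2=2$ gives the primitive exponent $(5,2)$.

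The fix is to drop the primitivity claim and rescatter to order $|\kappa(m,m_{\text{out}})|$ (not $1$) whenever a wall crosses $\rho_C$, exactly as prescribed in Construction~\ref{con:blowup} and used later in Proposition~\ref{prop:broken}. The coefficient is still preserved: the wall function on $\rho_C$ is $f_{\mathfrak{p}_C}=1+s^{\beta_C}ty$, and the $(s^{\beta_C}ty)^k$-coefficient of $f_{\mathfrak{p}_C}^{\,k}$ equals $1$ for every $k$. And rescattering to precisely this order is what compensates the monodromy introduced by the new affine singularities (this is the content of Construction~\ref{con:blowup}), so kinks at all subsequent joints are preserved. With this correction your induction goes through unchanged.
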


\begin{proof}
The statement follows from Proposition \ref{prop:initial} and consistency of wall structures. 
\end{proof}

\begin{corollary}
\label{cor:chamber}
There is a surjective map between chambers $\widehat{\mathfrak{u}}$ of $\mathscr{S}_m(\widehat{X})$ and chambers $\mathfrak{u}$ of $\mathscr{S}_m(X)$ such that $\widehat{\mathfrak{u}}$ is contained in the image of $\mathfrak{u}$ in $\widehat{B}$ under the map from Corollary \ref{cor:consistent}.
\end{corollary}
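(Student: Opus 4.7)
The plan is to factor the claimed map through the subset $\mathscr{S}'_m(\widehat{X})\subseteq\mathscr{S}_m(\widehat{X})$ provided by Corollary~\ref{cor:consistent}, then transport chambers across the abstract isomorphism $\mathscr{S}'_m(\widehat{X})\cong\mathscr{S}'_m(X)$ and identify chambers of $\mathscr{S}'_m(X)$ with chambers of $\mathscr{S}_m(X)$. First I would recall that $\mathscr{S}'_m(\widehat{X})$ is obtained from $\mathscr{S}_m(\widehat{X})$ by deleting walls, so $\mathscr{S}_m(\widehat{X})$ is a refinement of $\mathscr{S}'_m(\widehat{X})$. Consequently, every chamber $\widehat{\mathfrak{u}}$ of $\mathscr{S}_m(\widehat{X})$ is contained in a unique chamber $\widehat{\mathfrak{u}}'$ of $\mathscr{S}'_m(\widehat{X})$, and this assignment $\widehat{\mathfrak{u}}\mapsto\widehat{\mathfrak{u}}'$ is surjective on chambers.

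Next I would invoke the abstract isomorphism of wall structures $\mathscr{S}'_m(\widehat{X})\cong\mathscr{S}'_m(X)$ from Corollary~\ref{cor:consistent}. Since the isomorphism preserves the incidence and intersection data of walls, it induces a bijection on the sets of chambers. Composing with the obvious identification of chambers of $\mathscr{S}'_m(X)$ with chambers of $\mathscr{S}_m(X)$ (the passage $\mathscr{S}_m(X)\leadsto\mathscr{S}'_m(X)$ only cuts walls inside $\sigma_C$ into two pieces without adding any new separating walls, so the chamber decompositions coincide), I obtain a bijection between chambers of $\mathscr{S}'_m(\widehat{X})$ and chambers of $\mathscr{S}_m(X)$. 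Composing this bijection with the refinement map $\widehat{\mathfrak{u}}\mapsto\widehat{\mathfrak{u}}'$ produces the desired map $\widehat{\mathfrak{u}}\mapsto\mathfrak{u}$.

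Surjectivity follows because both intermediate maps are surjective: the refinement map is surjective since each chamber of $\mathscr{S}'_m(\widehat{X})$ contains at least one chamber of its refinement $\mathscr{S}_m(\widehat{X})$, and the chamber bijections coming from the abstract isomorphism and the cutting of walls in $\sigma_C$ are, in particular, surjective. For the containment statement, I would unwind the geometric realization implicit in Construction~\ref{con:blowup}: away from $\sigma_C$ the underlying spaces $B$ and $\widehat{B}$ are naturally identified, while inside $\sigma_C$ the insertion of $\rho_C$ and the subdivision of Figure~\ref{fig:blowup} send the chamber $\mathfrak{u}$ of $\mathscr{S}_m(X)$ to a (possibly cut) region of $\widehat{B}$ whose walls agree with those of $\mathscr{S}'_m(\widehat{X})$ under the abstract isomorphism. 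By construction $\widehat{\mathfrak{u}}'$ lies in this image region, and hence so does $\widehat{\mathfrak{u}}\subseteq\widehat{\mathfrak{u}}'$.

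The main obstacle I expect is not the combinatorial map itself but bookkeeping the geometric realization of the abstract isomorphism in a way compatible with the ``image in $\widehat{B}$'' statement. Concretely, one must verify that when a wall of $\mathscr{S}_m(X)$ crosses $\rho_C$ it is indeed replaced by the scattered wall in $\mathscr{S}_m(\widehat{X})$ and not by an unrelated wall carrying the same function; this is essentially the content of Proposition~\ref{prop:initial} (uniqueness of the scattering), and I would appeal to it to close the argument.
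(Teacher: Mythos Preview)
Your argument is correct and is precisely the reasoning the paper leaves implicit: the corollary is stated in the paper without proof, as an immediate consequence of Corollary~\ref{cor:consistent}. Your three-step factorization (refinement $\mathscr{S}_m(\widehat{X})\to\mathscr{S}'_m(\widehat{X})$, abstract isomorphism $\mathscr{S}'_m(\widehat{X})\cong\mathscr{S}'_m(X)$, and identification of chambers under the wall-cutting $\mathscr{S}_m(X)\leadsto\mathscr{S}'_m(X)$) is exactly what underlies the statement, and your observation that the geometric content of ``image in $\widehat{B}$'' comes from Construction~\ref{con:blowup} rather than from the abstract isomorphism alone is the right way to close the argument.
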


\begin{figure}[h!]
\centering
\begin{tikzpicture}[scale=1.6]
\draw[gray] (1,0) -- (1,1) -- (0,1) -- (-1,-1) -- (1,0);
\draw[gray] (1,0) -- (2,0) node[right]{$L-C$};
\draw[gray] (1,1) -- (2,2) node[above right]{$C$};
\draw[gray] (0,1) -- (0,2) node[above]{$L-C$};
\draw[gray] (-1,-1) -- (-2,-2) node[below left]{$L$};
\draw[gray,dashed,fill=black,fill opacity=0.2] (0.5,2) -- (0.5,1) node[opacity=1,rotate=0]{$\times$} -- (1.5,2);
\draw[gray,dashed,fill=black,fill opacity=0.2] (2,0.5) -- (1,0.5) node[opacity=1,rotate=0]{$\times$} -- (2,1.5);
\draw[gray,dashed,fill=black,fill opacity=0.2] (2,-0.5) -- (0,-0.5) node[opacity=1,rotate=30]{$\times$} -- (-1.5,-2);
\draw[gray,dashed,fill=black,fill opacity=0.2] (-0.5,2) -- (-0.5,0) node[opacity=1,rotate=60]{$\times$} -- (-2,-1.5);
\draw[violet,line width=2pt] (0.5,1) -- (1.5,1);
\draw[violet,line width=2pt] (1,0.5) -- (1,1.5);
\draw[blue,line width=1.2pt] (1,1) circle (2pt);
\draw[blue,line width=1.2pt] (0.5,1) -- (-0.5,1);
\draw[blue,line width=1.2pt] (1,0.5) -- (1,-0.5);
\draw[violet,line width=2pt] (-0.5,0) -- (0.5,2);
\draw[violet,line width=2pt] (0,-0.5) -- (2,0.5);
\draw[violet,line width=2pt] (-0.5,0) -- (-1.5,-2);
\draw[violet,line width=2pt] (0,-0.5) -- (-2,-1.5);
\draw (3.2,0) node{\Large$=$};
\draw(3.5,0);
\end{tikzpicture}
\begin{tikzpicture}[scale=1,rotate=90]
\draw[gray] (-2,-1) -- (0,0) -- (0,1) -- (-1,2) -- (-4,3);
\draw[gray,dashed] (-2,-1) -- (3,-1) node[above]{$L$};
\draw[gray] (0,0) -- (3,0) node[above]{$L-C$};
\draw[gray] (0,1) -- (3,1) node[above]{$C$};
\draw[gray] (-1,2) -- (3,2) node[above]{$L-C$};
\draw[gray,dashed] (-4,3) -- (3,3);
\draw[gray,dashed,fill=black,fill opacity=0.2] (-3,-1) -- (-1,-0.5) node[opacity=1,rotate=30]{$\times$} -- (-2/3,0) -- (0,0.5) node[opacity=1,rotate=0]{$\times$} -- (-1/3,1) -- (-1/2,1.5) node[opacity=1,rotate=45]{$\times$} -- (-5/3,2) -- (-2.5,2.5) node[opacity=1,rotate=70]{$\times$} -- (-4,2.8);
\draw[violet,line width=2pt] (0,0.5) -- (0,3);
\draw[blue,line width=1.2pt] (0,0.5) -- (0,-1);
\draw[violet,line width=2pt] (-0.5,1.5) -- (2,-1);
\draw[blue,line width=1.2pt] (-0.5,1.5) -- (-2,3);
\draw[blue,line width=1.2pt] (0,1) circle (3.2pt);
\draw[violet,line width=2pt] (-1,-0.5) -- (-2,-1);
\draw[violet,line width=2pt] (-2.5,2.5) -- (-4,3);
\draw[violet,line width=2pt] (-1,-0.5) -- (2,1);
\draw[blue,line width=1.2pt] (2,1) -- (3,1+1/2);
\draw[red,line width=1.2pt] (2,1) -- (3,1+1/3);
\draw[violet,line width=2pt] (-2.5,2.5) -- (2,1);
\draw[blue,line width=1.2pt] (2,1) -- (3,1-1/3);
\draw[red,line width=1.2pt] (2,1) -- (3,1-1/4);
\end{tikzpicture}
\caption{Some part of $\mathscr{S}_\infty(\mathbb{F}_1)$ in two different charts. The initial wall structure $\mathscr{S}_0(\mathbb{F}_1)$ consists of the blue and purple walls. The walls for $\mathscr{S}'_0(\mathbb{F}_1)$ inside $\mathscr{S}_\infty(\mathbb{F}_1)$ are the red and purple ones. In $\mathscr{S}'_0(\mathbb{F}_1)$ there is no scattering at the blue circle.}
\label{fig:blowupinitial}
\end{figure}
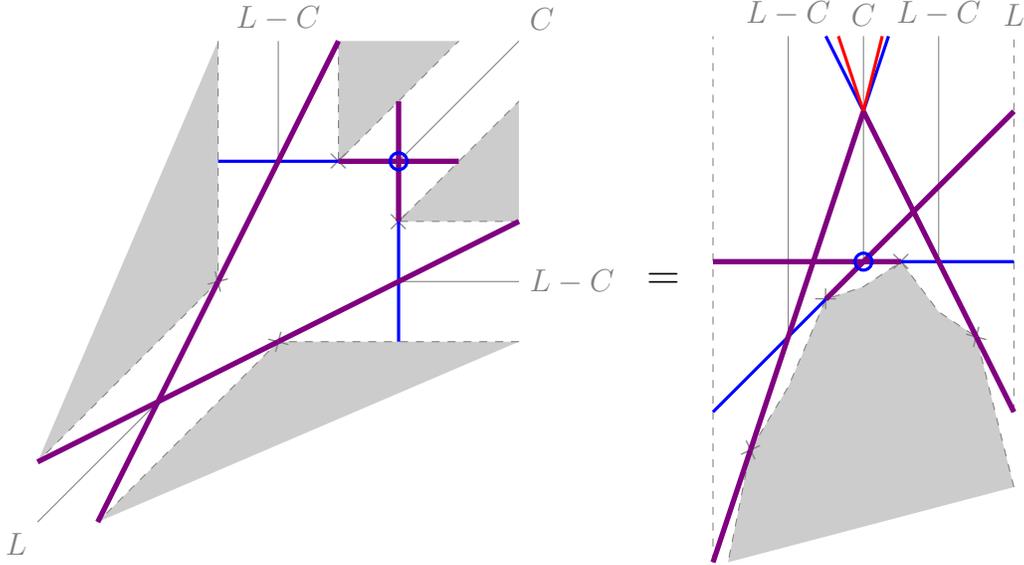

\subsection{Broken lines and tropical curves}

For the definitions and notations concerning broken lines and tropical curves see \S\ref{section-LG} and \S\ref{sec:tropicalcorrespondence}.

Let $\widehat{\mathfrak{u}}$ be an unbounded chamber of $\mathscr{S}_m(\widehat{X})$ and let $\mathfrak{u}$ be the corresponding chamber of $\mathscr{S}_m(X)$ via Corollary \ref{cor:chamber}. Let $P$ be a general point in $\mathfrak{u}$ and let $\widehat{P}$ be a general point in $\widehat{u}$. Let $\mathfrak{B}'_m(\widehat{X},\widehat{P}\in\widehat{u})$ be the subset of $\mathfrak{B}_m(\widehat{X},\widehat{P}\in\widehat{u})$ consisting of broken lines which only break at walls of $\mathscr{S}'_\infty(\widehat{X})$. Let $\rho_C$ be the unbounded edge corresponding to the exceptional line $C$.

\begin{proposition}
\label{prop:broken}
There is a bijective map $\mathfrak{B}_m(X,P\in\mathfrak{u})\rightarrow\mathfrak{B}'_m(\widehat{X},\widehat{P}\in\widehat{\mathfrak{u}})$ preserving $a_{\mathfrak{b}}(\boldsymbol{q})$, the coefficient of the ending monomial of a broken line $\mathfrak{b}$. Whenever a broken line in $\mathfrak{B}'_m(\widehat{X},\widehat{P}\in\widehat{u})$ crosses $\rho_C$ it breaks to order $|\kappa(m,m_{\text{out}})|$, where $a(\boldsymbol{q})s^\beta t^dz^m$ is its monomial before breaking. See Figure \ref{fig:brokencorr} for an example.
\end{proposition}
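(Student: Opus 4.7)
The plan is to build the bijection by transporting broken lines through the abstract wall-structure isomorphism established in Proposition~\ref{prop:initial} and Corollary~\ref{cor:consistent}. Since a broken line is by definition a piecewise affine map into the underlying affine manifold whose local behavior at each break is dictated entirely by the attached wall function and the monomial transport rule, any isomorphism of (consistent) wall structures as abstract combinatorial data induces a bijection on broken lines with matching initial asymptotic monomial, ending chamber, and coefficient $a_\mathfrak{b}(\boldsymbol{q})$. So the core task is to translate this abstract correspondence into a concrete recipe for producing a broken line of $\mathfrak{B}'_m(\widehat{X},\widehat{P}\in\widehat{\mathfrak{u}})$ from one of $\mathfrak{B}_m(X,P\in\mathfrak{u})$ and to verify that the rule for what happens at $\rho_C$ matches the scattering statement in the proposition.

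I would proceed as follows. First, I use Corollary~\ref{cor:chamber} to choose $\widehat{\mathfrak{u}}\subset\widehat{B}$ lying in the image of $\mathfrak{u}$ and place $\widehat{P}$ accordingly. Given $\mathfrak{b}\in\mathfrak{B}_m(X,P\in\mathfrak{u})$, I produce $\widehat{\mathfrak{b}}$ segment by segment: away from the ray $\rho_C$, each affine segment of $\mathfrak{b}$ lies in a chamber of $\mathscr{S}'_\infty(X)$ and hence sits inside a corresponding chamber of $\mathscr{S}'_\infty(\widehat{X})$ under the identification of Corollary~\ref{cor:consistent}, and each break of $\mathfrak{b}$ at a wall $\mathfrak{p}$ is translated into a break of $\widehat{\mathfrak{b}}$ at the matching wall in $\mathscr{S}'_\infty(\widehat{X})$ with the same wall function. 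The only new feature is that $\widehat{\mathfrak{b}}$ may need to cross $\rho_C$; at every such crossing I insert a new break. By the construction of $\mathscr{S}'_0(\widehat{X})$ from $\mathscr{S}'_0(X)$ in Proposition~\ref{prop:initial}, the effect of passing a wall with $z$-exponent $m$ through $\rho_C$ in the abstract identification is precisely scattering to order $|\kappa(m,m_{\mathrm{out}})|$. Hence the break inserted at $\rho_C$ must be of this order in order for the resulting segment of $\widehat{\mathfrak{b}}$ to land back in the wall structure that abstractly corresponds to $\mathfrak{b}$'s continuation in $X$.

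Next I check that the ending monomial and its coefficient are preserved. The monomial transport rule in Definition~\ref{defi:brokenline} extracts a monomial summand of $a t^d z^m \cdot f_\mathfrak{p}^k$ where $k$ is the kink of $m$ with the wall direction. Under the isomorphism of abstract wall structures, wall functions and kinks are preserved (this is the content of Proposition~\ref{prop:initial}), so the monomial and coefficient picked up at each break at a wall of $\mathscr{S}'_\infty(X)$ match those picked up at the corresponding wall of $\mathscr{S}'_\infty(\widehat{X})$. At the added breaks on $\rho_C$, the slab function on $\rho_C$ is trivial to leading order (the kink of $\varphi$ across $\rho_C$ is $1$ and no additional $t$-monomials arise at zeroth $t$-order because the new exceptional wall comes from the splitting of monodromy in Construction~\ref{con:blowup} with unit slab function, as explained in step $(4)$), so monomial transport at a $\rho_C$ crossing amounts simply to the scattering transformation, producing the monomial $z^{m+|\kappa(m,m_{\mathrm{out}})| m_{\mathrm{out}}}$ with coefficient $1$. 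This is exactly what the asserted break rule accomplishes, and thus $a_{\widehat{\mathfrak{b}}}(\boldsymbol{q})=a_\mathfrak{b}(\boldsymbol{q})$.

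Finally, to invert the map, take $\widehat{\mathfrak{b}}\in\mathfrak{B}'_m(\widehat{X},\widehat{P}\in\widehat{\mathfrak{u}})$ and collapse each break at $\rho_C$ into a straight passage through the corresponding wall of $\mathscr{S}'_\infty(X)$ (undoing the scattering) while leaving all other breaks untouched. By the above, this gives a well-defined $\mathfrak{b}\in\mathfrak{B}_m(X,P\in\mathfrak{u})$ and the two constructions are mutually inverse. The main obstacle I expect is the bookkeeping at multi-wall intersection points that happen to lie on $\rho_C$ or at its vertex, where one must verify that ignoring scattering at the vertex of $\rho_C$ in the construction of $\mathscr{S}'_0(\widehat{X})$ does not produce spurious broken-line behavior; since we only consider $\widehat{\mathfrak{b}}$ that break at walls of $\mathscr{S}'_\infty(\widehat{X})$ (by definition of $\mathfrak{B}'_m$), this issue is handled by the very definition of the subset.
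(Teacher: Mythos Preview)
Your approach is essentially the same as the paper's: transport broken-line data through the abstract wall-structure isomorphism of Corollary~\ref{cor:consistent}, insert a forced break at each $\rho_C$ crossing of order $|\kappa(m,m_{\mathrm{out}})|$, and check coefficients are preserved. The structure and logic are correct.

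One point, however, is misstated. You write that ``the slab function on $\rho_C$ is trivial to leading order'' and appeal to a ``unit slab function'' from Construction~\ref{con:blowup}. That is not the reason the coefficient is $1$: the wall function along $\rho_C$ is the binomial $f_{\mathfrak{p}_C}=1+s^{\beta_C}ty$, not a unit. The reason the break at $\rho_C$ contributes coefficient $1$ is that breaking to the maximal order $k=|\kappa(m,m_{\mathrm{out}})|$ means selecting the $(s^{\beta_C}ty)^{k}$ term of $f_{\mathfrak{p}_C}^{k}=(1+s^{\beta_C}ty)^{k}$, whose coefficient is $\binom{k}{k}=1$. This is exactly the paper's argument. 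Your conclusion is correct, but the justification should be replaced by this binomial-coefficient observation rather than a claim that the wall function is trivial.
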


\begin{proof}
A broken line is determined by its endpoint, asymptotic monomial and the walls at which it breaks, including the order of the breaking. For a broken line $\mathfrak{b}$ in $\mathfrak{B}_m(X,P\in\mathfrak{u})$ consider this data. We get similar data for $\mathfrak{B}'_m(\widehat{X},\widehat{P}\in\widehat{\mathfrak{u}})$ as follows. The asymptotic monomial is $z^m$, the same as for $\mathfrak{b}$. The walls at which $\mathfrak{b}$ breaks correspond to walls of $\mathscr{S}'_\infty(\widehat{X})$ via Corollary \ref{cor:consistent}. This gives walls in $\mathscr{S}'_\infty(\widehat{X})$ together with breaking orders (the same as for $\mathfrak{b}$). Additionally, we require that a broken line in $\mathfrak{B}'_m(\widehat{X},\widehat{P}\in\widehat{\mathfrak{u}})$ breaks at $\rho_C$ to order $|\kappa(m,m_{\text{out}})|$ as in the statement of the proposition. Using this data we get a unique broken line $\widehat{\mathfrak{b}}$ in $\mathfrak{B}'_m(\widehat{X},\widehat{P}\in\widehat{\mathfrak{u}})$. Since $\mathscr{S}_\infty(X)$ and $\mathscr{S}'_\infty(\widehat{X})$ are isomorphic as abstract wall structures (Corollary \ref{cor:consistent}), the coefficients that $a_{\mathfrak{b}}(\boldsymbol{q})$ and $a_{\widehat{\mathfrak{b}}}(\boldsymbol{q})$ pick up by breaking are the same. The breaking of $\widehat{\mathfrak{b}}$ at $\rho_C$ does not change $a_{\widehat{\mathfrak{b}}}(\boldsymbol{q})$, since the wall $\mathfrak{p}_C$ corresponding to $\rho_C$ has function $f_{\mathfrak{p}_C}=1+s^{\beta_C} ty$, where $\beta_C$ is the tropical cycle corresponding to $C$, and by breaking to order $|\kappa(m,m_{\text{out}})|$ with $\mathfrak{p}_C$ the broken lines coefficient $a_{\widehat{\mathfrak{b}}}(\boldsymbol{q})$ picks up the $(s^{\beta_C} ty)^{|\kappa(m,m_{\text{out}})|}$-coefficient of $f_{\mathfrak{p}_C}^{|\kappa(m,m_{\text{out}})|}$ which is $1$. Hence we have $a_{\widehat{\mathfrak{b}}}(\boldsymbol{q})=a_{\mathfrak{b}}(\boldsymbol{q})$.
\end{proof}

\begin{figure}[h!]
\centering
\begin{tikzpicture}[scale=1.4,rotate=90]
\draw[gray] (-3,-1) -- (0,0) -- (0,1) -- (-3,2);
\draw[dashed,gray] (-3,-1) -- (2,-1);
\draw[gray] (0,0) -- (2,0);
\draw[gray] (0,1) -- (2,1);
\draw[dashed,gray] (-3,2) -- (2,2);
\draw[dashed,fill=black,fill opacity=0.2,gray] (-3,-0.8) -- (-1.5,-0.5) node[opacity=1,rotate=20]{$\times$} -- (-1,0) -- (0,0.5) node[opacity=1,rotate=0]{$\times$} -- (-1,1) -- (-1.5,1.5) node[opacity=1,rotate=70]{$\times$} -- (-3,1.8);
\draw[gray] (0,0.5) -- (0,2);
\draw[gray] (0,0.5) -- (0,-1);
\draw[gray] (-1.5,-0.5) -- (2,2/3);
\draw[gray] (-1.5,1.5) -- (2,1/3);
\draw[black!40!green,line width=2pt] (1.95,0.8) node[fill,circle,inner sep=1pt]{} -- (0.6,0.8) -- (1.32,0.44) -- (2,0.44);
\draw[blue,line width=1.2pt] (-1.5,-0.5) -- (1.32,0.44);
\draw[blue,line width=1.2pt] (-1.5,1.5) -- (0.6,0.8);
\draw (0,-1.67) node{\large$\leadsto$};
\draw (0,-2.2);
\end{tikzpicture}
\begin{tikzpicture}[scale=1,rotate=90]
\draw[gray] (-2,-1) -- (0,0) -- (0,1) -- (-1,2) -- (-4,3);
\draw[dashed,gray] (-2,-1) -- (3,-1);
\draw[gray] (0,0) -- (3,0);
\draw[gray] (0,1) -- (3,1) node[above]{$\rho_C$};
\draw[gray] (-1,2) -- (3,2);
\draw[dashed,gray] (-4,3) -- (3,3);
\draw[dashed,fill=black,fill opacity=0.2,gray] (-3,-1) -- (-1,-0.5) node[opacity=1,rotate=30]{$\times$} -- (-2/3,0) -- (0,0.5) node[opacity=1,rotate=0]{$\times$} -- (-1/3,1) -- (-1/2,1.5) node[opacity=1,rotate=45]{$\times$} -- (-5/3,2) -- (-2.5,2.5) node[opacity=1,rotate=70]{$\times$} -- (-4,2.8);
\draw[gray] (0,0.5) -- (0,3);
\draw[gray] (0,0.5) -- (0,-1);
\draw[gray] (-0.5,1.5) -- (2,-1);
\draw[gray] (-0.5,1.5) -- (-2,3);
\draw[gray] (-1,-0.5) -- (-2,-1);
\draw[gray] (-2.5,2.5) -- (-4,3);
\draw[gray] (-1,-0.5) -- (2,1);
\draw[gray] (2,1) -- (3,1+1/2);
\draw[gray] (2,1) -- (3,1+1/3);
\draw[gray] (-2.5,2.5) -- (2,1);
\draw[gray] (2,1) -- (3,1-1/3);
\draw[gray] (2,1) -- (3,1-1/4);
\draw[gray] (2/3,1/3) -- (3,1/3);
\draw[gray] (0,1+2/3) -- (3,1+2/3);
\draw[black!40!green,line width=2pt] (2.9,1.4) node[fill,circle,inner sep=1pt]{} -- (0.8,1.4) -- (1.6,1) -- (1.84,0.92) -- (3,0.92);
\draw[blue,line width=1.2pt] (-1,-0.5) -- (1.84,0.92);
\draw[blue,line width=1.2pt] (0,0.5) -- (0,1) -- (-0.5,1.5);
\draw[blue,line width=1.2pt] (0,1) -- (1.6,1);
\draw[blue,line width=1.2pt] (-2.5,2.5) -- (0.8,1.4);
\end{tikzpicture}
\caption{A broken line (green) for $\mathbb{P}^2$ (left) and the corresponding broken line for $\mathbb{F}_1$ (right). Note that the broken line for $\mathbb{F}_1$ breaks at the edge $\rho_C$. We can complete $\mathfrak{b}$ to a tropical curve $h\in\mu^{-1}(\mathfrak{b})$ (blue).}
\label{fig:brokencorr}
\end{figure}
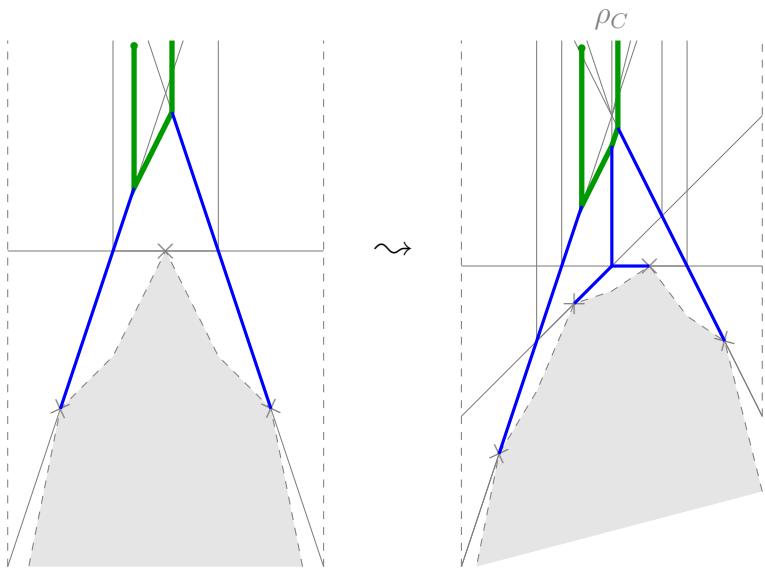

Recall the surjective map $\mu : \coprod_\beta \mathfrak{H}_{p,q}(X,\beta,P\in\mathfrak{u}) \rightarrow \mathfrak{B}_{p,q}(X,P\in\mathfrak{u})$ from Proposition \ref{prop:corr}.

\begin{lemma}
\label{lem:corr}
Let $\widehat{\mathfrak{u}}$ be an unbounded cell of $\mathscr{S}_{p+q}(\widehat{X})$. Let $\mathfrak{b}$ be a broken line in $\mathfrak{B}'_{p,q}(\widehat{X},\widehat{P}\in\widehat{\mathfrak{u}})$ that intersects $\rho_C$. Then each completion $h\in\mu^{-1}(\mathfrak{b})$ contains a tropical sub-disk of class $C$ and without unbounded legs. In particular, if the endpoint of $\mathfrak{b}$ lies on $\rho_C$, then $h$ contains the unique tropical curve of class $C$ with one unbounded leg of weight $1$ (see Figures \ref{fig:corr1} and \ref{fig:corr2}).
\end{lemma}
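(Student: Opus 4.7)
The plan is to analyze, locally at each point where $\mathfrak{b}$ meets $\rho_C$, the disk-extension procedure assigning $h$ to $\mathfrak{b}$ via Proposition~\ref{prop:corr} (proved in \cite{Gra2}). The guiding principle is that at every break of $\mathfrak{b}$ along a wall $\mathfrak{p}$ with contributing monomial $s^\beta t^d z^m$ of $f_\mathfrak{p}$, the completion $h$ acquires a tropical sub-disk whose $1$-valent vertex sits at the breaking point, whose outgoing direction is $m$, and whose class is $\beta$ (or an appropriate multiple for higher-order breaks). I would apply this to the slab $\mathfrak{p}_C$ on $\rho_C$: by Proposition~\ref{prop:broken}, $\mathfrak{b}$ breaks there to order $k = |\kappa(m, m_{\text{out}})| \geq 1$ against $f_{\mathfrak{p}_C} = 1 + s^{\beta_C} t y$, so the attached sub-disk has class $k[C]$ and a single outgoing leg of weight $k$ in direction $m_{\text{out}}$. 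Since $C$ is a smooth rational curve with $C.E = 1$, such a connected tropical disk is an unramified $k$-fold cover of the unique class-$C$ disk with weight-$1$ leg and hence contains a class-$C$ sub-disk.

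Next, I would verify that this class-$C$ sub-disk, viewed as a subgraph of $h$, has no unbounded legs. Its would-be $1$-valent vertex coincides with the interior crossing of $\mathfrak{b}$ with $\rho_C$, becoming a higher-valent vertex of $h$; the sub-disk's outgoing ray in direction $m_{\text{out}}$ gets identified with the segment of the completed curve continuing along $\rho_C$ past the crossing. All edges of the sub-disk inside $h$ are therefore bounded.

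For the ``in particular'' statement, suppose the endpoint of $\mathfrak{b}$ lies on $\rho_C$. The segment from that endpoint to $B_\infty$ added in passing from the tropical disk to the tropical curve $h$ runs along $\rho_C$ in direction $m_{\text{out}}$. This segment, together with the formerly absorbed ray of the class-$C$ sub-disk, recombines into a single unbounded leg of $h$ of weight $1$ along $\rho_C$, so the resulting subgraph of $h$ is a tropical curve of class $C$ with a single weight-$1$ leg. Uniqueness of such a representative follows from the local structure of $\mathscr{S}'_\infty(\widehat X)$ near $\rho_C$, which determines the tree uniquely from the incoming edge direction and the positions of the affine singularities adjacent to $\rho_C$.

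The main obstacle is the higher-order case $k > 1$: one must verify that the sub-disk attached at an order-$k$ break of $\mathfrak{b}$ really decomposes so as to contain an irreducible class-$C$ sub-disk rather than existing only as a primitive class-$k[C]$ object. I expect this to follow from a standard multiple-cover argument, since any connected tropical disk of class $k[C]$ with a single outgoing leg of weight $k$ along $m_{\text{out}}$ must factor through the unique class-$C$ disk as an unramified $k$-fold cover, from which a class-$C$ sub-disk can be extracted directly.
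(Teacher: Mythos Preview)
Your approach is the same as the paper's. The paper's argument is two sentences: it cites Proposition~\ref{prop:broken} to know that $\mathfrak{b}$ breaks at every crossing of $\rho_C$, then appeals to the construction of the disk-extension $\mu$ from \cite{Gra2} to conclude that $h$ contains the claimed sub-disk; for the second assertion it notes that placing the endpoint on $\rho_C$ lets one append the unbounded leg of $h$ to the sub-disk to obtain the class-$C$ tropical curve. You are unpacking precisely these steps.

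One geometric point to fix in your write-up: the sub-disk attached at a crossing of $\rho_C$ extends from the break point along $\rho_C$ \emph{toward the bounded vertex} $v_1+v_2$ of $\rho_C$, and from there to the two adjacent affine singularities (this is what Figures~\ref{fig:corr1} and \ref{fig:corr2} depict). The broken line crosses $\rho_C$ transversally and does not continue along it, so there is no ``segment of the completed curve continuing along $\rho_C$ past the crossing'' for the sub-disk's ray to be identified with. The reason the sub-disk has no unbounded legs is simply that its root edge (break point to vertex) and its two legs to the singularities are all compact; no identification is needed.

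Your concern about the order $k>1$ is a fair point that the paper also leaves implicit. It is not an obstruction: any tropical sub-disk of class $k[C]$ supported on the image of the elementary class-$C$ curve decomposes, at the level of the finite fibre $\mu^{-1}(\mathfrak b)$, so as to contain a primitive class-$C$ piece, because the class-$C$ elementary tropical curve is rigid and the leg-splitting possibilities at the vertex $v_1+v_2$ always include the weight-$1$ option. Your multiple-cover heuristic is exactly the right way to see this.
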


\begin{proof}
By Proposition \ref{prop:broken}, whenever $\mathfrak{b}$ intersects $\rho_C$ it breaks. Hence, if $\mathfrak{b}$ intersects $\rho_C$, then by construction $h\in\mu^{-1}({\mathfrak{b}})$ contains a tropical sub-disk as claimed. If the endpoint of $\mathfrak{b}$ lies on $\rho_C$, then we can add the unbounded leg of $h$ to the tropical disk to obtain a tropical curve of class $C$ with one unbounded leg of weight $1$.
\end{proof}

\begin{lemma}
\label{lem:intersects}
A tropical curve in $\mathfrak{H}_n(\widehat{X},\pi^\star\beta-C)$ intersects the edge $\rho_C$ or has a bounded leg ending in an affine singularity neighboring $\rho_C$ (see Figures \ref{fig:corr1} and \ref{fig:corr2}).
\end{lemma}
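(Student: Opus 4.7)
The plan is to exploit the dictionary between classes of tropical curves and elements of $\Pic(\widehat X)$ via the intersection pairing with elementary tropical curves, as set up in the paragraph preceding Lemma~\ref{lemma-degree-varphi}.

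First I would compute the algebraic self-pairing. Since $\pi_\star C = 0$, the projection formula gives $\pi^\star\beta . C = \beta . \pi_\star C = 0$, and since $C$ is an exceptional $(-1)$-curve we have $C^2 = -1$. Consequently
\[
(\pi^\star\beta - C).C \;=\; 1.
\]
Under the identification $H^1(\overline{B},\iota_\star\widecheck\Lambda_N)\cong\Pic(\widehat X)$ used in the construction of a tropical class from a tropical curve, the toric divisor $C$ corresponds precisely to the elementary tropical curve $\tilde h_C$ whose support is $\rho_C$ together with two bounded legs of weight $1$ terminating in the two affine singularities adjacent to $\rho_C$ (compare the labelling of unbounded edges in Figure~\ref{fig:blowupinitial}).

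Next I would invoke Theorem~7 of \cite{R2} (quoted in the paper just before Lemma~\ref{lemma-degree-varphi}), which says that the tropical intersection pairing of the corresponding $1$-cycles computes the ordinary intersection pairing of the associated curve classes in $\widehat X$. Applying this to $h\in\mathfrak{H}_n(\widehat X,\pi^\star\beta-C)$ and to $\tilde h_C$ gives
\[
h.\tilde h_C \;=\; (\pi^\star\beta - C).C \;=\; 1.
\]

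The key step is to observe that the tropical intersection number $h.\tilde h_C$ is computed locally on $\widehat B$: the only nonzero local contributions come either from transversal crossings of the two tropical curves, or from points where a bounded leg of one terminates at an affine singularity that is also met by the other. Since $\tilde h_C$ is supported entirely inside the union of $\rho_C$ and the two neighboring affine singularities, if $h$ neither intersects $\rho_C$ nor has a bounded leg ending in one of those two singularities, then the supports of $h$ and $\tilde h_C$ are disjoint, forcing $h.\tilde h_C = 0$ and contradicting the previous display.

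The main obstacle I foresee is handling degenerate intersection configurations carefully — for instance if an edge of $h$ overlaps $\rho_C$ along a segment, or if $h$ has a vertex on $\rho_C$ but no transverse crossing — and checking that each such case still qualifies as \emph{intersecting} $\rho_C$ in the sense of the lemma. These can all be treated by a case analysis using the local rules for the tropical intersection product from Theorem~6 of \cite{R2}, but the bookkeeping (rather than any deeper geometric input) is where care is needed.
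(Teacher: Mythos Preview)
Your approach via the tropical intersection pairing $h\cdot\tilde h_C=(\pi^\star\beta-C)\cdot C=1$ is different from the paper's (which first argues that $h$ must cross a \emph{neighboring} unbounded edge $\rho$ and then invokes balancing), and the underlying idea is sound. However, there is a genuine gap in your key step. You assert that $\tilde h_C$ is supported inside the union of $\rho_C$ and the two neighboring affine singularities, but this is not correct: by the definition of elementary tropical curves in the paragraph before Lemma~\ref{lemma-degree-varphi}, the support of $\tilde h_C$ also contains the two bounded legs, which are genuine line \emph{segments} running from the vertex of $\rho_C$ to the singularities $\delta_1,\delta_2$ along the adjacent bounded edges of $\P$.

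Consequently your contrapositive fails: a tropical curve $h$ could cross one of these segments transversally at an interior point --- contributing nontrivially to $h\cdot\tilde h_C$ --- yet neither meet $\rho_C$ nor have a bounded leg ending at $\delta_1$ or $\delta_2$. To exclude this possibility you would have to track where $h$ goes after entering the unbounded maximal cell bounded by $\rho_C$, that segment, and the neighboring edge $\rho$; but that is exactly the balancing argument the paper carries out directly. So the intersection-theoretic detour, while elegant, does not bypass the core step. The edge cases you flagged (overlaps with $\rho_C$, vertices on $\rho_C$) are not the real issue; the missing case is transversal crossings of the bounded legs of $\tilde h_C$.
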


\begin{proof}
A tropical curve of class $\pi^\star\beta-C$ has to intersect an unbounded edge $\rho$ neighboring $\rho_C$, as these are the unbounded edges of class $\pi^\star\beta'-C$ for some class $\beta'$ of $X$. If it does not end in the affine singularity between $\rho$ and $\rho_C$ it has to intersect $\rho_C$ by the balancing condition.
\end{proof}

\begin{figure}[h!]
\centering
\begin{tikzpicture}[scale=1,rotate=90]
\draw[gray] (-2,-1) -- (0,0) -- (0,1) -- (-1,2) -- (-4,3);
\draw[dashed,gray] (-2,-1) -- (3,-1);
\draw[gray] (0,0) -- (3,0);
\draw[gray] (0,1) -- (3,1) node[above]{$\rho_C$};
\draw[gray] (-1,2) -- (3,2);
\draw[dashed,gray] (-4,3) -- (3,3);
\draw[dashed,fill=black,fill opacity=0.2,gray] (-3,-1) -- (-1,-0.5) node[opacity=1,rotate=30]{$\times$} -- (-2/3,0) -- (0,0.5) node[opacity=1,rotate=0]{$\times$} -- (-1/3,1) -- (-1/2,1.5) node[opacity=1,rotate=45]{$\times$} -- (-5/3,2) -- (-2.5,2.5) node[opacity=1,rotate=70]{$\times$} -- (-4,2.8);
\draw[gray] (0,0.5) -- (0,3);
\draw[gray] (0,0.5) -- (0,-1);
\draw[gray] (-0.5,1.5) -- (2,-1);
\draw[gray] (-0.5,1.5) -- (-2,3);
\draw[gray] (-1,-0.5) -- (-2,-1);
\draw[gray] (-2.5,2.5) -- (-4,3);
\draw[gray] (-1,-0.5) -- (2,1);
\draw[gray] (2,1) -- (3,1+1/2);
\draw[gray] (2,1) -- (3,1+1/3);
\draw[gray] (-2.5,2.5) -- (2,1);
\draw[gray] (2,1) -- (3,1-1/3);
\draw[gray] (2,1) -- (3,1-1/4);
\draw[gray] (2/3,1/3) -- (3,1/3);
\draw[gray] (0,1+2/3) -- (3,1+2/3);
\draw[black!40!green,line width=2pt] (2.9,1) node[fill,circle,inner sep=1pt]{} -- (2,1) -- (3,1);
\draw[blue,line width=1.2pt] (0,0.5) -- (0,1) -- (-0.5,1.5);
\draw[blue,line width=1.2pt] (0,1) -- (2,1);
\draw[blue,line width=1.2pt] (-2.5,2.5) -- (2,1) -- (-1,-0.5);
\draw (0,-2) node{\Large$\leadsto$};
\draw (0,-2.8);
\end{tikzpicture}
\begin{tikzpicture}[scale=1,rotate=90]
\draw[gray] (-2,-1) -- (0,0) -- (0,1) -- (-1,2) -- (-4,3);
\draw[dashed,gray] (-2,-1) -- (3,-1);
\draw[gray] (0,0) -- (3,0);
\draw[gray] (0,1) -- (3,1) node[above]{$\rho_C$};
\draw[gray] (-1,2) -- (3,2);
\draw[dashed,gray] (-4,3) -- (3,3);
\draw[dashed,fill=black,fill opacity=0.2,gray] (-3,-1) -- (-1,-0.5) node[opacity=1,rotate=30]{$\times$} -- (-2/3,0) -- (0,0.5) node[opacity=1,rotate=0]{$\times$} -- (-1/3,1) -- (-1/2,1.5) node[opacity=1,rotate=45]{$\times$} -- (-5/3,2) -- (-2.5,2.5) node[opacity=1,rotate=70]{$\times$} -- (-4,2.8);
\draw[gray] (0,0.5) -- (0,3);
\draw[gray] (0,0.5) -- (0,-1);
\draw[gray] (-0.5,1.5) -- (2,-1);
\draw[gray] (-0.5,1.5) -- (-2,3);
\draw[gray] (-1,-0.5) -- (-2,-1);
\draw[gray] (-2.5,2.5) -- (-4,3);
\draw[gray] (-1,-0.5) -- (2,1);
\draw[gray] (2,1) -- (3,1+1/2);
\draw[gray] (2,1) -- (3,1+1/3);
\draw[gray] (-2.5,2.5) -- (2,1);
\draw[gray] (2,1) -- (3,1-1/3);
\draw[gray] (2,1) -- (3,1-1/4);
\draw[gray] (2/3,1/3) -- (3,1/3);
\draw[gray] (0,1+2/3) -- (3,1+2/3);
\draw[blue,line width=1.2pt] (2,1) -- (3,1);
\draw[blue,line width=1.2pt] (-2.5,2.5) -- (2,1) -- (-1,-0.5);
\end{tikzpicture}
\caption{Shifting the endpoint of the broken line (green) onto the edge $\rho_C$, a tropical curve $h\in\mu^{-1}({\mathfrak{b}})$ (green and blue) contains a tropical curve of class $C$. Deleting it we obtain a tropical curve of class $2L-C$.}
\label{fig:corr1}
\end{figure}
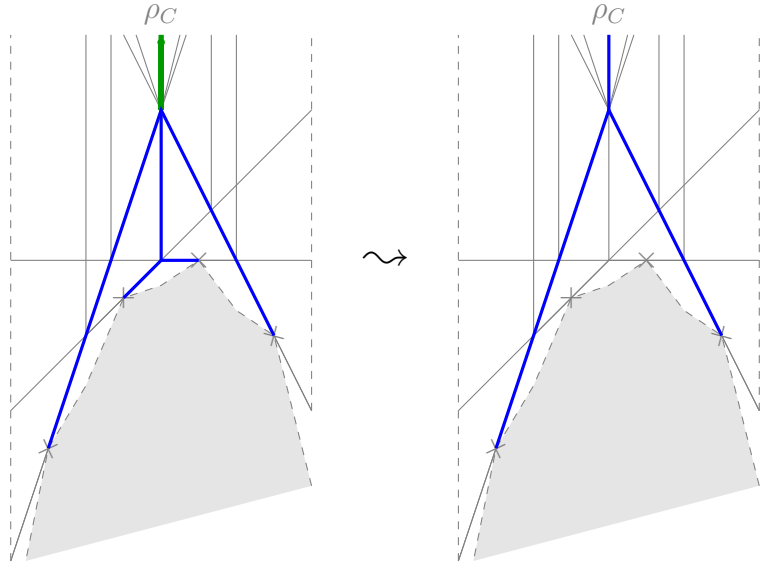

\begin{figure}[h!]
\centering
\begin{tikzpicture}[scale=1,rotate=90]
\draw[gray] (-2,-1) -- (0,0) -- (0,1) -- (-1,2) -- (-4,3);
\draw[dashed,gray] (-2,-1) -- (3,-1);
\draw[gray] (0,0) -- (3,0);
\draw[gray] (0,1) -- (3,1) node[above]{$\rho_C$};
\draw[gray] (-1,2) -- (3,2);
\draw[dashed,gray] (-4,3) -- (3,3);
\draw[dashed,fill=black,fill opacity=0.2,gray] (-3,-1) -- (-1,-0.5) node[opacity=1,rotate=30]{$\times$} -- (-2/3,0) -- (0,0.5) node[opacity=1,rotate=0]{$\times$} -- (-1/3,1) -- (-1/2,1.5) node[opacity=1,rotate=45]{$\times$} -- (-5/3,2) -- (-2.5,2.5) node[opacity=1,rotate=70]{$\times$} -- (-4,2.8);
\draw[gray] (0,0.5) -- (0,3);
\draw[gray] (0,0.5) -- (0,-1);
\draw[gray] (-0.5,1.5) -- (2,-1);
\draw[gray] (-0.5,1.5) -- (-2,3);
\draw[gray] (-1,-0.5) -- (-2,-1);
\draw[gray] (-2.5,2.5) -- (-4,3);
\draw[gray] (-1,-0.5) -- (2,1);
\draw[gray] (2,1) -- (3,1+1/2);
\draw[gray] (2,1) -- (3,1+1/3);
\draw[gray] (-2.5,2.5) -- (2,1);
\draw[gray] (2,1) -- (3,1-1/3);
\draw[gray] (2,1) -- (3,1-1/4);
\draw[gray] (2/3,1/3) -- (3,1/3);
\draw[gray] (0,1+2/3) -- (3,1+2/3);
\draw[blue,line width=1.2] (2.9,1) -- (3,1);
\draw[black!40!green,line width=2pt] (2.9,1) node[fill,circle,inner sep=1pt]{} -- (0,1) -- (0,0) -- (0.8,0.2) -- (3,0.2);
\draw[blue,line width=1.2pt] (-0.5,1.5) -- (0.8,0.2);
\draw[blue,line width=1.2pt] (-1,-0.5) -- (0,0);
\draw (0,-2) node{\Large$\leadsto$};
\draw (0,-2.8);
\end{tikzpicture}
\begin{tikzpicture}[scale=1,rotate=90]
\draw[gray] (-2,-1) -- (0,0) -- (0,1) -- (-1,2) -- (-4,3);
\draw[dashed,gray] (-2,-1) -- (3,-1);
\draw[gray] (0,0) -- (3,0);
\draw[gray] (0,1) -- (3,1) node[above]{$\rho_C$};
\draw[gray] (-1,2) -- (3,2);
\draw[dashed,gray] (-4,3) -- (3,3);
\draw[dashed,fill=black,fill opacity=0.2,gray] (-3,-1) -- (-1,-0.5) node[opacity=1,rotate=30]{$\times$} -- (-2/3,0) -- (0,0.5) node[opacity=1,rotate=0]{$\times$} -- (-1/3,1) -- (-1/2,1.5) node[opacity=1,rotate=45]{$\times$} -- (-5/3,2) -- (-2.5,2.5) node[opacity=1,rotate=70]{$\times$} -- (-4,2.8);
\draw[gray] (0,0.5) -- (0,3);
\draw[gray] (0,0.5) -- (0,-1);
\draw[gray] (-0.5,1.5) -- (2,-1);
\draw[gray] (-0.5,1.5) -- (-2,3);
\draw[gray] (-1,-0.5) -- (-2,-1);
\draw[gray] (-2.5,2.5) -- (-4,3);
\draw[gray] (-1,-0.5) -- (2,1);
\draw[gray] (2,1) -- (3,1+1/2);
\draw[gray] (2,1) -- (3,1+1/3);
\draw[gray] (-2.5,2.5) -- (2,1);
\draw[gray] (2,1) -- (3,1-1/3);
\draw[gray] (2,1) -- (3,1-1/4);
\draw[gray] (2/3,1/3) -- (3,1/3);
\draw[gray] (0,1+2/3) -- (3,1+2/3);
\draw[blue,line width=1.2pt] (-0.5,1.5) -- (0.8,0.2) -- (0,0);
\draw[blue,line width=1.2pt] (0,0.5) -- (0,0) -- (-1,-0.5);
\draw[blue,line width=1.2pt] (0.8,0.2) -- (3,0.2);
\end{tikzpicture}
\caption{An example where the tropical curve $h\in\mu^{-1}(\mathfrak{b})$ (left) goes through an affine singularity neighboring $\rho_C$.}
\label{fig:corr2}
\end{figure}
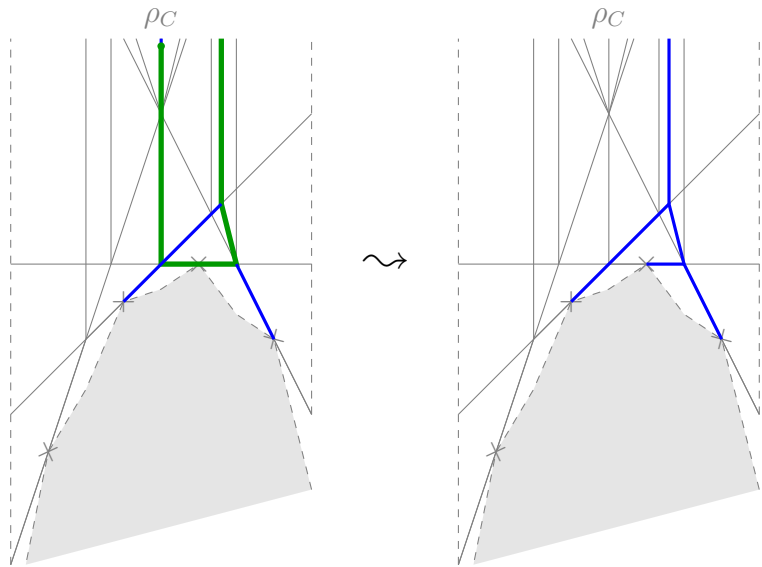

\begin{theorem}
\label{thm:blowupfan}
Let $P$ be a general point in an unbounded chamber $\mathfrak{u}$ of $\mathscr{S}_{n+1}(X)$. There is a surjective map with finite preimages (see Figures \ref{fig:corr1} and \ref{fig:corr2})
\[ \mu': \mathfrak{H}_n(\widehat{X},\pi^\star\beta-C) \rightarrow \mathfrak{B}_{1,n}(X,\beta,P\in\mathfrak{u}) \]
such that 
\[ a_{\mathfrak{b}}(\boldsymbol{q}) = \sum_{h\in{\mu'}^{-1}(\mathfrak{b})} m_h(\boldsymbol{q})\cdot \frac{i\hbar}{(\boldsymbol{q}^{1/2}-\boldsymbol{q}^{-1/2})}. \]
\end{theorem}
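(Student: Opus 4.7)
The plan is to build $\mu'$ by composing three correspondences that are already available. Given $h \in \mathfrak{H}_n(\widehat{X},\pi^*\beta-C)$, Lemma \ref{lem:intersects} lets us isolate inside $h$ a distinguished tropical sub-piece $h_C$ of class $C$: either the unbounded leg along $\rho_C$ together with the edges feeding it (if $h$ crosses $\rho_C$) or the bounded leg ending at an adjacent affine singularity (if $h$ stops there). Deleting $h_C$ from $h$ produces a tropical disk $h^\circ$ in $\widehat{X}$ with a single outgoing edge of weight $n$ and endpoint in a neighbourhood of $\rho_C$. Via the inverse of the disk-extension map (Proposition \ref{prop-finite-extension-to-disks}) this yields a broken line $\widehat{\mathfrak{b}}$ in an unbounded chamber $\widehat{\mathfrak{u}}$, and since by construction $\widehat{\mathfrak{b}}$ breaks only at walls tracked by Corollary \ref{cor:consistent}, it lies in $\mathfrak{B}'_n(\widehat{X},\widehat{P}\in\widehat{\mathfrak{u}})$. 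Finally Proposition \ref{prop:broken} transports $\widehat{\mathfrak{b}}$ to a broken line $\mathfrak{b}\in \mathfrak{B}_n(X,P\in\mathfrak{u})$ with $a_{\widehat{\mathfrak{b}}}(\boldsymbol{q}) = a_{\mathfrak{b}}(\boldsymbol{q})$, and we set $\mu'(h):=\mathfrak{b}$. That $\mathfrak{b}$ lands in $\mathfrak{B}_{1,n}(X,\beta,P\in\mathfrak{u})$ is a matter of tracking the class: the removed piece $h_C$ reappears in $X$ as the weight-1 unbounded leg in direction $m_{\text{out}}$ that completes $\mathfrak{b}$, so the $X$-class becomes $\beta$ and the two-leg profile is $(1,n)$.

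Surjectivity with finite preimages follows by inverting the construction. Given $\mathfrak{b}\in \mathfrak{B}_{1,n}(X,\beta,P\in\mathfrak{u})$ and any $h' \in \mu^{-1}(\mathfrak{b})\subset \mathfrak{H}_{1,n}(X,\beta,P)$ supplied by Proposition \ref{prop:corr}, the weight-1 unbounded leg of $h'$ is replaced by (and respectively, its terminating bounded leg near the appropriate singularity is swapped with) a tropical sub-piece $h_C$ of class $C$ supported in the corresponding region of $\widehat{B}$ under Construction \ref{con:blowup}. This gives an element of $(\mu')^{-1}(\mathfrak{b})$ and finiteness is inherited from $\mu^{-1}(\mathfrak{b})$.

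For the multiplicity identity, I apply Proposition \ref{prop:corr} to get $a_\mathfrak{b}(\boldsymbol{q}) = \sum_{h'} m_{h'}(\boldsymbol{q})$ and then establish, under the bijection $h'\leftrightarrow h$, the ratio $m_{h'}(\boldsymbol{q}) = m_h(\boldsymbol{q})\cdot \tfrac{i\hbar}{\boldsymbol{q}^{1/2}-\boldsymbol{q}^{-1/2}}$. By Definition \ref{def:qtropmult}, unbounded legs contribute no factor, so the weight-1 unbounded leg of $h'$ in $X$ being replaced in $h$ by an unbounded leg along $\rho_C$ is invisible to the multiplicity; the only discrepancy is the extra weight-1 bounded leg carried by $h'$ at the new singularity of $B$ (produced by the singularity-merging of Construction \ref{con:blowup}), whose multiplicity is precisely $\tfrac{i\hbar}{\boldsymbol{q}^{1/2}-\boldsymbol{q}^{-1/2}}$. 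Matching all remaining vertex and wall-scattering factors uses that Proposition \ref{prop:initial} and Corollary \ref{cor:consistent} preserve both coefficients and kinks across the inclusion $\mathscr{S}'_\infty(\widehat X)\hookrightarrow \mathscr{S}_\infty(\widehat X)$. The main obstacle will be the case analysis at vertices of $h'$ in $X$ that lie on the edge of $B$ where the singularity of $B$ splits under Construction \ref{con:blowup} into $\delta_1,\delta_2$: such a vertex can correspond either to a genuine trivalent vertex of $h$ or to a bounded-leg endpoint of $h$, and I must verify in both cases that the trivalent/bounded-leg factors rearrange correctly. This should be tractable because all walls of $\mathscr{S}'_0(\widehat X)$ have $\kappa(m,m_{\text{out}})=\pm 1$, so the relevant local scattering is elementary.
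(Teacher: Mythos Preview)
Your proposal has the right ingredients (Propositions \ref{prop:broken}, \ref{prop:corr}, Lemmas \ref{lem:corr}, \ref{lem:intersects}) but the central operation is inverted, and this breaks the argument.

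You start with $h \in \mathfrak{H}_n(\widehat{X},\pi^*\beta-C)$ and claim that Lemma \ref{lem:intersects} lets you \emph{isolate and delete} a sub-piece $h_C$ of class $C$ inside $h$. But $h$ has total class $\pi^*\beta - C$; it does not contain a component of class $C$. Lemma \ref{lem:intersects} says only that $h$ meets $\rho_C$ or ends at a neighbouring singularity, which is a connectivity statement, not an assertion about a sub-curve. Likewise your description of $h_C$ as ``the unbounded leg along $\rho_C$ together with the edges feeding it'' does not describe anything in $h$: the single unbounded leg of $h$ has weight $n$ and need not lie on $\rho_C$ at all. The paper goes the other way: one \emph{adds} the unique tropical curve $h_C$ of class $C$ (the elementary curve supported on the $1$-skeleton near $\rho_C$) to $h$, obtaining a curve of class $\pi^*\beta$ with legs of weights $1$ and $n$; Lemma \ref{lem:intersects} is what guarantees the result is connected. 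This glued curve lies in $\mathfrak{H}_{1,n}(\widehat{X},\pi^*\beta,\widehat P)$, and then $\mu$ from Proposition \ref{prop:corr} (not the disk map of Proposition \ref{prop-finite-extension-to-disks}, whose inverse is not single-valued) sends it to a broken line in $\mathfrak{B}'_{1,n}(\widehat{X})$, which Proposition \ref{prop:broken} transports to $\mathfrak{B}_{1,n}(X,\beta,P\in\mathfrak{u})$. A key step you are missing is that the paper first moves the endpoint of the broken line onto $\rho_C$ by wall crossing; this is what puts Lemma \ref{lem:corr} in its strong form, so that the completions genuinely contain the full curve $h_C$ rather than merely a sub-disk.

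Your multiplicity bookkeeping inherits the same confusion. The factor $\tfrac{i\hbar}{\boldsymbol{q}^{1/2}-\boldsymbol{q}^{-1/2}}$ is not ``an extra weight-$1$ bounded leg at a new singularity of $B$'' (there is no new singularity in $B$; the splitting happens in $\widehat B$). It is the $\boldsymbol{q}$-refined multiplicity of the entire curve $h_C$: one trivalent vertex of multiplicity $1$ contributing $\tfrac{\boldsymbol{q}^{1/2}-\boldsymbol{q}^{-1/2}}{i\hbar}$ and two bounded legs of weight $1$ each contributing $\tfrac{i\hbar}{\boldsymbol{q}^{1/2}-\boldsymbol{q}^{-1/2}}$. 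Once the add-$h_C$ operation is in place, the multiplicity identity is immediate from the multiplicativity of $m_h(\boldsymbol{q})$ together with Proposition \ref{prop:corr}; no case analysis at the $\delta_1,\delta_2$ splitting is needed.
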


\begin{proof}
Consider a broken line in $\mathfrak{B}_{1,n}(X,\beta,P\in\mathfrak{u})$ and let $\mathfrak{b}$ be the corresponding broken line in $\mathfrak{B}'_{1,n}(\widehat{X},\pi^\star\beta,\widehat{P}\in\widehat{\mathfrak{u}})$ via Proposition \ref{prop:broken}. By wall crossing we can assume that the endpoint of $\mathfrak{b}$ lies on $\rho_C$. By Lemma \ref{lem:corr}, any $h\in\mu^{-1}(\mathfrak{b})$ contains the unique tropical curve $h_C$ of class $C$. It has $\boldsymbol{q}$-refined multiplicity
\[ m_{h_C}(\boldsymbol{q}) = \frac{i\hbar}{\boldsymbol{q}^{1/2}-\boldsymbol{q}^{-1/2}}. \] 
Deleting it we obtain a tropical curve in $\mathfrak{H}_n(\widehat{X},\pi^\star\beta-C)$. Conversely, to a tropical curve in $\mathfrak{H}_n(\widehat{X},\pi^\star\beta-C)$ we can add the tropical curve of class $C$ to obtain a tropical curve in $\mathfrak{H}_{1,n}(\widehat{X},\pi^\star\beta)$, which is connected by Lemma \ref{lem:intersects}, and in turn corresponds to a broken line in $\mathfrak{B}'_{1,n}(\widehat{X},\pi^\star\beta,\widehat{P}\in\widehat{\mathfrak{u}})\simeq\mathfrak{B}_{1,n}(X,\beta,P\in\mathfrak{u})$. All steps apart from deleting $h_C$ preserve $a_{\mathfrak{b}}(\boldsymbol{q})$.
\end{proof}

\begin{corollary}
\label{cor:tropcorrg}
There is a bijective map $\mathfrak{H}_{1,n}(X,\beta) \rightarrow \mathfrak{H}_n(\widehat{X},\pi^\star\beta-C), h \mapsto h'$ such that $m_h(\boldsymbol{q})=m_{h'}(\boldsymbol{q})\cdot i\hbar/(\boldsymbol{q}^{1/2}-\boldsymbol{q}^{-1/2})$. In particular,
\[ R_{1,d}^g(X,\beta) = \sum_{g_1+g_2=g} R_d^{g_1}(\widehat{X},\pi^\star\beta-C) \cdot \frac{(-1/4)^{g_2}}{(2g_2+1)!}. \]
\end{corollary}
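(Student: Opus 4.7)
The plan is to upgrade the surjective map of Theorem~\ref{thm:blowupfan} to a bijection by combining its proof with the broken-line correspondence of Proposition~\ref{prop:broken}, and then deduce the integral identity by formal inversion of Corollary~\ref{cor-two-and-one-point}.

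First I would establish the bijection fiber-by-fiber over $\mathfrak{B}_{1,n}(X,\beta,P)$. Fix $P\in\mathfrak{u}$ and pick $\widehat P\in\widehat{\mathfrak{u}}$ lying on $\rho_C$. Given $h\in\mathfrak{H}_{1,n}(X,\beta)$, set $\mathfrak{b}=\mu(h)$ and let $\mathfrak{b}'\in\mathfrak{B}'_{1,n}(\widehat X,\pi^\star\beta,\widehat P)$ be its counterpart under Proposition~\ref{prop:broken}. The abstract wall isomorphism $\mathscr{S}_\infty(X)\simeq\mathscr{S}'_\infty(\widehat X)$ of Corollary~\ref{cor:consistent} identifies $\mu^{-1}(\mathfrak{b})$ with the subset of $\mu^{-1}(\mathfrak{b}')$ in $\mathfrak{H}_{1,n}(\widehat X,\pi^\star\beta,\widehat P)$, because the decorated source graph of a tropical curve extending a broken line is intrinsic to the walls it pivots through. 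The proof of Theorem~\ref{thm:blowupfan} already supplies a bijection between that latter fiber and ${\mu'}^{-1}(\mathfrak{b})\subset\mathfrak{H}_n(\widehat X,\pi^\star\beta-C)$ via adding or deleting the unique class-$C$ tropical sub-disk $h_C$. Disjoint-unioning over $\mathfrak{b}$ produces the claimed bijection. For the multiplicity statement, the wall-isomorphism step preserves $m_h(\boldsymbol{q})$ intrinsically, while the $h_C$-step splits the product in Definition~\ref{def:qtropmult} as $m_{\tilde h}(\boldsymbol{q})=m_{h'}(\boldsymbol{q})\,m_{h_C}(\boldsymbol{q})$, because Lemma~\ref{lem:intersects} forces $h_C$ to attach to $h'$ either along a shared segment of $\rho_C$ or at a bounded-leg endpoint at an affine singularity neighbouring $\rho_C$, so no new trivalent vertex is created; a direct calculation from Definition~\ref{def:qtropmult} applied to the single weight-one bounded leg of $h_C$ gives $m_{h_C}(\boldsymbol{q})=i\hbar/(\boldsymbol{q}^{1/2}-\boldsymbol{q}^{-1/2})$.

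For the integral formula, I would combine Corollary~\ref{cor-two-and-one-point},
\[
R^g(\widehat X,\pi^\star\beta-C)=\sum_{g_0+g_1=g}R^{g_0}_{1,d}(X,\beta)\,N(g_1,1),
\]
with the generating-function identity $\sum_{g\geq 0}N(g,1)\hbar^{2g}=(\hbar/2)/\sin(\hbar/2)$ that follows from the defining expression~\eqref{eq-cover-contribution} for $N(g,1)$ with $p=1$. Viewed as formal power series in $\hbar$, Corollary~\ref{cor-two-and-one-point} then asserts a single multiplication, and inverting with
\[
\frac{\sin(\hbar/2)}{\hbar/2}=\sum_{g_2\geq 0}\frac{(-1/4)^{g_2}}{(2g_2+1)!}\hbar^{2g_2}
\]
and reading off the $\hbar^{2g}$-coefficient produces the stated convolution identity.

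The hard part will be verifying that the wall-isomorphism identification of tropical-curve fibers is bijective on individual curves rather than merely on their multiplicity sums. A priori an element of $\mu^{-1}(\mathfrak{b}')$ in $\mathfrak{H}_{1,n}(\widehat X,\pi^\star\beta,\widehat P)$ could branch off along walls in $\mathscr{S}_\infty(\widehat X)\setminus\mathscr{S}'_\infty(\widehat X)$; what rules this out is that any such branch would alter the intersection with the exceptional class $C$, contradicting the prescribed class $\pi^\star\beta$. Combined with uniqueness of $h_C$, this confines all $\rho_C$-interactions to the $C$-piece being traded and makes the extensions of $\mathfrak{b}$ and $\mathfrak{b}'$ to tropical curves combinatorially identical under the wall isomorphism.
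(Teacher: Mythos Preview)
Your construction of the bijection and the multiplicity factor $m_{h_C}(\boldsymbol{q})=i\hbar/(\boldsymbol{q}^{1/2}-\boldsymbol{q}^{-1/2})$ follows the same line as the paper, which likewise extracts it from the proof of Theorem~\ref{thm:blowupfan}. One caution: the sentence ``no new trivalent vertex is created'' is not literally true in general (see Figure~\ref{fig:corr1}); the correct statement is that the vertex and leg contributions of $h_C$ separate off multiplicatively from those of $h'$, which is what you actually use.

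Where you diverge is the derivation of the integral identity. The paper stays entirely on the tropical side: it sums the multiplicity relation from Theorem~\ref{thm:blowupfan} via Proposition~\ref{prop:corr}, obtains
\[
\sum_{g\geq 0}R_{1,d}^g(X,\beta)\hbar^{2g}=\Big(\sum_{g\geq 0}R_d^g(\widehat X,\pi^\star\beta-C)\hbar^{2g}\Big)\cdot\frac{\boldsymbol{q}^{1/2}-\boldsymbol{q}^{-1/2}}{i\hbar},
\]
and expands $(\boldsymbol{q}^{1/2}-\boldsymbol{q}^{-1/2})/i\hbar=\sum_{g\geq 0}\frac{(-1/4)^g}{(2g+1)!}\hbar^{2g}$ directly. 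You instead import Corollary~\ref{cor-two-and-one-point} from Section~\ref{section-two-point} and invert the convolution with $N(g,1)$. Your computation is correct, but it undercuts the purpose of Section~\ref{sec:scattering-blowup}: as the Remark following the corollary states, the paper's point is that the tropical argument here gives an \emph{independent} proof of Corollary~\ref{cor-two-and-one-point}, so invoking that corollary to finish makes the derivation circular relative to the paper's logical architecture (though not globally, since Corollary~\ref{cor-two-and-one-point} has its own degeneration-formula proof). The paper's route is also shorter, since the bijection and multiplicity factor already encode the generating-function identity without passing through $N(g,1)$.
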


\begin{proof}
The tropical curve of class $C$ has $\boldsymbol{q}$-refined multiplicity $m_h(\boldsymbol{q})$ with inverse
\[ \frac{1}{m_h(\boldsymbol{q})}=\frac{\boldsymbol{q}^{1/2}-\boldsymbol{q}^{-1/2}}{i\hbar} = \sum_{g\geq 0}\frac{(-1/4)^g}{(2g+1)!}\hbar^{2g} . \]
By Proposition \ref{prop:corr} and Theorem \ref{thm:blowupfan} we have
\[ \sum_{g\geq 0}R_{1,d}^g(X,\beta)\hbar^{2g} = \left(\sum_{g\geq 0}R_d^g(\widehat{X},\pi^\star\beta-C)\hbar^{2g}\right) \cdot \left(\sum_{g\geq 0}\frac{(-1/4)^g}{(2g+1)!}\hbar^{2g}\right), \]
leading to the claimed equality.
\end{proof}

\begin{remark}
Corollary \ref{cor:tropcorrg} together with the higher genus tropical correspondence theorems for $1$-marked invariants (\cite{Gra1}, Theorem 7.2) and $2$-marked invariants (Theorem~\ref{prop:trop}) gives a tropical proof of Corollary~\ref{cor-two-and-one-point}.
\end{remark}


\appendix

\section{Explicit wall-crossing computation}
\label{app:wallcrossing}

Recall the wall-crossing formula as defined in \cite{GPS}.  For each wall with function $f = 1 + c x^ay^b$, we define an automorphism of $\bC[x^{\pm 1},y^{\pm 1}][[t]]$ by $x\mapsto x\cdot f^{-b}$, $y\mapsto y\cdot f^{a}.$  (This transformation demands that we localize the ring at $f$.)
Invariantly, we consider the lattice $M:=\bZ^2$ and the group ring $\bC[M]$ generated by monomials $z^m,$ $m\in M.$  The transformation is then $z^m\mapsto z^m \cdot f^{\braket{n,m}},$ where $f = 1 + cz^r$ and $n\in M^\vee$ is a primitive normal to $r\in M$. 
To interpret the \emph{auto}morphism as an \emph{iso}morphism of charts on opposite side of a wall, we take a primitive
normal vector pointing from the new chamber to the old and make the replacement of variables $z^m\to z^m \cdot f^{\braket{n,m}}.$  Since our path will be vertical, this is why all the normal vectors below are taken with negative $y$-component.

Our plan is to track the superpotential from its value $W_0 = t(x^{-1}y + y + xy^{-2})$ in the central chamber below the horizontal line segment of the scattering diagram (see Figure \ref{fig:scat}) to its value at height infinity as we move along a vertical line, crossing walls along the way.  Note $W_0$ is equivalent to the familiar $\bP^2$ superpotential $t(x + y + x^{-1}y^{-1})$ after the affine substitution $x\to x^{-1}y, y \to y.$

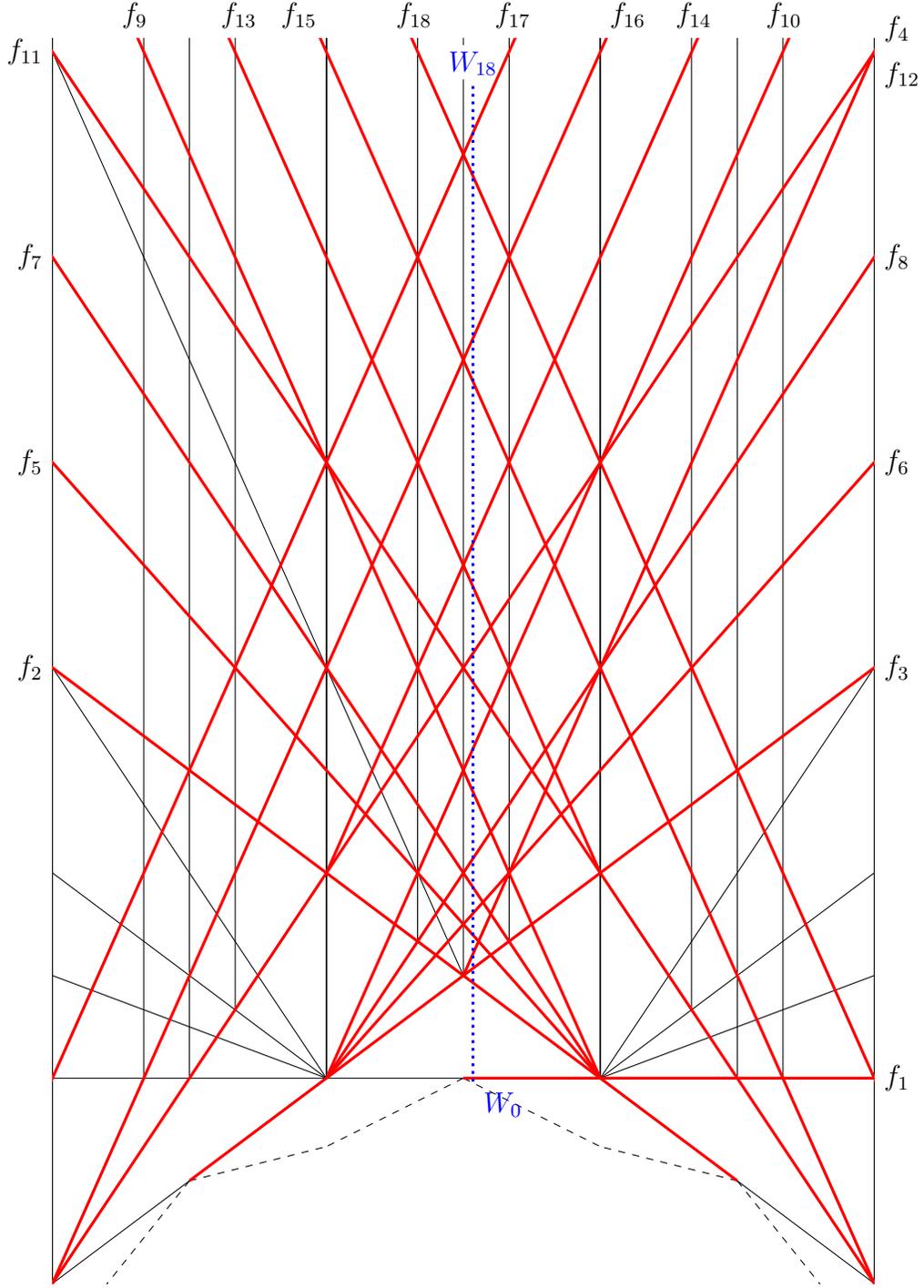
\begin{figure}[h!]
\begin{tikzpicture}[xscale=4,yscale=1,rotate=90]
\begin{scope}
\clip (-3,-1) rectangle (15.2,2);
\draw[dashed] (0.0, 0.5) -- (-1.0, 1.0);
\draw[dashed] (0.0, 0.5) -- (-1.0, 0.0);
\draw[dashed] (-1.5, 1.5) -- (-1.0, 1.0);
\draw[dashed] (-1.5, -0.5) -- (-1.0, 0.0);
\draw[dashed] (-1.5, 1.5) -- (-4.0, 2.0);
\draw[dashed] (-1.5, -0.5) -- (-4.0, -1.0);
\draw[dashed] (-6.0, 2.5) -- (-4.0, 2.0);
\draw[dashed] (-6.0, -1.5) -- (-4.0, -1.0);
\draw[dashed] (-6.0, 2.5) -- (-10.0, 3.0);
\draw[dashed] (-6.0, -1.5) -- (-10.0, -2.0);
\draw[dashed] (-13.5, 3.5) -- (-10.0, 3.0);
\draw[dashed] (-13.5, -2.5) -- (-10.0, -2.0);
\draw[-] (-6.0, -1.5) -- (-12.0, -2.5);
\draw[-] (-6.0, 2.5) -- (-12.0, 3.5);
\draw[-] (-1.5, -0.5) -- (-7.5, -2.5);
\draw[-] (-1.5, 1.5) -- (-7.5, 3.5);
\draw[-] (0.0, 0.5) -- (0.0, -2.5);
\draw[-] (-3.0, -1.0) -- (-3.0, -2.5);
\draw[-] (0.0, 0.5) -- (0.0, 3.5);
\draw[-] (-3.0, 2.0) -- (-3.0, 3.5);
\draw[-] (0.0, 0.0) -- (30.0, 0.0);
\draw[-] (-3.0, -1.0) -- (30.0, -1.0);
\draw[-] (-9.0, -2.0) -- (30.0, -2.0);
\draw[-] (-9.0, 3.0) -- (30.0, 3.0);
\draw[-] (-3.0, 2.0) -- (30.0, 2.0);
\draw[-] (0.0, 1.0) -- (30.0, 1.0);
\draw[-] (0.0, 1.5) -- (30.0, 1.5);
\draw[-] (0.0, -0.5) -- (30.0, -0.5);
\draw[-] (1.5, 0.5) -- (14.6, 0.5);
\draw[-] (15.1, 0.5) -- (16.0, 0.5);
\draw[-] (0.0, 0.0) -- (30.0, 0.0);
\draw[-] (-3.0, 2.0) -- (30.0, 2.0);
\draw[-] (-3.0, -1.0) -- (30.0, -1.0);
\draw[-] (0.0, 1.0) -- (30.0, 1.0);
\draw[-] (0.0, 0.0) -- (30.0, 0.0);
\draw[-] (0.0, 1.0) -- (30.0, 1.0);
\draw[-] (3.0, 0.0) -- (30.0, 0.0);
\draw[-] (3.0, 1.0) -- (30.0, 1.0);
\draw[-] (2.0, 0.333) -- (30.0, 0.333);
\draw[-] (2.0, 0.667) -- (30.0, 0.667);
\draw[-] (0.0, 0.0) -- (3.75, -2.5);
\draw[-] (-1.5, 1.5) -- (10.5, -2.5);
\draw[-] (0.0, 0.0) -- (7.5, -2.5);
\draw[-] (-1.5, -0.5) -- (10.5, 3.5);
\draw[-] (0.0, 1.0) -- (7.5, 3.5);
\draw[-] (0.0, 1.0) -- (3.75, 3.5);
\draw[-] (-6.0, 2.5) -- (24.0, -2.5);
\draw[-] (0.0, 1.0) -- (21.0, -2.5);
\draw[-] (0.0, 0.0) -- (15.0, -2.5);
\draw[-] (-6.0, -1.5) -- (24.0, 3.5);
\draw[-] (0.0, 0.0) -- (21.0, 3.5);
\draw[-] (0.0, 1.0) -- (15.0, 3.5);
\draw[-] (0.0, 1.0) -- (15.75, -2.5);
\draw[-] (-13.5, 3.5) -- (30.0, -1.333);
\draw[-] (-3.0, 2.0) -- (30.0, -1.667);
\draw[-] (0.0, 1.0) -- (30.0, -2.333);
\draw[-] (-13.5, -2.5) -- (30.0, 2.333);
\draw[-] (-3.0, -1.0) -- (30.0, 2.667);
\draw[-] (0.0, 0.0) -- (30.0, 3.333);
\draw[-] (0.0, 0.0) -- (15.75, 3.5);
\draw[-] (1.5, 0.5) -- (30.0, 3.667);
\draw[-] (0, 1.667) -- (30.0, 1.667);
\draw[-] (0, -0.667) -- (30.0, -0.667);
\draw[-] (1.0, 1.333) -- (30.0, 1.333);
\draw[-] (1.0, -0.333) -- (30.0, -0.333);
\draw[very thick, red] (0.0, 0.5) -- (0.0, -5.0);
\draw[very thick, red] (-1.5, 1.5) -- (18.0, -5.0);
\draw[very thick, red] (-1.5, -0.5) -- (18.0, 6.0);
\draw[very thick, red] (-6.0, 2.5) -- (30.0, -3.5);
\draw[very thick, red] (-0.0, 1.0) -- (30.0, -4.0);
\draw[very thick, red] (-6.0, -1.5) -- (30.0, 4.5);
\draw[very thick, red] (-0.0, -0.0) -- (30.0, 5.0);
\draw[very thick, red] (-0.0, 1.0) -- (27.0, -5.0);
\draw[very thick, red] (-3.0, 2.0) -- (30.0, -1.667);
\draw[very thick, red] (-0.0, 1.0) -- (30.0, -2.333);
\draw[very thick, red] (3.0, 1.0) -- (30.0, -2.0);
\draw[very thick, red] (1.5, 0.5) -- (30.0, -2.667);
\draw[very thick, red] (-3.0, -1.0) -- (30.0, 2.667);
\draw[very thick, red] (-0.0, -0.0) -- (30.0, 3.333);
\draw[very thick, red] (3.0, 0.0) -- (30.0, 3.0);
\draw[very thick, red] (-0.0, -0.0) -- (27.0, 6.0);
\draw[very thick, red] (-13.5, 3.5) -- (30.0, -1.333); 
\draw[very thick, red] (-13.5, -2.5) -- (30.0, 2.333);
\end{scope}
\draw (0,-1) node [right]{$f_1$};
\draw (6,2) node[left]{$f_2$};
\draw (6,-1) node [right]{$f_3$};
\draw (15,-1) node [above right]{$f_4$};
\draw (9,2) node[left]{$f_5$};
\draw (9,-1) node [right]{$f_6$};
\draw (12,2) node[left]{$f_7$};
\draw (12,-1) node [right]{$f_8$};
\draw (15.2,1.7) node[above]{$f_9$};
\draw (15.2,-0.67) node[above]{$f_{10}$};
\draw (15,2) node[left]{$f_{11}$};
\draw (15,-1) node [below right]{$f_{12}$};
\draw (15.2,1.32) node[above]{$f_{13}$};
\draw (15.2,-0.34) node[above]{$f_{14}$};
\draw (15.2,1.1) node[above]{$f_{15}$};
\draw (15.2,-0.1) node[above]{$f_{16}$};
\draw (15.2,0.32) node[above]{$f_{17}$};
\draw (15.2,0.68) node[above]{$f_{18}$};
\draw[blue,dotted,very thick] (-.05,.465) node[below right]{$W_0$} -- (14.5,.465) node[above] {$W_{18}$};
\end{tikzpicture}
\caption{The scattering diagram to order $t^{9}$, with relevant walls in red.  We will track the superpotential along the dotted blue line,
from $W_0 = t(x^{-1}y + y + xy^{-2})$ at the bottom to $W_{18}$ at the top, crossing $18$ walls along the way.}
\label{fig:scat}
\end{figure}

We work to order $t^{9}$.  The $t$ order can be no less than the absolute value of the slope of the line, and since these values increase as you move up (a consequence of how new rays are spawned), there are only finitely many walls to consider at any fixed order.

Let us label the $18$ walls to cross by wall functions $f_1, ..., f_{18}$ and normal vectors $n_1, ...,n_{18}.$
The first wall that we cross is special, since it is a slab:  the wall function is proportional to $t$ rather than being $1+O(t)$.  
We have $f_1 = t(1+x^{-1})$ and normal vector $n_1 = (0,-1).$
Let's cross it.  The monomial $x^{-1}y = z^{(-1,1)}$ transforms to $x^{-1}y f_1^{-1}.$ 
Likewise $y\to yf_1^{-1}$, so their sum goes to $t^{-1}y.$  Then $xy^{-2}\to xy^{-2}f_1^2.$  In total, we have
$W_1 = y + t^3 x^{-1}y^{-2} + 2t^3 y^{-2} + t^3 x y^{-2}.$

Here is a list of the 18 wall functions and normal vectors, using the Sage code developed by the first author:
\begin{equation*}
\begin{array}{llcll}
f_1 = t(1+x^{-1})&n_1 = (0,-1) && f_{10} = 1 + 13t^9x^{-1}y^{-9}& n_{10}=(9,-1)\\
f_2 = 1+t^3xy^{-3}&n_2 = (-3,-1) && f_{11} = 1 + t^6xy^{-6}&n_{11}= (-6,-1)\\
f_3 = 1+t^3x^{-1}y^{-3} &n_3 = (3,-1) &&  f_{12} = 1 +  t^6x^{-1}y^{-6}&n_{12} = (6,-1)\\
f_4 = 1+15t^9x^{-1}y^{-9}& n_4 = (9,-1) &&  f_{13} = 1 + 9t^9xy^{-9} & n_{13} = (-9,-1)\\
f_5 = 1+t^9x^{2}y^{-9} &n_5 = (-9,-2) && f_{14} = 1 +9t^9x^{-1}y^{-9}  &n_{14} = (9,-1)\\
f_6 = 1+t^9x^{-2}y^{-9}&n_6=(9,-2) &&f_{15} = 1 +  3t^9xy^{-9} &n_{15} = (-9,-1)\\
f_7 = 1 + 3t^6xy^{-6} & n_7 = (-6,-1) && f_{16} = 1 +3t^9x^{-1}y^{-9}  &n_{16} = (9,-1) \\
f_8 = 1+3t^6x^{-1}y^{-6}&n_8 = (6,-1) && f_{17} = 1 +t^9xy^{-9} &n_{17} = (-9,-1)\\
f_9 = 1 + 13t^9xy^{-9} & n_9 = (-9,-1) && f_{18} = 1 + t^9x^{-1}y^{-9} &n_{18} = (9,-1)\\
\end{array}
\end{equation*} 

With these in hand, we can program a computer to compute the superpotential along the blue line.  Doing
so yields $W_{18},$ to order $t^{12},$ to be
$$W_{18} = y + 2t^{3}y^{-2} + 5t^6y^{-5} + 32t^9y^{-8} + 286t^{12}y^{-11} + \cdots,$$
in agreement with Theorem \ref{thm-main}. Here the omitted terms $\cdots$ are {either} $O(t^{14})$ {or} $O(t^{11})$ but with nontrivial $x$ coefficient; including wall functions to order $t^{12}$ will remove these latter terms.

If we are only interested in ``pure $y$ terms,'' i.e.~terms of the form $cy^a$ (no $x$s), there is a simplification that allows us to compute the terms in $W_{18}$ above by hand:  a pure $y$ term must cross at least \emph{two} walls to generate a new pure $y$ term.  Consider the quantity $W_3$, which is $W_0$ after the transformation of three walls.  Note all of the further walls to cross come with wall functions of the form $f = 1 + O(t^6).$  As a result, a term in $W_3$ such as $2t^3 y^{-2}$ is ``stable'' if we are working to order $t^{12}$, since it would have to cross at least two walls to generate a new pure $y$ term, which would then appear with order at least $t^{15}.$

Let us therefore calculate the pure $y$ terms of $W_{18}.$  
We have from above $W_1 = y + t^3 x^{-1}y^{-2} + 2t^3 y^{-2} + t^3 x y^{-2}.$
Crossing the second wall gives
$W_2 = (y+t^3xy^{-2})f_2^{-1} + 2t^3y^{-2}f_2^2 + t^3x^{-1}y^{-2}f_2^5.$
The first term is $y$ and the next terms can be expanded to
$$W_2 = \underline{y} + {2t^3y^{-2}} + 4t^6xy^{-5} + 2t^9x^{2}y^{-8} + \underline{t^3 x^{-1}y^{-2}} + {5t^6y^{-5}} + {10t^9xy^{-8}} +
\red{10t^{12}x^2y^{-11}}$$
to order $t^{12}.$
The term in red cannot affect a pure $y$ term since it would have to cross a wall with an $x^{-1}$ term, and this will bring the $t$ power
above $12$ --- we henceforth drop it. 
Crossing the next wall with $f_3 = 1 + t^{3}x^{-1}y^{-3}$ and $n_3 = (3,-1),$ the underlined terms arrange to cancel $f_3^{-1}$ giving $y$, so
$$W_3 = y
+ 2t^3y^{-2}f_3^2 + 4t^6xy^{-5}f_3^8 + {2t^9x^2y^{-8}f_3^{14}} + 5t^6y^{-5}f_3^5 + {10t^9xy^{-8}f_3^{11}} + ...,$$
and the ellipses include terms which will have no effect on pure $y$ terms to order $t^{12}.$  As we expand, note that no future wall carries a power of $t$ less than $6$, so: 1) we can drop any non-pure $y$
term with power $t^9$; and 2) any pure $y$ terms with order $t^3$ or higher are stable in the sense that they cannot
affect other pure $y$ terms to order $t^{12}$.  This gives

$$W_3 = y + 4t^6x^{-1}y^{-5} + 4t^6xy^{-5} + \underline{2t^3y^{-2} + 5t^6y^{-5} + 32t^9y^{-8}+11\cdot 10t^{12}y^{-11}} + ...,$$
where the underlined terms are stable.
The $4t^6x^{-1}y^{-5}$ term will contribute to the term $t^{12}y^{-11}$ when it crosses Wall $7$ with exponent $(-1,-5)\cdot (-6,-1) = 11,$ thus picking up a factor of $3\cdot 11$ from the linear term, yielding $4\cdot 3\cdot 11t^{12}y^{-11}.$  Likewise when it crosses Wall $12$ it gets a factor $4\cdot 11t^{12}y^{-11}.$
 Similarly, we get another $11(4\cdot 3+4)t^{12}y^{-11}$ from the $4t^6xy^{-5}$ term. 
The $y$ term will contribute when it crosses two $t^6$ walls with an $x$ and an $x^{-1}$, so Walls $7$ and $8$, $7$ and $13$, $8$ and $12$
and $8$ and $13$.  Since $(0,1)\cdot n$ is always $-1$ for all walls that our path crosses, there will be negative terms in the expansion of the power of the wall function.  From these four pairs of crossings the $y$ term generates terms of the form $t^{12}y^{-11}$ with coefficients,
respectively, $-9\cdot 11, -3\cdot 11, -3\cdot 11, -1\cdot 11.$  Therefore the coefficient of $t^{12}y^{-12}$ in $tW_{18}$ is
$11\left[(10 + 2(4\cdot 3+4) - (9+3+3+1)\right] = 286.$  So
$$W_{18} =  y + 2t^{3}y^{-2} + 5t^6y^{-5} + 32t^9y^{-8} + 286t^{12}y^{-11} + ...$$
as claimed.


\section{Explicit broken line counting}
\label{app:brokenlines}

We use a Sage code to calculate the numbers $R_{1,3d-1}(\mathbb{P}^2,dL)$ for $d\leq 4$. It can be found on \href{timgraefnitz.com}{timgraefnitz.com}. In the code, broken lines are implemented in reversed order.
We start with point $P$ on an unbounded maximal cell of $\mathscr{S}_d(\mathbb{P}^2)$ and a broken line coming out of this point in the negative of the unique unbounded direction $m_{\text{out}}$, with attached monomial $z^{qm_{\text{out}}}$. We can do this, since all broken lines that end in $P$ have to be parallel to $m_{\text{out}}$.
When the broken line hits a wall, we apply the transformation $z^m \mapsto f^{n\cdot m}z^m$, where $n$ is the normal direction of the wall. Each term in $f^{n\cdot m}z^m$ gives a new possible broken line. The above procedure is applied recursively until the direction of the new broken line is $m_{\text{out}}$. Then we have found a broken line with asymptotic monomial $z^{qm_{\text{out}}}$ and ending in $P$.
If we add together the coefficients $a_{\mathfrak{b}}$ of the broken lines $\mathfrak{b}$ with asymptotic monomial $z^{qm_{\text{out}}}$ and resulting monomial $a_{\mathfrak{b}}z^{-pm_{\text{out}}}$ we get the tropical count $R_{p,q}^{\text{trop}}(\mathbb{P}^2,dL)$, where $d=(p+q)/3$. Using the tropical correspondence (Theorem~\ref{prop:trop}) we obtain the log invariants $R_{p,q}(\mathbb{P}^2,dL)$.

\subsection{Degree $1$}

Figure \ref{fig:sage1} shows the broken lines the code produces for $d=1$.

\begin{figure}[h!]
\centering
\begin{tikzpicture}[xscale=1.6,yscale=.7,rotate=90]
\clip (-3,-1) rectangle (6.6,2);
\draw[dashed] (0.0, 0.5) -- (-1.0, 1.0);
\draw[dashed] (0.0, 0.5) -- (-1.0, 0.0);
\draw[dashed] (-1.5, -0.5) -- (-1.0, 0.0);
\draw[dashed] (-1.5, 1.5) -- (-1.0, 1.0);
\draw[dashed] (-1.5, 1.5) -- (-4.0, 2.0);
\draw[dashed] (-1.5, -0.5) -- (-4.0, -1.0);
\draw[dashed] (-6.0, -1.5) -- (-4.0, -1.0);
\draw[dashed] (-6.0, 2.5) -- (-4.0, 2.0);
\draw[gray] (0.0, 0.5) node[fill,cross,inner sep=2pt,rotate=90.0]{} -- (0.0, 2.5);
\draw[gray] (0.0, 0.5) node[fill,cross,inner sep=2pt,rotate=-90.0]{} -- (0.0, -1.5);
\draw[gray] (-1.5, 1.5) node[fill,cross,inner sep=2pt,rotate=-18.435]{} -- (6.0, -1.0);
\draw[gray] (-1.5, -0.5) node[fill,cross,inner sep=2pt,rotate=18.435]{} -- (6.0, 2.0);
\draw[gray] (-1.5, 1.5) node[fill,cross,inner sep=2pt,rotate=161.565]{} -- (-4.5, 2.5);
\draw[gray] (-1.5, -0.5) node[fill,cross,inner sep=2pt,rotate=-161.565]{} -- (-4.5, -1.5);
\draw[gray] (-6.0, 2.5) node[fill,cross,inner sep=2pt,rotate=-9.462]{} -- (6.0, 0.5);
\draw[gray] (-6.0, -1.5) node[fill,cross,inner sep=2pt,rotate=9.462]{} -- (6.0, 0.5);
\draw[gray] (-3.0, 2.0) -- (6.0, 2.0);
\draw[gray] (0.0, 0.0) -- (6.0, 0.0);
\draw[gray] (-3.0, -1.0) -- (6.0, -1.0);
\draw[gray] (0.0, 1.0) -- (6.0, 1.0);
\draw[gray] (1.5, 0.5) -- (6.0, 0.5);
\draw[gray] (3.0, 0.0) -- (6.0, -0.5);
\draw[gray] (3.0, 1.0) -- (6.0, 1.5);
\draw[gray] (-0.0, -0.0) -- (4.5, -1.5);
\draw[gray] (-0.0, -0.0) -- (6.0, -0.0);
\draw[gray] (-3.0, 2.0) -- (-3.0, 2.5);
\draw[gray] (-0.0, -0.0) -- (6.0, 1.0);
\draw[gray] (-0.0, 1.0) -- (4.5, 2.5);
\draw[gray] (-0.0, 1.0) -- (6.0, 0.0);
\draw[gray] (-0.0, 1.0) -- (6.0, 1.0);
\draw[gray] (-3.0, -1.0) -- (-3.0, -1.5);
\fill[black!40!green] (5.9, 0.382) ellipse (3pt and 1.4pt);
\draw[black!40!green,line width=2pt] (5.9, 0.382) -- (0.0, 0.382);
\draw[black!40!green,line width=2pt] (0.0, 0.382) -- (-0.573, -0.191);
\draw[black!40!green,line width=2pt] (-0.573, -0.191) -- (6, -0.191) node[right]{$2$};
\fill[black!40!green] (5.9, 0.382) ellipse (3pt and 1.4pt);
\draw[black!40!green,line width=2pt] (5.9, 0.382) -- (-0.236, 0.382);
\draw[black!40!green,line width=2pt] (-0.236, 0.618) -- (-0.927, 1.309);
\draw[black!40!green,line width=2pt] (-0.927, 1.309) -- (6, 1.309) node[left]{$2$};
\end{tikzpicture}
\hspace{10mm}
\begin{tikzpicture}[xscale=1.6,yscale=.7,rotate=90]
\clip (-3,-1) rectangle (6.6,2);
\draw[dashed] (0.0, 0.5) -- (-1.0, 1.0);
\draw[dashed] (0.0, 0.5) -- (-1.0, 0.0);
\draw[dashed] (-1.5, -0.5) -- (-1.0, 0.0);
\draw[dashed] (-1.5, 1.5) -- (-1.0, 1.0);
\draw[dashed] (-1.5, 1.5) -- (-4.0, 2.0);
\draw[dashed] (-1.5, -0.5) -- (-4.0, -1.0);
\draw[dashed] (-6.0, -1.5) -- (-4.0, -1.0);
\draw[dashed] (-6.0, 2.5) -- (-4.0, 2.0);
\draw[gray] (0.0, 0.5) node[fill,cross,inner sep=2pt,rotate=90.0]{} -- (0.0, 2.5);
\draw[gray] (0.0, 0.5) node[fill,cross,inner sep=2pt,rotate=-90.0]{} -- (0.0, -1.5);
\draw[gray] (-1.5, 1.5) node[fill,cross,inner sep=2pt,rotate=-18.435]{} -- (6.0, -1.0);
\draw[gray] (-1.5, -0.5) node[fill,cross,inner sep=2pt,rotate=18.435]{} -- (6.0, 2.0);
\draw[gray] (-1.5, 1.5) node[fill,cross,inner sep=2pt,rotate=161.565]{} -- (-4.5, 2.5);
\draw[gray] (-1.5, -0.5) node[fill,cross,inner sep=2pt,rotate=-161.565]{} -- (-4.5, -1.5);
\draw[gray] (-6.0, 2.5) node[fill,cross,inner sep=2pt,rotate=-9.462]{} -- (6.0, 0.5);
\draw[gray] (-6.0, -1.5) node[fill,cross,inner sep=2pt,rotate=9.462]{} -- (6.0, 0.5);
\draw[gray] (-3.0, 2.0) -- (6.0, 2.0);
\draw[gray] (0.0, 0.0) -- (6.0, 0.0);
\draw[gray] (-3.0, -1.0) -- (6.0, -1.0);
\draw[gray] (0.0, 1.0) -- (6.0, 1.0);
\draw[gray] (1.5, 0.5) -- (6.0, 0.5);
\draw[gray] (3.0, 0.0) -- (6.0, -0.5);
\draw[gray] (3.0, 1.0) -- (6.0, 1.5);
\draw[gray] (-0.0, -0.0) -- (4.5, -1.5);
\draw[gray] (-0.0, -0.0) -- (6.0, -0.0);
\draw[gray] (-3.0, 2.0) -- (-3.0, 2.5);
\draw[gray] (-0.0, -0.0) -- (6.0, 1.0);
\draw[gray] (-0.0, 1.0) -- (4.5, 2.5);
\draw[gray] (-0.0, 1.0) -- (6.0, 0.0);
\draw[gray] (-0.0, 1.0) -- (6.0, 1.0);
\draw[gray] (-3.0, -1.0) -- (-3.0, -1.5);
\fill[black!40!green] (5.9, 0.382) ellipse (3pt and 1.4pt);
\draw[black!40!green,line width=2pt] (5.9, 0.382) -- (0.0, 0.382);
\draw[black!40!green,line width=2pt] (0.0, 0.382) -- (-1.236,-0.236);
\draw[black!40!green,line width=2pt] (-2.82, -0.764) -- (6, -0.764) node[left]{$1$};
\end{tikzpicture}
\caption{Broken lines used to compute $R_{1,2}(\mathbb{P}^2,L)$ and $R_{2,1}(\mathbb{P}^2,L)$. The numbers above the infinite segments are the genus $0$ coefficients of the broken lines.}
\label{fig:sage1}
\end{figure}
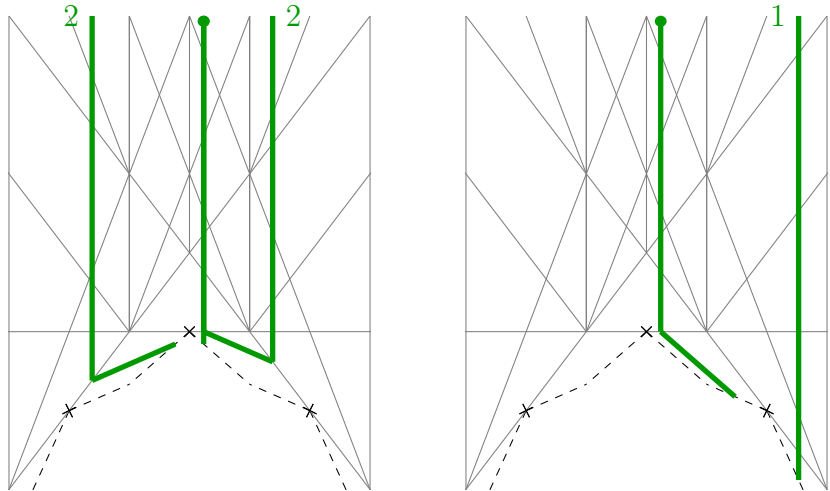

For $(p,q)=(1,2)$ there are two broken lines, giving two tropical curves. One has one vertex of multiplicity $2$ and one bounded leg of weight $1$. The other one has two vertices of multiplicity $2,1$ and two bounded legs of weights $1,1$. Both have multiplicity
\[ m_h(\boldsymbol{q}) = \frac{\boldsymbol{q}-\boldsymbol{q}^{-1}}{\boldsymbol{q}^{1/2}-\boldsymbol{q}^{-1/2}} = \boldsymbol{q}^{1/2}+\boldsymbol{q}^{-1/2} \]
Using $\boldsymbol{q}=e^{i\hbar}$ we see
\[ m_h(\boldsymbol{q}) = \sum_{g\geq 0} \frac{2(-1)^g}{(2g)!2^{2g}}\hbar^{2g}. \]
There are two such curves, hence
\[ R_{1,2}^g = \frac{4(-1)^g}{(2g)!2^{2g}}. \]
The first numbers are:

\vspace{3mm}
\begin{center}
\renewcommand{\arraystretch}{1.2}
\begin{tabular}{|c|c|c|c|c|} 								\hline
$R_{1,2}^0=4$		& $R_{1,2}^1=-\frac{1}{2}$		& $R_{1,2}^2=\frac{1}{1536}$		& $R_{1,2}^3=-\frac{1}{184320}$		& $R_{1,2}^4=\frac{1}{10321920}$		\\ \hline
\end{tabular}
\end{center}
\vspace{3mm}

For $(p,q)=(2,1)$ there is one tropical curve, having the same multiplicity as above. Moreover, we have to divide by $p=2$ to get the non-tropical count $R_{2,1}^g$. This gives
\[ R_{2,1}^g = \frac{(-1)^g}{(2g)!2^{2g}}. \]
The first numbers are:

\vspace{3mm}
\begin{center}
\renewcommand{\arraystretch}{1.2}
\begin{tabular}{|c|c|c|c|c|} 								\hline
$R_{2,1}^0=1$	& $R_{2,1}^1=-\frac{1}{8}$		& $R_{2,1}^2=\frac{1}{384}$		& $R_{2,1}^3=-\frac{1}{46080}$		& $R_{2,1}^4=\frac{1}{2580480}$	\\ \hline
\end{tabular}
\end{center}
\vspace{3mm}

\subsection{Degree $2$}

Figure \ref{fig:sage2} shows the broken lines the code produces for $d=2$.
\begin{figure}[h!]
\centering
\begin{tikzpicture}[xscale=1.6,yscale=.7,rotate=90]
\clip (-3,-1) rectangle (6.6,2);
\draw[dashed,gray] (0.0, 0.5) -- (-1.0, 1.0);
\draw[dashed,gray] (0.0, 0.5) -- (-1.0, 0.0);
\draw[dashed,gray] (-1.5, -0.5) -- (-1.0, 0.0);
\draw[dashed,gray] (-1.5, 1.5) -- (-1.0, 1.0);
\draw[dashed,gray] (-1.5, 1.5) -- (-4.0, 2.0);
\draw[dashed,gray] (-1.5, -0.5) -- (-4.0, -1.0);
\draw[dashed,gray] (-6.0, -1.5) -- (-4.0, -1.0);
\draw[dashed,gray] (-6.0, 2.5) -- (-4.0, 2.0);
\draw[gray] (0.0, 0.5) node[fill,cross,inner sep=2pt,rotate=90.0]{} -- (0.0, 2.5);
\draw[gray] (0.0, 0.5) node[fill,cross,inner sep=2pt,rotate=-90.0]{} -- (0.0, -1.5);
\draw[gray] (-1.5, 1.5) node[fill,cross,inner sep=2pt,rotate=-18.435]{} -- (6.0, -1.0);
\draw[gray] (-1.5, -0.5) node[fill,cross,inner sep=2pt,rotate=18.435]{} -- (6.0, 2.0);
\draw[gray] (-1.5, 1.5) node[fill,cross,inner sep=2pt,rotate=161.565]{} -- (-4.5, 2.5);
\draw[gray] (-1.5, -0.5) node[fill,cross,inner sep=2pt,rotate=-161.565]{} -- (-4.5, -1.5);
\draw[gray] (-6.0, 2.5) node[fill,cross,inner sep=2pt,rotate=-9.462]{} -- (6.0, 0.5);
\draw[gray] (-6.0, -1.5) node[fill,cross,inner sep=2pt,rotate=9.462]{} -- (6.0, 0.5);
\draw[gray] (-3.0, 2.0) -- (6.0, 2.0);
\draw[gray] (0.0, 0.0) -- (6.0, 0.0);
\draw[gray] (-3.0, -1.0) -- (6.0, -1.0);
\draw[gray] (0.0, 1.0) -- (6.0, 1.0);
\draw[gray] (1.5, 0.5) -- (6.0, 0.5);
\draw[gray] (3.0, 0.0) -- (6.0, -0.5);
\draw[gray] (3.0, 1.0) -- (6.0, 1.5);
\draw[gray] (-0.0, -0.0) -- (4.5, -1.5);
\draw[gray] (-0.0, -0.0) -- (6.0, -0.0);
\draw[gray] (-3.0, 2.0) -- (-3.0, 2.5);
\draw[gray] (-0.0, -0.0) -- (6.0, 1.0);
\draw[gray] (-0.0, 1.0) -- (4.5, 2.5);
\draw[gray] (-0.0, 1.0) -- (6.0, 0.0);
\draw[gray] (-0.0, 1.0) -- (6.0, 1.0);
\draw[gray] (-3.0, -1.0) -- (-3.0, -1.5);
\fill[black!40!green] (5.9, 0.382) ellipse (3pt and 1.4pt);
\draw[black!40!green,line width=2pt] (5.9, 0.382) -- (1.146, 0.382);
\draw[black!40!green,line width=2pt] (1.146, 0.382) -- (1.429, 0.524);
\draw[black!40!green,line width=2pt] (1.429, 0.524) -- (6, 0.524) node[left,xshift=2pt]{$5$};
\fill[black!40!green] (5.9, 0.382) ellipse (3pt and 1.4pt);
\draw[black!40!green,line width=2pt] (5.9, 0.382) -- (0.0, 0.382);
\draw[black!40!green,line width=2pt] (0.0, 0.382) -- (-0.573, -0.191);
\draw[black!40!green,line width=2pt] (-0.573, -0.191) -- (0.0, -0.076);
\draw[black!40!green,line width=2pt] (0.0, -0.076) -- (6, -0.076) node[right,xshift=-2pt]{$5$};
\fill[black!40!green] (5.9, 0.382) ellipse (3pt and 1.4pt);
\draw[black!40!green,line width=2pt] (5.9, 0.382) -- (0.0, 0.382);
\draw[black!40!green,line width=2pt] (0.0, 0.382) -- (-1.058, -0.676);
\draw[black!40!green,line width=2pt] (-1.058, -0.676) -- (6, -0.676) node[right,xshift=-2pt]{$5$};
\fill[black!40!green] (5.9, 0.382) ellipse (3pt and 1.4pt);
\draw[black!40!green,line width=2pt] (5.9, 0.382) -- (-0.236, 0.382);
\draw[black!40!green,line width=2pt] (-0.236, 0.618) -- (-0.927, 1.309);
\draw[black!40!green,line width=2pt] (-0.927, 1.309) -- (0.0, 1.124);
\draw[black!40!green,line width=2pt] (0.0, 1.124) -- (6, 1.124) node[left,xshift=2pt]{$5$};
\fill[black!40!green] (5.9, 0.382) ellipse (3pt and 1.4pt);
\draw[black!40!green,line width=2pt] (5.9, 0.382) -- (-0.236, 0.382);
\draw[black!40!green,line width=2pt] (-0.236, 0.618) -- (-1.342, 1.724);
\draw[black!40!green,line width=2pt] (-1.342, 1.724) -- (6, 1.724) node[left,xshift=2pt]{$5$};
\end{tikzpicture}
\hspace{2mm}
\begin{tikzpicture}[xscale=1.6,yscale=.7,rotate=90]
\clip (-3,-1) rectangle (6.6,2);
\draw[dashed,gray] (0.0, 0.5) -- (-1.0, 1.0);
\draw[dashed,gray] (0.0, 0.5) -- (-1.0, 0.0);
\draw[dashed,gray] (-1.5, -0.5) -- (-1.0, 0.0);
\draw[dashed,gray] (-1.5, 1.5) -- (-1.0, 1.0);
\draw[dashed,gray] (-1.5, 1.5) -- (-4.0, 2.0);
\draw[dashed,gray] (-1.5, -0.5) -- (-4.0, -1.0);
\draw[dashed,gray] (-6.0, -1.5) -- (-4.0, -1.0);
\draw[dashed,gray] (-6.0, 2.5) -- (-4.0, 2.0);
\draw[gray] (0.0, 0.5) node[fill,cross,inner sep=2pt,rotate=90.0]{} -- (0.0, 2.5);
\draw[gray] (0.0, 0.5) node[fill,cross,inner sep=2pt,rotate=-90.0]{} -- (0.0, -1.5);
\draw[gray] (-1.5, 1.5) node[fill,cross,inner sep=2pt,rotate=-18.435]{} -- (6.0, -1.0);
\draw[gray] (-1.5, -0.5) node[fill,cross,inner sep=2pt,rotate=18.435]{} -- (6.0, 2.0);
\draw[gray] (-1.5, 1.5) node[fill,cross,inner sep=2pt,rotate=161.565]{} -- (-4.5, 2.5);
\draw[gray] (-1.5, -0.5) node[fill,cross,inner sep=2pt,rotate=-161.565]{} -- (-4.5, -1.5);
\draw[gray] (-6.0, 2.5) node[fill,cross,inner sep=2pt,rotate=-9.462]{} -- (6.0, 0.5);
\draw[gray] (-6.0, -1.5) node[fill,cross,inner sep=2pt,rotate=9.462]{} -- (6.0, 0.5);
\draw[gray] (-3.0, 2.0) -- (6.0, 2.0);
\draw[gray] (0.0, 0.0) -- (6.0, 0.0);
\draw[gray] (-3.0, -1.0) -- (6.0, -1.0);
\draw[gray] (0.0, 1.0) -- (6.0, 1.0);
\draw[gray] (1.5, 0.5) -- (6.0, 0.5);
\draw[gray] (3.0, 0.0) -- (6.0, -0.5);
\draw[gray] (3.0, 1.0) -- (6.0, 1.5);
\draw[gray] (-0.0, -0.0) -- (4.5, -1.5);
\draw[gray] (-0.0, -0.0) -- (6.0, -0.0);
\draw[gray] (-3.0, 2.0) -- (-3.0, 2.5);
\draw[gray] (-0.0, -0.0) -- (6.0, 1.0);
\draw[gray] (-0.0, 1.0) -- (4.5, 2.5);
\draw[gray] (-0.0, 1.0) -- (6.0, 0.0);
\draw[gray] (-0.0, 1.0) -- (6.0, 1.0);
\draw[gray] (-3.0, -1.0) -- (-3.0, -1.5);
\fill[black!40!green] (5.9, 0.382) ellipse (3pt and 1.4pt);
\draw[black!40!green,line width=2pt] (5.9, 0.382) -- (1.146, 0.382);
\draw[black!40!green,line width=2pt] (1.146, 0.382) -- (1.323, 0.559);
\draw[black!40!green,line width=2pt] (1.323, 0.559) -- (6, 0.559) node[left,xshift=2pt]{$8$};
\fill[black!40!green] (5.9, 0.382) ellipse (3pt and 1.4pt);
\draw[black!40!green,line width=2pt] (5.9, 0.382) -- (0.0, 0.382);
\draw[black!40!green,line width=2pt] (0.0, 0.382) -- (-1.236, -0.236);
\draw[black!40!green,line width=2pt] (-2.82, -0.764) -- (-2.292, -0.764);
\draw[black!40!green,line width=2pt] (-2.292, -0.764) -- (-2.646, -0.941);
\draw[black!40!green,line width=2pt] (-2.646, -0.941) -- (6, -0.941) node[left,xshift=2pt]{$8$};
\fill[black!40!green] (5.9, 0.382) ellipse (3pt and 1.4pt);
\draw[black!40!green,line width=2pt] (5.9, 0.382) -- (0.0, 0.382);
\draw[black!40!green,line width=2pt] (0.0, 0.382) -- (-0.573, -0.191);
\draw[black!40!green,line width=2pt] (-0.573, -0.191) -- (6, -0.191) node[right,xshift=-2pt]{$6$};
\fill[black!40!green] (5.9, 0.382) ellipse (3pt and 1.4pt);
\draw[black!40!green,line width=2pt] (5.9, 0.382) -- (-0.236, 0.382);
\draw[black!40!green,line width=2pt] (-0.236, 0.618) -- (-0.927, 1.309);
\draw[black!40!green,line width=2pt] (-0.927, 1.309) -- (6, 1.309) node[right,xshift=-2pt]{$6$};
\end{tikzpicture}
\hspace{2mm}
\begin{tikzpicture}[xscale=1.6,yscale=.7,rotate=90]
\clip (-3,-1) rectangle (6.6,2);
\draw[dashed,gray] (0.0, 0.5) -- (-1.0, 1.0);
\draw[dashed,gray] (0.0, 0.5) -- (-1.0, 0.0);
\draw[dashed,gray] (-1.5, -0.5) -- (-1.0, 0.0);
\draw[dashed,gray] (-1.5, 1.5) -- (-1.0, 1.0);
\draw[dashed,gray] (-1.5, 1.5) -- (-4.0, 2.0);
\draw[dashed,gray] (-1.5, -0.5) -- (-4.0, -1.0);
\draw[dashed,gray] (-6.0, -1.5) -- (-4.0, -1.0);
\draw[dashed,gray] (-6.0, 2.5) -- (-4.0, 2.0);
\draw[gray] (0.0, 0.5) node[fill,cross,inner sep=2pt,rotate=90.0]{} -- (0.0, 2.5);
\draw[gray] (0.0, 0.5) node[fill,cross,inner sep=2pt,rotate=-90.0]{} -- (0.0, -1.5);
\draw[gray] (-1.5, 1.5) node[fill,cross,inner sep=2pt,rotate=-18.435]{} -- (6.0, -1.0);
\draw[gray] (-1.5, -0.5) node[fill,cross,inner sep=2pt,rotate=18.435]{} -- (6.0, 2.0);
\draw[gray] (-1.5, 1.5) node[fill,cross,inner sep=2pt,rotate=161.565]{} -- (-4.5, 2.5);
\draw[gray] (-1.5, -0.5) node[fill,cross,inner sep=2pt,rotate=-161.565]{} -- (-4.5, -1.5);
\draw[gray] (-6.0, 2.5) node[fill,cross,inner sep=2pt,rotate=-9.462]{} -- (6.0, 0.5);
\draw[gray] (-6.0, -1.5) node[fill,cross,inner sep=2pt,rotate=9.462]{} -- (6.0, 0.5);
\draw[gray] (-3.0, 2.0) -- (6.0, 2.0);
\draw[gray] (0.0, 0.0) -- (6.0, 0.0);
\draw[gray] (-3.0, -1.0) -- (6.0, -1.0);
\draw[gray] (0.0, 1.0) -- (6.0, 1.0);
\draw[gray] (1.5, 0.5) -- (6.0, 0.5);
\draw[gray] (3.0, 0.0) -- (6.0, -0.5);
\draw[gray] (3.0, 1.0) -- (6.0, 1.5);
\draw[gray] (-0.0, -0.0) -- (4.5, -1.5);
\draw[gray] (-0.0, -0.0) -- (6.0, -0.0);
\draw[gray] (-3.0, 2.0) -- (-3.0, 2.5);
\draw[gray] (-0.0, -0.0) -- (6.0, 1.0);
\draw[gray] (-0.0, 1.0) -- (4.5, 2.5);
\draw[gray] (-0.0, 1.0) -- (6.0, 0.0);
\draw[gray] (-0.0, 1.0) -- (6.0, 1.0);
\draw[gray] (-3.0, -1.0) -- (-3.0, -1.5);
\fill[black!40!green] (5.8, 0.382) ellipse (3pt and 1.4pt);
\draw[black!40!green,line width=2pt] (5.8, 0.382) -- (1.146, 0.382);
\draw[black!40!green,line width=2pt] (1.146, 0.382) -- (1.146, 0.618);
\draw[black!40!green,line width=2pt] (1.146, 0.618) -- (6, 0.618) node[left,xshift=2pt]{$9$};
\fill[black!40!green] (5.8, 0.382) ellipse (3pt and 1.4pt);
\draw[black!40!green,line width=2pt] (5.8, 0.382) -- (0.0, 0.382);
\draw[black!40!green,line width=2pt] (0.0, 0.382) -- (-1.146, -0.382);
\draw[black!40!green,line width=2pt] (-1.146, -0.382) -- (6, -0.382) node[left,xshift=2pt]{$9$};
\fill[black!40!green] (5.8, 0.382) ellipse (3pt and 1.4pt);
\draw[black!40!green,line width=2pt] (5.8, 0.382) -- (0.0, 0.382);
\draw[black!40!green,line width=2pt] (0.0, 0.382) -- (-0.708, 0.146);
\draw[black!40!green,line width=2pt] (-0.708, 0.854) -- (-1.146, 1.146);
\draw[black!40!green,line width=2pt] (-3.27, 1.854) -- (-1.854, 1.618);
\draw[black!40!green,line width=2pt] (-1.854, 1.618) -- (6, 1.618) node[left,xshift=2pt]{$9$};
\end{tikzpicture}
\hspace{2mm}
\begin{tikzpicture}[xscale=1.6,yscale=.7,rotate=90]
\clip (-3,-1) rectangle (6.6,2);
\draw[dashed,gray] (0.0, 0.5) -- (-1.0, 1.0);
\draw[dashed,gray] (0.0, 0.5) -- (-1.0, 0.0);
\draw[dashed,gray] (-1.5, -0.5) -- (-1.0, 0.0);
\draw[dashed,gray] (-1.5, 1.5) -- (-1.0, 1.0);
\draw[dashed,gray] (-1.5, 1.5) -- (-4.0, 2.0);
\draw[dashed,gray] (-1.5, -0.5) -- (-4.0, -1.0);
\draw[dashed,gray] (-6.0, -1.5) -- (-4.0, -1.0);
\draw[dashed,gray] (-6.0, 2.5) -- (-4.0, 2.0);
\draw[gray] (0.0, 0.5) node[fill,cross,inner sep=2pt,rotate=90.0]{} -- (0.0, 2.5);
\draw[gray] (0.0, 0.5) node[fill,cross,inner sep=2pt,rotate=-90.0]{} -- (0.0, -1.5);
\draw[gray] (-1.5, 1.5) node[fill,cross,inner sep=2pt,rotate=-18.435]{} -- (6.0, -1.0);
\draw[gray] (-1.5, -0.5) node[fill,cross,inner sep=2pt,rotate=18.435]{} -- (6.0, 2.0);
\draw[gray] (-1.5, 1.5) node[fill,cross,inner sep=2pt,rotate=161.565]{} -- (-4.5, 2.5);
\draw[gray] (-1.5, -0.5) node[fill,cross,inner sep=2pt,rotate=-161.565]{} -- (-4.5, -1.5);
\draw[gray] (-6.0, 2.5) node[fill,cross,inner sep=2pt,rotate=-9.462]{} -- (6.0, 0.5);
\draw[gray] (-6.0, -1.5) node[fill,cross,inner sep=2pt,rotate=9.462]{} -- (6.0, 0.5);
\draw[gray] (-3.0, 2.0) -- (6.0, 2.0);
\draw[gray] (0.0, 0.0) -- (6.0, 0.0);
\draw[gray] (-3.0, -1.0) -- (6.0, -1.0);
\draw[gray] (0.0, 1.0) -- (6.0, 1.0);
\draw[gray] (1.5, 0.5) -- (6.0, 0.5);
\draw[gray] (3.0, 0.0) -- (6.0, -0.5);
\draw[gray] (3.0, 1.0) -- (6.0, 1.5);
\draw[gray] (-0.0, -0.0) -- (4.5, -1.5);
\draw[gray] (-0.0, -0.0) -- (6.0, -0.0);
\draw[gray] (-3.0, 2.0) -- (-3.0, 2.5);
\draw[gray] (-0.0, -0.0) -- (6.0, 1.0);
\draw[gray] (-0.0, 1.0) -- (4.5, 2.5);
\draw[gray] (-0.0, 1.0) -- (6.0, 0.0);
\draw[gray] (-0.0, 1.0) -- (6.0, 1.0);
\draw[gray] (-3.0, -1.0) -- (-3.0, -1.5);
\fill[black!40!green] (5.8, 0.382) ellipse (3pt and 1.4pt);
\draw[black!40!green,line width=2pt] (5.8, 0.382) -- (1.146, 0.382);
\draw[black!40!green,line width=2pt] (1.146, 0.382) -- (0.792, 0.736);
\draw[black!40!green,line width=2pt] (0.792, 0.736) -- (6, 0.736) node[left,xshift=2pt]{$8$};
\fill[black!40!green] (5.8, 0.382) ellipse (3pt and 1.4pt);
\draw[black!40!green,line width=2pt] (5.8, 0.382) -- (0.0, 0.382);
\draw[black!40!green,line width=2pt] (0.0, 0.382) -- (-2.0, -0.618);
\draw[black!40!green,line width=2pt] (-2.82, -0.764) -- (6, -0.764) node[left,xshift=2pt]{$6$};
\end{tikzpicture}
\hspace{2mm}
\begin{tikzpicture}[xscale=1.6,yscale=.7,rotate=90]
\clip (-3,-1) rectangle (6.6,2);
\draw[dashed,gray] (0.0, 0.5) -- (-1.0, 1.0);
\draw[dashed,gray] (0.0, 0.5) -- (-1.0, 0.0);
\draw[dashed,gray] (-1.5, -0.5) -- (-1.0, 0.0);
\draw[dashed,gray] (-1.5, 1.5) -- (-1.0, 1.0);
\draw[dashed,gray] (-1.5, 1.5) -- (-4.0, 2.0);
\draw[dashed,gray] (-1.5, -0.5) -- (-4.0, -1.0);
\draw[dashed,gray] (-6.0, -1.5) -- (-4.0, -1.0);
\draw[dashed,gray] (-6.0, 2.5) -- (-4.0, 2.0);
\draw[gray] (0.0, 0.5) node[fill,cross,inner sep=2pt,rotate=90.0]{} -- (0.0, 2.5);
\draw[gray] (0.0, 0.5) node[fill,cross,inner sep=2pt,rotate=-90.0]{} -- (0.0, -1.5);
\draw[gray] (-1.5, 1.5) node[fill,cross,inner sep=2pt,rotate=-18.435]{} -- (6.0, -1.0);
\draw[gray] (-1.5, -0.5) node[fill,cross,inner sep=2pt,rotate=18.435]{} -- (6.0, 2.0);
\draw[gray] (-1.5, 1.5) node[fill,cross,inner sep=2pt,rotate=161.565]{} -- (-4.5, 2.5);
\draw[gray] (-1.5, -0.5) node[fill,cross,inner sep=2pt,rotate=-161.565]{} -- (-4.5, -1.5);
\draw[gray] (-6.0, 2.5) node[fill,cross,inner sep=2pt,rotate=-9.462]{} -- (6.0, 0.5);
\draw[gray] (-6.0, -1.5) node[fill,cross,inner sep=2pt,rotate=9.462]{} -- (6.0, 0.5);
\draw[gray] (-3.0, 2.0) -- (6.0, 2.0);
\draw[gray] (0.0, 0.0) -- (6.0, 0.0);
\draw[gray] (-3.0, -1.0) -- (6.0, -1.0);
\draw[gray] (0.0, 1.0) -- (6.0, 1.0);
\draw[gray] (1.5, 0.5) -- (6.0, 0.5);
\draw[gray] (3.0, 0.0) -- (6.0, -0.5);
\draw[gray] (3.0, 1.0) -- (6.0, 1.5);
\draw[gray] (-0.0, -0.0) -- (4.5, -1.5);
\draw[gray] (-0.0, -0.0) -- (6.0, -0.0);
\draw[gray] (-3.0, 2.0) -- (-3.0, 2.5);
\draw[gray] (-0.0, -0.0) -- (6.0, 1.0);
\draw[gray] (-0.0, 1.0) -- (4.5, 2.5);
\draw[gray] (-0.0, 1.0) -- (6.0, 0.0);
\draw[gray] (-0.0, 1.0) -- (6.0, 1.0);
\draw[gray] (-3.0, -1.0) -- (-3.0, -1.5);
\fill[black!40!green] (5.8, 0.382) ellipse (3pt and 1.4pt);
\draw[black!40!green,line width=2pt] (5.8, 0.382) -- (1.146, 0.382);
\draw[black!40!green,line width=2pt] (1.146, 0.382) -- (-0.27, 1.09);
\draw[black!40!green,line width=2pt] (-0.27, 1.09) -- (6, 1.09) node[left,xshift=2pt]{$5$};
\end{tikzpicture}
\caption{Broken lines used to compute $R_{p,q}(\mathbb{P}^2,2L)$ for $(p,q)=(1,5),(2,4),(3,3),(4,2),(5,1)$. The numbers above the infinite segments are the genus $0$ coefficients of the broken lines.}
\label{fig:sage2}
\end{figure}
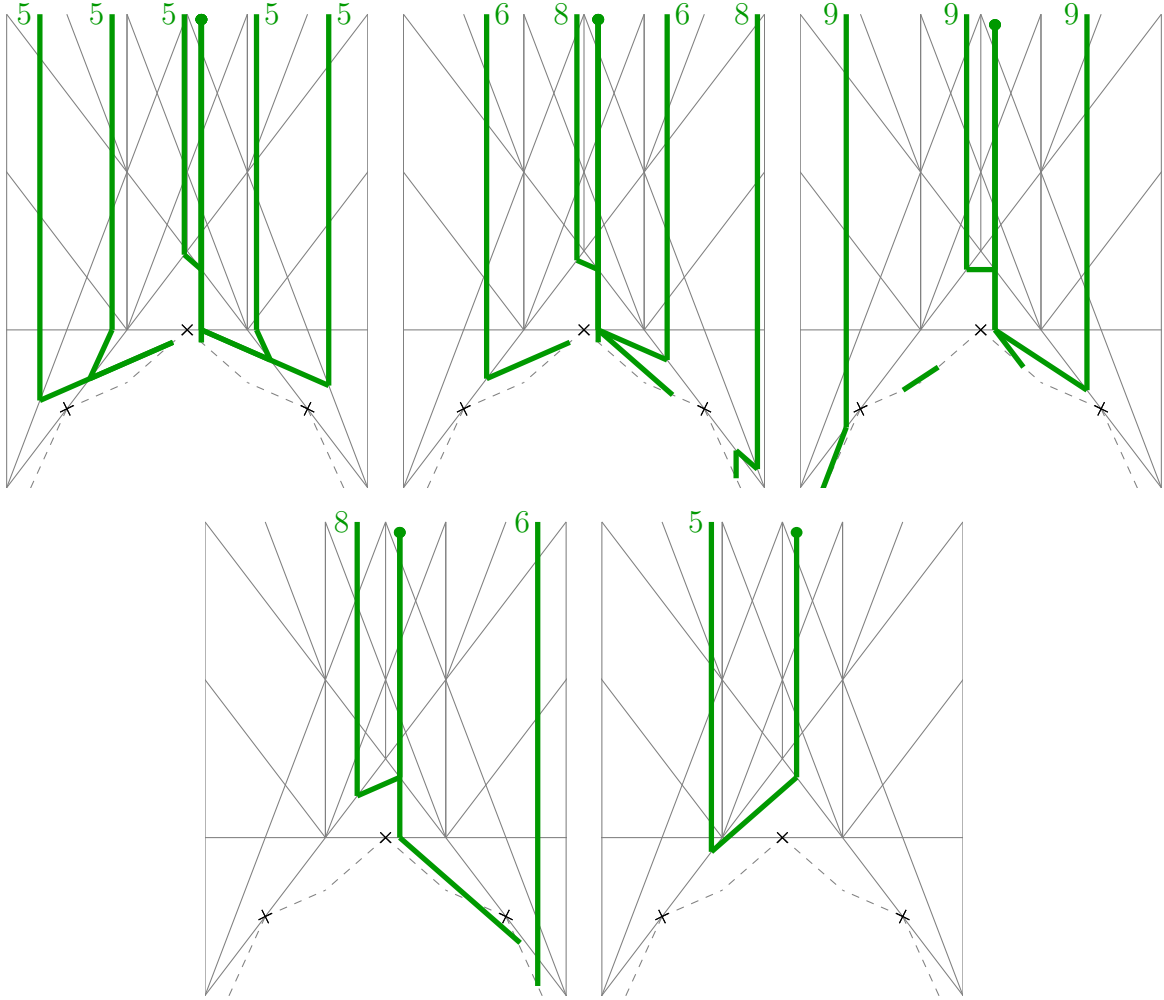
For $(p,q)=(1,5)$ we have $5$ broken lines, giving $5$ tropical curves. Some of these tropical curves are in fact two different tropical curves by the splitting $1+1$ of the weight $2$ bounded leg. However, it turns out that they all have the same multiplicity. The easiest tropical curve has one vertex of multiplicity $5$ and one bounded leg of weight $1$, giving
\begin{equation}
\label{eq:1,5}
m_h(\boldsymbol{q}) = \frac{\boldsymbol{q}^{5/2}-\boldsymbol{q}^{-5/2}}{\boldsymbol{q}^{1/2}-\boldsymbol{q}^{1/2}} = \boldsymbol{q}^2+\boldsymbol{q}+1+\boldsymbol{q}^{-1}+\boldsymbol{q}^{-2}
\end{equation}
Some more involved tropical curve has two vertices of multiplicity $5,4$ and two bounded legs of weights $2,1$. This gives
\begin{eqnarray*}
m_h(\boldsymbol{q}) &=& -\frac{1}{2}\frac{(\boldsymbol{q}^{5/2}-\boldsymbol{q}^{-5/2})(\boldsymbol{q}^2-\boldsymbol{q}^{-2})}{(\boldsymbol{q}-\boldsymbol{q}^{-1})(\boldsymbol{q}^{1/2}-\boldsymbol{q}^{1/2})} \\
&=& -\frac{1}{2}\frac{\boldsymbol{q}^{5/2}-\boldsymbol{q}^{-5/2}}{\boldsymbol{q}^{1/2}-\boldsymbol{q}^{1/2}} \cdot (\boldsymbol{q}+\boldsymbol{q}^{-1})
\end{eqnarray*}
The factor $\frac{1}{2}$ comes from $w_L=2$. The corresponding tropical curve with leg splitting $1+1$ has three vertices of multiplicity $5,2,2$ and three bounded legs of weights $1,1,1$. This gives
\begin{eqnarray*}
m_h(\boldsymbol{q}) &=& \frac{1}{2}\frac{(\boldsymbol{q}^{5/2}-\boldsymbol{q}^{-5/2})(\boldsymbol{q}-\boldsymbol{q}^{-1})^2}{(\boldsymbol{q}^{1/2}-\boldsymbol{q}^{1/2})^3} \\
&=& \frac{1}{2}\frac{\boldsymbol{q}^{5/2}-\boldsymbol{q}^{-5/2}}{\boldsymbol{q}^{1/2}-\boldsymbol{q}^{1/2}} \cdot (\boldsymbol{q}^{1/2}+\boldsymbol{q}^{-1/2})^2
\end{eqnarray*}
The factor $\frac{1}{2}$ comes from $|\text{Aut}(h)|=2$. Using $(\boldsymbol{q}^{1/2}+\boldsymbol{q}^{-1/2})^2 = \boldsymbol{q}+\boldsymbol{q}^{-1}+2$ the two tropical curves together give the same contribution \eqref{eq:1,5} as the first tropical curve. Multiplying by five, the number of tropical curves, we get
\[ \sum_{g\geq 0}R_{1,5}^g\hbar^{2g} = 5\boldsymbol{q}^2+5\boldsymbol{q}+5+5\boldsymbol{q}^{-1}+5\boldsymbol{q}^{-2} \]
This gives
\[ R_{1,5}^g = 5\delta_{g,0} + \frac{(-1)^g}{(2g)!}(10+10\cdot 2^{2g}), \]
where $\delta_{g,0}$ is the Kronecker delta. The first numbers are:

\vspace{3mm}
\begin{center}
\renewcommand{\arraystretch}{1.2}
\begin{tabular}{|c|c|c|c|c|} 								\hline
$R_{1,5}^0=25$	& $R_{1,5}^1=-25$		& $R_{1,5}^2=\frac{85}{12}$		& $R_{1,5}^3=-\frac{65}{72}$		& $R_{1,5}^4=\frac{257}{4032}$	\\ \hline
\end{tabular}
\end{center}
\vspace{3mm}

For $(p,q)=(2,4)$ there four tropical curves. Two of them have multiplicity
\begin{eqnarray*}
m_h(\boldsymbol{q}) &=& \frac{(\boldsymbol{q}^2-\boldsymbol{q}^{-2})(\boldsymbol{q}-\boldsymbol{q}^{-1})}{(\boldsymbol{q}^{1/2}-\boldsymbol{q}^{-1/2})^2} \\
&=& (\boldsymbol{q}+\boldsymbol{q}^{-1})(\boldsymbol{q}^{1/2}+\boldsymbol{q}^{-1/2})^2 \\
&=& \boldsymbol{q}^2+2\boldsymbol{q}+2+2\boldsymbol{q}^{-1}+\boldsymbol{q}^{-2}
\end{eqnarray*}
The other two tropical curves again have different bounded leg splittings. A careful computation gives
\[ m_h(\boldsymbol{q}) = \boldsymbol{q}^2+\boldsymbol{q}+2+\boldsymbol{q}^{-1}+\boldsymbol{q}^{-2} \]
So in total we have (note that we have to divide by $p=2$)
\[ \sum_{g\geq 0}R_{2,4}^g\hbar^{2g} = 2\boldsymbol{q}^2 + 3\boldsymbol{q} + 4 + 3\boldsymbol{q}^{-1} + 2\boldsymbol{q}^{-2} \]
This gives 
\[ R_{2,4}^g = 4\delta_{g,0} + \frac{(-1)^g}{(2g)!}(6+4 \cdot 2^{2g}). \]
The first numbers are

\vspace{3mm}
\begin{center}
\renewcommand{\arraystretch}{1.2}
\begin{tabular}{|c|c|c|c|c|} 								\hline
$R_{2,4}^0=14$	& $R_{2,4}^1=-11$		& $R_{2,4}^2=\frac{35}{12}$		& $R_{2,4}^3=-\frac{131}{360}$		& $R_{2,4}^4=\frac{103}{4032}$	\\ \hline
\end{tabular}
\end{center}
\vspace{3mm}

For $(p,q)=(3,3)$ there are three tropical curves, each of multiplicity
\[ m_h(q) = \frac{(\boldsymbol{q}^{3/2}-\boldsymbol{q}^{-3/2})^2}{(\boldsymbol{q}^{1/2}-\boldsymbol{q}^{-1/2})^2} = \boldsymbol{q}^2+2\boldsymbol{q}+3+2\boldsymbol{q}^{-1}+\boldsymbol{q}^{-2} \]
So in total we have (we have $3$ tropical curves and have to divide by $p=3$)
\[ \sum_{g\geq 0}R_{3,3}^g\hbar^{2g} = \boldsymbol{q}^2+2\boldsymbol{q}+3+2\boldsymbol{q}^{-1}+\boldsymbol{q}^{-2} \]
This gives
\[ R_{3,3}^g = 3\delta_{g,0} + \frac{(-1)^g}{(2g)!}(4+2\cdot 2^{2g}). \]
The first numbers are

\vspace{3mm}
\begin{center}
\renewcommand{\arraystretch}{1.2}
\begin{tabular}{|c|c|c|c|c|} 								\hline
$R_{3,3}^0=9$	& $R_{3,3}^1=-6$		& $R_{3,3}^2=\frac{3}{2}$		& $R_{3,3}^3=-\frac{11}{60}$		& $R_{3,3}^4=\frac{43}{3360}$	\\ \hline
\end{tabular}
\end{center}
\vspace{3mm}

For $(p,q)=(4,2)$ we only have two of the tropical curves as for $(p,q)=(4,2)$, one of each type, and we have to divide by $p=4$ instead of $p=2$, so $R_{4,2}^g=\frac{1}{4}R_{2,4}^g$, giving
\[ R_{4,2}^g = 1\delta_{g,0} + \frac{(-1)^g}{(2g)!}\left(\frac{3}{2}+1 \cdot 2^{2g}\right). \]
The first numbers are

\vspace{3mm}
\begin{center}
\renewcommand{\arraystretch}{1.2}
\begin{tabular}{|c|c|c|c|c|} 								\hline
$R_{4,2}^0=\frac{7}{2}$	& $R_{4,2}^1=-\frac{11}{4}$		& $R_{4,2}^2=\frac{35}{48}$		& $R_{4,2}^3=-\frac{131}{1440}$		& $R_{4,2}^4=\frac{103}{16128}$	\\ \hline
\end{tabular}
\end{center}
\vspace{3mm}

For $(p,q)=(5,1)$ we only have one of the tropical curve as for $(p,q)=(1,5)$, and we have to divide by $p=5$ to get $R_{p,q}^g$, so
\[ R_{5,1}^g = 1\delta_{g,0} + \frac{(-1)^g}{(2g)!}\left(\frac{2}{5}+\frac{2}{5}\cdot 2^{2g}\right). \]
The first numbers are

\vspace{3mm}
\begin{center}
\renewcommand{\arraystretch}{1.2}
\begin{tabular}{|c|c|c|c|c|} 								\hline
$R_{5,1}^0=1$	& $R_{5,1}^1=-1$		& $R_{5,1}^2=\frac{17}{60}$		& $R_{5,1}^3=-\frac{13}{360}$		& $R_{5,1}^4=\frac{257}{100800}$	\\ \hline
\end{tabular}
\end{center}
\vspace{3mm}

\subsection{Degree $3$}

For $(p,q)=(8,1)$ we have three different tropical curves. One has classical vertex multiplicities $m_V=8,1$ and bounded leg weights $w_L=1,1$, giving
\[ m_h(\textbf{q}) = \textbf{q}^{7/2}+\textbf{q}^{5/2}+\ldots+\textbf{q}^{-7/2}. \]
The second has $m_V=8,6,1$ and $w_L=2,1,1$ giving
\[ m_h(\textbf{q}) = -\frac{1}{2}(\textbf{q}^{11/2}+\textbf{q}^{9/2}+2\textbf{q}^{7/2}+2\textbf{q}^{5/2}+3\textbf{q}^{3/2}+3\textbf{q}^{1/2}+ (-\text{exp})). \]
The third is obtained from the second one via leg splitting and has $m_V=8,3,3,1$ and $w_L=1,1,1,1$, giving
\[ m_h(\textbf{q}) = \frac{1}{2}(\textbf{q}^{11/2}+3\textbf{q}^{9/2}+6\textbf{q}^{7/2}+8\textbf{q}^{5/2}+9\textbf{q}^{3/2}+9\textbf{q}^{1/2}+ (-\text{exp})). \]
Putting everything together we obtain the tropical counts
\[ \mathcal{R}_{8,1}(\textbf{q}) = \textbf{q}^{9/2}+3\textbf{q}^{7/2}+4\textbf{q}^{5/2}+4\textbf{q}^{3/2}+4\textbf{q}^{1/2}+ (-\text{exp}). \]
Dividing by $8$ we get the $2$-marked log invariants
\[ R_{8,1}^g = \frac{(-1)^g}{8(2g)!}\left(8\left(\frac{1}{2}\right)^{2g}+8\left(\frac{3}{2}\right)^{2g}+8\left(\frac{5}{2}\right)^{2g}+6\left(\frac{7}{2}\right)^{2g}+2\left(\frac{9}{2}\right)^{2g}\right). \]
The first numbers are

\vspace{3mm}
\begin{center}
\renewcommand{\arraystretch}{1.2}
\begin{tabular}{|c|c|c|c|c|} 								\hline
$R_{8,1}^0=4$	& $R_{8,1}^1=-\frac{23}{2}$		& $R_{8,1}^2=\frac{1037}{96}$		& $R_{8,1}^3=-\frac{59363}{11520}$		& $R_{8,1}^4=\frac{3870617}{2580480}$	\\ \hline
\end{tabular}
\end{center}
\vspace{3mm}

For $(p,q)=(7,2)$ we have six tropical curve. The first has $m_V=7,2$ and $w_L=1,1$ giving
\[ m_h(\textbf{q}) = \textbf{q}^{7/2}+2\textbf{q}^{5/2}+2\textbf{q}^{3/2}+2\textbf{q}^{1/2}+ (-\text{exp}). \]
The second has $m_V=7,6,2$ and $w_L=2,1,1$ giving
\[ m_h(\textbf{q}) = -\frac{1}{2}(\textbf{q}^{11/2}+2\textbf{q}^{9/2}+3\textbf{q}^{7/2}+4\textbf{q}^{5/2}+5\textbf{q}^{3/2}+6\textbf{q}^{1/2} + (-\text{exp})). \]
Splitting the leg we obtain $m_V=7,3,3,2$ and $w_L=1,1,1,1$ giving
\[ m_h(\textbf{q}) = \frac{1}{2}(\textbf{q}^{11/2}+4\textbf{q}^{9/2}+9\textbf{q}^{7/2}+14\textbf{q}^{5/2}+17\textbf{q}^{3/2}+18\textbf{q}^{1/2} + (-\text{exp})). \]
There is another curve with $m_V=7,12$ and $w_L=3,1$ giving
\[ m_h(\textbf{q}) = \frac{1}{3}(\textbf{q}^{15/2}+\textbf{q}^{13/2}+\textbf{q}^{11/2}+2\textbf{q}^{9/2}+2\textbf{q}^{7/2}+2\textbf{q}^{5/2}+3\textbf{q}^{3/2}+2\textbf{q}^{1/2} + (-\text{exp})). \]
Splitting the leg once we get $m_V=7,8,4$ and $w_L=2,1,1$ giving
\[ m_h(\textbf{q}) = -\frac{1}{2}(\textbf{q}^{15/2}+2\textbf{q}^{13/2}+4\textbf{q}^{11/2}+6\textbf{q}^{9/2}+8\textbf{q}^{7/2}+10\textbf{q}^{5/2}+12\textbf{q}^{3/2}+13\textbf{q}^{1/2} + (-\text{exp})). \]
Splitting the leg again weg get $m_V=7,4,4,4$ and $w_L=1,1,1,1$ giving
\[ m_h(\textbf{q}) = \frac{1}{6}(\textbf{q}^{15/2}+4\textbf{q}^{13/2}+10\textbf{q}^{11/2}+20\textbf{q}^{9/2}+32\textbf{q}^{7/2}+44\textbf{q}^{5/2}+54\textbf{q}^{3/2}+59\textbf{q}^{1/2} + (-\text{exp})). \]
Putting everything together we obtain the tropical counts
\[ \mathcal{R}_{7,2}(\textbf{q}) = 2\textbf{q}^{9/2}+6\textbf{q}^{7/2}+10\textbf{q}^{5/2}+12\textbf{q}^{3/2}+12\textbf{q}^{1/2} + (-\text{exp}). \]
Dividing by $7$ we get the $2$-marked log invariants
\[ R_{7,2}^g = \frac{(-1)^g}{7(2g)!}\left(24\left(\frac{1}{2}\right)^{2g}+24\left(\frac{3}{2}\right)^{2g}+20\left(\frac{5}{2}\right)^{2g}+12\left(\frac{7}{2}\right)^{2g}+4\left(\frac{9}{2}\right)^{2g}\right). \]
The first numbers are

\vspace{3mm}
\begin{center}
\renewcommand{\arraystretch}{1.2}
\begin{tabular}{|c|c|c|c|c|} 								\hline
$R_{7,2}^0=12$	& $R_{7,2}^1=-\frac{59}{2}$		& $R_{7,2}^2=\frac{2483}{96}$		& $R_{7,2}^3=-\frac{966893}{80640}$		& $R_{7,2}^4=\frac{8904803}{2580480}$	\\ \hline
\end{tabular}
\end{center}
\vspace{3mm}

\subsection{Central chamber}

Let us compute $\vartheta_2$ in the central chamber via tropical disks.

\begin{figure}[h!]
\centering
\begin{tikzpicture}[xscale=2,yscale=1,rotate=90]
\clip (-3,-1) rectangle (6.6,2);
\draw[dashed,gray] (0.0, 0.5) -- (-1.0, 1.0);
\draw[dashed,gray] (0.0, 0.5) -- (-1.0, 0.0);
\draw[dashed,gray] (-1.5, -0.5) -- (-1.0, 0.0);
\draw[dashed,gray] (-1.5, 1.5) -- (-1.0, 1.0);
\draw[dashed,gray] (-1.5, 1.5) -- (-4.0, 2.0);
\draw[dashed,gray] (-1.5, -0.5) -- (-4.0, -1.0);
\draw[dashed,gray] (-6.0, -1.5) -- (-4.0, -1.0);
\draw[dashed,gray] (-6.0, 2.5) -- (-4.0, 2.0);
\draw[gray] (0.0, 0.5) node[fill,cross,inner sep=2pt,rotate=90.0]{} -- (0.0, 2.5);
\draw[gray] (0.0, 0.5) node[fill,cross,inner sep=2pt,rotate=-90.0]{} -- (0.0, -1.5);
\draw[gray] (-1.5, 1.5) node[fill,cross,inner sep=2pt,rotate=-18.435]{} -- (6.0, -1.0);
\draw[gray] (-1.5, -0.5) node[fill,cross,inner sep=2pt,rotate=18.435]{} -- (6.0, 2.0);
\draw[gray] (-1.5, 1.5) node[fill,cross,inner sep=2pt,rotate=161.565]{} -- (-4.5, 2.5);
\draw[gray] (-1.5, -0.5) node[fill,cross,inner sep=2pt,rotate=-161.565]{} -- (-4.5, -1.5);
\draw[gray] (-6.0, 2.5) node[fill,cross,inner sep=2pt,rotate=-9.462]{} -- (6.0, 0.5);
\draw[gray] (-6.0, -1.5) node[fill,cross,inner sep=2pt,rotate=9.462]{} -- (6.0, 0.5);
\draw[gray] (-3.0, 2.0) -- (6.0, 2.0);
\draw[gray] (0.0, 0.0) -- (6.0, 0.0);
\draw[gray] (-3.0, -1.0) -- (6.0, -1.0);
\draw[gray] (0.0, 1.0) -- (6.0, 1.0);
\draw[gray] (1.5, 0.5) -- (6.0, 0.5);
\draw[gray] (3.0, 0.0) -- (6.0, -0.5);
\draw[gray] (3.0, 1.0) -- (6.0, 1.5);
\draw[gray] (-0.0, -0.0) -- (4.5, -1.5);
\draw[gray] (-0.0, -0.0) -- (6.0, -0.0);
\draw[gray] (-3.0, 2.0) -- (-3.0, 2.5);
\draw[gray] (-0.0, -0.0) -- (6.0, 1.0);
\draw[gray] (-0.0, 1.0) -- (4.5, 2.5);
\draw[gray] (-0.0, 1.0) -- (6.0, 0.0);
\draw[gray] (-0.0, 1.0) -- (6.0, 1.0);
\draw[gray] (-3.0, -1.0) -- (-3.0, -1.5);
\fill[black!40!green] (-0.3, 0.2) ellipse (3pt and 1.4pt);
\draw[black!40!green,line width=2pt] (-0.3, 0.2) -- (-1.3, -0.3);
\draw[black!40!green,line width=2pt] (-2.5, -0.7) -- (6, -0.7) node[above]{$6$};
\fill[black!40!green] (-0.3, 0.2) ellipse (3pt and 1.4pt);
\draw[black!40!green,line width=2pt] (-0.3, 0.2) -- (-0.75, -0.25);
\draw[black!40!green,line width=2pt] (-0.75, -0.25) -- (6, -0.25) node[above]{$5$};
\fill[black!40!green] (-0.3, 0.2) ellipse (3pt and 1.4pt);
\draw[black!40!green,line width=2pt] (-0.3, 0.2) -- (-0.6, 0.2);
\draw[black!40!green,line width=2pt] (-0.6, 0.8) -- (-1.2, 1.4);
\draw[black!40!green,line width=2pt] (-1.2, 1.4) -- (6, 1.4) node[above]{$1$};
\fill[black!40!green] (-0.3, 0.2) ellipse (3pt and 1.4pt);
\draw[black!40!green,line width=2pt] (-0.3, 0.2) -- (6, 0.2) node[above]{$4$};
\fill[black!40!green] (-0.3, 0.2) ellipse (3pt and 1.4pt);
\draw[black!40!green,line width=2pt] (-0.3, 0.2) -- (0.0, 0.35);
\draw[black!40!green,line width=2pt] (0.0, 0.35) -- (6, 0.35) node[above]{$3$};
\fill[black!40!green] (-0.3, 0.2) ellipse (3pt and 1.4pt);
\draw[black!40!green,line width=2pt] (-0.3, 0.2) -- (0.0, 0.5);
\draw[black!40!green,line width=2pt] (0.0, 0.5) -- (6, 0.5) node[above]{$2$};
\draw (-0.8,0.45) -- (0.2,0.45) -- (0.2,-0.2) -- (-0.8,-0.2) -- (-0.8,0.45);
\end{tikzpicture}
\hspace{1cm}
\begin{tikzpicture}[xscale=4,yscale=2,rotate=90]
\draw (-2,0); 
\draw (-1.45,-1.1) rectangle (0.7,0.75);
\clip (-1.45,-1.1) rectangle (0.7,0.75);
\draw[dashed,gray] (0.0, 0.5) -- (-1.0, 1.0);
\draw[dashed,gray] (0.0, 0.5) -- (-1.0, 0.0);
\draw[dashed,gray] (-1.5, -0.5) -- (-1.0, 0.0);
\draw[dashed,gray] (-1.5, 1.5) -- (-1.0, 1.0);
\draw[dashed,gray] (-1.5, 1.5) -- (-4.0, 2.0);
\draw[dashed,gray] (-1.5, -0.5) -- (-4.0, -1.0);
\draw[dashed,gray] (-6.0, -1.5) -- (-4.0, -1.0);
\draw[dashed,gray] (-6.0, 2.5) -- (-4.0, 2.0);
\draw[gray] (0.0, 0.5) node[fill,cross,inner sep=2pt,rotate=90.0]{} -- (0.0, 2.5);
\draw[gray] (0.0, 0.5) node[fill,cross,inner sep=2pt,rotate=-90.0]{} -- (0.0, -1.1);
\draw[gray] (-1.5, 1.5) node[fill,cross,inner sep=2pt,rotate=-18.435]{} -- (6.0, -1.0);
\draw[gray] (-1.5, -0.5) node[fill,cross,inner sep=2pt,rotate=18.435]{} -- (6.0, 2.0);
\draw[gray] (-1.5, 1.5) node[fill,cross,inner sep=2pt,rotate=161.565]{} -- (-4.5, 2.5);
\draw[gray] (-1.5, -0.5) node[fill,cross,inner sep=2pt,rotate=-161.565]{} -- (-4.5, -1.5);
\draw[gray] (-6.0, 2.5) node[fill,cross,inner sep=2pt,rotate=-9.462]{} -- (6.0, 0.5);
\draw[gray] (-3.0, 2.0) -- (6.0, 2.0);
\draw[gray] (0.0, 0.0) -- (6.0, 0.0);
\draw[gray] (0.0, 1.0) -- (6.0, 1.0);
\draw[gray] (1.5, 0.5) -- (6.0, 0.5);
\draw[gray] (3.0, 0.0) -- (6.0, -0.5);
\draw[gray] (3.0, 1.0) -- (6.0, 1.5);
\draw[gray] (-0.0, -0.0) -- (4.5, -1.5);
\draw[gray] (-0.0, -0.0) -- (6.0, -0.0);
\draw[gray] (-3.0, 2.0) -- (-3.0, 2.5);
\draw[gray] (-0.0, -0.0) -- (6.0, 1.0);
\draw[gray] (-0.0, 1.0) -- (4.5, 2.5);
\draw[gray] (-0.0, 1.0) -- (6.0, 0.0);
\draw[gray] (-0.0, 1.0) -- (6.0, 1.0);
\draw[gray] (-3.0, -1.0) -- (-3.0, -1.5);
\fill[black!40!green] (-0.3, 0.2) ellipse (1.7pt and 0.7pt);
\draw[black!40!green,line width=2pt] (-0.3, 0.2) -- (-1.3, -0.3) node[right]{\scriptsize$t^2x^2y^{-4}$};
\fill[black!40!green] (-0.3, 0.2) ellipse (1.7pt and 0.7pt);
\draw[black!40!green,line width=2pt] (-0.3, 0.2) -- (-0.75, -0.25) node[right]{\scriptsize$t^2(\boldsymbol{q}^{1/2}+\boldsymbol{q}^{-1/2})xy^{-1}$};
\fill[black!40!green] (-0.3, 0.2) ellipse (1.7pt and 0.7pt);
\draw[black!40!green,line width=2pt] (-0.3, 0.2) -- (-0.6, 0.2) node[below,xshift=-7mm]{\scriptsize$t^2(\boldsymbol{q}^{1/2}+\boldsymbol{q}^{-1/2})y^{-1}$};
\fill[black!40!green] (-0.3, 0.2) ellipse (1.7pt and 0.7pt);
\draw[black!40!green,line width=2pt] (-0.3, 0.2) -- (0, 0.2) node[above]{\scriptsize$t^2y^2$};
\fill[black!40!green] (-0.3, 0.2) ellipse (1.7pt and 0.7pt);
\draw[black!40!green,line width=2pt] (-0.3, 0.2) -- (0.2, 0.45) node[above,xshift=5mm]{\scriptsize$t^2(\boldsymbol{q}^{1/2}+\boldsymbol{q}^{-1/2})x^{-1}y^2$};
\draw[black!40!green,line width=2pt] (-0.3, 0.2) -- (-0.1, 0.4) node[left]{\scriptsize$t^2x^{-2}y^2$};
\end{tikzpicture}
\caption{Broken lines defining $\vartheta_2$ on the central chamber. The right hand side shows the situation in the central chamber, including the ending monomials.}
\label{fig:theta2}
\end{figure}
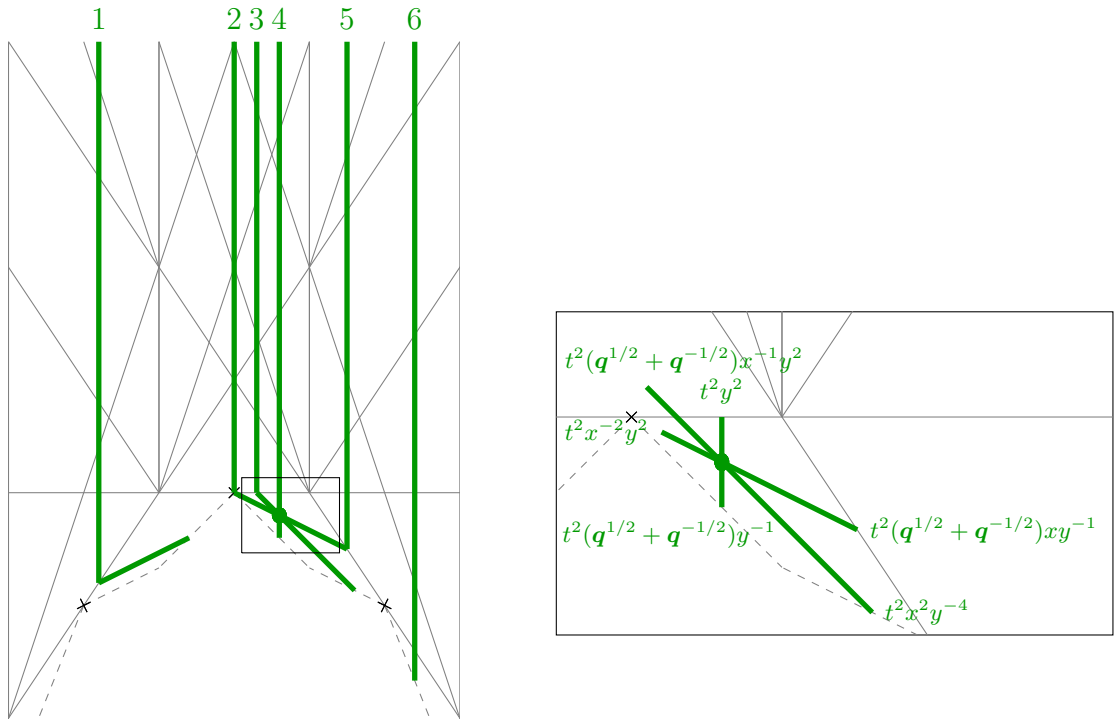

Consider the numbering of broken line as in Figure \ref{fig:theta2}. The broken lines $4$ and $6$ give trivial tropical disks whose only vertex is $V_\infty$. Hence, the $\boldsymbol{q}$-refined tropical multiplicity is trivial:
\[ m_h(\boldsymbol{q}) = 1. \]
The broken lines $1$, $3$ and $5$ give tropical disks with one vertex of multiplicity $m_V=2$ and one bounded leg of weight $w_L=1$. This gives a $\boldsymbol{q}$- refined multiplicity of
\[ m_h(\boldsymbol{q}) = \boldsymbol{q}^{1/2}+\boldsymbol{q}^{-1/2}. \]
The broken line $2$ gives two different tropical disks. One has $m_V=4$ and $w_L=2$, giving
\[ m_h(\boldsymbol{q}) = -\frac{1}{2}(\boldsymbol{q}^1+\boldsymbol{q}^{-1}). \]
The other one, obtained by leg splitting, has $m_V=2,2$ and $w_L=1,1$, giving
\[ m_h(\boldsymbol{q}) = \frac{1}{2}(\boldsymbol{q}^{1/2}+\boldsymbol{q}^{1/2})^2 = \frac{1}{2}\boldsymbol{q}^1+1+\frac{1}{2}\boldsymbol{q}^{-1}. \]
Here the factor $\frac{1}{2}$ comes from the nontrivial automorphism group. The two terms above sum up to simply $1$.

Putting everything together we obtain
\[ \vartheta_2(\boldsymbol{q}) =  t^2 \big(y^2+x^{-2}y^2+x^2y^{-4}+(\boldsymbol{q}^{1/2}+\boldsymbol{q}^{-1/2})(y^{-1}+xy^{-1}+x^{-1}y^2)\big). \]

\section{Verification of computations}
\label{app-verification}

In this section we verify the calculations of the genus $1$ log invariants $R_{1,3d-1}^1(\mathbb{P}^2,dL)$ as calculated tropically in Appendix \ref{app:brokenlines} by relating them to the known local invariants $N^1(\mathbb{F}_1,dL-C)$ via the higher genus log-local correspondence of \cite{BFGW}.

Gopakumar--Vafa integrality
(\cite{GV1,IP}) relates $N^g$ to integers $n^g$ through
contributions of lower classes and genera.
The class $\beta=dL-C$ is primitive, so there are no multiple cover contributions from other classes,
but there are contributions from curves of lower genus.
We have
\[ \sum_{g\geq 0} N^g(\mathbb{F}_1,dL-C)\hbar^{2g} = \sum_g n^g(\mathbb{F}_1,dL-C) \cdot \left(\frac{ih}{q^{1/2}-q^{-1/2}}\right)^{2g-2}. \]
This gives $N^0(\mathbb{F}_1,dL-C)=n^0(\mathbb{F}_1,dL-C)$ and 
\[ N^1(\mathbb{F}_1,dL-C)=n^1(\mathbb{F}_1,dL-C)+ \frac{1}{12} n^0(\mathbb{F}_1,dL-C). \]
In \cite{HKR}, Table B.8 shows the well-known $n^0(\mathbb{F}_1,L-C)=-2$, $n^0(\mathbb{F}_1,2L-C)=5$ and according to Table B.9 we have $n^1(\mathbb{F}_1,L-C)=n^1(\mathbb{F}_1,2L-C)=0$. So,
\[ N^1(\mathbb{F}_1,L-C)=-\frac{1}{6}, \quad N^1(\mathbb{F}_1,2L-C)=\frac{5}{12}. \]
By the higher genus log-local correspondence of \cite{BFGW} (see in particular the proof of Theorem 4.11) we have, for $d=1,2$,
\[ R_{3d-1}^1(\mathbb{F}_1,dL-C) = (-1)^{3d-1}(3d-1)\left(N^1(\mathbb{F}_1,dL-C)-\frac{1}{24}(3d-1)^2N^0(\mathbb{F}_1,dL-C)\right). \]
This gives
\[ R_2^1(\mathbb{F}_1,L-C) = -\frac{1}{3}, \quad R_5^1(\mathbb{F}_1,2L-C) = -\frac{575}{24} \]
By Corollary \ref{cor-two-and-one-point} we have
\[ R_{3d-1}^g(\mathbb{P}^2,dL) = R_{3d-1}^g(\mathbb{F}_1,dL-C). \]
This gives
\[ R_{1,2}^1(\mathbb{P}^2,L-C) = -\frac{1}{2}, \quad R_{1,5}^1(\mathbb{P}^2,2L-C) = -25, \]
which agrees with the log invariants calculated in Appendix \ref{app:brokenlines}.

\sloppy
\tiny{

}
\end{document}